\documentclass[a4paper,12pt,reqno]{amsart}
\usepackage{amssymb}
\usepackage{amsmath}
\usepackage{anyfontsize}
\usepackage{hyperref}
\usepackage{mathtools}
\usepackage{mathrsfs}
\usepackage{bbm}
\usepackage{tikz}
\usetikzlibrary{commutative-diagrams}
\usepackage{tikz-cd}
\usepackage{bm}
\usepackage{bm}
\usepackage{geometry}
\usepackage{colordvi}
\usepackage{color}

\DeclareMathOperator{\ovep}{\overline{\varepsilon}}

\allowdisplaybreaks

\numberwithin{equation}{section}

\theoremstyle{definition}
\newtheorem{theorem}{Theorem}[section]
\newtheorem{corollary}[theorem]{Corollary}
\newtheorem{lemma}[theorem]{Lemma}
\newtheorem{definition}[theorem]{Definition}
\newtheorem{proposition}[theorem]{Proposition}
\newtheorem{example}[theorem]{Example}
\newtheorem{remark}[theorem]{Remark}
\newtheorem{que}[theorem]{Question}

\begin{document}
\title[Homological properties of quantum groups at roots of unity]{Homological properties of quantum groups governed by small quantum groups}
\author{Yimin Huang}
\address{School of Mathematical Sciences, Fudan University, Shanghai 200433, China}
\email{21110180008@m.fudan.edu.cn}

\author{Tiancheng Qi}
\address{School of Mathematical Sciences, Fudan University, Shanghai 200433, China}
\email{tcqi21@m.fudan.edu.cn}

\author{Quanshui Wu}
\address{School of Mathematical Sciences, Fudan University, Shanghai 200433, China}
\email{qswu@fudan.edu.cn}

\author{Ruipeng Zhu}
\address{School of Mathematics, Shanghai University of Finance and Economics, Shanghai 200433, China}
\email{zhuruipeng@sufe.edu.cn}



\begin{abstract}

We develop a general characteristic-free framework for studying the homological properties of a broad class of module-finite Hopf algebras. This framework makes it possible to reduce the study of the homological properties of many quantum groups at roots of unity and their multiparameter deformations to the study of the corresponding small quantum groups. For any affine Hopf algebra $H$ admitting a large central Hopf subalgebra $C$, we prove that its left homological integral space, in the sense of Lu-Wu-Zhang, is isomorphic as a bimodule to the left integral space of the identity fiber algebra, which is a finite-dimensional Hopf algebra. Consequently, $H$ is a symmetric Frobenius extension of $C$ if and only if the corresponding identity fiber algebra is unimodular and the square of the antipode of $H$ is inner, thus providing an effective criterion for the Calabi-Yau property of $H$. For a broad class of quantum groups at roots of unity, an appropriate large central Hopf subalgebra can be chosen such that the identity fiber algebra is the corresponding small quantum group. Therefore, some homological properties of these big quantum groups 
are governed by those of their corresponding small quantum groups. Assuming that the base field is algebraically closed, we prove that $H$ is unimodular if and only if, for some (equivalently, every) maximal ideal $\mathfrak{m}$ of $C$, the category of finite-dimensional representations of the fiber algebra at $\mathfrak{m}$ is unimodular in the sense of Yadav as a module category over the finite tensor category of finite-dimensional representations of the identity fiber algebra. As an application, we prove that all Andruskiewitsch-Angiono-Yakimov large quantum groups 
are affine noetherian unimodular Artin-Schelter Gorenstein Hopf algebras. We also give a necessary and sufficient condition for these large quantum groups 
to be Calabi-Yau. 

\end{abstract}

\subjclass[2020]{
17B37, 
16T05, 
16E10, 
18M05. 
}

\keywords{Quantum group, Artin-Schelter Gorenstein Hopf algebra, homological integral,  Nakayama automorphism, Calabi-Yau algebra, tensor category.}
\thanks{This research has been supported by NSFC (Grant No. 12471032) and NSFC (Grant No. 12301052).}

\maketitle	

\section*{Introduction}
Quantum groups at roots of unity, along with their multiparameter deformations, are closely related to numerous areas of mathematics and have a wide range of applications. These include the construction of \textit{quantum invariants} of links and $3$-manifolds \cite{MR1091619,MR1354257,MR1191386,MR3556349}, (semisimple and non-semisimple) \textit{modular tensor categories} (MTCs) \cite{MR1862634,MR2286123,MR4498166,MR4652904}, and (semisimple and non-semisimple) \textit{topological quantum field theories} (TQFTs) \cite{MR4788097,MR2674592,MR3539369,MR1292673}, among others. The structure theory \cite{MR1013053,MR1066560,MR2178656}, representation theory \cite{MR1124981,MR1296515,MR2178656,MR4600057}, and homological properties \cite{MR1482982,MR2054387} of quantum groups at roots of unity have received considerable attention and been extensively studied over the past three decades.


Let $\mathfrak{g}$ be a complex semisimple Lie algebra, and let $\epsilon\in\mathbb{C}$ be a primitive root of unity whose order satisfies specified restrictions depending on $\mathfrak{g}$.
The De Concini-Kac (DK for short) quantized enveloping algebra $\mathcal{U}_{\epsilon}(\mathfrak{g})$ \cite{MR1103601} is defined as the specialization at $q=\epsilon$ of the unrestricted integral form of the Drinfeld-Jimbo quantized enveloping algebra $\mathcal{U}_{q}(\mathfrak{g})$ \cite{MR934283,MR797001}. The representation theory of $\mathcal{U}_{\epsilon}(\mathfrak{g})$ shares many notable similarities with that of the universal enveloping algebra of a restricted Lie algebra. A key feature of the quantum group $\mathcal{U}_{\epsilon}(\mathfrak{g})$ is that it contains a canonical \textit{large} central Hopf subalgebra $C_{\epsilon}(\mathfrak{g})$ (see \eqref{eq-canceHopfsubalgof-Uepg} for its definition), in the sense that $\mathcal{U}_{\epsilon}(\mathfrak{g})$ is a finitely generated $C_{\epsilon}(\mathfrak{g})$-module. The Hopf quotient 
$$\mathcal{U}_{\epsilon}(\mathfrak{g})/C_{\epsilon}(\mathfrak{g})^{+}\mathcal{U}_{\epsilon}(\mathfrak{g})$$ is precisely the Lusztig small quantum group $\mathfrak{u}_{\epsilon}(\mathfrak{g})$ \cite{MR1013053,MR1066560}. Here $C_{\epsilon}(\mathfrak{g})^{+}$ denotes the augmentation ideal of the Hopf algebra $C_{\epsilon}(\mathfrak{g})$. It is well known that the small quantum group $\mathfrak{u}_{\epsilon}(\mathfrak{g})$ is a normal Hopf subalgebra of the Lusztig quantized enveloping algebra at $\epsilon$ \cite{MR1013053,MR1066560}, obtained by specializing the restricted integral form of $\mathcal{U}_{q}(\mathfrak{g})$. Similarly, the Hopf quotient of the De Concini-Kac-Procesi (DKP for short) quantized Borel subalgebra $\mathcal{U}_{\epsilon}^{\geq 0}(\mathfrak{g})$ \cite{MR1351503} with respect to its canonical large central Hopf subalgebra coincides with the small quantum Borel subgroup $\mathfrak{u}_{\epsilon}^{\geq 0}(\mathfrak{g})$; see \S \ref{subsec-QBsgrps}.

The homological properties of the quantum groups $\mathcal{U}_{\epsilon}(\mathfrak{g})$ and $\mathcal{U}_{\epsilon}^{\geq 0}(\mathfrak{g})$ are well understood. In \cite{MR2054387}, Chemla computed the rigid dualizing complex of 
$\mathcal{U}_{\epsilon}(\mathfrak{g})$, and thereby established that $\mathcal{U}_{\epsilon}(\mathfrak{g})$ is a Calabi-Yau algebra in the sense of Ginzburg \cite{Ginz2007CY}. On the other hand, although $\mathcal{U}_{\epsilon}^{\geq 0}(\mathfrak{g})$ is skew Calabi-Yau, it is not Calabi-Yau. Its Nakayama automorphism was determined by Brown-Gordon-Stroppel in \cite{MR2379091}. See \S \ref{subsec-sCYrigG} for background on these notions. Note that the Lusztig small quantum group $\mathfrak{u}_{\epsilon}(\mathfrak{g})$ is a \textit{unimodular} and, moreover, \textit{factorizable} \cite[Corollary A.3.3]{MR1354257} Hopf algebra, whereas the small quantum Borel subgroup  $\mathfrak{u}_{\epsilon}^{\geq 0}(\mathfrak{g})$ is \textit{neither}. This observation motivates the following question:

\begin{que}
Are the homological properties of the \textit{big quantum groups} at roots of unity governed by the corresponding \textit{small quantum groups}?\label{que-big-small-hp-intro}
\end{que}

Before proceeding with Question \ref{que-big-small-hp-intro}, we briefly return from the quantum setting to the classical setting. Let $(\mathfrak{r},[p])$ be a finite-dimensional restricted Lie algebra over a field of characteristic $p>0$. The Hopf quotient of its universal enveloping algebra $\mathcal{U}(\mathfrak{r})$ by the ideal generated by the augmentation ideal of the \textit{$p$-center} is precisely the restricted enveloping algebra $\mathfrak{u}(\mathfrak{r})$; see \cite[p.114]{MR1898492}. It is well known that $\mathcal{U}(\mathfrak{r})$ is skew Calabi-Yau. In \cite{MR1762922}, Yekutieli described the rigid dualizing complex of the universal enveloping algebra of an arbitrary finite-dimensional Lie algebra. It follows from Yekutieli's result that $\mathcal{U}(\mathfrak{r})$ is Calabi-Yau if and only if $\operatorname{tr}(\operatorname{ad}_{x})=0$ for all $x\in \mathfrak{r}$, where $\operatorname{ad}_{x}\colon \mathfrak{r}\to \mathfrak{r}$ denotes the adjoint action of $x$ on $\mathfrak{r}$. A classical theorem of Schue \cite{MR185045} states that $\operatorname{tr}(\operatorname{ad}_{x})=0$ for all $x\in \mathfrak{r}$ if and only if the restricted enveloping algebra $\mathfrak{u}(\mathfrak{r})$ is a symmetric Frobenius algebra, which is equivalent to $\mathfrak{u}(\mathfrak{r})$ being unimodular. So the universal enveloping algebra $\mathcal{U}(\mathfrak{r})$ is Calabi-Yau if and only if the restricted enveloping algebra $\mathfrak{u}(\mathfrak{r})$ is unimodular.

Motivated by the preceding discussion, we consider Question \ref{que-big-small-hp-intro} in a more general context.

\begin{que}
Let $H$ be an affine (i.e., finitely generated as a $\mathbbm{k}$-algebra) Hopf algebra over a field $\mathbbm{k}$, and let $C$ be a central Hopf subalgebra of $H$ such that $H$ is finitely generated as a $C$-module. Denote  the augmentation ideal of $C$ by $C^{+}$. Are the homological properties of $H$ governed by the finite-dimensional Hopf quotient $H/C^{+}H$?\label{modulefinite-identfiber-intro}
\end{que}

It is worth noting that, in addition to the DK quantized enveloping algebras $\mathcal{U}_{\epsilon}(\mathfrak{g})$, the DKP quantized Borel subalgebras $\mathcal{U}_{\epsilon}^{\geq 0}(\mathfrak{g})$, and the universal enveloping algebras $\mathcal{U}(\mathfrak{r})$ of finite-dimensional restricted Lie algebras $(\mathfrak{r},[p])$, the framework considered in Question \ref{modulefinite-identfiber-intro} encompasses several other significant classes of Hopf algebras. These include all De Concini-Lyubashenko quantized coordinate rings $\mathcal{O}_{\epsilon}(G)$ at roots of unity $\epsilon$ \cite{MR1296515}, all Andruskiewitsch-Angiono-Yakimov (AAY for short) large quantum groups $U_{\mathfrak{q}}$ whose braiding matrices $\mathfrak{q}$ belong to a one-parameter family \cite{MR4600057}, and many Artin-Schelter (AS for short) Gorenstein Hopf algebras of low Gelfand-Kirillov dimension \cite{MR2661247,MR2732991,MR2320655,MR4042591,MR4977721}.

\begin{remark}\label{rmk-affmofiHopf-repCH}

Suppose the base field $\mathbbm{k}$ is algebraically closed of characteristic zero, and let the pair $(H,C)$ be in the setting of Question \ref{modulefinite-identfiber-intro}. Then the maximal spectrum $\operatorname{maxSpec}C$ is an algebraic group with respect to the convolution product, see \S \ref{subsec-fiberalgg-mfhopf} for the details. By \cite[Theorem 1.1 (1)]{HMQW2026chev}, the associated sheaf $\widetilde{H}$ of $H$ as a $C$-module defines a \textit{vector bundle over the algebraic group} $\operatorname{maxSpec} C$\textit{ with fibers $H/\mathfrak{m}H$ at each point }$\mathfrak{m}\in \operatorname{maxSpec} C$. The bundle of algebras $$\{H/\mathfrak{m}H\mid \mathfrak{m}\in \operatorname{maxSpec} C\}$$ plays a central role in the study of the representation theory of $H$; see \cite{MR1898492,MR1863398,MR1989650,MR1676211,MR1288995} for example. By \cite[Theorem 1.1 (2)]{HMQW2026chev}, the pair $(H,C)$, equipped with the Hattori-Stallings trace map, is a Cayley-Hamilton Hopf algebra in the sense of De Concini-Procesi-Reshetikhin-Rosso \cite{MR2178656}. This enables the study of the representation theory of $H$ within the general framework of Cayley-Hamilton Hopf algebras. Consequently, the representation theory of many significant quantum groups at roots of unity can be investigated in a unified framework rather than on a case-by-case basis. For recent developments in this direction, we refer to \cite{MR4891381,HMQW2026chev,HQWZ2026chev}.
\end{remark}

This paper provides an affirmative answer to Question \ref{modulefinite-identfiber-intro} from the perspective of \textit{homological integrals} in the sense of Lu-Wu-Zhang \cite{MR2320655} (see Definition \ref{def-homolointeLWZ} and the discussion below), thereby establishing a criterion for $H$ to be Calabi-Yau in terms of the finite-dimensional Hopf algebra $H/C^{+}H$; see Theorem \ref{thm-fghopfhomol-unimosCY}. Furthermore, we establish a connection between the \textit{unimodularity} of $H$ and \textit{unimodular module categories} in the sense of Yadav \cite{MR4632574}; see Theorem \ref{thm-intromodufihipf-fibunim8}. Our results apply to numerous important quantum groups at roots of unity and their multiparameter deformations, including all AAY large quantum groups \cite{MR4600057}, thus answering Question \ref{que-big-small-hp-intro}.

In the remainder of this introduction, we assume that $H$ is an affine Hopf algebra over the base field $\mathbbm{k}$, of \textit{arbitrary} characteristic unless otherwise stated, and that $H$ is finitely generated as a module over a central Hopf subalgebra $C$. Then $H$ is a \textit{module-finite $C$-algebra} in the sense of \cite[p. xv]{MR2080008}. By \cite[Theorem 0.2]{MR1938745} and \cite[Corollary 2]{MR2271358}, $H$ is a noetherian AS-Gorenstein Hopf algebra (see Definition \ref{def-ASGore-hopf}) with bijective antipode. In this setting, the \textit{homological integral} is considered in the sense of Lu-Wu-Zhang \cite{MR2320655}: if $d$ is the self-injective dimension of $H$, then any nonzero element of
$$\textstyle \int_{ H}^{l}\coloneqq \operatorname{Ext}_{H}^{d}({_{H}\mathbbm{k}}, {_{H}H})
$$
is called a \textit{left homological integral} of $H$, and $\textstyle \int_{H}^{l}$ is called the \textit{left homological integral space}. The definition of the right homological integral is symmetric, and the right homological integral space is denoted by $\textstyle \int_{H}^{r}$. We call $H$ \textit{unimodular} \cite{MR2320655} if $\textstyle \int_{H}^{l}\cong \mathbbm{k}$ as $H$-$H$-bimodules. For finite-dimensional Hopf algebras, the homological integral coincides with the classical integral, and this notion of unimodularity agrees with the usual condition that the spaces of left and right integrals coincide. The \textit{integral order} \cite{MR2320655} of $H$ is defined by
$$\operatorname{io}(H)
=
\inf\left\{
n\ge 1 \,\bigm|
\left(\textstyle \int_{H}^{l}\right)^{\otimes n}\cong \mathbbm{k}\text{ as\ }H\text{-}H\text{-bimodules}
\right\}.$$
By \cite[Lemma 2.3]{MR2320655}, $H$ is unimodular if and only if $\operatorname{io}(H)=1$. If $H$ is module-finite, then $\operatorname{io}(H)$ is necessarily finite; see \cite[Theorem 2.3 (b)]{MR2661247} or \cite[Theorem B (d)]{MR4891381}.

Homological integrals are powerful tools in the study of noetherian AS-Gorenstein Hopf algebras. Notable applications include
\begin{itemize}
\item[$\bullet$] establishing a ``Maschke theorem'' for noetherian affine PI Hopf algebras \cite{MR2320655};
\item[$\bullet$] describing the Nakayama automorphism of AS-Gorenstein Hopf algebras \cite{MR2437632};
\item[$\bullet$] providing an invariant for classifying noetherian affine prime Hopf algebras of low Gelfand-Kirillov dimension (see \cite{MR2320655,MR2661247,MR3490761,MR4042591} for a partial list);
\item[$\bullet$] providing a lower bound, optimal in a certain sense, for the level of the lowest discriminant subvariety of an affine Cayley-Hamilton Hopf algebra \cite{MR4891381}.
\end{itemize}

As observed in Remark \ref{rmk-affmofiHopf-repCH}, if the base field $\mathbbm{k}$ is algebraically closed (including in positive characteristic), then $\operatorname{maxSpec} C$ is a group under convolution, with identity element $\mathfrak{m}_{\overline{\varepsilon}} \coloneqq C^{+}$. For any $\mathfrak{m}\in \operatorname{maxSpec}C$, the finite-dimensional algebra $H/\mathfrak{m}H$ is called the \textit{fiber algebra} of $H$ at $\mathfrak{m}$. The coproduct of $H$ endows the \textit{identity fiber algebra} $H/\mathfrak{m}_{\ovep}H$ with the structure of a finite-dimensional Hopf algebra, and each fiber algebra $H/\mathfrak{m}H$ with the structure of an $H/\mathfrak{m}_{\ovep}H$-$H/\mathfrak{m}_{\ovep}H$-bicomodule algebra; see \S \ref{subsec-fiberalgg-mfhopf} for details. Then $H/\mathfrak{m}_{\ovep}H\text{-mod}$ is a finite tensor category, and $H/\mathfrak{m}H\text{-mod}$ can be naturally regarded as a module category over the finite tensor category $H/\mathfrak{m}_{\ovep}H\text{-mod}$. Moreover, \cite[Proposition 3.8]{HMQW2026chev} proves that $H/\mathfrak{m}H\text{-mod}$ is an indecomposable exact module category over $H/\mathfrak{m}_{\ovep}H\text{-mod}$.


We now state our first result, which relates the homological integral of $H$ to the integral of the identity fiber $H/\mathfrak{m}_{\ovep}H$. Consequently, the computation of homological integrals for a large class of, often infinite-dimensional, module-finite Hopf algebras is reduced to the computation of integrals for the finite-dimensional Hopf algebra $H/\mathfrak{m}_{\ovep}H$. In \cite[Question III. 4.8]{MR1898492}, Brown and Goodearl asked under which conditions $C\subseteq H$ is a symmetric Frobenius extension. We establish a necessary and sufficient condition for $H$ to be a symmetric Frobenius extension of $C$, thereby obtaining a criterion for $H$ to be Calabi-Yau in terms of the identity fiber $H/\mathfrak{m}_{\ovep}H$.

\begin{theorem} \label{thm-fghopfhomol-unimosCY}
Let $H$ be an affine Hopf algebra over a field $\mathbbm{k}$, with a central Hopf subalgebra $C$ such that $H$ is finitely generated as a $C$-module. Let $S$ be the antipode of $H$. Then
\begin{itemize}
\item[(a)] $\textstyle \int_{H}^{l} \cong \textstyle \int_{H/\mathfrak{m}_{\ovep}H}^{l}$ and $\textstyle \int_{H}^{r} \cong \textstyle \int_{H/\mathfrak{m}_{\ovep}H}^{r}$ as $H$-$H$ bimodules.
\item[(b)] $H$ is unimodular if and only if $H/\mathfrak{m}_{\ovep}H$ is unimodular.
\item[(c)] $W_{l}(\chi)S^{2}$ is the Nakayama automorphism of $H$, where $\chi:H\to\mathbbm{k}$ is the character of $H$ induced by the distinguished group-like element of the dual Hopf algebra $(H/\mathfrak{m}_{\ovep}H)^{\circ}$, and $W_{l}(\chi)$ is the left winding automorphism associated with $\chi$.
\item[(d)] The extension $C\subseteq H$ is a symmetric Frobenius extension if and only if $S^{2}$ is inner and $H/\mathfrak{m}_{\ovep}H$ is unimodular. In particular, $H$ is Calabi-Yau if and only if
\begin{itemize}
\item[(i)] $H$ has finite global dimension, 
\item[(ii)] $S^{2}$ is an inner automorphism of $H$, and
\item[(iii)] $H/\mathfrak{m}_{\ovep}H$ is unimodular.
\end{itemize}
\item[(e)] If $\mathbbm{k}$ is an algebraically closed field of characteristic zero, $H$ is semiprime, and $H$ has the Chevalley property (i.e., the tensor product of any two irreducible $H$-modules is completely reducible as an $H$-module), then $H$ is a Calabi-Yau algebra.
\end{itemize}
\end{theorem}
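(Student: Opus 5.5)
The plan is to deduce everything from one change-of-rings computation of $\operatorname{Ext}_H^{*}({}_H\mathbbm{k},{}_HH)$, which exploits that $H$ is a finitely generated module over the central Hopf subalgebra $C$. By the cited results \cite{MR1938745,MR2271358}, $H$ is noetherian AS-Gorenstein with bijective antipode. First I reduce to the case where $C$ is affine. Its augmentation ideal $C^{+}$ is a maximal ideal with residue field $\mathbbm{k}$. Then $H$ is a faithfully flat (in fact projective) $C$-module, by the Hopf-module/faithful flatness results for central Hopf subalgebras, and $\bar H:=H/C^{+}H=H/\mathfrak{m}_{\ovep}H$ is a finite-dimensional Hopf quotient. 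Moreover, $C$ is itself an AS-Gorenstein commutative Hopf algebra; over $\mathbbm{k}$ it is the coordinate ring of an affine group scheme, so it is Gorenstein with $\operatorname{Ext}^{c}_{C}(\mathbbm{k},C)\cong\mathbbm{k}$, trivial as a $C$-bimodule, where $c=\operatorname{injdim} C$.

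For (a), I use the change of rings
$$\operatorname{Ext}^{n}_{H}(\mathbbm{k},H)\cong \operatorname{Ext}^{n}_{H}\bigl(\mathbbm{k},\operatorname{Hom}_{C}(H,H)\bigr)\ldots$$
More concretely, I run the spectral sequence for $\mathbbm{k}=\bar H\otimes_{\bar H}\mathbbm{k}$:
$$E_2^{p,q}=\operatorname{Ext}^{p}_{\bar H}\bigl(\mathbbm{k},\operatorname{Ext}^{q}_{H}(\bar H,H)\bigr)\Rightarrow \operatorname{Ext}^{p+q}_{H}(\mathbbm{k},H).$$
Flatness gives $\operatorname{Ext}^{q}_{H}(H\otimes_{C}\mathbbm{k},H)\cong \operatorname{Ext}^{q}_{C}(\mathbbm{k},H)\cong \operatorname{Ext}^{q}_{C}(\mathbbm{k},C)\otimes_{C}H$. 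This is concentrated in degree $c$ and equals $\bar H$ as a bimodule, up to a twist which I must track. Since $\bar H$ is Frobenius, $\operatorname{Ext}^{p}_{\bar H}(\mathbbm{k},\bar H)$ is $\int^{l}_{\bar H}$ for $p=0$ and vanishes otherwise. Hence $\int_H^l\cong \int^l_{\bar H}$ in degree $d=c$. The right-hand version follows symmetrically, or by applying $S$.

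\textbf{The main obstacle} is verifying that the isomorphism respects the right $H$-action, i.e.\ that no extra character from $C$ creeps in. I would handle this by using the Hopf structure as Lu-Wu-Zhang do. The right $H$-module $\int_H^l$ is one-dimensional, given by a character $\xi$, and $\xi$ can be detected via $\operatorname{Ext}_H(\mathbbm{k},-)$ applied to $H\otimes \mathbbm{k}_\xi$. Alternatively, I would use the fact that the right action on $\operatorname{Ext}^c_C(\mathbbm{k},C)$ is trivial because $C$ is commutative, and the convolution on $\operatorname{maxSpec}C$ makes $\mathfrak{m}_{\ovep}$ fixed. Part (b) is then immediate, since unimodularity just means the bimodule is trivial, and on both sides this is tested by the same character.

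For (c), I invoke the Brown-Zhang description \cite{MR2437632}: the Nakayama automorphism of a noetherian AS-Gorenstein Hopf algebra is $S^{2}W_l(\xi)$, where $\xi$ is the character giving the right action on $\int^l_H$, with the ordering conventions adjusted. By (a), $\xi$ is the composite $H\to\bar H\to\mathbbm{k}$ of the distinguished grouplike of $\bar H^{*}=(H/\mathfrak{m}_{\ovep}H)^{\circ}$. For (d), I use Brown-Goodearl/Bell-Farnsteiner-type results. The ring $H$ is Frobenius over $C$, which is plausible from the fact that $\operatorname{Hom}_C(H,C)\cong H$ twisted by the Nakayama automorphism of (c), obtained from the above Ext computation. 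The extension is symmetric iff that twist is inner. The element $W_l(\chi)S^2$ is inner iff both $W_l(\chi)$ and $S^2$ are. For this I argue that $W_l(\chi)$ inner forces $\chi=\varepsilon$: a winding automorphism fixes $C$, and I compare actions on the trivial module. The Calabi-Yau criterion then follows from skew CY $=$ AS-regular plus Nakayama inner, i.e.\ finite global dimension.

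For (e), I would argue as follows. The Chevalley property passes to $\bar H$, and semiprime module-finite over char $0$ forces $\bar H$ to be semisimple-ish. More precisely, I would show that $\bar H$ is unimodular and $S^2$ is inner by Larson-Radford-type arguments for the Chevalley property. Global dimension is finite since $H$ is semiprime PI AS-Gorenstein with the regularity inherited via the Chevalley property. This last part is the one I am least sure of.
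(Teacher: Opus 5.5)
\paragraph{Parts (a)--(d).} Your argument for (a)--(c) is correct. It is essentially the alternative route the paper itself mentions: instead of Stefan's spectral sequence (Proposition \ref{prop-usebystespmathm1}), you use the change-of-rings isomorphism $\operatorname{RHom}_H(\mathbbm{k},H)\cong\operatorname{RHom}_{\overline H}(\mathbbm{k},\operatorname{RHom}_H(\overline H,H))$. The paper uses this same isomorphism for the injective-dimension bound in Theorem \ref{thm-noethAS-intebimoduiso-un}.

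The ``main obstacle'' you flag does not exist. $C$ acts on $\operatorname{Ext}^c_C(\mathbbm{k},C)$ through the first variable, i.e.\ via $\ovep$. Hence $\operatorname{Ext}^c_C(\mathbbm{k},C)\otimes_C H\cong\mathbbm{k}\otimes_C H=\overline H$, with both actions given by multiplication. A complex whose cohomology sits in one degree is isomorphic in $\mathbf{D}(\overline H\otimes H^{op})$ to that cohomology, which is exactly \eqref{eq-EHomckhi-hbar7}. Self-injectivity of $\overline H$ then collapses your spectral sequence. The remark about convolution on $\operatorname{maxSpec}C$ plays no role.

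In (d), applying $\varepsilon$ to $W_l(\chi)S^2=\iota_u$ and using $\chi S^2=\chi$ is exactly the paper's argument (Corollary \ref{cor-nakainniff-unimodular}). It has to be applied to the composite, not to $W_l(\chi)$ alone. What you have not justified is that $\operatorname{Hom}_C(H,C)$ is $H$ twisted by $W_l(\chi)S^2$. Your computation of $\operatorname{Ext}^{*}_H(\mathbbm{k},H)$ says nothing about the bimodule $\operatorname{Hom}_C(H,C)$. You need two inputs here:
\begin{itemize}
\item the Frobenius property \cite[Corollary III.4.7]{MR1898492};
\item the coincidence of the two Nakayama automorphisms \cite[\S 2.5]{MR2379091}. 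In effect, $\operatorname{Hom}_C(H,C)[d]=\operatorname{RHom}_C(H,C[d])$ is the rigid dualizing complex of $H$, because that of the commutative Hopf algebra $C$ is $C[d]$.
\end{itemize}

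\paragraph{Part (e): the genuine gap.} Passing the Chevalley property to $\overline H$ and adding semiprimeness of $H$ cannot suffice. A counterexample to that reduction is $\mathcal U^{\geq 0}_\epsilon(\mathfrak g)$:
\begin{itemize}
\item it is a domain over $\mathbb{C}$, module-finite over the central Hopf subalgebra $\mathcal C^{\geq 0}_\epsilon(\mathfrak g)$;
\item its identity fiber $\mathfrak u^{\geq 0}_\epsilon(\mathfrak g)$ is basic, so it has the Chevalley property;
\item yet $\mathcal U^{\geq 0}_\epsilon(\mathfrak g)$ is not unimodular (\S\ref{subsec-QBsgrps}).
\end{itemize}

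The missing idea is that semiprimeness together with the Chevalley property of $H$ itself forces $\overline H$ to be \emph{semisimple}; this is \cite[Theorem 0.3 (b)]{HQWZ2026chev}. The Chevalley property of $H$ concerns irreducibles over all maximal ideals of $C$, not only over $\mathfrak m_{\ovep}$.

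Even granting semisimplicity, Larson--Radford only gives $S^2=\mathrm{id}$ on $\overline H$, not that $S^2$ is inner on $H$. The paper argues differently (Proposition \ref{prop-modufin-idensemiCY-char0}). Take $t\in\int^l_{\overline H}$ with $\varepsilon(t)=1$; then $\beta^{-1}(t\otimes 1)\in H\otimes_C H$ is a separability idempotent. By Endo--Watanabe \cite{MR227211}, $C\subseteq H$ is a symmetric Frobenius extension, and by (d) this yields unimodularity and innerness of $S^2$ at once.

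Your finite-global-dimension claim also has no mechanism; it comes from characteristic zero. For example, $C$ is smooth by Cartier's theorem. Separability makes every $H$-module $M$ a direct summand of $H\otimes_C M$, so $\operatorname{gl.dim}H\le\operatorname{gl.dim}C<\infty$. The paper instead cites \cite[Theorem 2.9 (8)]{MR4201485}.
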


\begin{remark}
In \cite{MR2678828}, He-Van Oystaeyen-Zhang proved that a noetherian AS-Gorenstein Hopf algebra is Calabi-Yau if and only if it is a unimodular AS-regular Hopf algebra and the square of its antipode is inner. Therefore, the necessary and sufficient condition for $H$ to be Calabi-Yau in Theorem \ref{thm-fghopfhomol-unimosCY} (d) also follows from the theorem of He-Van Oystaeyen-Zhang together with Theorem \ref{thm-fghopfhomol-unimosCY} (b).
\end{remark}

\begin{remark}
If, in the assumptions of Theorem \ref{thm-fghopfhomol-unimosCY}, one requires only that $C$ be a normal Hopf subalgebra of $H$, the conclusion may fail, even when $C$ is commutative and $H$ is finitely generated as a $C$-module; see Example \ref{eg-infdihdralgrp}. Therefore, Theorem \ref{thm-fghopfhomol-unimosCY} \textit{does not necessarily hold for commutative-by-finite Hopf algebras in the sense of Brown-Couto} \cite{MR4201485}.
\end{remark}

Theorem \ref{thm-fghopfhomol-unimosCY} is proved in \S \ref{subsec-pfofintrm-thm}. This theorem 
provides a new approach to establishing the Calabi-Yau property of $\mathcal{U}_{\epsilon}(\mathfrak{g})$ and to computing the Nakayama automorphism of $\mathcal{U}_{\epsilon}^{\geq 0}(\mathfrak{g})$ via the corresponding small quantum groups (see Propositions \ref{prop-DKquantgrp-CY} and \ref{prop-nakaya-quantborel-BGS}). Note that the quantum group $\mathcal{U}_{\epsilon}^{\geq 0}(\mathfrak{g})$ is a skew Calabi-Yau Hopf algebra for which $S^{2}$ is inner, but it is not unimodular. In the setting of Theorem \ref{thm-fghopfhomol-unimosCY}, there exist unimodular skew Calabi-Yau Hopf algebras for which $S^{2}$ is not inner (see Example \ref{eg-geralizedsuzuki-algunimodular}).

Since the group algebra of a finite group is always a symmetric Frobenius algebra, Theorem \ref{thm-fghopfhomol-unimosCY} implies that the group algebra $\mathbbm{k}[G]$ of a finitely generated \textit{central-by-finite} group $G$ is a unimodular Hopf algebra; see Proposition \ref{prop-fggrp-io1ifcbf}. This shows that homological integrals can provide a necessary condition for a finitely generated group to be central-by-finite. Theorem \ref{thm-fghopfhomol-unimosCY} is also applied to study the homological properties of the algebra $\mathcal{H}_{LY}=H(e_{\pm 1},f_{\pm 1},u,v)$ \cite[Definition 5.1]{MR4977721}, which arises in the study of Hopf algebras with the dual Chevalley property of discrete corepresentation type. By \cite[Proposition 5.4]{MR4977721}, $\mathcal{H}_{LY}$ is a non-pointed affine module-finite Hopf algebra with the dual Chevalley property of discrete corepresentation type. Using Theorem \ref{thm-fghopfhomol-unimosCY}, we prove that $\mathcal{H}_{LY}$ is a unimodular AS-Gorenstein Hopf algebra of infinite global dimension and 
a symmetric Frobenius extension of its canonical large central Hopf subalgebra; see Example \ref{eg-Liu-Yualg}.

In Theorem \ref{thm-noethAS-intebimoduiso-un}, we establish a stronger version of Theorem \ref{thm-fghopfhomol-unimosCY} (a), in which $H$ is allowed to be an arbitrary noetherian AS-Gorenstein Hopf algebra and $C$ to be an arbitrary central Hopf subalgebra of $H$. This result facilitates the study of the homological properties of certain non-PI noetherian Hopf algebras through Theorem \ref{thm-noethAS-intebimoduiso-un}. For example, if $q\in \mathbb{C}^{\times}$ is not a root of unity, Theorem \ref{thm-noethAS-intebimoduiso-un} allows us to deduce the Calabi-Yau property of the standard Drinfeld double quantum group $\mathfrak{D}_{q}(\mathfrak{g})$ from that of $\mathcal{U}_q(\mathfrak{g})$; see Theorem \ref{thm-DDQGrp-CY}. 


The study of (semisimple and non-semisimple) MTCs and their applications has long been an active area of research and continues to attract considerable attention. A fundamental problem is the construction of new MTCs from a given one.
One effective approach is to consider the category of \textit{local modules} (see \cite[Definition 4.10]{MR4652904}) over a ``nice'' Frobenius algebra object in an MTC; see \cite{MR1936496,MR4652904} and the references therein. Inspired by this idea, Yadav \cite{MR4632574} introduced the notion of a \textit{unimodular module category} (see Definition \ref{def-unimmodu-excat}) for exact module categories over a finite tensor category $\mathcal{C}$, thereby obtaining a functorial construction of Frobenius algebra objects in the Drinfeld center of $ \mathcal{C}$. Since $H/\mathfrak{m}H\text{-mod}$ is an indecomposable exact module category over the finite tensor category $H/\mathfrak{m}_{\ovep}H\text{-mod}$, one may consider its unimodularity 
as a module category in the sense of Yadav. This motivates our second result, which provides equivalent characterizations of the unimodularity of $H$ in terms of the unimodularity, in the sense of Yadav, of the module categories $H/\mathfrak{m}H\text{-mod}$ associated with the fiber algebras $H/\mathfrak{m}H$.

\begin{theorem}
Let $H$ be an affine Hopf algebra over an algebraically closed field $\mathbbm{k}$ that is finitely generated as a module over a central Hopf subalgebra $C$.\label{thm-intromodufihipf-fibunim8} 
\begin{itemize}
\item[(a)]The following are equivalent:
\begin{itemize}
\item[(i)] $H$ is unimodular;
\item[(ii)]For any $\mathfrak{m}\in \operatorname{maxSpec}C$, $H/\mathfrak{m}H\text{-mod}$ is unimodular as an indecomposable exact module category over $H/\mathfrak{m}_{\ovep}H\text{-mod}$;
\item[(iii)]There exists $\mathfrak{m}\in \operatorname{maxSpec}C$ such that $H/\mathfrak{m}H\text{-mod}$ is unimodular as an indecomposable exact module category over $H/\mathfrak{m}_{\ovep}H\text{-mod}$.
\end{itemize}
\item[(b)]If $H$ is Calabi-Yau (e.g., all De Concini-Kac quantized enveloping algebra $\mathcal{U}_{\epsilon}(\mathfrak{g})$ at roots of unity $\epsilon$), then $H/\mathfrak{m}H\text{-mod}$ is unimodular as an indecomposable exact module category over $H/\mathfrak{m}_{\ovep}H\text{-mod}$ for all $\mathfrak{m}\in \operatorname{maxSpec}C$.
\end{itemize}
\end{theorem}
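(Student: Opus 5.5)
We reduce Theorem \ref{thm-intromodufihipf-fibunim8} to Theorem \ref{thm-fghopfhomol-unimosCY}(b). The latter says that $H$ is unimodular if and only if the identity fiber $H_{\ovep}\coloneqq H/\mathfrak{m}_{\ovep}H$ is unimodular. So it suffices to show that, for every $\mathfrak{m}\in\operatorname{maxSpec}C$, the exact module category $\mathcal{M}_{\mathfrak{m}}\coloneqq H/\mathfrak{m}H\text{-mod}$ over $\mathcal{C}\coloneqq H_{\ovep}\text{-mod}$ is unimodular in Yadav's sense if and only if $H_{\ovep}$ is unimodular. Given this fiberwise equivalence, (a) follows at once: (i)$\Rightarrow$(ii) is the forward implication, (ii)$\Rightarrow$(iii) is trivial because $\operatorname{maxSpec}C\neq\emptyset$, and (iii)$\Rightarrow$(i) is the converse. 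Part (b) then follows because a Calabi-Yau Hopf algebra is unimodular (He-Van Oystaeyen-Zhang, or Theorem \ref{thm-fghopfhomol-unimosCY}(d)), and we apply (a). The $\mathcal{U}_\epsilon(\mathfrak{g})$ example is Chemla's result.

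For the fiberwise equivalence we use Yadav's characterization. An exact indecomposable module category $\mathcal{M}$ over $\mathcal{C}$ is unimodular exactly when the relative Serre functor $\mathbb{S}_{\mathcal{M}}$, equivalently the Nakayama functor twisted appropriately, is isomorphic as a $\mathcal{C}$-module functor to the identity twisted by the distinguished invertible object $D_{\mathcal{C}}$ in the right way. Equivalently, the dual category $\mathcal{C}^{*}_{\mathcal{M}}$ has distinguished invertible object compatible with $D_{\mathcal{C}}$. The unimodularity condition is that the Radford/Nakayama-type isomorphism $\mathbb{S}_{\mathcal{M}}\cong D_{\mathcal{C}}\otimes -$ (up to the double dual, per the paper's Definition \ref{def-unimmodu-excat}) holds with the distinguished object of $\mathcal{C}^*_{\mathcal{M}}$ trivial. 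For a left $H_{\ovep}$-comodule algebra $A=H/\mathfrak{m}H$, the category $A\text{-mod}$ is exact. Its Nakayama functor is $A^{*}\otimes_{A}-$, and Yadav's condition amounts to an isomorphism $A^{*}\cong {}_{\alpha}A_{\beta}$ of $A$-bimodules compatible with the $H_{\ovep}$-coaction, twisted by the distinguished grouplike of $H_{\ovep}^{*}$ (the modular character).

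The key step, and the main obstacle, is to compute the Nakayama bimodule $(H/\mathfrak{m}H)^{*}$ uniformly in $\mathfrak{m}$. We localize and complete at $\mathfrak{m}$. By Theorem \ref{thm-fghopfhomol-unimosCY}(a), (c), $H$ is a Frobenius extension of $C$ with $\operatorname{Hom}_{C}(H,C)\cong {}^{1}H^{\nu}$, where $\nu=W_{l}(\chi)S^{2}$ and $\chi$ is the modular character of $H_{\ovep}$. We use that $H$ is projective over $C$ (the vector-bundle result cited in Remark \ref{rmk-affmofiHopf-repCH}, or Frobenius-extension theory). Base changing along $C\to C/\mathfrak{m}$ then gives $(H/\mathfrak{m}H)^{*}\cong {}^{1}(H/\mathfrak{m}H)^{\bar\nu}$. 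Note that $\nu$ preserves $\mathfrak{m}H$, since $S^{2}$ and $W_{l}(\chi)$ fix $C$ pointwise: $C$ is central and commutative, so $S^2|_C=\mathrm{id}$, and $\chi|_{C}=\varepsilon$ because $\chi$ factors through $H_{\ovep}$. The $S^{2}$ part is exactly the double-dual twist that appears in every relative Serre functor, so it cancels in Yadav's criterion. What remains is the winding by $\chi$, which acts on $\mathcal{M}_{\mathfrak{m}}$ as $\mathbbm{k}_{\chi}\otimes-$ via the $\mathcal{C}$-action. Yadav's condition therefore reduces to the statement that $\mathbbm{k}_\chi\otimes -\cong\mathrm{id}$ as module functors on $\mathcal{M}_{\mathfrak{m}}$, up to the prescribed comparison with $D_{\mathcal{C}}=\mathbbm{k}_{\chi}$ or its inverse.

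To finish, we need that this holds precisely when $\chi=\varepsilon$. If $\chi=\varepsilon$ it is clear. Conversely, a module natural isomorphism $\mathbbm{k}_{\chi}\otimes-\cong\mathrm{id}$ on an indecomposable exact module category forces $\mathbbm{k}_{\chi}\cong\mathbf{1}$ in $\mathcal{C}$. To see this, use the internal Hom $\underline{\operatorname{Hom}}(M,M)$ for a nonzero $M$: an isomorphism $\mathbbm{k}_\chi\otimes M\cong M$ natural in $\mathcal{C}$-module structure yields a nonzero morphism $\mathbbm{k}_\chi\to\underline{\operatorname{End}}(M)$ compatible with unit maps, hence $\mathbbm{k}_\chi\cong\mathbf 1$ by simplicity and invertibility. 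This step is delicate: we must check carefully that the paper's Definition \ref{def-unimmodu-excat} matches the twist obtained, including left versus right and $D$ versus $D^{-1}$. We must also verify compatibility of the Frobenius-extension isomorphism with the comodule-algebra structure of $H/\mathfrak{m}H$, which should follow from the coproduct-equivariance of the homological integral established for part (a).
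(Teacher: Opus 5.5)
Your outer reduction is the same as the paper's. You reduce (a) to Theorem~\ref{thm-fghopfhomol-unimosCY}(b), and (b) follows because a Calabi--Yau Hopf algebra is unimodular. The gap is in the fiberwise step, namely the claim that after cancelling $S^{2}$ against the relative Serre functor, Yadav's condition ``reduces to'' $\mathbbm{k}_{\chi}\otimes-\cong\mathrm{id}$.

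By Definition~\ref{def-unimmodu-excat}, unimodularity of $\mathcal{M}_{\mathfrak m}$ means that the distinguished invertible object of the \emph{dual} category $\operatorname{Rex}_{\mathcal C}(\mathcal M_{\mathfrak m})$ is trivial. Your Frobenius-extension computation never touches that category. For \emph{every} exact $\mathcal C$-module category, the Nakayama functor and the relative Serre functor differ by the action of $D_{\mathcal C}^{\pm1}$ (Fuchs--Schaumann--Schweigert). For $\mathcal C=\overline{H}\text{-mod}$, with $\overline{H}=H/\mathfrak m_{\ovep}H$, one has $D_{\mathcal C}=\mathbbm{k}_{\chi}$ with the same $\chi$ as in Theorem~\ref{thm-fghopfhomol-unimosCY}(c).

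So the winding by $\chi$ left over after removing $S^{2}$ is exactly this universal term. Your computation merely re-derives that identity, which holds for any module category and carries no information about $D_{\operatorname{Rex}_{\mathcal C}(\mathcal M_{\mathfrak m})}$. A test demanding that this term be trivial tests unimodularity of $\mathcal C$, not of $\mathcal M$. These are not equivalent in general: the remark after Definition~\ref{def-unimmodu-excat} recalls a non-unimodular $H$ whose $H\text{-mod}$ admits a unimodular module category.

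This is why you cannot decide between $D_{\mathcal C}$ and $D_{\mathcal C}^{-1}$. Comparing with one makes the condition vacuous; comparing with the other gives $\chi^{2}=\varepsilon$. Your final answer $\chi=\varepsilon$ is correct here, but only for a reason your argument never establishes. Also, Theorem~\ref{thm-fghopfhomol-unimosCY}(a) is only an $H$-$H$-bimodule isomorphism, so it gives no coaction-compatibility of the Frobenius isomorphism on the fibers.

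The missing idea is to identify the dual category. It is governed by the \emph{right} $\overline{H}$-coaction $\Delta_{\mathfrak m,\ovep}$ on $A=H/\mathfrak mH$, not the left one you use. The paper proceeds as follows. $A$ is an $\overline{H}$--$\overline{H}$ bi-Galois object. Andruskiewitsch--Mombelli give $\operatorname{Rex}_{\mathcal C}(A\text{-mod})\cong\operatorname{Bimod}^{\overline{H}}(A,A)$, and Schauenburg gives $\operatorname{Bimod}^{\overline{H}}(A,A)\cong\text{mod-}\overline{H}\cong(\overline{H}\text{-mod})^{\mathrm{rev}}$. Hence $\operatorname{Rex}_{\mathcal C}(\mathcal M_{\mathfrak m})\cong(\overline{H}\text{-mod})^{\mathrm{rev}}$ for every $\mathfrak m$, and this is unimodular iff $\overline{H}$ is (Proposition~\ref{prop-bigaloisunimoduiff}). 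Theorem~\ref{thm-fghopfhomol-unimosCY}(b) then finishes.

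If you want to keep a Nakayama-functor proof, you would have to show that $D_{\operatorname{Rex}_{\mathcal C}(\mathcal M_{\mathfrak m})}$ acts by twisting with a \emph{right} winding automorphism of $A$, namely by the modular character of the right Galois Hopf algebra. You would also have to show that this Hopf algebra is $\overline{H}$. So you need the bi-Galois structure anyway; the left-sided Frobenius computation alone cannot finish the proof.
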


Theorem \ref{thm-intromodufihipf-fibunim8} follows from Theorem \ref{thm-unimodularcomodualg-fiber}. Moreover, the argument extends to the more general setting of Hopf group coalgebras of finite type in the sense of Turaev-Virelizier \cite{MR2674592,MR1903398}; see Remark \ref{rmk-unimocat-Hopfcogralg}. By Theorem \ref{thm-intromodufihipf-fibunim8}, all unimodular affine Hopf algebras with a large central Hopf subalgebra naturally give rise to Yadav's unimodular module categories. This includes the following infinite-dimensional Hopf algebras: 
\begin{itemize}
\item[$\bullet$] all DK quantized enveloping algebras $\mathcal{U}_{\epsilon}(\mathfrak{g})$ at roots of unity;
\item[$\bullet$] group algebras of finitely generated central-by-finite groups (see Proposition \ref{prop-fggrp-io1ifcbf});
\item[$\bullet$] the algebra $\mathcal{H}_{LY} $ (see Example \ref{eg-Liu-Yualg});
\item[$\bullet$] all AAY large quantum groups $U_{\mathfrak{q}}$ whose braiding matrices $\mathfrak{q}$ belong to a one-parameter family (see Theorem \ref{thm-intro-AAYlargeqgrps} below);
\item[$\bullet$] a large class of multiparameter quantized enveloping algebras $\mathcal{U}_{\mathfrak{q}}(\mathfrak{g})$, including the Drinfeld double quantum groups $\mathfrak{D}_{\epsilon}(\mathfrak{g})$ at roots of unity (see Example \ref{eg-multiparamqgrp-AAYlarge}).
\end{itemize}

In \cite{MR4600057}, Andruskiewitsch-Angiono-Yakimov developed a general Poisson-geometric framework for the uniform, rather than case-by-case, study of the representation theory of all contragredient quantum supergroups at roots of unity, as well as a broad class of multiparameter quantized enveloping algebras at roots of unity.  For this purpose, they considered the quantum doubles $U_{\mathfrak{q}}$ (called \textit{large quantum groups}) of the bosonizations of all distinguished pre-Nichols algebras \cite{MR3459702} associated with braiding matrices $\mathfrak{q}$ belonging to a one-parameter family, 
such that the corresponding Nichols algebra of diagonal type is finite-dimensional, condition \eqref{eq-ffc-cartroot-largech7} holds (see \cite[Eq. (4.26)]{MR4600057}), and the additional technical conditions listed in \cite[\S 4.1]{MR4600057} are satisfied. When $\mathfrak{q}$ is genuinely of finite Cartan type \cite[p.522]{MR4164719}, a large class of the multiparameter quantized enveloping algebras $\mathcal{U}_{\mathfrak{q}}(\mathfrak{g})$ considered in \cite{MR4506530,MR2642565} arise as special cases of $U_{\mathfrak{q}}$. For $\mathfrak{q}$ of non-Cartan type, the framework of Andruskiewitsch-Angiono-Yakimov includes multiparameter quantum groups at roots of unity associated with finite-dimensional simple contragredient Lie superalgebras, as well as quantizations at roots of unity in characteristic zero of certain simple Lie algebras in positive characteristic \cite{MR4600057}.

As in \cite{MR4600057}, the AAY large quantum groups $U_{\mathfrak{q}}$ considered in this paper are defined over $\mathbb{C}$. The difference is that we require only that the Nichols algebra corresponding to the braiding matrix $\mathfrak{q}$  be finite-dimensional and that condition \eqref{eq-ffc-cartroot-largech7} hold. The definition of the large quantum group $U_{\mathfrak{q}}$ and the relevant background are recalled in \S \ref{sec-largeqgrp-setup}. Under condition \eqref{eq-ffc-cartroot-largech7}, the quantum group $U_{\mathfrak{q}}$ has a canonical large central Hopf subalgebra $Z_{\mathfrak{q}}$, constructed by Andruskiewitsch-Angiono-Yakimov in \cite[\S 4.5]{MR4600057}. When $\mathfrak{q}$ is genuinely of finite Cartan type and of integral type (see Example \ref{eg-GGsmquangroupmu-9}), the identity fiber of $U_{\mathfrak{q}}$ with respect to the central Hopf subalgebra $Z_{\mathfrak{q}}$ is the small multiparameter quantum group considered by García-Gavarini in \cite[\S 7.3.2]{MR4506530}. The Lusztig small quantum group $\mathfrak{u}_{\epsilon}(\mathfrak{g})$ arises as a Hopf quotient for a specific choice of $\mathfrak{q}$; see Remark \ref{rmk-GGsmallquantumgrp-lusz}. To endow the pair $(U_{\mathfrak{q}},Z_{\mathfrak{q}})$ with a nontrivial Poisson order structure in the sense of Brown-Gordon \cite{MR1989650}, Andruskiewitsch-Angiono-Yakimov impose the additional conditions on the braiding matrix $\mathfrak{q}$ listed in \cite[\S 4.1]{MR4600057}. The geometry of the Poisson algebraic group $\operatorname{maxSpec} Z_{\mathfrak{q}}$ was studied in detail in \cite{MR4600057}, where its symplectic foliation was described in \cite[Theorem 8.2]{MR4600057}. By \cite[Theorem 20]{MR3459702}, if $\mathfrak{q}\in \text{M}_{\theta}(\mathbb{C})$, then the Gelfand-Kirillov dimension of $U_{\mathfrak{q}}$ is given by $|\mathcal{O}^{\mathfrak{q}}|+2\theta$, where $\mathcal{O}^{\mathfrak{q}}$ denotes the set of \textit{Cartan roots} of $\mathfrak{q}$ \cite{MR3459702}; see \eqref{eq-cartanroots-set}. Writing $\mathcal{O}_{+}^{\mathfrak{q}}
=\mathcal{O}^{\mathfrak{q}}\cap \mathbb{N}^{\theta}$, we have $|\mathcal{O}^{\mathfrak{q}}|=2|\mathcal{O}_{+}^{\mathfrak{q}}|.$ Note that the braiding matrix $\mathfrak{q}$ induces a canonical $\mathbb{Z}$-bilinear form $\mathfrak{q}:\mathbb{Z}^{\theta}\times \mathbb{Z}^{\theta}\to\mathbb{C}^{\times}$.
Our third result applies the preceding results to all AAY large quantum groups (see Corollary \ref{cor-ASgorenlargequan8-eregucart6} and Theorem \ref{thm-largqungrp-CY}). We prove that every large quantum group $U_{\mathfrak{q}}$ is a unimodular Noetherian affine AS-Gorenstein Hopf algebra and establish a necessary and sufficient condition on $\mathfrak{q}$ for 
$U_{\mathfrak{q}}$ to be Calabi-Yau. In particular, the bundle of fiber algebras of $U_{\mathfrak{q}}$ over $\operatorname{maxSpec} Z_{\mathfrak{q}}$ gives rise to a family of unimodular indecomposable exact module categories in the sense of Yadav.


\begin{theorem}\label{thm-intro-AAYlargeqgrps}
Let $\mathfrak{q}=(q_{ij})_{\theta\times \theta}$ be a braiding matrix such that the corresponding Nichols algebra is finite-dimensional and that the condition \eqref{eq-ffc-cartroot-largech7} holds. Let $U_{\mathfrak{q}}$ denote the corresponding Andruskiewitsch-Angiono-Yakimov large quantum group.
\begin{itemize}
\item[(a)] The large quantum group $U_{\mathfrak{q}}$ is a unimodular noetherian affine Artin-Schelter Gorenstein Hopf algebra of injective dimension $ |\mathcal{O}^{\mathfrak{q}}|+2\theta$. 
\item[(b)] The large quantum group $U_{\mathfrak{q}}$ is Calabi-Yau if and only if $\mathfrak{q}$ is of Cartan type, and there exist $\alpha,\beta\in \mathbb{Z}^{\theta}$ such that 
$$\mathfrak{q}(\alpha,\alpha_{j})\mathfrak{q}(\alpha_{j},\beta)=q_{jj}^{-1}$$ 
for all $1\leq j\leq \theta$.
\item[(c)] For any $\mathfrak{m}\in \operatorname{maxSpec}Z_{\mathfrak{q}}$, the category $U_{\mathfrak{q}}/\mathfrak{m}U_{\mathfrak{q}}\text{-mod}$ is a unimodular indecomposable exact module category over $U_{\mathfrak{q}}/\mathfrak{m}_{\ovep}U_{\mathfrak{q}}\text{-mod}$.
\end{itemize}
\end{theorem}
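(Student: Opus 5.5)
The plan is to reduce everything to statements about the small quantum group, i.e.\ the identity fiber $\mathfrak{u}_{\mathfrak{q}}:=U_{\mathfrak{q}}/\mathfrak{m}_{\ovep}U_{\mathfrak{q}}$, and then apply Theorems \ref{thm-fghopfhomol-unimosCY} and \ref{thm-intromodufihipf-fibunim8}.

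\textbf{Step 1: verify the hypotheses.} First check that $U_{\mathfrak{q}}$ is affine and finitely generated over the central Hopf subalgebra $Z_{\mathfrak{q}}$ of Andruskiewitsch--Angiono--Yakimov. The subalgebra $Z_{\mathfrak{q}}$ is generated by the $N_\beta$-th powers of the PBW root vectors $E_\beta,F_\beta$ for Cartan roots $\beta\in\mathcal{O}^{\mathfrak{q}}_+$, together with suitable powers of the group-likes $K_i,L_i$. Condition \eqref{eq-ffc-cartroot-largech7} is what guarantees that these elements are central. The PBW basis of the distinguished pre-Nichols algebra then shows that $U_{\mathfrak{q}}$ is a free $Z_{\mathfrak{q}}$-module of finite rank. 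Hence $U_{\mathfrak{q}}$ is noetherian AS-Gorenstein by the results cited in the introduction.

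For the injective dimension, I would argue as follows. Since $U_{\mathfrak{q}}$ is module-finite over the affine commutative ring $Z_{\mathfrak{q}}$, its injective dimension equals its GK-dimension, which is $|\mathcal{O}^{\mathfrak{q}}|+2\theta$ by \cite[Theorem 20]{MR3459702}. To justify the equality of the two dimensions, use the AS-Gorenstein and Cohen--Macaulay property of module-finite Hopf algebras (Wu--Zhang). Alternatively, note that $Z_{\mathfrak{q}}$ is a localized polynomial ring (a Laurent $\otimes$ polynomial algebra) of that Krull dimension.

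\textbf{Step 2: unimodularity, which gives (a) and (c).} By Theorem \ref{thm-fghopfhomol-unimosCY}(b), it suffices to show that $\mathfrak{u}_{\mathfrak{q}}$ is unimodular. Identify $\mathfrak{u}_{\mathfrak{q}}$ with a quotient of the Drinfeld double of the bosonization $\mathfrak{B}(V)\#\mathbbm{k}\Gamma$, possibly up to a central group-like quotient. Quotienting by $Z_{\mathfrak{q}}^+$ kills the Cartan-root powers, so the pre-Nichols algebra becomes the Nichols algebra $\mathfrak{B}_{\mathfrak{q}}$, which is finite-dimensional. A Drinfeld double of a finite-dimensional Hopf algebra is unimodular (Radford), and unimodularity passes to quotients by central Hopf ideals generated by central group-likes. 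Part (c) then follows directly from Theorem \ref{thm-intromodufihipf-fibunim8}(a).

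\textbf{Step 3: the Calabi--Yau criterion (b).} This is the main obstacle. By Theorem \ref{thm-fghopfhomol-unimosCY}(d) and Step 2, $U_{\mathfrak{q}}$ is Calabi--Yau if and only if $U_{\mathfrak{q}}$ has finite global dimension and $S^2$ is inner.

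For finite global dimension, I expect it to hold exactly when no non-Cartan root vectors survive with nilpotent relations. Those vectors give subquotients of the form $\mathbbm{k}[x]/(x^N)$, which have infinite global dimension. So I would prove that $U_{\mathfrak{q}}$ is AS-regular if and only if the distinguished pre-Nichols algebra is a quantum polynomial algebra (an iterated Ore extension), and that this holds if and only if every root is Cartan, i.e.\ $\mathfrak{q}$ is of Cartan type. For the converse direction, I would use a filtration argument: the trivial module has finite projective dimension exactly when the associated graded is a skew polynomial ring.

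For $S^2$: compute $S^2(E_j)=g E_j g^{-1}$ with $g=K_{\rho}L_{\rho}^{-1}$-type group-likes, up to the scalar $q_{jj}^{\pm1}$. An inner automorphism of $U_{\mathfrak{q}}$ that acts diagonally on the $\mathbb{Z}^\theta$-grading should be conjugation by a group-like $K_\alpha L_\beta$ of the torus. To see this, take graded components and use that the units of $U_{\mathfrak{q}}$ are scalar multiples of group-likes, which follows from the PBW basis and a degree argument. Conjugation by $K_\alpha L_\beta$ multiplies $E_j$ by $\mathfrak{q}(\alpha,\alpha_j)\mathfrak{q}(\alpha_j,\beta)$, and $S^2(E_j)=q_{jj}^{-1}\cdot(\text{torus conjugation})$ after normalization. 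Matching the scalars on the $E_j$ (the $F_j$ then follow by symmetry) yields exactly the stated equation. The delicate point is showing that every unit of $U_{\mathfrak{q}}$ is a scalar times a group-like, and I would prove this via a degree filtration on the PBW basis.
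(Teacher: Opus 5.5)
\textbf{The gap is in Step 2.} You treat $\mathfrak{u}_{\mathfrak{q}}=U_{\mathfrak{q}}/Z_{\mathfrak{q}}^{+}U_{\mathfrak{q}}$ as a quotient of a Drinfeld double and then push unimodularity down along central group-like quotients. In general the relationship runs the other way.

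In $\mathfrak{u}_{\mathfrak{q}}$ the group-likes are the $K_{\alpha},L_{\alpha}$ modulo the lattice $\Lambda$ generated by the $N_{\beta}\beta$ ($\beta\in\mathcal{O}_{+}^{\mathfrak{q}}$) and the $N\gamma$ ($\gamma\in\Delta_{+}^{\mathfrak{q}}$). The pairing induced by $\tau^{AAY}$ on these group parts is degenerate as soon as the radicals $\mathscr{R}^{\geq 0},\mathscr{R}^{\leq 0}$ of the bicharacter $\mathfrak{q}$ are strictly larger than $\Lambda$.

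For example, take $\mathfrak{q}=(\epsilon^{d_{i}a_{ij}})$ of type $A_{2}$ with $N=3$. Then $\alpha_{1}+2\alpha_{2}\in\mathscr{R}^{\geq 0}\setminus 3\mathbb{Z}^{2}$. So $K_{\alpha_{1}+2\alpha_{2}}$ and $L_{\alpha_{1}+2\alpha_{2}}$ are nontrivial central group-likes of $\mathfrak{u}_{\mathfrak{q}}$, which has dimension $3^{10}$. The actual Drinfeld double is the proper quotient $\overline{U_{\mathfrak{q}}}$, of dimension $3^{8}$.

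Thus $\mathfrak{u}_{\mathfrak{q}}$ is a central \emph{extension} of a double: the double is a quotient of $\mathfrak{u}_{\mathfrak{q}}$, not the other way round. Your principle ``unimodularity passes to quotients by central group-likes'' is therefore applied in the wrong direction. Moreover, you never establish nondegeneracy of any pairing, which is exactly what is needed to invoke Radford's theorem.

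\textbf{The missing idea, which is the paper's route, is to enlarge the central subalgebra rather than work with the fiber over $Z_{\mathfrak{q}}$.} Let $C_{\mathfrak{q}}\supseteq Z_{\mathfrak{q}}$ be generated by $Z_{\mathfrak{q}}$ together with the central group-likes $K_{\alpha}$ ($\alpha\in\mathscr{R}^{\geq 0}$) and $L_{\alpha}$ ($\alpha\in\mathscr{R}^{\leq 0}$). Show, as in Lemma \ref{lem-nondegenpair-ubarleq7bore}, that $U_{\mathfrak{q}}/C_{\mathfrak{q}}^{+}U_{\mathfrak{q}}$ is a Drinfeld double for a nondegenerate pairing, hence unimodular. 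The lemma uses Dedekind's lemma on the group parts and nondegeneracy of $\mathcal{B}(W)\times\mathcal{B}(V)\to\mathbb{C}$. Then apply Theorem \ref{thm-modulefinitehopf-integral}(b) to the pair $(U_{\mathfrak{q}},C_{\mathfrak{q}})$. That theorem is an equivalence for any central Hopf subalgebra over which the algebra is module-finite, so unimodularity of $\mathfrak{u}_{\mathfrak{q}}$, which you need for (c), then follows for free.

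\textbf{A smaller issue is in Step 3.} A truncated polynomial subquotient says nothing about global dimension: $M_{2}(\mathbbm{k})$ has global dimension $0$ but contains $\mathbbm{k}[e_{12}]\cong\mathbbm{k}[x]/(x^{2})$. Your graded criterion is also not a usable statement as written. For necessity of Cartan type, argue as in Corollary \ref{cor-ASgorenlargequan8-eregucart6}(d),(e):
\begin{itemize}
\item move a non-Cartan root to a simple root using the isomorphisms $T_{i}^{\mathfrak{q}}$;
\item order the PBW basis so that the algebra is free over $\mathbb{C}\langle E_{\beta}\rangle\cong\mathbb{C}[z]/(z^{N_{\beta}})$;
\item restrict a finite projective resolution of the trivial module, on which $E_{\beta}$ acts by $0$, to get a contradiction.
\end{itemize}
Your description of the units holds only in Cartan type, since otherwise $1+E_{\beta}$ is a unit. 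That is enough here, because you use it only after Cartan type is known. Also, $S^{2}(E_{j})=q_{jj}^{-1}E_{j}$ directly; no torus conjugation is involved.

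Step 1 is fine. Using Wu--Zhang's $\operatorname{inj.dim}=\operatorname{GKdim}$ is a valid substitute for the paper's appeal to Theorem \ref{thm-noethAS-intebimoduiso-un}(a).
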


\begin{remark}
Let the braiding matrix $\mathfrak{q}$ satisfy the assumptions of Theorem \ref{thm-intro-AAYlargeqgrps}. If, in addition, $\mathfrak{q}$ is genuinely of finite Cartan type, then Theorem \ref{thm-intro-AAYlargeqgrps} provides a criterion for the Calabi-Yau property of a large class of multiparameter quantized enveloping algebras $\mathcal{U}_{\mathfrak{q}}(\mathfrak{g})$ considered in \cite{MR4506530,MR2642565}; see Corollaries \ref{cor-PHRmulparaquangro-regulardoman} and \ref{cor-mulQGrp-whenCY}. There exist multiparameter quantized enveloping algebras $\mathcal{U}_{\mathfrak{q}}(\mathfrak{g})$ that are not Calabi-Yau (Example \ref{eg-finitegen-maynotS2iner}). Moreover, Corollary \ref{cor-ASgorenlargequan8-eregucart6} (d) establishes that the AAY large quantized Borel algebra $U_{\mathfrak{q}}^{\geq 0}$ is Artin-Schelter regular if and only if $\mathfrak{q}$ is of Cartan type. The quantum groups $\mathcal{U}_{\epsilon}^{\geq 0}(\mathfrak{g})$ arise as special cases of $U_{\mathfrak{q}}^{\geq 0}$; see Example \ref{eq-positpart-Lztsmaqugrp9-nichols}.
\end{remark}

The paper is organized as follows. In \S \ref{section 1}, we review background material on (skew) Calabi-Yau algebras, rigid Gorenstein algebras, Hopf Galois extensions, and Stefan's spectral sequence. In \S \ref{sec-homolointTHM}, we prove Theorem \ref{thm-fghopfhomol-unimosCY}, establish a stronger version of the result, and present several examples illustrating these results. In \S \ref{sec-applica-UqgUqb-Dqg}, we apply Theorems \ref{thm-fghopfhomol-unimosCY} and \ref{thm-noethAS-intebimoduiso-un} to investigate the homological properties of the quantum groups $\mathcal{U}_{\epsilon}(\mathfrak{g}), \mathcal{U}_{\epsilon}^{\geq 0}(\mathfrak{g})$, and $\mathfrak{D}_{q}(\mathfrak{g})$. In \S \ref{sec-AAYlargequantumGrp}, we review the relevant background on AAY large quantum groups, including Nichols algebras of diagonal type, and prove Theorem \ref{thm-intro-AAYlargeqgrps} (a) and (b). In \S \ref{sec-yadaunimodular-cat}, after recalling basic notions concerning unimodular module categories in the sense of Yadav, we prove Theorem \ref{thm-intromodufihipf-fibunim8}.


\section{Preliminaries}\label{section 1}
In this section, after fixing some basic notation, we recall some preliminaries on (skew) Calabi-Yau algebras, rigid Gorenstein algebras, Hopf Galois extensions and Stefan's spectral sequence. The main references are \cite{MR1358765,MR2437632,MR3250287,MR4413364,MR5000225}.
\subsection{Notation and conventions}
Throughout this paper, $\mathbbm{k}$ is a fixed field and all algebras are over $\mathbbm{k}$. An unadorned $\otimes$ denotes $\otimes_{\mathbbm{k}}$. All modules over algebras considered here are left modules. Let $A$ be an algebra over $\mathbbm{k}$. The category of left $A$-modules is denoted by $A\operatorname{-Mod}$, and the category of finite-dimensional left $A$-modules is denoted by $A\operatorname{-mod}$.

We write ${\bf D}(A)$ for the derived category of $A\operatorname{-Mod}$. The derived Hom and tensor functors are denoted by $\operatorname{RHom}_{A}$ and $\otimes_{A}^{L}$, respectively. We refer to \cite{MR1269324} for standard background on derived categories and derived functors.


For a left $A$-module $M$, $\operatorname{p.dim}_{A}M$ and $\operatorname{inj.dim}_{A}M$ denote the projective dimension and injective dimension of $M$, respectively. The left and right global dimensions of $A$ are denoted by $\operatorname{l.gl.dim}A$ and $\operatorname{r.gl.dim}A$, respectively. When $\operatorname{l.gl.dim}A=\operatorname{r.gl.dim}A$ (for instance, when $A$ is noetherian), the common value is denoted by $\operatorname{gl.dim}A$.  The opposite algebra of $A$ is denoted by $A^{op}$. If $A$ is noetherian and both $\operatorname{inj.dim}_{A}A $ and $\operatorname{inj.dim}_{A^{op}}A $ are finite, then $\operatorname{inj.dim}_{A}A =\operatorname{inj.dim}_{A^{op}}A$ \cite[Lemma A]{MR244325}. In this case, we say that the noetherian algebra $A$ has \textit{finite injective dimension}. The enveloping algebra $A\otimes A^{op}$ of $A$ is denoted by $A^{e}$. Any $A$-$A$ bimodule $M$ can be identified with a left $A^{e}$-module via $(a\otimes b)m=amb$, and with a right $A^{e}$-module via $m(a\otimes b)=bma$. We will use these identifications freely.

For background on Hopf algebras and tensor categories, see \cite{MR1243637,MR4164719,MR3242743}. The coproduct, counit, and antipode of a Hopf algebra are denoted by $\Delta,\varepsilon$ and $S$, respectively. For a Hopf algebra $H$ and $h\in H$, we use Sweedler's notation 
$$\Delta(h)=\sum_{(h)} h_{(1)}\otimes h_{(2)} \in H \otimes H.$$
Similarly, for a left $H$-comodule $(V,\delta)$ and an element $v\in V$, we write 
$$\delta(v)=\sum_{(v)}v_{(-1)}\otimes v_{(0)}\in H\otimes V.$$ Let $\mathcal{C}$ be a finite tensor category over $\mathbbm{k}$ and let $\mathcal{M}$ be a finite left $\mathcal{C}$-module category.   If $A$ is an algebra in $\mathcal{C}$, the category of right $A$-modules in $\mathcal{C}$, denoted by $\text{Mod}_{\mathcal{C}}(A)$, naturally carries the structure of a left $\mathcal{C}$-module category \cite[Proposition 7.8.10]{MR3242743}. 




\subsection{Skew Calabi-Yau algebras and rigid Gorenstein algebras}\label{subsec-sCYrigG}
Suppose that $A$ is an algebra over $\mathbbm{k}$.  Let $M$ be an $A$-$A$-bimodule and let $\mu,\nu$ be algebra automorphisms of $A$. Denote by ${^{\nu}M}^{\mu}$ the twisted $A$-$A$-bimodule, which as a $\mathbbm{k}$-linear space is $M$, with the bimodule structure given by $a\cdot m\cdot b= \nu(a)m\mu(b)$ for all $a,b\in A$ and $m\in M$. If $\nu$ (resp., $\mu$) is the identity map of $A$, then ${^{\nu}M}^{\mu}$ is denoted by $M^{\mu}$ (resp., $^{\nu}M$). 

Suppose $\mu,\nu$ are algebra automorphisms of $A$. It is clear that $A^{\mu}\cong A^{\nu}$ as $A$-$A$-bimodules if and only if there exists an invertible element $u\in A$ such that $\mu(a)=u\nu(a)u^{-1}$ for all $a\in A$ (i.e., $\mu$ and $\nu$ differ by an inner automorphism of $A$).

\begin{definition}
[\cite{MR2437632,MR3250287}]Let $A$ be an algebra over $\mathbbm{k}$. 
\begin{itemize}
\item[(1)] $A$ is called \textit{homologically smooth} over $\mathbbm{k}$ if $A$ is perfect as an $A^{e}$-module (i.e., $A$ has a finitely generated $A^e$-projective resolution of finite length).
\item[(2)] $A$ is called \textit{skew Calabi-Yau} if $A$ is homologically smooth, and there is an integer $d$ and an algebra automorphism $\nu $ of $A$ such that
\begin{equation}\label{eq-defsCYExtgrp1}
\operatorname{Ext}_{A^{e}}^{i}(A,A^{e})\cong
\begin{cases}
0, & i\neq d,\\
A^{\nu}, &i=d,
\end{cases}
\end{equation}
as $A$-$A$ bimodules.
\item[(3)] $A$ is called \textit{rigid Gorenstein} if $A$ is noetherian with finite injective dimension $d$, and there is an algebra automorphism $\nu $ of $A$ such that \eqref{eq-defsCYExtgrp1} holds.
\end{itemize}
\end{definition}
The algebra automorphism $\nu$ in \eqref{eq-defsCYExtgrp1} is called the \textit{Nakayama automorphism} of $A$, which is unique up to an inner automorphism of $A$. When the Nakayama automorphism of a skew Calabi-Yau algebra $A$ is inner, then $A$ is a \textit{Calabi-Yau algebra} in the sense of Ginzburg \cite{Ginz2007CY}. If $A$ is skew Calabi-Yau, then the integer $d$ in \eqref{eq-defsCYExtgrp1} is $\operatorname{p.dim}_{A^{e}}A$, see \cite[Lemma 4.4]{MR4413364}. 
If a homologically smooth algebra $A$ is rigid Gorenstein with injective dimension $d$, then $A$ is skew Calabi-Yau and $\operatorname{p.dim}_{A^{e}}A=\operatorname{inj.dim}_{A}A=\operatorname{gl.dim}A=d$. There are noetherian Calabi-Yau algebras that are not rigid Gorenstein, see \cite[Example 4.15]{MR4413364}.



\subsection{Hopf Galois extensions and Stefan's spectral sequence}\label{subsec-UMact-stefan}
Let $H$ be a Hopf algebra 
and let $B$ be a left $H$-comodule algebra. The extension ${^{coH}\!B}\subseteq B$ is said to be \textit{left $H$-Galois} if the canonical map $$\beta: B\otimes_{{^{coH}\!B}}B\to  H\otimes B, a\otimes b\mapsto \sum_{(a)}a_{(-1)}\otimes a_{(0)}b$$ is bijective, where 
$${^{coH}\!B}=\left\{a\in B  \bigm| \sum_{(a)}a_{(-1)}\otimes a_{(0)}= 1\otimes a\right\}$$
is the subalgebra of coinvariants of the $H$-comodule $B$. Given a left $H$-Galois extension ${^{coH}\!B}\subseteq B$, we say that $B$ is a left \textit{$H$-Galois object} if ${^{coH}\!B}=\mathbbm{k}$. Right $H$-Galois extensions and right $H$-Galois objects are defined analogously.

In this subsection, we assume that $A\subseteq B$ is a left $H$-Galois extension with the canonical map $\beta:B\otimes_{A}B\to H\otimes B$. Consider the $\mathbbm{k}$-linear map 
\begin{equation}\label{eq-babti-taudef}
\tau:H\to B\otimes_{A}B,h\mapsto \beta^{-1}(h\otimes 1).
\end{equation}
For any $h\in H$, we  use the notation
$$\tau(h)=\sum\tau^{1}(h)\otimes_{A} \tau^{2}(h)\in B\otimes_{A}B.$$
By \cite[Remark 3.4]{MR1098989}, for any $h,k\in H$ and any $a\in A$, one has
\begin{equation}\label{eq-lefgalexumac-for1}
\sum a\tau^{1}(h)\otimes_{A} \tau^{2}(h)=\sum \tau^{1}(h)\otimes_{A} \tau^{2}(h)a\in B\otimes_{A}B,
\end{equation}
\begin{equation}\label{eq-lGeu-botbarims7}
\sum \tau^{1}(hk)\otimes_{A}\tau^{2}(hk)=\sum \tau^{1}(h)\tau^{1}(k)\otimes_{A}\tau^{2}(k)\tau^{2}(h)\in B\otimes_{A}B.
\end{equation}
It follows from \eqref{eq-lefgalexumac-for1} and \eqref{eq-lGeu-botbarims7} that $B\otimes_{A}B$ has a right $H$-module structure given by
\begin{equation}\label{eq-botimabriHmo-stu1}
(x\otimes_{A}y)\leftharpoonup h\coloneqq \sum x\tau^{1}(h)\otimes_{A} \tau^{2}(h)y \in B\otimes_{A}B
\end{equation}
for all $x,y\in B$ and $h\in H$. Thus $B\otimes_{A}B$ is a $B^{e}$-$H$-bimodule, with the left $B^{e}$-module structure induced by the outer action and the right $H$-module structure given by \eqref{eq-botimabriHmo-stu1}. Unless stated otherwise, \textit{the $B^{e}$-$H$-bimodule structure on $B\otimes_{A}B$ will always be understood to be the one defined above}. Moreover, considering the $B^{e}$-$H$-bimodule structure on $H\otimes B$ defined by
$$(b\otimes b^{\prime})\rightharpoonup  (h\otimes x)\leftharpoonup k\coloneqq \sum_{(b)}b_{(-1)}hk\otimes b_{(0)}xb^{\prime}$$
for all $b,b^{\prime},x\in B, h,k\in H$, the canonical map $\beta:B\otimes_{A}B\to H\otimes B$ is a $B^{e}$-$H$-bimodule isomorphism. Since $A^{e}$ is a subalgebra of $B^{e}$, $B^{e}$ admits a canonical $B^{e}$-$A^{e}$-bimodule structure. Consequently, $B^{e}\otimes_{{A}^{e}}A$ may be regarded as a left $B^{e}$-module. Then
\begin{equation}\label{eq-ABetim-bbaiso1}
B^{e}\otimes_{A^{e}}A\to B\otimes_{A}B, \, (x\otimes y)\otimes_{A^{e}} a\mapsto xa\otimes_{A}  y
\end{equation}
is an isomorphism of left $B^{e}$-modules. 

Any left $B^{e}$-module $M$ can be naturally regarded as an $A$-$A$-bimodule. Moreover, the center $Z_{A}(M)$ of $M$, viewed as an $A$-$A$-bimodule is canonically isomorphic to $\operatorname{Hom}_{A^{e}}(A,M)$ via the map $Z_{A}(M)\to \operatorname{Hom}_{A^{e}}(A,M), \, x\mapsto (a\mapsto ax)$. We use this isomorphism to identify $Z_{A}(M)$ with $\operatorname{Hom}_{A^{e}}(A,M)$. We now show that $Z_{A}(M)=\operatorname{Hom}_{A^{e}}(A,M)$ carries a canonical $H$-module structure. Indeed, there are canonical isomorphisms:
$$
\operatorname{Hom}_{A^{e}}(A,M) \cong  \operatorname{Hom}_{A^{e}}(A,\operatorname{Hom}_{B^{e}}(B^{e},M))  \cong  \operatorname{Hom}_{B^{e}}( B^{e}\otimes_{A^{e}} A,M)
 \overset{\eqref{eq-ABetim-bbaiso1}}{\cong} \operatorname{Hom}_{B^{e}}(B\otimes_{A} B,M).
$$
Hence, the $B^{e}$-$H$-bimodule structure on $B\otimes_{A}B$ induces a left $H$-module structure on 
$\operatorname{Hom}_{A^{e}}(A,M)$. This $H$-module structure 
is known as the \textit{Ulbrich-Miyashita action} \cite{MR992778,MR1358765}. In particular, for any complex $Y^{\bullet}$ of left $B^{e}$-modules that is bounded below, $\operatorname{RHom}_{A^{e}}(A,Y^{\bullet})$ is a complex of left $H$-modules.

We will later require the following result, which reformulates \textit{Stefan's spectral sequence} \cite{MR1358765} in terms of derived functors; see also \cite[Theorem 2.3]{MR5000225}.
\begin{theorem}
[\text{\cite[Theorem 3.3]{MR1358765}}]Let $A\subseteq B$ be a left $H$-Galois extension such that $B$ is flat as both a left and a right $A$-module. Then for any left $B^{e}$-module $M$, one has\label{thm-stefanspsq}
$$\operatorname{RHom}_{B^{e}}(B,M)\cong \operatorname{RHom}_{H}(\mathbbm{k},\operatorname{RHom}_{A^{e}}(A,M)).$$
\end{theorem}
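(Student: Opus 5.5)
The plan is to realize the isomorphism as a Grothendieck-type composition of derived functors. First I establish it at the underived level, and then I check that the relevant acyclicity conditions hold.

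\emph{Underived identity.} First I show that $(B\otimes_{A}B)\otimes_{H}\mathbbm{k}\cong B$ as left $B^{e}$-modules, via $x\otimes_{A}y\otimes 1\mapsto xy$. Transport along the $B^{e}$-$H$-bimodule isomorphism $\beta\colon B\otimes_{A}B\to H\otimes B$ to get $(H\otimes B)\otimes_{H}\mathbbm{k}\cong B$, via $h\otimes x\otimes 1\mapsto \varepsilon(h)x$. The induced left $B^{e}$-action on $B$ is then
\[
(b\otimes b')\cdot x=\sum\varepsilon(b_{(-1)})b_{(0)}xb'=bxb',
\]
which is the regular bimodule structure. Composing with $\beta$ recovers the multiplication map. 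Tensor–Hom adjunction then gives, naturally in $M$,
\[
\operatorname{Hom}_{B^{e}}(B,M)\cong\operatorname{Hom}_{B^{e}}\bigl((B\otimes_{A}B)\otimes_{H}\mathbbm{k},M\bigr)\cong\operatorname{Hom}_{H}\bigl(\mathbbm{k},\operatorname{Hom}_{B^{e}}(B\otimes_{A}B,M)\bigr).
\]
Using \eqref{eq-ABetim-bbaiso1}, the inner term is $\operatorname{Hom}_{A^{e}}(A,M)$ with exactly the Ulbrich–Miyashita action. So $\operatorname{Hom}_{B^{e}}(B,-)=\operatorname{Hom}_{H}(\mathbbm{k},-)\circ\operatorname{Hom}_{A^{e}}(A,-)$ as functors $B^{e}\text{-Mod}\to\mathbbm{k}\text{-Mod}$.

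\emph{Derivation.} Take an injective resolution $M\to I^{\bullet}$ over $B^{e}$. Two facts are needed.

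(1) $\operatorname{RHom}_{A^{e}}(A,M)$, computed as an object of $\mathbf{D}(A^{e})$ forgetting the $H$-action, is represented by $\operatorname{Hom}_{A^{e}}(A,I^{\bullet})$. Since $B$ is flat as both a left and a right $A$-module, $B^{e}=B\otimes B^{op}$ is flat as a right $A^{e}$-module. Hence $\operatorname{Hom}_{A^{e}}(-,I)\cong\operatorname{Hom}_{B^{e}}(B^{e}\otimes_{A^{e}}-,I)$ is exact for any injective $B^{e}$-module $I$. Thus injective $B^{e}$-modules restrict to injective $A^{e}$-modules. This also justifies regarding $\operatorname{RHom}_{A^{e}}(A,M)$ as a complex of $H$-modules, as stated before the theorem.

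(2) $\operatorname{Hom}_{A^{e}}(A,I)$ is an injective $H$-module for $I$ injective over $B^{e}$. By adjunction,
\[
\operatorname{Hom}_{H}\bigl(-,\operatorname{Hom}_{B^{e}}(B\otimes_{A}B,I)\bigr)\cong\operatorname{Hom}_{B^{e}}\bigl((B\otimes_{A}B)\otimes_{H}-,I\bigr).
\]
Since $B\otimes_{A}B\cong H\otimes B$ as right $H$-modules through $\beta$, it is free as a right $H$-module. So $(B\otimes_{A}B)\otimes_{H}-$ is exact and the displayed functor is exact.

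Combining (1) and (2), the complex $\operatorname{Hom}_{A^{e}}(A,I^{\bullet})$ is a bounded-below complex of injective $H$-modules representing $\operatorname{RHom}_{A^{e}}(A,M)$. Applying $\operatorname{Hom}_{H}(\mathbbm{k},-)$ to it computes $\operatorname{RHom}_{H}(\mathbbm{k},\operatorname{RHom}_{A^{e}}(A,M))$. By the underived identity, this equals $\operatorname{Hom}_{B^{e}}(B,I^{\bullet})=\operatorname{RHom}_{B^{e}}(B,M)$.

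\emph{Main obstacle.} I expect the delicate point to be bookkeeping rather than homological. One must verify that the $H$-module structure obtained through $\operatorname{Hom}_{B^{e}}(B\otimes_{A}B,-)$ agrees with the Ulbrich–Miyashita action used in the statement. One must also check that $x\otimes_{A}y\otimes1\mapsto xy$ is well defined on $(B\otimes_{A}B)\otimes_{H}\mathbbm{k}$. For the latter, I would use that $\sum\tau^{1}(h)\tau^{2}(h)=\varepsilon(h)$, which follows by applying $\varepsilon\otimes\mathrm{id}$ to $\beta(\tau(h))=h\otimes1$. Naturality in $M$ is then automatic from the adjunctions.
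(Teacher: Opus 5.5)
Your proposal is correct and is essentially the standard proof of the cited result of Stefan. It combines three ingredients: the composite-functor identity $\operatorname{Hom}_{B^{e}}(B,-)\cong\operatorname{Hom}_{H}(\mathbbm{k},-)\circ\operatorname{Hom}_{A^{e}}(A,-)$ obtained from $(B\otimes_{A}B)\otimes_{H}\mathbbm{k}\cong B$; the preservation of injectives, which holds because $B\otimes_{A}B\cong H\otimes B$ is free as a right $H$-module (this part needs no flatness); and two-sided flatness of $B$ over $A$, used only in your step (1) to identify the derived functor with $\operatorname{RHom}_{A^{e}}(A,-)$. The paper gives no proof of its own here, only the citations, so there is nothing further to compare.
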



\section{Homological Integrals of module-finite Hopf algebras}\label{sec-homolointTHM}

In this section, we recall background material and basic facts concerning Artin-Schelter Gorenstein Hopf algebras. We then prove Theorem \ref{thm-fghopfhomol-unimosCY}, along with the stronger result, Theorem \ref{thm-noethAS-intebimoduiso-un}. These results are illustrated with several examples. 
Throughout this section, unless otherwise stated, the base field $\mathbbm{k}$ is arbitrary.
\subsection{Artin-Schelter Gorenstein Hopf algebras and homological integrals}
In this subsection, we recall the definition and basic properties of Artin-Schelter Gorenstein Hopf algebras and their homological integrals in the sense of Lu-Wu-Zhang \cite{MR2320655}. 
\begin{definition}
[\cite{MR1482982,MR2437632}] Let $H$ be a noetherian Hopf algebra.
 \label{def-ASGore-hopf} Then $H$ is called \textit{Artin-Schelter Gorenstein} (or \textit{AS-Gorenstein} for short) of dimension $d$, if
\begin{itemize}
\item[(1)] $\operatorname{inj.dim}_{H}H=\operatorname{inj.dim}_{H^{op}}H=d$;
\item[(2)] $\dim_{\mathbbm{k}}\operatorname{Ext}_{H}^{d}(\mathbbm{k},H)=\dim_{\mathbbm{k}}\operatorname{Ext}_{H^{op}}^{d}(\mathbbm{k},H)=1$;
\item[(3)] $\text{Ext}_{H}^{i}(\mathbbm{k},H)=\operatorname{Ext}_{H^{op}}^{i}(\mathbbm{k},H)=0$ for all $i\neq d$.
\end{itemize}
If, in addition, $H$ has finite global dimension, it is called \textit{Artin-Schelter regular} (or \textit{AS-regular} for short).
\end{definition}

By \cite[Theorem 0.2 (1)]{MR1938745}, any noetherian affine PI Hopf algebra is AS-Gorenstein, which answers \cite[Question A]{MR1676211} affirmatively. 
Now the Brown-Goodearl conjecture asserts that every noetherian Hopf algebra is AS-Gorenstein.
The antipode of any noetherian AS-Gorenstein Hopf algebra is bijective \cite[Corollary 0.3]{MR3856132}. For AS-Gorenstein Hopf algebras, Lu-Wu-Zhang introduced the notion of homological integral as a homological tool for studying infinite-dimensional Hopf algebras. 
\begin{definition}
[\text{\cite[Definition 1.1]{MR2320655}}]Let $H$\label{def-homolointeLWZ} be an AS-Gorenstein Hopf algebra of dimension $d$. Any nonzero element in $\operatorname{Ext}_{H}^{d}(\mathbbm{k},H)$ (resp., $\operatorname{Ext}_{H^{op}}^{d}(\mathbbm{k},H)$) is called a left (resp., right) \textit{homological integral} of $H$. We write $\textstyle \int_{H}^{l}=\operatorname{Ext}_{H}^{d}(\mathbbm{k},H)$ (resp., $\textstyle \int_{H}^{r}=\operatorname{Ext}_{H^{op}}^{d}(\mathbbm{k},H)$), which is called the left (resp. right) \textit{homological integral space} of $H$. 
\end{definition}

In what follows, assume that $H$ is noetherian AS-Gorenstein. Let $G(H^{\circ})$ denote the set of group-like elements of the finite dual $H^{\circ}$ of the Hopf algebra $H$, which coincides with the set of characters of $H$ (i.e., algebra homomorphisms from $H$ to $\mathbbm{k}$). Clearly, $G(H^{\circ})$ forms a group under the convolution product.  Since $H$ is AS-Gorenstein, the left homological integral space $\textstyle \int_{H}^{l}$ is a $1$-dimensional $H$-$H$-bimodule with the trivial left $H$-module structure. Hence, there exists a unique character $\psi\in G(H^{\circ})$  such that \begin{equation}\label{eq-rightinchara}
x\cdot h=x\psi(h)
\end{equation}
for all $x\in \textstyle \int_{H}^{l}$ and $h\in H$. The character $\psi$ in \eqref{eq-rightinchara} is referred to as the (left) \textit{integral character}. In this paper, we consider only the integral character defined by left homological integrals; the right homological integral character is defined analogously. The \textit{integral order} of $H$, denoted by $\operatorname{io}(H)$, is the minimal positive integer $n$ such that $(\textstyle \int_{H}^{l})^{\otimes n}\cong \mathbbm{k}$ as $H$-$H$-bimodules (if no such $n$ exists, we set $\operatorname{io}(H)=+\infty$). The integral order can be defined analogously using the right homological integral space. Since the antipode of $H$ is bijective, the integral orders defined using the left  and the right homological integral spaces coincide \cite[Lemma 2.1]{MR2320655}. Moreover, $\operatorname{io}(H)$ can be computed via the integral character:
\begin{lemma}
[\text{\cite[Lemma 2.3]{MR2320655}}]Assume that $H$ is a noetherian AS-Gorenstein Hopf algebra with bijective antipode. Then $\operatorname{io}(H)$ equals the order of the integral character in $G(H^{\circ})$.\label{lem-ioHcomputed-ordergrpch}
\end{lemma}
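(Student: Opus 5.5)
The plan is to make the bimodule $\textstyle\int_{H}^{l}$ completely explicit as a one-dimensional bimodule determined by the integral character $\psi$, and then to compute its tensor powers directly. By the AS-Gorenstein hypothesis, $\textstyle\int_{H}^{l}=\operatorname{Ext}^{d}_{H}(\mathbbm{k},H)$ is one-dimensional. Its left $H$-action is trivial, i.e.\ through $\varepsilon$, and by \eqref{eq-rightinchara} its right action is through $\psi$. Writing $\mathbbm{k}_{\chi}^{\phi}$ for the one-dimensional bimodule on which $H$ acts on the left via $\chi$ and on the right via $\phi$, we therefore have $\textstyle\int_{H}^{l}\cong \mathbbm{k}_{\varepsilon}^{\psi}$.

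The first step is a classification of one-dimensional bimodules. Any $\mathbbm{k}$-linear map between one-dimensional spaces is multiplication by a scalar. Hence $\mathbbm{k}_{\chi}^{\phi}\cong \mathbbm{k}_{\chi'}^{\phi'}$ as bimodules if and only if $\chi=\chi'$ and $\phi=\phi'$. In particular, $\mathbbm{k}_{\varepsilon}^{\phi}\cong\mathbbm{k}$, the trivial bimodule, if and only if $\phi=\varepsilon$.

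The second step computes tensor powers. The tensor product $M\otimes N$ of $H$-$H$-bimodules in the definition of $\operatorname{io}(H)$ carries the bimodule structure given by the coproduct,
$$h\cdot(m\otimes n)\cdot k=\sum h_{(1)}mk_{(1)}\otimes h_{(2)}nk_{(2)}.$$
Applying this to $\mathbbm{k}_{\varepsilon}^{\psi}\otimes\mathbbm{k}_{\varepsilon}^{\phi}$, the left action is through $\varepsilon*\varepsilon=\varepsilon$ and the right action is through the convolution product $\psi*\phi$. An induction on $n$ then gives
$$\left(\textstyle\int_{H}^{l}\right)^{\otimes n}\cong\mathbbm{k}_{\varepsilon}^{\psi^{n}},$$
where $\psi^{n}$ denotes the $n$-th power of $\psi$ in the group $G(H^{\circ})$.

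Combining the two steps, $\left(\textstyle\int_{H}^{l}\right)^{\otimes n}\cong\mathbbm{k}$ holds exactly when $\psi^{n}=\varepsilon$. So the minimal such $n\ge1$ is the order of $\psi$ in $G(H^{\circ})$. If no such $n$ exists, then $\operatorname{io}(H)=+\infty$ and $\psi$ has infinite order, so the two quantities agree in that case as well.

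There is no real obstacle here; the only point needing care is fixing the convention for tensor products of bimodules. The bijectivity of the antipode enters only through the independence of $\operatorname{io}(H)$ from the choice of left or right integrals, which is \cite[Lemma 2.1]{MR2320655}. Because of that independence, working with left integrals as above is enough.
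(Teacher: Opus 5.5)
Your proposal is correct and is essentially the standard argument behind Lu--Wu--Zhang \cite[Lemma 2.3]{MR2320655}, which the paper cites without proof. You identify $\int_{H}^{l}$ with the one-dimensional bimodule on which $H$ acts on the left through $\varepsilon$ and on the right through $\psi$, show that tensor powers (with the coproduct-diagonal bimodule structure) correspond to convolution powers $\psi^{n}$, and note that one-dimensional bimodules are classified by their pair of characters. Since the paper defines both $\operatorname{io}(H)$ and the integral character via left integrals, your observation that bijectivity of the antipode plays no role in this particular computation is also accurate.
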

The Hopf algebra $H$ is called \textit{unimodular} \cite[Definition 1.2]{MR2320655} if $\textstyle \int_{H}^{l}\cong \mathbbm{k}$ as $H$-$H$-bimodules. If $H$ is finite-dimensional, this definition reduces to the classical one. Clearly, $H$ is unimodular if and only if the left integral character of $H$ coincides with the counit $\varepsilon$.
By Lemma \ref{lem-ioHcomputed-ordergrpch}, $H$ is unimodular if and only if $\text{io}(H)=1$.

Recall that the left winding automorphism $W_{l}(\chi): H \to H$ and right winding automorphism  $W_{r}(\chi): H \to H$ associated to $\chi\in G(H^{\circ})$ are defined respectively by:
$$W_{l}(\chi)(h)\coloneqq \sum_{(h)}\chi(h_{(1)})h_{(2)} \text{ and\ } W_{r}(\chi)(h)\coloneqq \sum_{(h)}h_{(1)}\chi(h_{(2)}), \forall h\in H.$$
It follows from the Brown-Zhang Theorem \cite{MR2437632} that the Nakayama automorphisms of noetherian AS-Gorenstein Hopf algebras are closely related to the homological integrals.
\begin{theorem}
[\cite{MR2437632,MR3856132}] Let $H$ be a noetherian Hopf algebra. Then $H$ is AS-Gorenstein if and only if $H$ is rigid Gorenstein. In these cases, if $\chi$ is the left integral character of $H$, then $W_{l}(\chi)S^{2} =S^{2}W_{l}(\chi) $ is a Nakayama automorphism of $H$.\label{thm-BZ-nakaASgoren}
\end{theorem}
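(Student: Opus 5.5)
The plan is to compute $\operatorname{Ext}^{i}_{H^{e}}(H,H^{e})$ directly by reducing it to $\operatorname{Ext}^{i}_{H}(\mathbbm{k},H)$ via Theorem \ref{thm-stefanspsq}, and then to read off both the equivalence and the Nakayama automorphism. First regard $H$ as a left $H$-comodule algebra via $\Delta$. Then $\mathbbm{k}={}^{coH}H\subseteq H$ is a left $H$-Galois extension. Indeed, the canonical map $\beta(a\otimes b)=\sum a_{(1)}\otimes a_{(2)}b$ is bijective, with inverse $h\otimes b\mapsto \sum h_{(1)}\otimes S(h_{(2)})b$; here only the existence of $S$ is used, not its bijectivity. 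Flatness over $\mathbbm{k}$ is automatic. Theorem \ref{thm-stefanspsq} with $A=\mathbbm{k}$ then gives, for every $H$-bimodule $M$,
$$\operatorname{RHom}_{H^{e}}(H,M)\cong \operatorname{RHom}_{H}(\mathbbm{k},M^{\mathrm{ad}}),$$
where $M^{\mathrm{ad}}$ is $M$ with the Ulbrich-Miyashita action. Since $\tau(h)=\sum h_{(1)}\otimes S(h_{(2)})$, this action is $h\cdot m=\sum h_{(1)}mS(h_{(2)})$.

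Next take $M=H^{e}=H\otimes H^{op}$ with its outer bimodule structure, keeping track of the commuting inner bimodule structure, which becomes the bimodule structure on the Ext groups. The key step is to untwist the adjoint action. The left $H$-module $(H\otimes H^{op})^{\mathrm{ad}}$, with action $h\cdot(x\otimes y)=\sum h_{(1)}x\otimes yS(h_{(2)})$, is isomorphic to $H\otimes V$, where $H$ acts on the first factor by left multiplication and $V$ is the underlying vector space of $H$ with trivial action. I will write down this isomorphism explicitly using $S$ and $\Delta$, in the spirit of the Galois map, for instance $x\otimes y\mapsto \sum x_{(1)}\otimes y\,x_{(2)}$ up to the correct placement of $S$. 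Since $H$ is noetherian, $\mathbbm{k}$ has a resolution by finitely generated free $H$-modules, so
$$\operatorname{Ext}^{i}_{H}(\mathbbm{k},H\otimes V)\cong \operatorname{Ext}^{i}_{H}(\mathbbm{k},H)\otimes V.$$
This yields $\operatorname{Ext}^{i}_{H^{e}}(H,H^{e})\cong \operatorname{Ext}^{i}_{H}(\mathbbm{k},H)\otimes H$ as vector spaces.

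For the implication ``AS-Gorenstein $\Rightarrow$ rigid Gorenstein'', this shows that $\operatorname{Ext}^{i}_{H^{e}}(H,H^{e})$ vanishes for $i\neq d$ and equals $\int^{l}_{H}\otimes H$ for $i=d$. It remains to transport the inner bimodule structure through the untwisting isomorphism and identify it. Right multiplication should act through the integral character $\chi$ on the first factor, combined with a coproduct, while left multiplication picks up $S^{2}$ from the antipode in the untwisting map. The expected outcome is $H^{W_{l}(\chi)S^{2}}$. The identity $W_{l}(\chi)S^{2}=S^{2}W_{l}(\chi)$ follows formally from $\chi S^{2}=\chi$, which holds since $S^{2}$ preserves characters.

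For the converse, assume $H$ is rigid Gorenstein. Running the same isomorphism backwards gives $\operatorname{Ext}^{i}_{H}(\mathbbm{k},H)\otimes H\cong \operatorname{Ext}^{i}_{H^{e}}(H,H^{e})$, which is zero for $i\neq d$ and isomorphic to $H^{\nu}$ for $i=d$. Comparing $H$-module sizes, for instance by applying $-\otimes_{H}\mathbbm{k}$ on the free factor, forces $\dim\operatorname{Ext}^{d}_{H}(\mathbbm{k},H)=1$. The right-hand version follows by applying the same argument to $H^{op}$, whose antipode is $S^{-1}$, and bijectivity of $S$ is available by \cite{MR3856132}. I expect the main obstacle to be the bookkeeping of the inner bimodule structure through the untwisting isomorphism, that is, pinning down exactly $W_{l}(\chi)S^{2}$ rather than some other winding or antipode twist, together with justifying the commutation of Ext with $\otimes V$ when $V$ is infinite-dimensional.
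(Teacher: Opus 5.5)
Your strategy is essentially the argument behind the cited result of Brown--Zhang \cite{MR2437632}. You reduce $\operatorname{Ext}_{H^{e}}(H,-)$ to $\operatorname{Ext}_{H}(\mathbbm{k},(-)^{\mathrm{ad}})$; Theorem \ref{thm-stefanspsq} with $A=\mathbbm{k}$ is just a repackaging of the freeness of $H^{e}$ as a right $H$-module via $h\mapsto\sum h_{(1)}\otimes S(h_{(2)})$. You then untwist $(H^{e})^{\mathrm{ad}}$ into a free module and use a resolution of $\mathbbm{k}$ by finitely generated free modules, which does make $\operatorname{Ext}$ commute with $-\otimes V$ for arbitrary $V$.

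The genuine gap is your treatment of the bijectivity of $S$, which is misplaced in both directions. In the converse you invoke \cite{MR3856132}, but that result (as used in this paper) takes AS-Gorenstein as its \emph{hypothesis}, which is what you are proving. At that stage you only have the left-sided conditions, so the step is circular. It is also avoidable. The Hopf algebra $H^{op,cop}$ has antipode $S$ and underlying algebra $H^{op}$, and rigid Gorensteinness is left-right symmetric: the flip $(H^{op})^{e}\cong H^{e}$ turns $H^{\nu}$ into ${}^{\nu}(H^{op})\cong (H^{op})^{\nu^{-1}}$. So your left-sided argument applied to $H^{op,cop}$ yields $\operatorname{Ext}^{i}_{H^{op}}(\mathbbm{k},H)\cong\delta_{i,d}\mathbbm{k}$ without any bijectivity. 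Alternatively, once the left conditions hold, your computation gives $H^{\nu}\cong H^{S^{2}W_{l}(\chi)}$. Any bimodule isomorphism $H^{\nu}\cong H^{\mu}$ forces $\mu=u^{-1}\nu(-)u$ for a unit $u$, so $S^{2}$, and hence $S$, is bijective.

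In the forward direction, which you treat as unproblematic, bijectivity is genuinely needed. The definition of rigid Gorenstein requires $\nu$ to be an automorphism. By the same observation, $H^{S^{2}W_{l}(\chi)}$ has the form $H^{\nu}$ with $\nu$ an automorphism only if $S$ is bijective. This is exactly where \cite{MR3856132} legitimately enters the cited statement.

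For the bookkeeping you left open: the map $x\otimes y\mapsto\sum x_{(1)}\otimes yx_{(2)}$ is not $H$-linear in general, since linearity forces $\sum S(h_{(2)})h_{(1)}=\varepsilon(h)$. The correct untwisting is $\Phi(x\otimes y)=\sum x_{(1)}\otimes yS^{2}(x_{(2)})$, with inverse $a\otimes b\mapsto\sum a_{(1)}\otimes bS(a_{(2)})$. It uses only $S$, and it intertwines the adjoint action with left multiplication on the first factor because $\sum h_{(1)}\otimes S(h_{(3)})S^{2}(h_{(2)})=h\otimes 1$. Under $\Phi$, the inner left action of $b$ becomes left multiplication by $b$ on the second factor. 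The inner right action of $a$ becomes $u\otimes v\mapsto\sum ua_{(1)}\otimes vS^{2}(a_{(2)})$. By naturality of $\operatorname{Ext}^{d}_{H}(\mathbbm{k},H\otimes V)\cong\int^{l}_{H}\otimes V$, this becomes $\xi\otimes v\mapsto\xi\otimes v\,S^{2}W_{l}(\chi)(a)$. Hence $\operatorname{Ext}^{d}_{H^{e}}(H,H^{e})\cong H^{S^{2}W_{l}(\chi)}$, as required; with this choice it is the right action, not the left, that carries $S^{2}$. Finally, in the converse the dimension count should use the untwisted left structure, i.e.\ apply $\mathbbm{k}\otimes_{H}-$ to $\operatorname{Ext}^{d}_{H}(\mathbbm{k},H)\otimes H\cong H^{\nu}$.
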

\begin{corollary}
Let $H$ be a noetherian AS-Gorenstein Hopf algebra. Then $H$ is unimodular and $S^{2}$ is inner if and only if the Nakayama automorphism of $H$ is inner.\label{cor-nakainniff-unimodular}
\end{corollary}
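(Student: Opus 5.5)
The plan is to derive Corollary \ref{cor-nakainniff-unimodular} directly from Theorem \ref{thm-BZ-nakaASgoren}. That theorem says $\nu=W_{l}(\chi)S^{2}$ is a Nakayama automorphism of $H$, where $\chi$ is the left integral character. The Nakayama automorphism is only defined up to inner automorphisms, so the claim becomes: $W_{l}(\chi)S^{2}$ is inner if and only if $\chi=\varepsilon$ and $S^{2}$ is inner.

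\emph{The direction ($\Leftarrow$).} If $H$ is unimodular, then $\chi=\varepsilon$. Hence $W_{l}(\varepsilon)=\mathrm{id}$, so $\nu=S^{2}$, which is inner by hypothesis.

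\emph{The direction ($\Rightarrow$).} Suppose $\nu(h)=uhu^{-1}$ for some invertible $u\in H$. The key step is to show that $\chi=\varepsilon$, using only the fact that $S^{2}$ is a Hopf algebra automorphism.

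First apply $\varepsilon$. Since $\varepsilon$ is an algebra map, $\varepsilon(uhu^{-1})=\varepsilon(h)$. On the other side, $\varepsilon\circ W_{l}(\chi)=\chi$ and $\varepsilon\circ S^{2}=\varepsilon$ give $\varepsilon(W_{l}(\chi)S^{2}(h))=\chi(S^{2}(h))$. Therefore $\chi\circ S^{2}=\varepsilon$.

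Next, $\chi\circ S^{2}=S^{*2}(\chi)$ is the image of $\chi$ under the group automorphism of $G(H^{\circ})$ induced by the Hopf automorphism $S^{2}$. In fact $\chi\circ S^{2}=\chi$, since $\chi\circ S=\chi^{-1}$ in the group $G(H^{\circ})$. Hence $\chi=\chi\circ S^{2}=\varepsilon$, so $H$ is unimodular.

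Finally, with $\chi=\varepsilon$ we get $\nu=S^{2}$, and $\nu$ is inner by assumption, so $S^{2}$ is inner.

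The only subtle point is the computation $\varepsilon\circ\nu=\chi\circ S^{2}$ combined with $\chi\circ S^{2}=\chi$. This uses the bijectivity of $S$, which holds for noetherian AS-Gorenstein Hopf algebras by \cite[Corollary 0.3]{MR3856132}, so that the relevant maps are genuine algebra automorphisms.
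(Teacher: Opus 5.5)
Your proof is correct and is essentially the paper's argument: apply $\varepsilon$ to the Brown--Zhang formula for the Nakayama automorphism to force $\chi=\varepsilon$, after which $\nu=S^{2}$ must be inner. The only difference is cosmetic. The paper writes the Nakayama automorphism in the equal form $S^{2}W_{l}(\chi)$ given in Theorem \ref{thm-BZ-nakaASgoren}, so applying $\varepsilon$ yields $\chi$ immediately. That makes your extra step $\chi\circ S^{2}=\chi$ unnecessary, though the step is valid and in fact holds in any Hopf algebra without needing $S$ to be bijective.
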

\begin{proof}
It suffices to show the if part. Suppose the Nakayama automorphism of $H$ is given by $\nu(h)=uhu^{-1}$ for all $h\in H$, where $u$ is a unit in $H$. 

Let $\chi $ be the left integral character of $H$. By Theorem \ref{thm-BZ-nakaASgoren}, $ \sum_{(h)}S^{2}(\chi(h_{(1)})h_{(2)})=uhu^{-1}$. Applying the counit $\varepsilon$ to both sides yields
$$\chi(h)=\sum_{(h)}\varepsilon(\chi(h_{(1)})h_{(2)})=\varepsilon(h)$$
for all $h\in H$. Therefore, $H$ is unimodular, and $S^{2}$ is inner.
\end{proof}

Let $A$ be a ring, and let $C$ be a central subring of $A$. The ring extension $C\subseteq A$ is called a \textit{Frobenius extension} if $A$ is finitely generated projective as a $C$-module and there exists an $A$-$C$-bimodule isomorphism $A\cong \operatorname{Hom}_{C}(A,C)$. In this case, $A$ is called a \textit{Frobenius $C$-algebra}. When $C\subseteq A$ is a Frobenius extension, there exists an algebra automorphism $\nu$ of $A$, unique up to inner automorphisms, such that there is an $A$-$A$ bimodule isomorphism ${^{\nu}A}\cong \operatorname{Hom}_{C}(A,C)$. Such an algebra automorphism $\nu$ is called the \textit{Nakayama automorphism} of the Frobenius extension $C\subseteq A $. The Frobenius extension $C\subseteq A$ is called \textit{symmetric} if the Nakayama automorphism is inner (or equivalently, there is an $A$-$A$-bimodule isomorphism $A\cong \operatorname{Hom}_{C}(A,C)$). For additional background on Frobenius extension, we refer the reader to \cite{MR1690111}. 

Now assume that $H$ is an affine Hopf algebra with a central Hopf subalgebra $C$ such that $H$ is a finitely generated $C$-module. Then $C\subseteq H$ is a Frobenius extension (see for example \cite[Corollary III. 4.7]{MR1898492}) and $H$ is rigid Gorenstein. Consequently, $H$ admits both a ``Nakayama automorphism'' as a Frobenius $C$-extension and a ``Nakayama automorphism'' as a rigid Gorenstein Hopf algebra. By \cite[\S 2.5]{MR2379091}, the Nakayama automorphisms arising from these two perspectives coincide. Therefore, no ambiguity arises when referring to the Nakayama automorphisms of affine Hopf algebras admitting large central Hopf subalgebras. In \cite[Question III. 4.8]{MR1898492}, Brown and Goodearl asked when the Frobenius extension $C \subseteq H$ is symmetric.  Corollary \ref{cor-nakainniff-unimodular} accordingly provides the following criterion for determining when $C \subseteq H$ is symmetric.
\begin{corollary}
Let $H$ be an affine Hopf algebra, and let $C$ be a central Hopf subalgebra of $H$ such that $H$ is a finitely generated $C$-module. Then $C \subseteq H$ is a symmetric Frobenius extension if and only if $H$ is unimodular and $S^{2}$ is inner.\label{cor-symmfrobenexif-unS2inn}
\end{corollary}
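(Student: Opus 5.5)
The plan is to combine the coincidence of the two Nakayama automorphisms with Corollary \ref{cor-nakainniff-unimodular}. First I would record the ambient facts. By \cite[Theorem 0.2]{MR1938745} and \cite[Corollary 2]{MR2271358}, $H$ is a noetherian AS-Gorenstein Hopf algebra with bijective antipode. By \cite[Corollary III. 4.7]{MR1898492}, $C\subseteq H$ is a Frobenius extension. Theorem \ref{thm-BZ-nakaASgoren} then tells us that $H$ is rigid Gorenstein, with Nakayama automorphism $\nu=W_{l}(\chi)S^{2}$, where $\chi$ is the left integral character.

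The key input is \cite[\S 2.5]{MR2379091}, quoted just before the statement. It says that the Nakayama automorphism of the Frobenius extension $C\subseteq H$, defined by ${^{\nu'}H}\cong\operatorname{Hom}_{C}(H,C)$, agrees with the rigid Gorenstein Nakayama automorphism $\nu$ up to an inner automorphism. One small point needs care here: one convention twists on the left and the other on the right. Since ${^{\nu}H}\cong H^{\nu^{-1}}$ as bimodules, innerness is unaffected by this, so the discrepancy is harmless.

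With these in hand, the argument is a chain of equivalences. The extension $C\subseteq H$ is symmetric exactly when its Nakayama automorphism $\nu'$ is inner. By the comparison above, this holds exactly when $\nu$ is inner. By Corollary \ref{cor-nakainniff-unimodular}, that holds exactly when $H$ is unimodular and $S^{2}$ is inner.

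For the reverse implication hidden in Corollary \ref{cor-nakainniff-unimodular}, I would check it directly. If $\chi=\varepsilon$, then $W_{l}(\chi)=\mathrm{id}$, so $\nu=S^{2}$, which is inner by assumption.

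The main obstacle is the identification of the two Nakayama automorphisms. If I could not simply cite \cite{MR2379091}, I would argue as follows. Since $C$ is a commutative noetherian affine Hopf algebra, $C$ is Gorenstein, and $\operatorname{RHom}_{C}(H,C)\cong\operatorname{Hom}_{C}(H,C)$ because $H$ is projective over $C$. I would then use the fact that the rigid dualizing complex of $H$ is obtained from that of $C$ via $\operatorname{RHom}_{C}(H,-)$. The rigid dualizing complex of the commutative Hopf algebra $C$ is $C[d]$, with trivial twist since $C$ is unimodular and commutative. Together these give $\operatorname{Ext}^{d}_{H^{e}}(H,H^{e})\cong\operatorname{Hom}_{C}(H,C)$ as bimodules, up to the twist convention. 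This identification is the step I would expect to be the technical heart if done from scratch.
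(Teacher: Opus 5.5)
Your proposal is correct and follows the paper's own argument: it cites \cite[\S 2.5]{MR2379091} to identify the Nakayama automorphism of the Frobenius extension $C\subseteq H$ with the rigid Gorenstein one $W_{l}(\chi)S^{2}$, and then applies Corollary~\ref{cor-nakainniff-unimodular}. Your remark that ${}^{\nu}H\cong H^{\nu^{-1}}$ makes the left/right twist convention irrelevant to innerness is a detail the paper leaves implicit. In your fallback sketch, the precise statement is that $\operatorname{Hom}_{C}(H,C)[d]$ is the rigid dualizing complex of $H$. That complex is the inverse of $\operatorname{RHom}_{H^{e}}(H,H^{e})\cong H^{\nu}[-d]$, so $\operatorname{Hom}_{C}(H,C)\cong H^{\nu^{-1}}\cong{}^{\nu}H$, which is exactly the discrepancy you already flagged.
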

\begin{remark}\label{rmk-recoOSthm}
For any finite-dimensional Hopf algebra $H$, Corollary \ref{cor-symmfrobenexif-unS2inn} (with $C=\mathbbm{k}$) shows that $H$ is a symmetric Frobenius algebra if and only if $H$ is unimodular and $S^{2}$ is inner. This recovers a theorem of Oberst-Schneider in \cite{MR347838}.
\end{remark}


For any Hopf algebra $H$, by a classical result of Lorenz-Lorenz \cite[\S 2.4]{MR1227522}, $\operatorname{l.gl.dim}H=\operatorname{p.dim}_{H}\mathbbm{k}$,  which is also equal to $\operatorname{p.dim}_{H^{e}}H$ and $\operatorname{r.gl.dim}H$ by \cite[Proposition A.1]{MR3681116}.
Hence, for any noetherian AS-regular Hopf algebra $H$, 
$$\operatorname{gl.dim}H=\operatorname{inj.dim}_{H}H=\operatorname{p.dim}_{H^{e}}H.$$ 

The following result will be used subsequently.
\begin{lemma}
[\text{\cite[Lemma 1.3]{MR3250287}}] Let $H$ be a noetherian Hopf algebra. Then $H$ is AS-regular if and only if $H$ is skew Calabi-Yau.\label{lem-noethHopf-sCYiffasre}
\end{lemma}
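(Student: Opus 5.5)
The plan is to prove the two implications separately. In both directions the bridge is the standard identification of $H$, as an $H^{e}$-module, with a module induced from the trivial module ${_{H}\mathbbm{k}}$. Throughout, $H$ is noetherian, and in either direction the antipode $S$ will turn out to be bijective: in the forward direction this is \cite[Corollary 0.3]{MR3856132}, and in the reverse direction it is part of the conclusion once $H$ is shown to be AS-Gorenstein. For the step where bijectivity is needed below, I would rely on the standard fact that Hopf algebras considered here have bijective antipode, or note that it is only used to obtain flatness.

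\emph{AS-regular $\Rightarrow$ skew Calabi-Yau.} By Theorem \ref{thm-BZ-nakaASgoren}, $H$ is rigid Gorenstein, and by the discussion in \S\ref{subsec-sCYrigG} it then suffices to show that $H$ is homologically smooth.
\begin{itemize}
\item Consider the algebra map $\iota\colon H\to H^{e}$, $h\mapsto \sum h_{(1)}\otimes S(h_{(2)})$. Via $\iota$, $H^{e}$ is free as a right $H$-module; this is the standard consequence of the bijectivity of $S$.
\item There is an isomorphism of left $H^{e}$-modules $H^{e}\otimes_{H}\mathbbm{k}\cong H$. Hence the induction functor $H^{e}\otimes_{H}-$ is exact and carries finitely generated projectives to finitely generated projectives.
\item Since $H$ is noetherian and $\operatorname{p.dim}_{H}\mathbbm{k}=\operatorname{gl.dim}H=d<\infty$ by Lorenz-Lorenz, take a resolution of $\mathbbm{k}$ by finitely generated free $H$-modules and truncate it at degree $d$. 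The $d$-th syzygy is finitely generated and projective, so this gives a finite resolution of $\mathbbm{k}$ by finitely generated projective modules.
\item Inducing this resolution along $\iota$ gives a finite resolution of $H$ by finitely generated projective $H^{e}$-modules. Hence $H$ is perfect over $H^{e}$, i.e.\ homologically smooth.
\end{itemize}

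\emph{Skew Calabi-Yau $\Rightarrow$ AS-regular.} Homological smoothness gives $\operatorname{p.dim}_{H^{e}}H<\infty$. By \cite[Proposition A.1]{MR3681116} this equals $\operatorname{gl.dim}H$, so $H$ has finite global dimension. For a noetherian algebra of finite global dimension, $\operatorname{inj.dim}_{H}H$ and $\operatorname{inj.dim}_{H^{op}}H$ are both finite, and they are equal.

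It remains to compute $\operatorname{Ext}^{i}_{H}(\mathbbm{k},H)$. The key step is the following identity, valid for a perfect $H^{e}$-module $P$ and a finite-dimensional left $H$-module $N$:
$$\operatorname{RHom}_{H}(P\otimes^{L}_{H}N,H)\cong \operatorname{RHom}_{H^{e}}(P,H\otimes N^{*})\cong \operatorname{RHom}_{H^{e}}(P,H^{e})\otimes^{L}_{H}N^{*}.$$
The first isomorphism is adjunction. The second is checked for $P=H^{e}$, where it holds because $N$ is finite-dimensional, and then extended to all perfect $P$ by d\'evissage: both sides are triangulated functors of $P$ that commute with finite direct sums and summands.

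Apply this with $P=H$ and $N=\mathbbm{k}$. The skew Calabi-Yau hypothesis gives
$$\operatorname{RHom}_{H}(\mathbbm{k},H)\cong H^{\nu}[-d]\otimes^{L}_{H}\mathbbm{k}\cong \mathbbm{k}[-d].$$
So $\operatorname{Ext}^{i}_{H}(\mathbbm{k},H)$ vanishes for $i\neq d$ and is one-dimensional for $i=d$. The symmetric argument, with $H^{op}$ in place of $H$, handles $\operatorname{Ext}_{H^{op}}^{i}(\mathbbm{k},H)$. Finally, $d$ agrees with the injective dimension: $d=\operatorname{p.dim}_{H^{e}}H=\operatorname{gl.dim}H=\operatorname{inj.dim}_{H}H$. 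Together these show that $H$ is AS-Gorenstein of dimension $d$ with finite global dimension, i.e.\ AS-regular.

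The main obstacle I expect is the second isomorphism in the displayed identity, namely tracking the bimodule sides correctly so that $H\otimes N^{*}\cong H^{e}\otimes_{H}N^{*}$ with the right module structures. Secondary points needing care are the freeness of $H^{e}$ over $\iota(H)$ in the first direction and the use of bijectivity of $S$ in the second.
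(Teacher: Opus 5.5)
Your proof is correct. The paper gives no proof of its own here; it only cites \cite[Lemma 1.3]{MR3250287}. So the comparison is with the tools the paper has on hand.

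\textbf{Forward direction.} This is the standard argument. Theorem \ref{thm-BZ-nakaASgoren} supplies the Ext condition. Homological smoothness comes from inducing a finite resolution of ${}_{H}\mathbbm{k}$ by finitely generated projectives along $\iota$. You are right to build the $H^{e}$-resolution this way, since $\operatorname{p.dim}_{H^{e}}H<\infty$ by itself does not give finitely generated terms.

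\textbf{Converse.} Here you take a genuinely different route from what the paper's framework offers. There, the converse is one line: finite global dimension gives finite injective dimension, so $H$ is rigid Gorenstein, and the ``if'' half of Theorem \ref{thm-BZ-nakaASgoren} gives AS-Gorenstein. You instead compute $\operatorname{Ext}^{i}_{H}(\mathbbm{k},H)$ directly from smoothness via the duality formula. That is longer, but it is self-contained. It also shows the converse uses nothing Hopf-theoretic beyond the existence of the one-dimensional trivial module.

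\textbf{Two small corrections.}

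First, bijectivity of $S$ plays no role anywhere, so you can drop the hedging. Your fallback claim that the Hopf algebras ``considered here have bijective antipode'' is not something you could cite for an arbitrary noetherian Hopf algebra. It is also unnecessary. The map $x\otimes y\mapsto \sum x_{(1)}\otimes S(x_{(2)})y$, with inverse $a\otimes b\mapsto \sum a_{(1)}\otimes a_{(2)}b$, is an isomorphism of right $H$-modules. It goes from $H\otimes H$, with $H$ acting on the first factor, to $H^{e}$ with $H$ acting through $\iota$. This holds for every Hopf algebra, so $H^{e}$ is free over $\iota(H)$ unconditionally.

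Second, with the sides tracked correctly, the formula reads $\operatorname{RHom}_{H}(N,H)\cong N^{*}\otimes^{L}_{H}\operatorname{RHom}_{H^{e}}(H,H^{e})$. In this formula:
\begin{itemize}
\item $N^{*}$ is the right $H$-module $(\xi b)(n)=\xi(bn)$;
\item the identification $\operatorname{Hom}_{\mathbbm{k}}(N,H)\cong N^{*}\otimes_{H}H^{e}$ uses the inner left $H$-structure of $H^{e}$;
\item the right $H$-action on the result is the inner right action.
\end{itemize}
For $N=\mathbbm{k}$ this gives $\operatorname{Ext}^{d}_{H}(\mathbbm{k},H)\cong \mathbbm{k}\otimes_{H}H^{\nu}$, which is one-dimensional with right action through $\varepsilon\circ\nu$. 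This is consistent with Theorem \ref{thm-BZ-nakaASgoren}, since $\varepsilon\circ W_{l}(\chi)S^{2}=\chi$. Because $H^{\nu}$ is free of rank one on either side, your dimension count is unaffected by the side swap.
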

\begin{remark}\label{rmk-noethCYHopf-iffasreinns-un}
Let $H$ be a noetherian Hopf algebra. It was proved in \cite[Theorem 2.3]{MR2678828} that $H$ is Calabi-Yau if and only if $H$ is unimodular, AS-regular, and $S^{2}$ is inner. We remark that this can also be derived from Corollary \ref{cor-nakainniff-unimodular} and Lemma \ref{lem-noethHopf-sCYiffasre}.
\end{remark}
There exist noetherian affine PI skew Calabi-Yau Hopf algebras that are unimodular, yet for which $S^{2}$ is not inner. See Example \ref{eg-geralizedsuzuki-algunimodular} and Remark \ref{rmk-infiSuzukalg-prenilc}.



\subsection{Proof of Theorem \ref{thm-fghopfhomol-unimosCY}}\label{subsec-pfofintrm-thm}
In this subsection, we fix a noetherian AS-Gorenstein Hopf algebra $H$ and a central Hopf subalgebra $C$ of $H$ ($H$ is not necessarily finitely generated  over $C$). Then $H$ is faithfully flat over $C$ \cite[Theorem 3.3]{MR1228767}. It follows that $C$ is noetherian. By Molnar's Theorem \cite{MR376740}, $C$ is affine. In particular, $C$ is AS-Gorenstein. Since $C$ is central, $\overline{H}\coloneqq H/C^{+}H$ is a Hopf quotient of $H$. By the proof of \cite[Proposition 3.4.3]{MR1243637}, $C\subseteq H$ is an $\overline{H}$-Galois extension. We will use the notation introduced here without further comment in subsequent sections.



We will use the following canonical action of $\overline{H}$ on the $\operatorname{Hom}_{C}$ spaces of $H$-modules.
 Let $M,N$ be left $H$-modules and let $f\in \operatorname{Hom}_{C}(M,N)$. For any $\overline{h}\in \overline{H}=H/C^{+}H$, define
\begin{equation}\label{eq-actonhomsp-MNchbar}
(\overline{h}\rightharpoonup f)(m)\coloneqq \sum_{(h)}h_{(1)}f(S(h_{(2)})m)
\end{equation}
for all $m\in M$. Clearly, \eqref{eq-actonhomsp-MNchbar} turns $\text{Hom}_{C}(M,N)$ into a left $\overline{H}$-module. From the definition of the Ulbrich-Miyashita action in subsection \ref{subsec-UMact-stefan}, the following result follows immediately.
\begin{lemma}
Retain the preceding notation. The left $\overline{H}$-module structure on $\operatorname{Hom}_{C}(M,N)$ defined in \eqref{eq-actonhomsp-MNchbar} coincides with the $\overline{H}$-module structure induced by the Ulbrich-Miyashita action on the center of $\operatorname{Hom}_{\mathbbm{k}}(M,N)$, viewed as a $C^{e}$-module.\label{lem-useaconhom-gbUMact}
\end{lemma}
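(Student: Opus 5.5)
The plan is to compute the Ulbrich-Miyashita action explicitly, by finding a closed formula for the map $\tau$ of \eqref{eq-babti-taudef} in the present situation $A=C$, $B=H$, with the Hopf algebra $\overline{H}$ coacting on the left through $(\pi\otimes \mathrm{id})\Delta$, where $\pi\colon H\to\overline{H}$ is the quotient map.

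\textbf{Step 1: set up the bimodule.} Regard $Y=\operatorname{Hom}_{\mathbbm{k}}(M,N)$ as a left $H^{e}$-module via
$$((a\otimes b)f)(m)=af(bm),$$
so that, as an $H$-$H$-bimodule, $a\cdot f\cdot b=afb$. Its center as a $C$-$C$-bimodule consists of those $f$ with $cf=fc$ for all $c\in C$. These are exactly the $C$-linear maps, so $Z_{C}(Y)=\operatorname{Hom}_{C}(M,N)$ as sets, and the only issue is to identify the actions.

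\textbf{Step 2: compute $\tau$.} I claim that
$$\tau(\overline{h})=\sum_{(h)} h_{(1)}\otimes_{C} S(h_{(2)}).$$
Indeed,
$$\beta\Bigl(\sum h_{(1)}\otimes_C S(h_{(2)})\Bigr)=\sum \overline{h_{(1)}}\otimes h_{(2)}S(h_{(3)})=\overline{h}\otimes 1.$$
Since $\beta$ is bijective (the extension is $\overline{H}$-Galois), this shows that the right-hand side depends only on $\overline{h}$ and equals $\beta^{-1}(\overline{h}\otimes 1)$. There is no separate well-definedness check: injectivity of $\beta$ handles it. As a sanity check, for $h=ch'$ with $c\in C^{+}$, one can move $c_{(1)}$ across $\otimes_C$ and obtain a factor $\sum c_{(1)}S(c_{(2)})=\varepsilon(c)=0$.

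\textbf{Step 3: unwind the action.} An element $x\in Z_C(Y)$ corresponds to the $H^{e}$-linear map
$$\varphi_x\colon H\otimes_C H\to Y,\qquad a\otimes_C b\mapsto axb,$$
through the chain of isomorphisms preceding Theorem \ref{thm-stefanspsq}, using \eqref{eq-ABetim-bbaiso1}. The $\overline{H}$-action on $\operatorname{Hom}_{H^{e}}(H\otimes_C H,Y)$ is induced by the right $\overline{H}$-action \eqref{eq-botimabriHmo-stu1}, namely $(\overline{h}\cdot\varphi)(z)=\varphi(z\leftharpoonup \overline{h})$. Evaluating at $1\otimes_C 1$ gives
$$\overline{h}\cdot x=\varphi_x(\tau(\overline{h}))=\sum h_{(1)}\,x\,S(h_{(2)}).$$
In $Y$ this is the map
$$m\mapsto \sum h_{(1)}x(S(h_{(2)})m),$$
which is exactly \eqref{eq-actonhomsp-MNchbar}.

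The only delicate point, and so the main obstacle, is bookkeeping of conventions: the left/right $H^{e}$ identifications, and the direction in which the right $\overline{H}$-action on $H\otimes_C H$ becomes a left action on the Hom space. Once the formula for $\tau$ in Step 2 is in hand, the rest is a direct substitution.
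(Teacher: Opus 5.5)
Your proposal is correct and follows the same route the paper intends: the paper gives no written argument and says the lemma follows immediately from the definition of the Ulbrich--Miyashita action, which is exactly the unwinding you carry out. Your explicit formula $\tau(\overline{h})=\sum h_{(1)}\otimes_{C}S(h_{(2)})$, verified through $\beta$ and made well defined by the injectivity of $\beta$, is precisely the computation the paper leaves implicit.
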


Hence, for any two left $H^{e}$-modules $M$ and $N$, $\operatorname{RHom}_{C}(M,N)$ is a complex of $\overline{H}$-$H^{e}$-bimodules. When $M=\mathbbm{k}$, for each integer $i\geq 0$, the induced left $\overline{H}$-module structure on the cohomology $\operatorname{Ext}_{C}^{i}(\mathbbm{k},N)$ coincides with the canonical $\overline{H}$-module structure on $\operatorname{Ext}_{C}^{i}(\mathbbm{k},N)$ considered in \cite[Lemma 5.2.1]{MR1201697}, with $B=H$ and $A=C$. 

In Theorem \ref{thm-noethAS-intebimoduiso-un}, we will prove that $\overline{H}$ is noetherian AS-Gorenstein and establish an $H$-$H$ bimodule isomorphism between the homological integral spaces of $H$ and $\overline{H}$. This is a stronger version of Theorem \ref{thm-fghopfhomol-unimosCY} (a). Theorem \ref{thm-fghopfhomol-unimosCY} (a)-(d) are consequences of Theorem \ref{thm-modulefinitehopf-integral}, and Theorem \ref{thm-fghopfhomol-unimosCY} (e) follows from Corollary \ref{cor-cheva-0algcl-CY}.

The following proposition is a key ingredient in the proof of Theorem \ref{thm-noethAS-intebimoduiso-un}. However, it is not the only approach to proving Theorem \ref{thm-noethAS-intebimoduiso-un}; as the proof of Theorem \ref{thm-noethAS-intebimoduiso-un} below demonstrates, one may alternatively use the spectral sequence considered in \cite[Proposition 5.3]{MR1201697}.
\begin{proposition}
Let $H$ be a noetherian Hopf algebra, and let $C$ be a central Hopf subalgebra of $H$. Then, for any two left $H^{e}$-modules $M$ and $N$, one has\label{prop-usebystespmathm1}, in ${\bf D}(H^{e})$,
$$\operatorname{RHom}_{H}(M,N)\cong \operatorname{RHom}_{\overline{H}}(\mathbbm{k},\text{RHom}_{C}(M,N)).$$
In particular, for the trivial bimodule $\mathbbm{k}$, $\operatorname{RHom}_{H}(\mathbbm{k},H)\cong \operatorname{RHom}_{\overline{H}}(\mathbbm{k},\text{RHom}_{C}(\mathbbm{k},H))$.
\end{proposition}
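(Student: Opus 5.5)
The plan is to reduce the statement to Stefan's spectral sequence (Theorem \ref{thm-stefanspsq}) applied to a suitable Hopf Galois extension built from $C\subseteq H$. The natural choice is to convert $\operatorname{RHom}_{H}(M,N)$ into Hochschild-type cohomology. For left $H$-modules $M,N$ (here $H^{e}$-modules, so the extra structure rides along), one has $\operatorname{Hom}_{H}(M,N)\cong \operatorname{Hom}_{H^{e}}(H,\operatorname{Hom}_{\mathbbm{k}}(M,N))$. Since $\operatorname{Hom}_{\mathbbm{k}}(P,-)$ of a projective $H$-module $P$ is coinduced-like and hence acyclic for $\operatorname{Hom}_{H^{e}}(H,-)$ (because $H$ is a Hopf algebra, $H^{e}\cong H\otimes H$ via the twisting by $S$ and $H^{e}$-projectivity is controlled), this derives to
$$\operatorname{RHom}_{H}(M,N)\cong \operatorname{RHom}_{H^{e}}(H,\operatorname{RHom}_{\mathbbm{k}}(M,N)),$$
where the right side is computed by taking a projective resolution $P^\bullet\to M$ of $H^{e}$-modules and using $\operatorname{Hom}_{\mathbbm{k}}(P^\bullet,N)$. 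The residual $H^{e}$-actions from the bimodule structures of $M,N$ commute with everything, so the isomorphism lives in ${\bf D}(H^{e})$.

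Next, I will apply Theorem \ref{thm-stefanspsq} with $B=H$, $A=C$ and Hopf algebra $\overline{H}$. The extension $C\subseteq H$ is $\overline{H}$-Galois, as recalled above. Since $H$ is faithfully flat over $C$ by \cite[Theorem 3.3]{MR1228767}, and $C$ is central, $H$ is flat as both a left and a right $C$-module, so the hypotheses hold. Taking the bimodule $\operatorname{Hom}_{\mathbbm{k}}(P^{i},N)$ termwise, I get
$$\operatorname{RHom}_{H^{e}}(H,\operatorname{Hom}_{\mathbbm{k}}(P^\bullet,N))\cong \operatorname{RHom}_{\overline{H}}(\mathbbm{k},\operatorname{RHom}_{C^{e}}(C,\operatorname{Hom}_{\mathbbm{k}}(P^\bullet,N))).$$
Stefan's theorem is stated for modules, but it extends to bounded-below complexes by a standard way-out or Cartan--Eilenberg argument, since the functors involved are composites of derived functors. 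Because $C$ is commutative and central, the $C$-center of $\operatorname{Hom}_{\mathbbm{k}}(P,N)$ is $\operatorname{Hom}_{C}(P,N)$. The same reasoning as in the first step, now applied to the commutative Hopf algebra $C$, gives $\operatorname{RHom}_{C^{e}}(C,\operatorname{RHom}_{\mathbbm{k}}(M,N))\cong\operatorname{RHom}_{C}(M,N)$. By Lemma \ref{lem-useaconhom-gbUMact}, the Ulbrich--Miyashita $\overline{H}$-action on this object is exactly the action \eqref{eq-actonhomsp-MNchbar}. Combining these isomorphisms yields the claim, and the special case follows by setting $M=\mathbbm{k}$ and $N=H$.

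The main obstacle is the bookkeeping at the level of complexes rather than cohomology. I must check that the identification $\operatorname{RHom}_{C}(M,N)$, computed from an $H^{e}$-projective resolution of $M$, really carries the $\overline{H}$-$H^{e}$-structure compatibly. I also need $H$-projective modules to be $C$-projective, which holds since $H$ is $C$-flat and in fact projective, so that the resolution computes $\operatorname{RHom}_{C}$. Finally, the acyclicity needed to pass $\operatorname{Hom}_{C}(P,N)$ into $\operatorname{RHom}_{\overline{H}}(\mathbbm{k},-)$ has to be justified. Alternatively, this step can be checked directly: $\operatorname{Hom}_{\overline{H}}(\mathbbm{k},\operatorname{Hom}_{C}(P,N))=\operatorname{Hom}_{H}(P,N)$, and $\operatorname{Hom}_{C}(H\otimes V,N)\cong\operatorname{Hom}_{\mathbbm{k}}(\overline{H}\otimes V,N)$ is coinduced as an $\overline{H}$-module by the Galois property and faithful flatness. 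That gives a direct Grothendieck-composition proof of the isomorphism, which I would use if the extension of Stefan's theorem to complexes proves awkward.
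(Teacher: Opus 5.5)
Your proposal is correct and follows the paper's proof: you rewrite $\operatorname{RHom}_{H}(M,N)$ as $\operatorname{RHom}_{H^{e}}(H,\operatorname{Hom}_{\mathbbm{k}}(M,N))$, apply Stefan's theorem (Theorem \ref{thm-stefanspsq}) to the faithfully flat $\overline{H}$-Galois extension $C\subseteq H$, and identify $\operatorname{RHom}_{C^{e}}(C,\operatorname{Hom}_{\mathbbm{k}}(M,N))$ with $\operatorname{RHom}_{C}(M,N)$, with the $\overline{H}$-actions matched by Lemma \ref{lem-useaconhom-gbUMact}. Since $\operatorname{RHom}_{\mathbbm{k}}(M,N)=\operatorname{Hom}_{\mathbbm{k}}(M,N)$ is already a single $H^{e}$-module, the paper applies Stefan's theorem to it directly, so several of your extra steps are unnecessary: the detour through $\operatorname{Hom}_{\mathbbm{k}}(P^{\bullet},N)$ and the extension to complexes can be dropped; the appeal to $C$-projectivity of $H$ can be avoided by resolving $N$ by injective $H$-modules, which are $C$-injective because $H$ is $C$-flat; and the first isomorphism holds for any $\mathbbm{k}$-algebra, with no Hopf structure needed.
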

\begin{proof}
There are canonical isomorphisms 
\begin{align*}\operatorname{RHom}_{H}(M,N)&\cong \operatorname{RHom}_{H}(H\otimes_{H}^{L}M,N)\\
&\cong \operatorname{RHom}_{H^{e}}(H,\operatorname{Hom}_{\mathbbm{k}}(M,N)).
\end{align*}
For the second isomorphism, see, for example, \cite[Lemma 2.13 (2)]{MR4413364}. Since $H$ is faithfully flat over $C$, Lemma \ref{lem-useaconhom-gbUMact} and Theorem \ref{thm-stefanspsq} imply that
\begin{align*}
\operatorname{RHom}_{H^{e}}(H,\operatorname{Hom}_{\mathbbm{k}}(M,N))&\cong \operatorname{RHom}_{\overline{H}}(\mathbbm{k},\text{RHom}_{C^{e}}(C,\operatorname{Hom}_{\mathbbm{k}}(M,N)))\\
&\cong \operatorname{RHom}_{\overline{H}}(\mathbbm{k},\operatorname{RHom}_{C}(M,N))
\end{align*}
in ${\bf D}(H^{e})$. Hence, $\operatorname{RHom}_{H}(M,N)\cong \operatorname{RHom}_{\overline{H}}(\mathbbm{k},\text{RHom}_{C}(M,N)).$
\end{proof}

The following lemma is standard.
\begin{lemma}\label{lem-homtenflat-naiso} 
Let $R$ be a ring, $X$ a finitely presented left $R$-module, $Y$ an $R$-$R$-bimodule, and $Z$ a flat left $R$-module. Then, there is a  natural isomorphism
$$\gamma:\text{Hom}_{R}(X,Y)\otimes_{R}Z\to \text{Hom}_{R}(X,Y\otimes_{R}Z), \, f\otimes z\mapsto (x\mapsto f(x)\otimes z).$$
\end{lemma}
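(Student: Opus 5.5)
This is the standard tensor–hom exchange, so I will prove it by the usual reduction through finite free presentations. The first step is to check that $\gamma$ is well defined. For $r\in R$ we have
\[
\gamma(fr\otimes z)(x)=f(x)r\otimes z=f(x)\otimes rz=\gamma(f\otimes rz)(x),
\]
where the right $R$-action on $\operatorname{Hom}_R(X,Y)$ is $(fr)(x)=f(x)r$, induced from the right action on $Y$. Moreover, $\gamma(f\otimes z)$ is left $R$-linear because $f$ is, and the left $R$-structure on $Y\otimes_R Z$ comes from the left action on $Y$. Naturality of $\gamma$ in $X$ (and in $Z$) is immediate from the formula.

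Next I treat free modules. For $X=R$, both sides identify with $Y\otimes_R Z$ via $f\mapsto f(1)$, and under these identifications $\gamma$ is the identity. Both sides are additive in $X$, so $\gamma$ is an isomorphism for every finitely generated free module $X=R^n$.

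For general $X$, I choose a finite presentation $R^m\to R^n\to X\to 0$. Applying $\operatorname{Hom}_R(-,Y)$ gives an exact sequence
\[
0\to \operatorname{Hom}_R(X,Y)\to \operatorname{Hom}_R(R^n,Y)\to \operatorname{Hom}_R(R^m,Y)
\]
of right $R$-modules. Since $Z$ is flat, $-\otimes_R Z$ preserves this left exactness. On the other side, applying $\operatorname{Hom}_R(-,Y\otimes_R Z)$ to the presentation also gives a left exact sequence. Naturality of $\gamma$ in $X$ yields a commutative ladder between these two sequences. The maps at the $R^n$ and $R^m$ positions are isomorphisms by the free case, so the five lemma (in its left exact form) shows that the map at the $X$ position is an isomorphism.

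The only point that needs care is where flatness is used. It is needed precisely so that tensoring with $Z$ keeps the first sequence exact at $\operatorname{Hom}_R(X,Y)\otimes_R Z$, that is, so that the kernel is preserved. Finite presentation is needed so that only finite free modules appear, which is what makes the free case apply.
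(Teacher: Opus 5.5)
Your proof is correct. The paper gives no proof of this lemma and simply calls it standard; your argument is the standard one it relies on. You reduce to finite free modules through a finite presentation, use flatness of $Z$ to keep $0\to\operatorname{Hom}_R(X,Y)\otimes_R Z\to\operatorname{Hom}_R(R^n,Y)\otimes_R Z\to\operatorname{Hom}_R(R^m,Y)\otimes_R Z$ exact, and compare kernels through the commutative ladder.
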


\begin{theorem}
Let $H$ be a noetherian AS-Gorenstein Hopf algebra of injective dimension $d$, and let $C$ be a central Hopf subalgebra of injective dimension $s$. Then the following hold:\label{thm-noethAS-intebimoduiso-un}
\begin{itemize}
\item[(a)] $\overline{H}$ is noetherian AS-Gorenstein of injective dimension $d-s$.
\item[(b)] $\textstyle \int_{H}^{l}\cong \textstyle \int_{\overline{H}}^{l}$ and $  \textstyle \int_{H}^{r}\cong \textstyle \int_{\overline{H}}^{r}$ as $H$-$H$ bimodules. In particular, $\operatorname{io}(H)=\operatorname{io}(\overline{H})$.
\item[(c)] $H$ is unimodular if and only if $\overline{H}$ is unimodular.
\item[(d)] Assume that $C$ is regular. If $\overline{H}$ is skew Calabi-Yau, then so is $H$. 
Moreover, if $S^{2}$ is inner and $\overline{H}$ is Calabi-Yau, then $H$ is Calabi-Yau.
\end{itemize}
\end{theorem}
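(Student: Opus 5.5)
The plan is to feed the computation of $\operatorname{RHom}_{C}(\mathbbm{k},H)$ into Proposition \ref{prop-usebystespmathm1} (with $M=\mathbbm{k}$, $N=H$). First, $C$ is a noetherian affine commutative Hopf algebra, hence AS-Gorenstein. Since $\mathbbm{k}$ is a finitely presented $C$-module and $H$ is flat over $C$, Lemma \ref{lem-homtenflat-naiso}, applied to a resolution of ${}_{C}\mathbbm{k}$ by finitely generated free $C$-modules, gives
$$\operatorname{RHom}_{C}(\mathbbm{k},H)\cong \operatorname{RHom}_{C}(\mathbbm{k},C)\otimes_{C}H\cong \textstyle\int_{C}^{l}[-s]\otimes_{C}H .$$
Since $C^{+}$ acts trivially on $\int_{C}^{l}$, this is concentrated in degree $s$ and equals $\int_{C}^{l}\otimes_{C}H\cong H/C^{+}H=\overline{H}$, up to keeping track of structures. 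The real content of this step is to identify the $\overline{H}$-$H^{e}$-bimodule structure on $\operatorname{Ext}^{s}_{C}(\mathbbm{k},H)$ coming from Lemma \ref{lem-useaconhom-gbUMact}. Concretely, I will show that it is $\overline{H}$ with left $\overline{H}$-action by left multiplication and $H^{e}$-action by multiplication, possibly twisted by a one-dimensional character of $\overline{H}$. That character should be the integral character of $C$ composed with the adjoint (Ulbrich-Miyashita) action. Because $C$ is central and commutative, its integral is $\operatorname{ad}$-invariant, so I expect the twist to be trivial. Still, I would verify this directly with formula \eqref{eq-actonhomsp-MNchbar} on a class represented by $f\colon\mathbbm{k}\to H$ at the Koszul/Ext level.

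Next, with this identification, Proposition \ref{prop-usebystespmathm1} yields
$$\operatorname{RHom}_{H}(\mathbbm{k},H)\cong \operatorname{RHom}_{\overline{H}}(\mathbbm{k},\overline{H})[-s]$$
in ${\bf D}(H^{e})$, where the right $H$-action on the right side is through $H\to\overline{H}$. This identification is compatible with the remaining right $H$-structure, since the $H^{e}$-action on $\operatorname{RHom}_{C}(\mathbbm{k},H)$ is through the right action on $H$. Taking cohomology gives $\operatorname{Ext}^{i}_{H}(\mathbbm{k},H)\cong \operatorname{Ext}^{i-s}_{\overline{H}}(\mathbbm{k},\overline{H})$ as bimodules. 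The right-sided version follows in the same way, using $H^{op}$ and the bijective antipode, or using the symmetric version of Stefan's theorem.

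For (a), note that $\overline{H}$ is noetherian as a quotient of $H$, and by the previous step its Ext groups $\operatorname{Ext}^{j}_{\overline{H}}(\mathbbm{k},\overline{H})$ vanish except for $j=d-s$, where the dimension is $1$. It remains to get finite injective dimension of $\overline{H}$. For that I would use the argument from \cite{MR1938745,MR2437632}: for a noetherian Hopf algebra, the injective dimension equals the largest $j$ with $\operatorname{Ext}^{j}(\mathbbm{k},\overline{H})\neq 0$, via $\operatorname{Ext}^{j}_{\overline{H}}(M,\overline{H})\cong \operatorname{Ext}^{j}_{\overline{H}}(\mathbbm{k},\operatorname{Hom}(M,\overline{H}))$-type arguments. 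Alternatively, I could apply the same isomorphism of Proposition \ref{prop-usebystespmathm1} with arbitrary $M$ to bound $\operatorname{Ext}_{\overline{H}}$ by $\operatorname{inj.dim}H-s$. Part (b) is then the degree-$d$ statement, together with Lemma \ref{lem-ioHcomputed-ordergrpch} for the integral orders, and (c) is immediate from (b).

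For (d), suppose $C$ is regular. Then $H$ has finite global dimension provided $\overline{H}$ does, by Proposition \ref{prop-usebystespmathm1} applied to arbitrary $M$ and $N$ (or by Lorenz-Lorenz, $\operatorname{p.dim}_{H}\mathbbm{k}\le \operatorname{p.dim}_{C}\mathbbm{k}+\operatorname{p.dim}_{\overline{H}}\mathbbm{k}$). With (a), this makes $H$ AS-regular, hence skew Calabi-Yau by Lemma \ref{lem-noethHopf-sCYiffasre}. If moreover $\overline{H}$ is Calabi-Yau, it is unimodular, so $H$ is unimodular by (c), and Remark \ref{rmk-noethCYHopf-iffasreinns-un} together with $S^{2}$ inner gives that $H$ is Calabi-Yau.

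The main obstacle is the first step: showing that $\operatorname{Ext}^{s}_{C}(\mathbbm{k},H)\cong\overline{H}$ with exactly the untwisted $\overline{H}$-$H^{e}$ structure. The rest is formal.
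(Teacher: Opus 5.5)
Your proposal is essentially correct. For (a)--(c) it follows the paper's strategy: compute $\operatorname{RHom}_{C}(\mathbbm{k},H)\cong\overline{H}[-s]$ in ${\bf D}(\overline{H}\otimes H^{op})$ and substitute into Proposition \ref{prop-usebystespmathm1}. Two points need tightening, and (d) goes by a different route.

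\textbf{The structure identification.} The step you call the main obstacle closes immediately, and no twist can occur. With $M=\mathbbm{k}$, formula \eqref{eq-actonhomsp-MNchbar} reduces to $(\overline{h}\rightharpoonup f)(1)=h\,f(1)$. So the $\overline{H}$-action on $\operatorname{Ext}^{\bullet}_{C}(\mathbbm{k},H)$ is functoriality in the second variable along the maps $x\mapsto hx$, which are $C$-linear because $C$ is central.
\begin{itemize}
\item Under your isomorphism $\operatorname{Hom}_{C}(P_{\bullet},C)\otimes_{C}H\cong\operatorname{Hom}_{C}(P_{\bullet},H)$, this action is left multiplication on the factor $H$.
\item Since $C$ is commutative, $\operatorname{Ext}^{s}_{C}(\mathbbm{k},C)$ is the trivial $C$-module.
\item Hence you get $\overline{H}$ with left and right multiplication, untwisted.
\end{itemize}
A twist of the kind you anticipate does occur for non-central normal Hopf subalgebras; this is the mechanism behind Example \ref{eg-infdihdralgrp}. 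The paper sidesteps the issue by resolving $H$ rather than $\mathbbm{k}$. With $I^{\bullet}$ an injective resolution of $C$, the map $\operatorname{Hom}_{C}(\mathbbm{k},I^{\bullet}\otimes_{C}H)\cong\operatorname{Hom}_{C}(\mathbbm{k},I^{\bullet})\otimes_{C}H$ is visibly an isomorphism of $\overline{H}$-$H$-bimodule complexes.

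\textbf{The bound $\operatorname{inj.dim}\overline{H}\le d-s$.} Your first option does not work as stated. There is no general result that the injective dimension of a noetherian Hopf algebra equals the top degree where $\operatorname{Ext}^{j}(\mathbbm{k},-)$ is nonzero, unless finiteness is known in advance, and that finiteness is exactly what is at stake here. The reduction $\operatorname{Ext}^{j}_{\overline{H}}(M,\overline{H})\cong\operatorname{Ext}^{j}_{\overline{H}}(\mathbbm{k},\operatorname{Hom}_{\mathbbm{k}}(M,\overline{H}))$ does not help either. For infinite-dimensional $M$, the module $\operatorname{Hom}_{\mathbbm{k}}(M,\overline{H})$ is not a sum of copies of $\overline{H}$, and $\overline{H}$ itself may be infinite-dimensional (e.g. $\mathcal{U}_{q}(\mathfrak{g})$ in Theorem \ref{thm-DDQGrp-CY}).

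Your second option is the right one. Its clean form, as in the paper, runs as follows:
\begin{itemize}
\item An injective resolution $J^{\bullet}$ of ${}_{H}H$ stays $C$-injective because $H$ is flat over $C$.
\item $\operatorname{Hom}_{H}(\overline{H},J^{\bullet})\cong\operatorname{Hom}_{C}(\mathbbm{k},J^{\bullet})$, so $\operatorname{RHom}_{H}(\overline{H},H)\cong\overline{H}[-s]$ in ${\bf D}(\overline{H})$.
\item The change-of-rings isomorphism $\operatorname{RHom}_{H}(M,H)\cong\operatorname{RHom}_{\overline{H}}(M,\operatorname{RHom}_{H}(\overline{H},H))$ then gives $\operatorname{Ext}^{k}_{\overline{H}}(M,\overline{H})\cong\operatorname{Ext}^{k+s}_{H}(M,H)=0$ for $k>d-s$ and every $\overline{H}$-module $M$.
\end{itemize}

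\textbf{Part (d).} Your route is genuinely different from the paper's. You bound $\operatorname{gl.dim}H\le\operatorname{gl.dim}C+\operatorname{gl.dim}\overline{H}$ using the composite-functor form of Proposition \ref{prop-usebystespmathm1}. Both terms on the right are finite: a skew Calabi-Yau $\overline{H}$ is homologically smooth, hence of finite global dimension, and a regular affine $C$ has finite global dimension. You then combine this with the standing AS-Gorenstein hypothesis and Lemma \ref{lem-noethHopf-sCYiffasre}.

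The paper instead cites \cite[Theorem 2.15 (2)]{MR5000225} on faithfully flat Hopf-Galois extensions. Your argument is more self-contained, but it relies on $H$ already being AS-Gorenstein. The cited theorem does not need that and yields skew Calabi-Yau directly. The Calabi-Yau conclusion is the same in both, via (c) and Remark \ref{rmk-noethCYHopf-iffasreinns-un}.
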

\begin{proof}
(a) and (b) Let $W$ be an arbitrary $C$-module, regarded as a symmetric $C$-$C$ bimodule. Define the left $H$-module structure on $W\otimes_{C}H$ by
 \begin{equation}\label{eq-lefact-wotimesch}
 h\cdot (w\otimes_{C}k)\coloneqq w\otimes_{C}hk
 \end{equation}
 for all $h,k\in H$ and $w\in W$. Since $C$ is a central subalgebra of $H$, the action \eqref{eq-lefact-wotimesch} is well defined.
 
Let $\mathbbm{k}$ be the trivial $H$-$H$ bimodule. By \eqref{eq-actonhomsp-MNchbar}, for any integer $i$, $\operatorname{Ext}_{C}^{i}(\mathbbm{k},H)$ has a canonical $\overline{H}$-$H$ bimodule structure. By \eqref{eq-lefact-wotimesch}, $\operatorname{Ext}_{C}^{s}(\mathbbm{k},C)\otimes_{C}H$ may be regarded as an $\overline{H}$-$H$ bimodule.
 
We claim that
\begin{equation}\label{eq-extCkHiso-exttimesH}
\operatorname{Ext}_{C}^{i}(\mathbbm{k},H)\cong 
\begin{cases}
\operatorname{Ext}_{C}^{s}(\mathbbm{k},C)\otimes_{C}H, & i= s,\\
0, &i\neq s.
\end{cases}
\end{equation}

Let $0\to C\to I^{\bullet}$ be an injective resolution of $C$ as a $C$-module. Since $H$ is a projective $C$-module, Lemma \ref{lem-homtenflat-naiso} implies that $0\to {_{C}H} (\cong C\otimes_{C}H) \to  I^{\bullet}\otimes_{C}H$ is an injective resolution of $H$ as a $C$-module. For the trivial $H$-$H$ bimodule $\mathbbm{k}$ and any $C$-module $X$, the object $\operatorname{Hom}_{C}(\mathbbm{k},X)\otimes_{C} H$ can be regarded as an $\overline{H}$-$H$ bimodule via \eqref{eq-lefact-wotimesch}. 
By Lemma \ref{lem-homtenflat-naiso}, there is an isomorphism of $\overline{H}$-$H$ bimodule complexes $\operatorname{Hom}_{C}(\mathbbm{k},I^{\bullet}\otimes_{C}H)\cong \operatorname{Hom}_{C}(\mathbbm{k},I^{\bullet})\otimes_{C}H$. Therefore, \eqref{eq-extCkHiso-exttimesH} follows.

Since $\operatorname{Ext}_{C}^{s}(\mathbbm{k},C)$ is trivial as a $C$-module, \eqref{eq-extCkHiso-exttimesH} implies that 
\begin{equation}\label{eq-EHomckhi-hbar7}
\operatorname{RHom}_{C}(\mathbbm{k},H)\cong (\mathbbm{k}\otimes_{C}H)[-s]\cong \overline{H}[-s]
\end{equation}
in ${\bf D}(\overline{H}\otimes H^{op})$.
By Proposition \ref{prop-usebystespmathm1}, we obtain
\begin{equation}\label{eq-RhomisoHHbar-key}
\begin{aligned}
\operatorname{RHom}_{H}(\mathbbm{k},H)&\cong \operatorname{RHom}_{\overline{H}}(\mathbbm{k},\text{RHom}_{C}(\mathbbm{k},H))\\
&\cong \operatorname{RHom}_{\overline{H}}(\mathbbm{k},\overline{H})[-s].
\end{aligned}
\end{equation}
Once it has been proved that $\overline{H}$ is AS-Gorenstein, taking the $d$th cohomology of both sides on \eqref{eq-RhomisoHHbar-key} yields $\operatorname{inj.dim}_{\overline{H}}\overline{H}=d-s$ and $$\textstyle \int_{H}^{l}=\operatorname{Ext}_{H}^{d}(\mathbbm{k},H)\cong \operatorname{Ext}_{\overline{H}}^{d-s}(\mathbbm{k},\overline{H})=\textstyle \int_{\overline{H}}^{l}$$ as $H$-$H$ bimodules. The right integral case follows by applying \cite[Lemma 2.1]{MR2320655} to the isomorphism $\textstyle \int_{\overline{H}}^{l}\cong \textstyle \int_{H}^{l}$. We now prove that $\overline{H}$ is AS-Gorenstein, thereby completing the proof of (a) and (b). It suffices to show that $\overline{H}$ is left AS-Gorenstein, as the right case is symmetric. By \eqref{eq-RhomisoHHbar-key}, $\operatorname{inj.dim}_{\overline{H}}\overline{H}\geq d-s$ and $\dim_{\mathbbm{k}}\operatorname{Ext}_{\overline{H}}^{k}(\mathbbm{k},\overline{H})=\delta_{k,d-s}$. It remains to show that $\operatorname{inj.dim}_{\overline{H}}\overline{H}\leq d-s$. 

Consider the injective resolution of $_{H}H$: $0\to {_{H}H}\to  J^{\bullet}$. Since $H$ is flat over $C$, the sequence $0\to {_{C}H}\to J^{\bullet}$ is an injective resolution of $_{C}H$. Now one has canonical chain isomorphisms $\operatorname{Hom}_{H}(\overline{H},J^{\bullet })\cong \operatorname{Hom}_{H}(H\otimes_{C}\mathbbm{k},J^{\bullet })\cong \operatorname{Hom}_{C}( \mathbbm{k},\operatorname{Hom}_{H}(H,J^{\bullet }))\cong \operatorname{Hom}_{C}( \mathbbm{k},J^{\bullet}) $ of left $\overline{H}$-module complexes. So $\operatorname{Ext}_{C}^{i}(\mathbbm{k},H)\cong \operatorname{End}_{H}^{i}(\overline{H},H)$ as left $\overline{H}$-modules for all $i\geq 0$. It follows from \eqref{eq-EHomckhi-hbar7} that 
\begin{equation}\label{eq-RHomhbarh}
\operatorname{RHom}_{H}(\overline{H},H)\cong \overline{H}[-s]
\end{equation}
in ${\bf D}(\overline{H})$. For any left $\overline{H}$-module $M$, one has
\begin{align*}
\operatorname{RHom}_{H}(M,H)&\cong \operatorname{RHom}_{\overline{H}}(M,\operatorname{RHom}_{H}(\overline{H},H))\\
&\cong \operatorname{RHom}_{\overline{H}}(M,\overline{H})[-s]& (\text{By }\eqref{eq-RHomhbarh}).
\end{align*}
Since $\operatorname{inj.dim}_{H}H=d$, one has $\operatorname{Ext}_{\overline{H}}^{k}(M,\overline{H})=0$, for all $k>d-s$. So $\operatorname{inj.dim}_{\overline{H}}\overline{H}\leq d-s$.

(c) This is a direct consequence of (b).

(d) Assume that $C$ is regular and $\overline{H}$ is skew Calabi-Yau. Since $C\subseteq H$ is a faithfully flat $\overline{H}$-Galois extension, $H$ is skew Calabi-Yau by \cite[Theorem 2.15 (2)]{MR5000225}. The second conclusion follows immediately from Remark \ref{rmk-noethCYHopf-iffasreinns-un}.
\end{proof}
\begin{remark}
Recall that a Hopf subalgebra $K$ of $H$ is called a \textit{normal} Hopf subalgebra if it is closed under all left and right adjoint actions; that is, $\sum_{(h)}h_{(1)}kS(h_{(2)}),\sum_{(h)}S(h_{(1)})kh_{(2)}\in K$ for all $k\in K$ and $h\in H$. Indeed, if $H$ is a noetherian AS-Gorenstein Hopf algebra of injective dimension $d$, $K$ is a normal Hopf subalgebra of $H$ such that $H$ is faithfully flat as a left and right $K$-module (which implies that $K$ is noetherian), and $K$ is AS-Gorenstein of injective dimension $s$, then the Hopf quotient $H/K^{+}H$ is a noetherian AS-Gorenstein Hopf algebra of injective dimension $d-s$. This follows by adapting the argument in (a) and applying \cite[Proposition 5.6]{2026Claincheduality}.
\end{remark}

The following example shows that the assumption that $C$ is central in Theorem \ref{thm-noethAS-intebimoduiso-un} (b) is necessary. Indeed, under the hypotheses of Theorem \ref{thm-noethAS-intebimoduiso-un}, if $C$ is  assumed only to be a commutative normal Hopf subalgebra of $H$ (rather than central), then the conclusion of Theorem \ref{thm-noethAS-intebimoduiso-un} (b) does not hold in general.
\begin{example}
[\cite{MR1676211,MR2320655}]Consider the infinite dihedral group\label{eg-infdihdralgrp}
$$\mathbb{D}=\langle g,x\mid  g^{2}=1,gxg=x^{-1}   \rangle.$$
Assume that $\text{char}\mathbbm{k}\neq 2$. Then $H=\mathbbm{k}[\mathbb{D}]$ is AS-regular of GK-dimension $1$ and $\operatorname{io}(H)=2$ \cite[Example 4.6]{MR2320655}. Now $\langle x\rangle \subseteq \mathbb{D}$ is an infinite cyclic group. Clearly, $\langle x\rangle$ is a normal subgroup of $\mathbb{D}$ and $\mathbb{D}/\langle x\rangle\cong \mathbb{Z}/2\mathbb{Z}$. So $K=\mathbbm{k}[\langle x\rangle]$ is a commutative normal Hopf subalgebra of $H$, $H$ is a finitely generated free $K$-module and $H/K^{+}H\cong \mathbbm{k}[\mathbb{Z}/2\mathbb{Z}]$. Since $\text{char}\mathbbm{k}\neq 2$, $H/K^{+}H$, as a finite-dimensional semisimple Hopf algebra, satisfies $\text{io}(H/K^{+}H)=1$. This shows that $\text{io}(H/K^{+}H)\neq \text{io}(H)$. In fact, $H$ is not a finitely generated module over any central Hopf subalgebra, as pointed out in \cite[p. 56]{MR1676211}. 
\end{example}

Recall that a group is \textit{central-by-finite} if its center has finite index. As discussed in Example \ref{eg-infdihdralgrp}, Theorem \ref{thm-noethAS-intebimoduiso-un} provides a  necessary homological condition for a finitely generated group to be central-by-finite.

\begin{proposition}
Let $G$ be a finitely generated group. If $G$ is central-by-finite, then $\text{io}(\mathbbm{k}[G])=1$.\label{prop-fggrp-io1ifcbf}
\end{proposition}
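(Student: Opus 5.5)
Let $Z=Z(G)$ be the center of $G$. The plan is to apply Theorem \ref{thm-noethAS-intebimoduiso-un} with $H=\mathbbm{k}[G]$ and $C=\mathbbm{k}[Z]$, so that the question reduces to the finite-dimensional Hopf algebra $\mathbbm{k}[G/Z]$.

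First, we check the hypotheses of that theorem.

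\begin{itemize}
\item \textbf{$Z$ is finitely generated abelian.} Since $[G:Z]<\infty$ and $G$ is finitely generated, Schreier's lemma shows that $Z$ is finitely generated. It is abelian because it is the center.
\item \textbf{$C$ is a central Hopf subalgebra of $H$.} The algebra $C=\mathbbm{k}[Z]$ is a commutative affine Hopf algebra, namely a Laurent polynomial ring tensored with a finite group algebra. It is central in $H$ because $Z$ is central in $G$. It is a Hopf subalgebra because $Z$ is a subgroup.
\item \textbf{$H$ is a finite free $C$-module.} If $t_1,\dots,t_n$ are coset representatives of $Z$ in $G$, then $H=\bigoplus_i C t_i$.
\item \textbf{$H$ is noetherian and AS-Gorenstein.} $H$ is module-finite over the noetherian commutative ring $C$, so it is noetherian. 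It is affine PI, so it is AS-Gorenstein by \cite[Theorem 0.2]{MR1938745}.
\item \textbf{$C$ is AS-Gorenstein.} $C$ is noetherian commutative Hopf, hence AS-Gorenstein (as noted at the start of \S\ref{subsec-pfofintrm-thm}).
\end{itemize}

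Second, we identify $\overline{H}=H/C^{+}H$. The augmentation ideal $C^{+}$ is spanned by the elements $z-1$ with $z\in Z$. Hence $C^{+}H$ is the kernel of the natural surjection $\mathbbm{k}[G]\to\mathbbm{k}[G/Z]$, and so
\[
\overline{H}\cong\mathbbm{k}[G/Z],
\]
which is the group algebra of a finite group.

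Third, we conclude. By Theorem \ref{thm-noethAS-intebimoduiso-un}(b), $\operatorname{io}(H)=\operatorname{io}(\overline{H})$. For a finite-dimensional Hopf algebra the homological integral is the classical integral. The element $\sum_{g\in G/Z}g$ is a two-sided integral of $\mathbbm{k}[G/Z]$ on which $G/Z$ acts trivially on both sides, in every characteristic. So the integral character is $\varepsilon$, $\mathbbm{k}[G/Z]$ is unimodular, and $\operatorname{io}(\overline{H})=1$ by Lemma \ref{lem-ioHcomputed-ordergrpch}. Therefore $\operatorname{io}(\mathbbm{k}[G])=1$.

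The main point requiring care is the verification of the hypotheses, especially the finite generation of $Z$ and the AS-Gorenstein property of $H$; both follow from the standard facts cited above. Alternatively, the conclusion follows directly from Theorem \ref{thm-fghopfhomol-unimosCY}(b), once $H$ is known to be affine and module-finite over $C$, which was checked in the first step.
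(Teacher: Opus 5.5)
Your proof is correct and follows the paper's argument: take $C=\mathbbm{k}[Z(G)]$, identify $\overline{H}\cong\mathbbm{k}[G/Z(G)]$ as a unimodular finite group algebra, and apply Theorem \ref{thm-noethAS-intebimoduiso-un}(b). The only difference is that you explicitly verify the hypotheses the paper leaves implicit: finite generation of $Z(G)$ via Schreier's lemma, and the noetherian AS-Gorenstein property via the affine PI result.
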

\begin{proof}
Let $Z$ denote the center of $G$. Then $\mathbbm{k}[G/Z]$ is unimodular, and $\mathbbm{k}[Z]$ is a central Hopf subalgebra of $\mathbbm{k}[G]$. Therefore, the result follows directly from Theorem \ref{thm-noethAS-intebimoduiso-un}.
\end{proof}

Since every noetherian affine PI Hopf algebra is AS-Gorenstein, the following theorem follows immediately from Theorems \ref{thm-noethAS-intebimoduiso-un} and \ref{thm-BZ-nakaASgoren}, Corollary \ref{cor-symmfrobenexif-unS2inn}, and Remark \ref{rmk-noethCYHopf-iffasreinns-un}.

\begin{theorem}\label{thm-modulefinitehopf-integral}
Let $H$ be an affine Hopf algebra, and let $C$ be a central Hopf subalgebra of $H$ such that $H$ is finitely generated as a $C$-module. Let $\overline{H}=H/C^{+}H$ be the resulting finite-dimensional Hopf quotient. Denote by $\theta\in G(\overline{H}^{\circ})$ the distinguished group-like element of $\overline{H}^{\circ}$. Then
\begin{itemize}
\item[(a)] $  \textstyle \int_{H}^{l}\cong \textstyle \int_{\overline{H}}^{l}$ and $ \textstyle \int_{H}^{r}\cong \textstyle \int_{\overline{H}}^{r}$ as $H$-$H$ bimodules. In particular, $\operatorname{io}(H)=\operatorname{io}(\overline{H})$.
\item[(b)] $H$ is unimodular if and only if $\overline{H}$ is unimodular.
\item[(c)]The extension $C \subseteq H$ is a symmetric Frobenius extension if and only if $S^{2}$ is inner and $\overline{H}$ is unimodular. In particular, $H$ is Calabi-Yau if and only if
\begin{itemize}
\item[(i)]$H$ has finite global dimension, 
\item[(ii)]$S^{2}$ is an inner automorphism of $H$,
\item[(iii)]$H/\mathfrak{m}_{\ovep}H$ is unimodular.
\end{itemize}
\item[(d)] If $\check{\theta}\in G(H^{\circ})$ is the character of $H$ induced by $\theta$, then $W_{l}(\check{\theta})S^{2}$ is the Nakayama automorphism of $H$.
\end{itemize}
\end{theorem}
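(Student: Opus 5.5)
The plan is to reduce everything to Theorem \ref{thm-noethAS-intebimoduiso-un}, after checking that its hypotheses hold. Since $H$ is affine and module-finite over the central subalgebra $C$, it is a noetherian affine PI algebra. By \cite[Theorem 0.2]{MR1938745} it is therefore AS-Gorenstein of some injective dimension $d$, and its antipode is bijective. As in the setup of \S\ref{subsec-pfofintrm-thm}, $H$ is faithfully flat over $C$, so $C$ is noetherian. By Molnar's theorem $C$ is then affine, hence a commutative affine Hopf algebra, and so AS-Gorenstein of some injective dimension $s$. Theorem \ref{thm-noethAS-intebimoduiso-un}(b) now gives (a) directly, including $\operatorname{io}(H)=\operatorname{io}(\overline{H})$. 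Part (b) is Theorem \ref{thm-noethAS-intebimoduiso-un}(c). Here $\overline{H}$ is finite-dimensional because $H$ is a finitely generated $C$-module, so its homological integrals are the classical ones.

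For (d), I will use the bimodule isomorphism $\int_H^l\cong\int_{\overline{H}}^l$ from (a). It identifies the left integral character $\chi$ of $H$ with the composite $H\to\overline{H}\xrightarrow{\theta'}\mathbbm{k}$, where $\theta'$ is the right character of the one-dimensional space $\int^l_{\overline{H}}$. One point needs checking: for a finite-dimensional Hopf algebra, this character of the left integral space is exactly the distinguished group-like element $\theta\in G(\overline{H}^{\circ})=G(\overline{H}^{*})$. That is the classical definition of the distinguished group-like element. Since the isomorphism in (a) is one of $H$-$H$ bimodules and the $H$-action on $\int_{\overline{H}}^l$ factors through $H\to\overline{H}$, we get $\chi=\check\theta$. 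Theorem \ref{thm-BZ-nakaASgoren} then says $W_l(\check\theta)S^2$ is a Nakayama automorphism of $H$. By the cited \cite[\S 2.5]{MR2379091}, it is also the Nakayama automorphism of the Frobenius extension $C\subseteq H$.

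For (c), Corollary \ref{cor-symmfrobenexif-unS2inn} says $C\subseteq H$ is symmetric Frobenius if and only if $H$ is unimodular and $S^2$ is inner. By (b), unimodularity of $H$ is equivalent to unimodularity of $\overline{H}=H/\mathfrak{m}_{\ovep}H$. For the Calabi-Yau criterion I will use Remark \ref{rmk-noethCYHopf-iffasreinns-un}: $H$ is Calabi-Yau if and only if it is unimodular, AS-regular and $S^2$ is inner. Since $H$ is already AS-Gorenstein, AS-regularity is the same as finite global dimension, condition (i). Unimodularity translates to (iii) via (b).

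I expect the only delicate point to be the identification in (d): matching the side and convention of the integral character with Radford's distinguished group-like element. That is, $\theta$ must be the character through which $\overline{H}$ acts on the right of its left integrals, not its inverse. If the paper's convention for ``distinguished group-like'' is the opposite one, the statement would pick up an inverse, so I will fix the convention $\Lambda h=\theta(h)\Lambda$ for a left integral $\Lambda$.
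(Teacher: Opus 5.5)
Your proposal is correct and follows essentially the same route as the paper. The paper deduces the theorem directly from the AS-Gorenstein property of noetherian affine PI Hopf algebras together with Theorem \ref{thm-noethAS-intebimoduiso-un}, Theorem \ref{thm-BZ-nakaASgoren}, Corollary \ref{cor-symmfrobenexif-unS2inn}, and Remark \ref{rmk-noethCYHopf-iffasreinns-un}; your explicit identification of the left integral character with $\check{\theta}$ in (d), using the convention $\Lambda h=\theta(h)\Lambda$, fills in a step the paper leaves implicit and agrees with the convention the paper uses for $\mathfrak{u}_{\epsilon}^{\geq 0}(\mathfrak{g})$.
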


\begin{remark}
An affine Hopf algebra $H$ is called \textit{commutative-by-finite} \cite{MR4201485} if it is finitely generated as a module over a commutative normal Hopf subalgebra. Example \ref{eg-infdihdralgrp} shows that Theorem \ref{thm-modulefinitehopf-integral} does not hold in general for commutative-by-finite Hopf algebras.
\end{remark}

\begin{example}
[\text{\cite[Definition 5.1]{MR4977721}}] Let\label{eg-Liu-Yualg} $\mathcal{H}_{LY}=H(e_{\pm 1},f_{\pm 1},u,v)$ be the $\mathbbm{k}$-algebra generated by $u,v,e_{i},f_{i}$ for $i\in \mathbb{Z}$, subject to the following relations
$$1=e_{0}+f_{0},\  e_{i}e_{j}=e_{i+j},\  f_{i}f_{j}=f_{i+j},\   e_{i}f_{j}=f_{j}e_{i}=0,$$
$$e_{i}u=(-1)^{i}ue_{i},\ f_{i}u=(-1)^{i}uf_{i},\ e_{i}v=(-1)^{i}ve_{i},\  f_{i}v=(-1)^{i}vf_{i},$$
$$u^{2}=v^{2}=0,\  uv=-vu$$
for any $i,j\in \mathbb{Z}$. The coalgebra structure of $\mathcal{H}_{LY}$ is defined by
$$\Delta(e_{i})=e_{i}\otimes e_{i}+f_{i}\otimes f_{-i},\  \varepsilon(e_{i})=1,$$
$$\Delta(f_{i})=e_{i}\otimes f_{i}+f_{i}\otimes e_{-i},\ \varepsilon(f_{i})=0,$$
$$\Delta(u)=1\otimes u+u\otimes e_{1}+v\otimes f_{-1},\  \varepsilon(u)=0,$$
$$\Delta(v)=1\otimes v+u\otimes f_{1}+v\otimes e_{-1},\  \varepsilon(v)=0.$$
Then $\mathcal{H}_{LY}$ is a Hopf algebra with the dual Chevalley property \cite[Lemma 5.2]{MR4977721}. The antipode of $\mathcal{H}_{LY}$ is given by
$$S(e_{i})=e_{-i},\ S(f_{i})=f_{i},\  S(u)=-vf_{-1}-ue_{-1}, \ S(v)=-uf_{1}-ve_{1}.$$
Then $S^{2}(h)=(f_{-1}+e_{1})h(f_{-1}+e_{1})^{-1}=(f_{-1}+e_{1})h(f_{1}+e_{-1})$ for all $h\in H$.

The Hopf algebra $\mathcal{H}_{LY}$ is not pointed; its coradical was computed in \cite[\S 5]{MR4977721}. Moreover, by the proof of \cite[Lemma 5.2]{MR4977721},
\begin{equation}\label{eq-pbwbasis-LYalg}
\{e_{i}u^{t}v^{s},f_{i}u^{t}v^{s}\mid  i\in \mathbb{Z}, 0\leq t,s\leq 1\}
\end{equation}
is a $\mathbbm{k}$-basis of $\mathcal{H}_{LY}$. Let $\mathcal{C}_{LY}$ be the subalgebra of $\mathcal{H}_{LY}$ generated by $e_{2i},f_{2i},i\in \mathbb{Z}$. Then $\mathcal{C}_{LY}$ is a central Hopf subalgebra of $\mathcal{H}_{LY}$. It is clear that 
\begin{align*}
\mathbbm{k}[x,x^{-1}]\oplus \mathbbm{k}[y,y^{-1}]&\to  \mathcal{C}_{LY},\\
 (p(x,x^{-1}),q(y,y^{-1})) &\mapsto p(e_{2},e_{-2})e_{0}+q(f_{2},f_{-2})f_{0}
\end{align*}
is an algebra isomorphism. By \eqref{eq-pbwbasis-LYalg}, $\mathcal{H}_{LY}$ is a free $\mathcal{C}_{LY}$-module with $\mathcal{C}_{LY}$-basis
$$\{1=e_{0}+f_{0},e_{1}+f_{1},u,v,uv, (e_{1}+f_{1})u, (e_{1}+f_{1})v, (e_{1}+f_{1})uv\}.$$
So $\overline{\mathcal{H}_{LY}}=\mathcal{H}_{LY}/\mathcal{C}_{LY}^{+}\mathcal{H}_{LY} $ is an $8$-dimensional Hopf algebra. For each $h\in \mathcal{H}_{LY}$, denote by $\overline{h}$ the image of $h$ in $\overline{\mathcal{H}_{LY}}$. Then it is direct to verify that
$$\textstyle{\int}_{\overline{\mathcal{H}_{LY}}}^{l}=\textstyle{\int}_{\overline{\mathcal{H}_{LY}}}^{r}=\mathbbm{k}(\overline{e_{1}}+1)\overline{uv}.$$
Hence $\overline{\mathcal{H}_{LY}}$ is unimodular. By Theorem \ref{thm-modulefinitehopf-integral} (b), $\mathcal{H}_{LY}$ is unimodular. Since $S^{2}$ is inner, $\mathcal{H}_{LY}$ is a symmetric Frobenius extension of $\mathcal{C}_{LY}$ by Theorem \ref{thm-modulefinitehopf-integral} (c). Observe that $\mathcal{H}_{LY}$ has nonzero nilpotent ideals (such as the principal ideal generated by $u$), therefore, by \cite[Corollary 4.2]{MR719665} and \cite[Proposition 1.6]{MR1482982}, the global dimension of $\mathcal{H}_{LY}$ is infinite.


\end{example}

\begin{example}
[Infinite Suzuki algebras, \cite{MR1406046}] Assume that the base field $\mathbbm{k}$ is algebraically closed. Let\label{eg-geralizedsuzuki-algunimodular} $n,m$ be positive integers, let $\epsilon$ be a primitive $m$-th root of unity, let $m_{1},\ldots,m_{n} \geq 2$ be positive integers such that $\epsilon^{m_{k}}=1$ for all $1\leq k\leq n$, and let $\zeta_{k}\in \mathbbm{k}^{\times}$ be a primitive $m_{k}$-th root of unity for each $k$.  Let $H=\mathfrak{S}(n,m,\epsilon,\{m_{k}\}_{k=1}^{n},\{\zeta_{k}\}_{k=1}^{n})$ be the $\mathbbm{k}$-algebra generated by $x_{1},\ldots,x_{n},g_{1},\ldots,g_{n}$, subject to the relations
$$g_{i}g_{j}=g_{j}g_{i},\  g_{k}^{m_{k}}=1,\  x_{j}g_{i}=\epsilon g_{i}x_{j},\ x_{i}g_{j}=\epsilon^{-1}g_{j}x_{i},\ x_{k}g_{k}=\zeta_{k}g_{k}x_{k},\  x_{j}x_{i}=\epsilon x_{i}x_{j}$$
for $1\leq k\leq n$ and $1\leq i<j\leq n$. The coalgebra structure on $H$ is defined by
$$\Delta(x_{i})=x_{i}\otimes g_{i}+1\otimes x_{i},\  \varepsilon(x_{i})=0,\  \Delta(g_{i})=g_{i}\otimes g_{i},\  \varepsilon(g_{i})=1$$
for $1\leq i\leq n$. It follows that $H$ is a pointed Hopf algebra, referred to as the \textit{infinite Suzuki algebra}. By viewing the $\mathbbm{k}$-algebra $H$ as a smash product of a suitable finite group algebra and a quantum affine space (see Remark \ref{rmk-infiSuzukalg-prenilc}), one obtains that
\begin{equation}\label{eq-insuzualg-pbw}
\{x_{1}^{s_{1}}\cdots x_{n}^{s_{n}}g_{1}^{t_{1}}\cdots g_{n}^{t_{n}}\mid   s_{1},\ldots,s_{n}\in \mathbb{N},  0\leq t_{k}\leq m_{k}-1\text{ for\ } k=1,2,\ldots,n\}
\end{equation}
is a $\mathbbm{k}$-basis of $H$. Set $C=\mathbbm{k}[x_{1}^{m_{1}},\ldots,x_{n}^{m_{n}}]\subseteq H$. Then $C$ is a central Hopf subalgebra of $H$. By \eqref{eq-insuzualg-pbw}, $H$ is a free $C$-module with basis
$$\{x_{1}^{s_{1}}\cdots x_{n}^{s_{n}}g_{1}^{t_{1}}\cdots g_{n}^{t_{n}}\mid    0\leq s_{k}, t_{k}\leq m_{k}-1\text{ for\ } k=1,2,\ldots,n\}.$$
It follows that $H/C^{+}H$ is precisely the finite-dimensional Hopf algebra constructed by Suzuki in \cite{MR1406046}. By \cite[Lemma 2 (b)]{MR1406046}, $H/C^{+}H$ is unimodular if and only if $\zeta_{i}=\epsilon^{2i-n-1}$ for all $1\leq i\leq n$. By Theorem \ref{thm-modulefinitehopf-integral}(b), $H$ is unimodular if and only if $\zeta_{i}=\epsilon^{2i-n-1}$ for all $1\leq i\leq n$. In \cite[Theorem 6]{MR1406046}, Suzuki showed that, for suitable choices of positive integers $n,m,m_{1},\ldots,m_{n}$ and roots of unity $\epsilon,\zeta_{1},\ldots,\zeta_{n}$, the Hopf quotient $H/C^{+}H$ is a unimodular Hopf algebra whose antipode has a non-inner square. In this case, $H$ is a unimodular noetherian affine PI Hopf algebra such that $S^{2}$ is not inner.
\end{example}

\begin{remark}\label{rmk-infiSuzukalg-prenilc}
Keep the notation of Example \ref{eg-geralizedsuzuki-algunimodular}. Define the matrix $(q_{ij})_{n\times n}\in \text{M}_{n}(\mathbbm{k} )$ by
$$q_{ij}=\begin{cases}
\epsilon^{-1},&  i<j,\\
\zeta_{i}^{-1},& i=j,\\
\epsilon, &   i>j.
\end{cases}
$$
Consider the group $G=\oplus_{i=1}^{n}\mathbb{Z}/m_{k}\mathbb{Z}$, and let $g_{1},\ldots, g_{n}$ denote its canonical generators. Namely, $g_{k}$ is the element whose $k$-th coordinate is $\overline{1}\in \mathbb{Z}/m_{k}\mathbb{Z}$ and whose remaining coordinates are zero. Let $V=\mathbbm{k}x_{1}\oplus\cdots\oplus \mathbbm{k}x_{n}$ be a $G$-graded space with $\operatorname{deg}x_{k}=g_{k}$ for $1\leq k\leq n$, equipped with a $\mathbbm{k}[G]$-action defined by $g_{i}\cdot x_{j}=q_{ij}x_{j}$ for all $1\leq i,j\leq n$. Then $V$ is a left Yetter-Drinfeld module over $\mathbbm{k}[G]$. Set $\mathcal{R}(V)=T(V)/(x_{i}x_{j}-q_{ij}x_{j}x_{i}\mid i<j)$; as an algebra, $\mathcal{R}(V)$ is the quantum affine space $\mathcal{O}_{\epsilon^{-1}}(\mathbbm{k}^{n})$ at the root of unity $\epsilon^{-1}$. Then $\mathcal{R}(V)$ is a braided Hopf algebra in $_{\mathbbm{k}[G]}^{\mathbbm{k}[G]}\mathcal{YD}$ and a pre-Nichols algebra of $V$. For some background on pre-Nichols algebras and Nichols algebras, see \S \ref{sec-largeqgrp-setup}. Indeed, the coopposite of the infinite Suzuki algebra $H=\mathfrak{S}(n,m,\epsilon,\{m_{k}\}_{k=1}^{n},\{\zeta_{k}\}_{k=1}^{n})$ is precisely the bosonization $\mathcal{R}(V)\# \mathbbm{k}[G]$. Therefore, if $\operatorname{char}\mathbbm{k}=0$, then $H$ is skew Calabi-Yau by \cite[Proposition 7.3.1]{MR4030357}. 
 
Let $\mathcal{B}(V)$ be the Nichols algebra of $V$. Then $(H/C^{+}H)^{cop}$ is the bosonization $\mathcal{B}(V)\# \mathbbm{k}[G]$ by \cite[Example 1.10.13]{MR4164719}.
\end{remark}

\begin{proposition} \label{prop-modufin-idensemiCY-char0}
Let $H$ be an affine Hopf algebra, and let $C$ be a central Hopf subalgebra of $H$ such that $H$ is finitely generated as a $C$-module and $\overline{H}=H/C^{+}H$ is semisimple. Then
\begin{itemize}
 \item[(a)] $C \subseteq H$ is a symmetric Frobenius extension. Hence $H$ is unimodular and $S^{2}$ is inner.
 \item[(b)] If the base field $\mathbbm{k}$ is of characteristic zero, then $H$ is Calabi-Yau.
 \end{itemize}
\end{proposition}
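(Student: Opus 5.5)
The plan is to prove (a) directly, without going through $S^{2}$, by showing that $C\subseteq H$ is a \emph{separable} extension and then deducing symmetry from separability. Unimodularity of $H$ alone would be easy: $\overline{H}$ is semisimple, hence unimodular by Larson–Sweedler, so Theorem \ref{thm-modulefinitehopf-integral} (b) applies. The difficulty is $S^{2}$. In characteristic $p$, Larson–Radford is not available to give $S^{2}=\operatorname{id}$, and in any case we would need $S^{2}$ inner on $H$ itself, not merely on $\overline{H}$. Once $C\subseteq H$ is known to be a symmetric Frobenius extension, Corollary \ref{cor-symmfrobenexif-unS2inn} gives both that $H$ is unimodular and that $S^{2}$ is inner.

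\textbf{Separability.} Since $\overline{H}$ is semisimple, there is a left integral $t\in\overline{H}$ with $\varepsilon(t)=1$. The extension $C\subseteq H$ is $\overline{H}$-Galois, and $H$ is faithfully flat (indeed finitely generated projective) over $C$. I would invoke Doi's Maschke-type theorem for Hopf–Galois extensions: if the Hopf algebra is semisimple, the extension is separable. Concretely, I would take $\sum \tau^{1}(t)\otimes_{C}\tau^{2}(t)\in H\otimes_{C}H$, using the map $\tau$ of \eqref{eq-babti-taudef}, as the separability idempotent. By \eqref{eq-lefgalexumac-for1} and \eqref{eq-lGeu-botbarims7}, together with the integral property of $t$, this element is central in the $H$-bimodule $H\otimes_{C}H$. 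It multiplies to $\varepsilon(t)=1$, because multiplication composed with $\tau$ is the counit.

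\textbf{From separable to symmetric.} Now $H$ is a separable, finitely generated projective $C$-algebra. Hence its center $Z$ is a commutative separable, finitely generated projective $C$-algebra, and $H$ is Azumaya over $Z$. I would build a symmetric form as the composite of two traces. The first is the reduced trace $\operatorname{trd}\colon H\to Z$; its bilinear form is nondegenerate because $H$ is Azumaya. The second is the trace $\operatorname{tr}_{Z/C}\colon Z\to C$, whose discriminant is a unit by separability. The composite $\operatorname{tr}_{Z/C}\circ\operatorname{trd}$ is a $C$-linear map $H\to C$ that vanishes on commutators and is nondegenerate. It therefore yields an $H$-$H$-bimodule isomorphism $H\cong\operatorname{Hom}_{C}(H,C)$, so the extension is symmetric Frobenius.

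I expect this step to be the main obstacle. One must check nondegeneracy locally on $\operatorname{Spec}C$, where the ranks of $H$ over $Z$ and of $Z$ over $C$ may vary, and the reduced trace must be defined globally. A fallback is to use the trace form directly on the separable algebra, or to verify the statement after faithfully flat base change to strict henselizations, where $H$ becomes a product of matrix algebras.

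\textbf{Part (b).} In characteristic zero, $C$ is an affine commutative Hopf algebra, hence reduced by Cartier's theorem and thus smooth, so $C$ is regular of finite global dimension. Since $H$ is separable and projective over $C$, every $H$-module is an $H$-direct summand of $H\otimes_{C}M$. This gives $\operatorname{gl.dim}H\le\operatorname{gl.dim}C<\infty$. Combining this with (a), the Calabi-Yau criterion of Theorem \ref{thm-modulefinitehopf-integral} (c) applies: $H$ has finite global dimension, $S^{2}$ is inner, and $\overline{H}$ is unimodular. Hence $H$ is Calabi-Yau.
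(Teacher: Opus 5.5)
Your proposal is correct, and for (a) it is the paper's argument. The paper also takes $\tau(t)=\beta^{-1}(t\otimes 1)$ for a normalized left integral $t\in\overline{H}$ as the separability idempotent, citing Doi. It then passes from ``separable and finitely generated projective over $C$'' to ``symmetric Frobenius over $C$'' in one line by quoting the Endo--Watanabe theorem, and reads off unimodularity and innerness of $S^{2}$ from Corollary \ref{cor-symmfrobenexif-unS2inn}. So the step you flag as the main obstacle is exactly that theorem, and you may simply cite it. Your sketch of it is nonetheless sound. The form $(x,y)\mapsto\operatorname{tr}_{Z/C}(\operatorname{trd}(xy))$ is nondegenerate because
\[
\operatorname{Hom}_{C}(H,C)\cong\operatorname{Hom}_{Z}(H,\operatorname{Hom}_{C}(Z,C))\cong\operatorname{Hom}_{Z}(H,Z)\cong H .
\]
The rank issue disappears once you write the noetherian ring $Z$ as a finite product, via idempotents, of factors over which $H$ has constant rank.

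For (b) the routes genuinely differ. The paper quotes Brown--Couto for AS-regularity of $H$ and then uses that the Nakayama automorphism is inner by (a). You instead prove $\operatorname{gl.dim}H\le\operatorname{gl.dim}C<\infty$ directly: separability makes every $H$-module $M$ an $H$-summand of $H\otimes_{C}M$, and $C$ is regular in characteristic zero by Cartier. You then apply Theorem \ref{thm-modulefinitehopf-integral} (c). Your version is self-contained and shows that characteristic zero enters only through the smoothness of $C$; the paper's version is shorter.
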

\begin{proof}
(a) Recall that $C \subseteq H$ is an $\overline{H}$-Galois extension. By \cite[Example 3.15]{MR992778}, $H$ is a separable $C$-algebra. In fact, if we take $t\in \textstyle \int_{\overline{H}}^{l}$ satisfying $\varepsilon(t)=1$, then using \eqref{eq-lGeu-botbarims7} one can directly verify that the preimage of $t\otimes 1\in \overline{H}\otimes H$ under the Galois map $\beta:H\otimes_{C}H\to \overline{H}\otimes H $ gives a separability idempotent of $H$. By the Endo-Watanabe Theorem \cite{MR227211}, $C \subseteq H$ is a symmetric Frobenius extension.

(b) By \cite[Theorem 2.9 (8)]{MR4201485}, $H$ is AS-regular. It follows from (a) that the Nakayama automorphism of $H$ is inner. So $H$ is Calabi-Yau.
\end{proof}

Recall that a Hopf algebra $H$ is said to have the \textit{Chevalley property} \cite{MR1852304} if the tensor product of any two finite-dimensional irreducible $H$-modules is completely reducible. 
Note that every irreducible module over an affine PI algebra is finite-dimensional \cite[Theorem 13.10.3 (i)]{MR1811901}.

\begin{corollary}
Let $\mathbbm{k}$ be an algebraically closed field of characteristic zero, and let $H$ be an affine semiprime Hopf algebra with a central Hopf subalgebra $C$ such that $H$ is finitely generated as a $C$-module. If $H$ has the Chevalley property, then $H$ is Calabi-Yau.\label{cor-cheva-0algcl-CY}
\end{corollary}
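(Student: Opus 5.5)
The strategy is to reduce to Proposition \ref{prop-modufin-idensemiCY-char0}. It therefore suffices to show that the identity fiber $\overline{H}=H/C^{+}H=H/\mathfrak{m}_{\ovep}H$ is semisimple. Once that is known, part (b) of the proposition applies in characteristic zero and gives that $H$ is Calabi-Yau.

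To prove semisimplicity, first note that $H$ is module-finite over the central subalgebra $C$. Hence $H$ is an affine PI algebra, so every irreducible $H$-module is finite-dimensional and the Chevalley property applies to all simple $H$-modules. Let $J$ be the Jacobson radical of $H$. Since $H$ is affine PI, $J$ is nilpotent; since $H$ is semiprime, $J=0$. Thus $H$ is semiprimitive: the intersection of the annihilators of the finite-dimensional simple modules is zero.

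Next consider $\overline{H}$. Its radical $\operatorname{rad}\overline{H}$ is the intersection of the annihilators of the simple $\overline{H}$-modules, and these are exactly the simple $H$-modules killed by $C^{+}$. I would show directly that $\overline{H}$ has the Chevalley property as well: if $V,W$ are simple $\overline{H}$-modules, then $V\otimes W$ is completely reducible as an $H$-module, and $C^{+}$ acts trivially on it because $\Delta(C^{+})\subseteq C^{+}\otimes C+C\otimes C^{+}$. Consequently $\operatorname{rad}\overline{H}$ is a Hopf ideal, and $\overline{H}/\operatorname{rad}\overline{H}$ is a semisimple Hopf algebra.

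The main obstacle is to show that $\operatorname{rad}\overline{H}=0$, which does not follow from semiprimeness alone. The approach I would try first uses the fibers. Since $\mathbbm{k}$ is algebraically closed, every simple $H$-module $V$ lies over some $\mathfrak{m}\in\operatorname{maxSpec}C$, and $V^{*}\otimes V$ lies over $\mathfrak{m}_{\ovep}$. By the Chevalley property, $V^{*}\otimes V$ is a semisimple $\overline{H}$-module. The aim is then to produce a faithful semisimple $\overline{H}$-module. Using the vector bundle structure from Remark \ref{rmk-affmofiHopf-repCH}, namely that $H$ is projective over $C$, semiprimeness of $H$ should force a generic fiber $H/\mathfrak{m}H$ to be semisimple. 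Since the fibers are translates of each other via the bicomodule structure, $\overline{H}\hookrightarrow H/\mathfrak{m}H\otimes(H/\mathfrak{m}H)^{op}$-type arguments, or equivalently $\overline{H}$ acting faithfully on $\bigoplus V^{*}\otimes V$ with $V$ running over the simple modules of a semisimple generic fiber, would give $\operatorname{rad}\overline{H}=0$. Concretely, the plan is to take $V$ to be the direct sum of the simples over a point $\mathfrak{m}$ where $H/\mathfrak{m}H$ is semisimple, and to use the isomorphism $\beta$ of the Galois structure to see that $\overline{H}\otimes H/\mathfrak{m}H\cong H/\mathfrak{m}H\otimes_{\mathbbm{k}}H/\mathfrak{m}H$-like identifications make $\overline{H}$ embed into $\operatorname{End}(V^{*}\otimes V)$. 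Semisimplicity of $\overline{H}$ then follows, and Proposition \ref{prop-modufin-idensemiCY-char0} (b) completes the proof.
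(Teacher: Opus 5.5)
Your reduction is the paper's: once $\overline{H}=H/C^{+}H$ is semisimple, Proposition \ref{prop-modufin-idensemiCY-char0}(b) finishes the proof. The difference is in how semisimplicity is obtained. The paper simply quotes \cite[Theorem 0.3 (b)]{HQWZ2026chev}. You instead try to prove it: find a semisimple fiber $A=H/\mathfrak{m}H$, let $V$ be the sum of its simple modules, and show that $V\otimes V^{*}$ (which lies over $\mathfrak{m}_{\ovep}$) is a faithful $\overline{H}$-module, completely reducible by the Chevalley property. This route is sound and self-contained, using only the Galois structure of the fibers. As written, however, its two load-bearing steps are expectations rather than arguments, and the first is exactly where characteristic zero must enter.

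A few side remarks. Your observations that $J(H)=0$ and that $\operatorname{rad}\overline{H}$ is a Hopf ideal are correct but never used. Also, the fibers are not isomorphic to one another in general; what you actually use is that each is an $\overline{H}$-Galois object.

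\textbf{Semisimple fiber.} This does not follow from semiprimeness alone. In characteristic $p$, take $H=\mathbbm{k}[x]$ with $x$ primitive and $C=\mathbbm{k}[x^{p}]$. Then $H$ is semiprime with the Chevalley property, yet every fiber $\mathbbm{k}[x]/(x^{p}-a)$ is non-semisimple, and so is $\overline{H}$.

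In characteristic zero the argument goes as follows.
\begin{itemize}
\item $C$ is reduced, since a nilpotent central $c$ generates the nilpotent ideal $cH$.
\item Inverting the set $\Sigma$ of non-zero-divisors of $C$ gives a finite product of fields of characteristic zero.
\item $\Sigma^{-1}H$ is a central localization of the semiprime ring $H$ (and $H\subseteq\Sigma^{-1}H$ because $H$ is $C$-projective), so it is semiprime. It is finite over each field factor, hence semisimple, hence separable because the characteristic is zero.
\item Clearing the finitely many denominators of a separability idempotent yields $s\in\Sigma$ with $H_{s}$ separable over $C_{s}$.
\item Therefore $H/\mathfrak{m}H$ is semisimple whenever $s\notin\mathfrak{m}$, and such $\mathfrak{m}$ exist because $C$ is a Jacobson ring.
\end{itemize}

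\textbf{Faithfulness.} Let $\beta\colon A\otimes A\to\overline{H}\otimes A$ be the Galois map of the left $\overline{H}$-Galois object $A$ (see \S\ref{subsec-fiberalgg-mfhopf}). A direct computation gives $\beta\bigl(\sum\overline{h_{(1)}}\otimes\overline{S(h_{(2)})}\bigr)=\overline{h}\otimes 1$. Hence $\tau(\overline{h})=\sum\overline{h_{(1)}}\otimes\overline{S(h_{(2)})}$ is the translation map \eqref{eq-babti-taudef}, and it is an injective algebra map $\overline{H}\to A\otimes A^{op}$.

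Under $V\otimes V^{*}\cong\operatorname{End}_{\mathbbm{k}}(V)$ the action is $h\cdot f=\sum h_{(1)}\,f\,S(h_{(2)})$. So the representation $\overline{H}\to\operatorname{End}_{\mathbbm{k}}(\operatorname{End}_{\mathbbm{k}}(V))\cong\operatorname{End}(V)\otimes\operatorname{End}(V)^{op}$ equals $(\rho\otimes\rho)\circ\tau$. This is injective as soon as $\rho\colon A\to\operatorname{End}_{\mathbbm{k}}(V)$ is.

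Taking $V$ to be the sum of the simple modules of the semisimple fiber $A$ gives a faithful, completely reducible $\overline{H}$-module, so $\operatorname{rad}\overline{H}=0$. With these two steps written out, your argument is a complete replacement for the citation used in the paper.
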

\begin{proof}
By \cite[Theorem 0.3 (b)]{HQWZ2026chev}, $\overline{H}=H/C^{+}H$ is semisimple. Therefore, the result follows directly from Proposition \ref{prop-modufin-idensemiCY-char0} (b).
\end{proof}




We now apply Theorem \ref{thm-modulefinitehopf-integral} to the universal enveloping algebra $\mathcal{U}(\mathfrak{g})$ of a finite-dimensional restricted Lie algebra $\mathfrak{g}$, thereby relating its homological properties 
to the integrals of the restricted universal enveloping algebra $\mathfrak{u}(\mathfrak{g})$.

\begin{example}
Assume that the base field $\mathbbm{k}$ is of characteristic $p>0$. Let $(\mathfrak{g},[p])$ be a finite-dimensional restricted Lie algebra over $\mathbbm{k}$, where $[p]:\mathfrak{g}\to\mathfrak{g}$ is the $p$-map. Denote by $\mathcal{C}(\mathfrak{g})$ the $p$-center of the universal enveloping algebra $\mathcal{U}(\mathfrak{g})$, that is, 
$$\mathcal{C}(\mathfrak{g})=\mathbbm{k}\langle x^{p}-x^{[p]}\mid x\in \mathfrak{g}\rangle\subseteq \mathcal{U}(\mathfrak{g}).$$
It is well known (see, e.g., \cite[p.114]{MR1898492}) that $\mathcal{C}(\mathfrak{g})$ is a central Hopf subalgebra of $\mathcal{U}(\mathfrak{g})$, that $\mathcal{U}(\mathfrak{g})$ is a free $\mathcal{C}(\mathfrak{g})$-module of rank $p^{\dim_{\mathbbm{k}}\mathfrak{g}}$, and that the Hopf quotient $\mathcal{U}(\mathfrak{g})/\mathcal{C}(\mathfrak{g})^{+}\mathcal{U}(\mathfrak{g})$ is precisely the \textit{restricted enveloping algebra} $\mathfrak{u}(\mathfrak{g})$ of $\mathfrak{g}$. By Theorem \ref{thm-modulefinitehopf-integral} (a), $\textstyle{\int}_{\mathcal{U}(\mathfrak{g})}^{l}\cong  \textstyle{\int}_{\mathfrak{u}(\mathfrak{g})}^{l}$ as $\mathcal{U}(\mathfrak{g})$-$\mathcal{U}(\mathfrak{g})$ bimodules. Let $\text{ad}_{x}:\mathfrak{g}\to\mathfrak{g}$ be the adjoint action of $x\in \mathfrak{g}$. By \cite[Theorem A]{MR1762922} or \cite[Proposition 6.3]{MR2437632}, $\mathcal{U}(\mathfrak{g})$ is a noetherian AS-regular Hopf algebra of dimension $\dim_{\mathbbm{k}}\mathfrak{g}$, and the integral character $\eta: \mathcal{U}(\mathfrak{g})\to\mathbbm{k}$ of $\mathcal{U}(\mathfrak{g})$ is given by
$$\eta(x)=\text{tr}(\text{ad}_{x})$$
for all $x\in \mathfrak{g}$. Thus, since $S^{2}=\text{id}$, $\mathcal{U}(\mathfrak{g})$ is Calabi-Yau if and only if $\text{tr}(\text{ad}_{x})=0$ for all $x\in \mathfrak{g}$. It follows from Theorem \ref{thm-modulefinitehopf-integral} that the following conditions are equivalent.
\begin{itemize}
\item[(i)]$\mathcal{U}(\mathfrak{g})$ is unimodular;
\item[(ii)]$\mathcal{U}(\mathfrak{g})$ is Calabi-Yau;
\item[(iii)]$\text{tr}(\text{ad}_{x})=0$ for all $x\in \mathfrak{g}$;
\item[(iv)] $\mathfrak{u}(\mathfrak{g})$ is unimodular;
\item[(v)] $\mathfrak{u}(\mathfrak{g})$ is a symmetric Frobenius algebra.  
\end{itemize}
This observation is not new. As noted above, the equivalence (i) $\Leftrightarrow$ (ii) $\Leftrightarrow$ (iii) follows directly from  \cite[Theorem A]{MR1762922}. The equivalence (iii) $\Leftrightarrow$ (v) is a classical result  of Schue \cite{MR185045}. Finally, the equivalence (iv) $\Leftrightarrow$ (v) follows from $S^{2}=\text{id}$ and Remark \ref{rmk-recoOSthm}.
\end{example}


\section{Homological properties of the quantum groups \texorpdfstring{$\mathcal{U}_{\epsilon}(\mathfrak{g}), \,\mathcal{U}_{\epsilon}^{\geq 0}(\mathfrak{g})$}{bigQGrps} and \texorpdfstring{$\mathfrak{D}_{q}(\mathfrak{g})$}{DQGrp}}\label{sec-applica-UqgUqb-Dqg}
In this section, we apply the results of the previous section to the DK quantized enveloping algebras $\mathcal{U}_{\epsilon}(\mathfrak{g})$ at roots of unity, the DKP quantized Borel subalgebras $\mathcal{U}_{\epsilon}^{\geq 0}(\mathfrak{g})$ at roots of unity, and the Drinfeld double quantum groups $\mathfrak{D}_q(\mathfrak{g})$, whether the quantum parameter $q$ is a root of unity or generic. In \S \ref{subsec-DKquanenalgsCY}, we provide a concise new proof that $\mathcal{U}_{\epsilon}(\mathfrak{g})$ is Calabi-Yau. In \S \ref{subsec-QBsgrps}, we develop an effective method for computing the Nakayama automorphism of $\mathcal{U}_{\epsilon}^{\geq 0}(\mathfrak{g})$. Finally, in \S \ref{subsec-Drindouble-Grps}, we prove that $\mathfrak{D}_{q}(\mathfrak{g})$ is Calabi-Yau.

\subsection{Big quantized enveloping algebras at roots of unity}\label{subsec-DKquanenalgsCY}
In \cite[Theorem 3.3.2]{MR2054387}, Chemla computed the rigid dualizing complex of the DK quantized enveloping algebra $\mathcal{U}_{\epsilon}(\mathfrak{g})$ at a root of unity $\epsilon$, thereby showing that $\mathcal{U}_{\epsilon}(\mathfrak{g})$ is a Calabi-Yau algebra. This result was also established by Brown-Gordon-Stroppel \cite[\S 6.4]{MR2379091}, who constructed an explicit nondegenerate symmetric $\mathcal{C}_{\epsilon}(\mathfrak{g})$-bilinear form $\mathcal{U}_{\epsilon}(\mathfrak{g})\times \mathcal{U}_{\epsilon}(\mathfrak{g})\to \mathcal{C}_{\epsilon}(\mathfrak{g})$, where $\mathcal{C}_{\epsilon}(\mathfrak{g})$, as defined in \eqref{eq-canceHopfsubalgof-Uepg}, is the canonical central Hopf subalgebra of $\mathcal{U}_{\epsilon}(\mathfrak{g})$. The aim of this subsection is to provide a new and short proof of this result using Theorem \ref{thm-modulefinitehopf-integral}.


We first fix some necessary notation \cite{MR1898492,MR1359532}. Let $\mathfrak{g}$ be a finite-dimensional complex semisimple Lie algebra of rank $n$ with a Cartan subalgebra $\mathfrak{h}$, let $\Phi$ denote the associated root system with fixed base $ \Pi=\{\alpha_{1},\ldots,\alpha_{n}\}$, and let $W$ be the Weyl group of $\Phi$. Let $(-,-)$ denote the inner product on $\oplus_{i=1}^{n}\mathbb{R}\alpha_{i}$ induced by the Killing form, and let $(a_{ij})_{n\times n}$ denote the corresponding Cartan matrix. As usual, the short roots in each irreducible component of $\Phi$ are normalized so that $(\alpha,\alpha)=2$ for each short root $\alpha$. Then $d_{i}=(\alpha_{i},\alpha_{i})/2\in \{1,2,3\}$, and the matrix $(d_{i}a_{ij})_{n\times n}$ is symmetric. Let $N\geq 3$ be an odd positive integer, coprime to $3$ if $\mathfrak{g}$ has a factor of type $G_{2}$, and let $\epsilon \in \mathbb{C}$ be a primitive $N$-th root of unity.

Recall that the DK quantized enveloping algebra $\mathcal{U}_{\epsilon}(\mathfrak{g})$ \cite{MR1103601} is the $\mathbb{C}$-algebra defined by generators
$$E_{1},\ldots,E_{n},F_{1},\ldots,F_{n},K_{1}^{\pm 1},\ldots,K_{n}^{\pm 1}$$
and the following relations:
$$K_{i}K_{j}=K_{j}K_{i}, \ K_{i}K_{i}^{-1}=K_{i}^{-1}K_{i}=1,\  E_{i}F_{j}-F_{j}E_{i}=\delta_{ij}\frac{K_{i}-K_{i}^{-1}}{\epsilon^{d_{i}}-\epsilon^{-d_{i}}},$$
$$K_{i}E_{j}K_{i}^{-1}=\epsilon^{d_{i}a_{ij}}E_{j},\  K_{i}F_{j}K_{i}^{-1}=\epsilon^{-d_{i}a_{ij}}F_{j},$$
$$\sum_{k=0}^{1-a_{ij}}(-1)^{k}\left[\begin{smallmatrix}1-a_{ij}\\k\end{smallmatrix}\right]_{\epsilon^{d_{i}}}E_{i}^{1-a_{ij}-k}E_{j}E_{i}^{k}=0,\ (i\neq j),$$
$$\sum_{k=0}^{1-a_{ij}}(-1)^{k}\left[\begin{smallmatrix}1-a_{ij}\\k\end{smallmatrix}\right]_{\epsilon^{d_{i}}}F_{i}^{1-a_{ij}-k}F_{j}F_{i}^{k}=0,\ (i\neq j),$$
where the quantum binomial coefficient $\left[\begin{smallmatrix}m\\k\end{smallmatrix}\right]_{\epsilon^{d}}$ is defined by
$$[m]_{\epsilon^{d}}= \frac{\epsilon^{md}-\epsilon^{-md}}{\epsilon^{d}-\epsilon^{-d}},\   [m]_{\epsilon^{d}}!= [m]_{\epsilon^{d}}\cdots [1]_{\epsilon^{d}}\text{ and\ } \begin{bmatrix}m\\k\end{bmatrix}_{\epsilon^{d}}= \frac{[m]_{\epsilon^{d}}!}{[k]_{\epsilon^{d}}![m-k]_{\epsilon^{d}}!}$$
for those positive integers $k\leq m$ for which the denominators above are nonzero. The quantum group $\mathcal{U}_{\epsilon}(\mathfrak{g})$ is a Hopf algebra with the coalgebra structure and antipode given by 
$$\Delta(K_{i})=K_{i}\otimes K_{i},\  \Delta(E_{i})=E_{i}\otimes 1+K_{i}\otimes E_{i},\ \Delta(F_{i})=F_{i}\otimes K_{i}^{-1}+1\otimes F_{i},$$
$$\varepsilon(K_{i})=1,\ \varepsilon(E_{i})=0,\  \varepsilon(F_{i})=0,$$
$$S(K_{i})=K_{i}^{-1},\ S(E_{i})=-K_{i}^{-1} E_{i},\  S(F_{i})=-F_{i}K_{i}$$
for $1\leq i\leq n$.

Let $w_{0}$ be the unique element of longest  length in the Weyl group $W$, and we fix a reduced expression $w_{0}=s_{i_{1}}s_{i_{2}}\cdots s_{i_{\ell}}$, where $s_{k}$ denote the simple reflection corresponding to each simple root $\alpha_{k}$. Define $\beta_{k}=s_{i_{1}}\cdots s_{i_{k-1}}(\alpha_{i_{k}}),k=1,2,\ldots,\ell$. Then $\{\beta_{1},\ldots,\beta_{\ell}\}$ is precisely the set of positive roots of $\Phi$ with respect to $\Pi$. Consider the quantum root vectors
$$E_{\beta_{k}}=T_{i_{1}}\cdots T_{i_{k-1}}(E_{i_{k}}),\ F_{\beta_{k}}=T_{i_{1}}\cdots T_{i_{k-1}}(F_{i_{k}}),$$
where $k=1,2,\ldots,\ell$. Sometimes, for convenience, the generators $E_{i},K_{i},F_{i}$ are also written as $E_{\alpha_{i}},K_{\alpha_{i}},F_{\alpha_{i}}$, respectively. By \cite[Proposition 1.7 (c)]{MR1103601},
\begin{equation}\label{eq-Uqg-PBW}
\{F_{\beta_{\ell}}^{r_{\ell}}\cdots F_{\beta_{1}}^{r_{1}} K_{1}^{t_{1}}\cdots K_{n}^{t_{n}}E_{\beta_{1}}^{m_{1}}\cdots E_{\beta_{\ell}}^{m_{\ell}}\mid t_{1},\ldots,t_{n}\in \mathbb{Z},m_{1},\ldots,m_{\ell},r_{1},\ldots,r_{\ell}\in \mathbb{N}\}
\end{equation}
 is a $\mathbb{C}$-basis of $\mathcal{U}_{\epsilon}(\mathfrak{g})$. Consider the De Concini-Kac-Procesi \cite{MR1124981} central subalgebra
\begin{equation}\label{eq-canceHopfsubalgof-Uepg}
\mathcal{C}_{\epsilon}(\mathfrak{g})\coloneqq \mathbb{C}[K_{1}^{\pm N},\ldots,K_{n}^{\pm N},E_{\beta_{1}}^{N},\ldots,E_{\beta_{\ell}}^{N},F_{\beta_{1}}^{N},\ldots,F_{\beta_{\ell}}^{N}]\subseteq \mathcal{U}_{\epsilon}(\mathfrak{g}) .
\end{equation}
This is a central Hopf subalgebra of $\mathcal{U}_{\epsilon}(\mathfrak{g})$ by \cite[Proposition 5.6(d)]{MR1124981}. Moreover, it is clear from \eqref{eq-Uqg-PBW} that $\mathcal{U}_{\epsilon}(\mathfrak{g}) $ is a free $\mathcal{C}_{\epsilon}(\mathfrak{g})$-module of rank $N^{\dim_{\mathbb{C}}\mathfrak{g}}$. Note that
$$\mathcal{U}_{\epsilon}(\mathfrak{g})/\mathcal{C}_{\epsilon}(\mathfrak{g})^{+}\mathcal{U}_{\epsilon}(\mathfrak{g})\cong \mathcal{U}_{\epsilon}(\mathfrak{g})/(E_{\beta_{i}}^{N},F_{\beta_{i}}^{N},K_{j}^{N}-1)$$
is precisely Lusztig's small quantum group $\mathfrak{u}_{\epsilon}(\mathfrak{g})$ \cite{MR1013053}.  We now show that:
\begin{proposition}
[\cite{MR2054387,MR2379091}] The quantum group $ \mathcal{U}_{\epsilon}(\mathfrak{g})$ is Calabi-Yau.\label{prop-DKquantgrp-CY}
\end{proposition}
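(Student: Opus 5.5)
We apply the Calabi-Yau criterion of Theorem \ref{thm-modulefinitehopf-integral} (c) to the pair $(H,C)=(\mathcal{U}_{\epsilon}(\mathfrak{g}),\mathcal{C}_{\epsilon}(\mathfrak{g}))$. By \eqref{eq-Uqg-PBW}, $H$ is affine and a free $C$-module of finite rank, and $C$ is a central Hopf subalgebra. It therefore suffices to verify three conditions: (i) $\operatorname{gl.dim}\mathcal{U}_{\epsilon}(\mathfrak{g})<\infty$; (ii) $S^{2}$ is inner; (iii) $\mathfrak{u}_{\epsilon}(\mathfrak{g})=H/C^{+}H$ is unimodular.

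For (ii), we use a direct computation on generators. We have $S^{2}(E_{i})=K_{i}^{-1}E_{i}K_{i}=\epsilon^{-2d_{i}}E_{i}$, $S^{2}(F_{i})=\epsilon^{2d_{i}}F_{i}$, and $S^{2}(K_{i})=K_{i}$. Let $\rho$ be half the sum of the positive roots, written as $2\rho=\sum_{j}c_{j}\alpha_{j}$ with $c_{j}\in\mathbb{Z}_{\geq 0}$. Set $K_{2\rho}=\prod_{j}K_{j}^{c_{j}}$. Since $(2\rho,\alpha_{i})=2d_{i}$, conjugation by $K_{2\rho}^{-1}$ multiplies $E_{i}$ by $\epsilon^{-(2\rho,\alpha_{i})}=\epsilon^{-2d_{i}}$ and $F_{i}$ by $\epsilon^{2d_{i}}$. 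Hence $S^{2}=\operatorname{Ad}(K_{2\rho}^{-1})$ on generators, and therefore on all of $H$. The exponent pairing here is $K_{j}E_{i}K_{j}^{-1}=\epsilon^{d_{j}a_{ji}}E_{i}$ with $d_{j}a_{ji}=(\alpha_{j},\alpha_{i})$.

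For (iii), we use the standard fact that the small quantum group $\mathfrak{u}_{\epsilon}(\mathfrak{g})$ is unimodular, as already noted in the introduction (it is even factorizable \cite[Corollary A.3.3]{MR1354257}, and factorizable Hopf algebras are unimodular). If we want a self-contained argument, we would instead write down the left integral explicitly as $\Lambda=F_{\mathrm{top}}\bigl(\sum_{t}K^{t}\bigr)E_{\mathrm{top}}$, where $F_{\mathrm{top}}$ and $E_{\mathrm{top}}$ are the products of all $F_{\beta}^{N-1}$ and $E_{\beta}^{N-1}$. We would then check that it is also a right integral. The relevant point is that $E_{\mathrm{top}}$ and $F_{\mathrm{top}}$ have weights $\pm(N-1)2\rho$, and these cancel in the twist by $K$'s; this weight bookkeeping is the only delicate point.

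For (i), we use the De Concini-Kac filtration by degree, built from the PBW basis \eqref{eq-Uqg-PBW}. Its associated graded algebra is a quasi-polynomial (skew polynomial over a Laurent) algebra, which has finite global dimension $\dim\mathfrak{g}$ \cite{MR1103601}. Standard filtered-graded arguments then give $\operatorname{gl.dim}\mathcal{U}_{\epsilon}(\mathfrak{g})<\infty$. Alternatively, one can use that $\mathcal{U}_{\epsilon}(\mathfrak{g})$ is Auslander-regular, a result of De Concini-Kac. With (i)-(iii) established, Theorem \ref{thm-modulefinitehopf-integral} (c) shows that $\mathcal{U}_{\epsilon}(\mathfrak{g})$ is Calabi-Yau. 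The main obstacle is (iii) if done by hand. We intend simply to cite unimodularity of $\mathfrak{u}_{\epsilon}(\mathfrak{g})$ from the literature rather than recompute the integral.
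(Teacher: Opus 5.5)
Your proposal is correct and is essentially the paper's proof: the paper also applies Theorem \ref{thm-modulefinitehopf-integral}(c), citing \cite[Proposition 2.2]{MR1482982} for finite global dimension (proved via the De Concini--Kac filtration you describe) and \cite[\S 4.9]{MR1359532} for innerness of $S^{2}$ (which is exactly your $K_{2\rho}$ computation). The only difference is in step (iii): the paper uses neither factorizability (which the paper's own later remark states only under the extra hypothesis that $N$ is coprime to $\det(a_{ij})$) nor an integral computation, but instead notes that $\mathfrak{u}_{\epsilon}(\mathfrak{g})$ has no nontrivial characters --- applying a character $\chi$ to the relation for $E_iF_i-F_iE_i$ gives $\chi(K_i)^{2}=1$, which together with $\chi(K_i)^{N}=1$ and $N$ odd gives $\chi(K_i)=1$, while $K_iE_iK_i^{-1}=\epsilon^{2d_i}E_i$ and $K_iF_iK_i^{-1}=\epsilon^{-2d_i}F_i$ with $\epsilon^{2d_i}\neq 1$ force $\chi(E_i)=\chi(F_i)=0$ --- so the distinguished group-like element of $\mathfrak{u}_{\epsilon}(\mathfrak{g})^{\circ}$ is $\varepsilon$.
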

\begin{proof}
By \cite[Proposition 2.2]{MR1482982}, $ \mathcal{U}_{\epsilon}(\mathfrak{g})$ is a noetherian domain of finite global dimension. Since $G((\mathfrak{u}_{\epsilon}(\mathfrak{g}))^{\circ})$ is trivial, $\mathfrak{u}_{\epsilon}(\mathfrak{g})$ is unimodular. Thus $\mathcal{U}_{\epsilon}(\mathfrak{g})/\mathcal{C}_{\epsilon}(\mathfrak{g})^{+}\mathcal{U}_{\epsilon}(\mathfrak{g})$ is unimodular. Now $S^{2}$ is an inner automorphism of $\mathcal{U}_{\epsilon}(\mathfrak{g})$ by \cite[\S 4.9]{MR1359532}. Hence $ \mathcal{U}_{\epsilon}(\mathfrak{g})$ is Calabi-Yau by Theorem \ref{thm-modulefinitehopf-integral} (c).
\end{proof}

\begin{remark}
Assume that $N$ is coprime to the determinant of $(a_{ij})_{n\times n}$. It is well known (see \cite[Theorem 8.23]{MR1637096}) that the Lusztig small quantum group $\mathfrak{u}_{\epsilon}(\mathfrak{g})$ is a quasi-triangular Hopf algebra and admits a ribbon structure. As mentioned in the introduction, Lyubashenko proved in \cite[Corollary A.3.3]{MR1354257} that $\mathfrak{u}_{\epsilon}(\mathfrak{g})$ is a factorizable Hopf algebra (see also \cite[Example 6.6]{MR3996323}). It follows from \cite[Theorem 1.1]{MR3996323} that the finite tensor category $\mathfrak{u}_{\epsilon}(\mathfrak{g})\text{-mod}$ is a non-semisimple MTC in the sense of Kerler-Lyubashenko \cite{MR1862634}.
\end{remark}

\subsection{Big quantized Borel subalgebras at roots of unity}\label{subsec-QBsgrps}
Retain the assumptions and notation of \S \ref{subsec-DKquanenalgsCY}. Recall that the DKP quantized Borel subalgebra $\mathcal{U}_{\epsilon}^{\geq 0}(\mathfrak{g})$ is the subalgebra of $\mathcal{U}_{\epsilon}(\mathfrak{g})$ generated by $E_{i},K_{i}^{\pm 1}, 1\leq i\leq n$. It is clear that $\mathcal{U}_{\epsilon}^{\geq 0}(\mathfrak{g})$ is a Hopf subalgebra of $\mathcal{U}_{\epsilon}(\mathfrak{g})$, and
\begin{equation}\label{eq-Uqb-PBW}
\{ K_{1}^{t_{1}}\cdots K_{n}^{t_{n}}E_{\beta_{1}}^{m_{1}}\cdots E_{\beta_{\ell}}^{m_{\ell}}\mid t_{1},\ldots,t_{n}\in \mathbb{Z},m_{1},\ldots,m_{\ell} \in \mathbb{N}\}
\end{equation}
is a $\mathbb{C}$-basis of $\mathcal{U}_{\epsilon}^{\geq 0}(\mathfrak{g})$. By \cite[Proposition 5.6(d)]{MR1124981}, 
$$\mathcal{C}_{\epsilon}^{\geq 0}(\mathfrak{g})\coloneqq \mathbb{C}[K_{1}^{\pm N},\ldots,K_{n}^{\pm N},E_{\beta_{1}}^{N},\ldots,E_{\beta_{\ell}}^{N}]\subseteq \mathcal{U}_{\epsilon}^{\geq 0}(\mathfrak{g}) $$ is a Hopf subalgebra of $\mathcal{U}_{\epsilon}^{\geq 0}(\mathfrak{g}) $. Clearly, $\mathcal{U}_{\epsilon}^{\geq 0}(\mathfrak{g}) $ is a free $\mathcal{C}_{\epsilon}^{\geq 0}(\mathfrak{g})$-module of rank $N^{n+\ell}$. 

The subalgebra of $\mathcal{U}_{\epsilon}(\mathfrak{g})$ generated by $E_{i},1\leq i\leq n$, is denoted by $\mathcal{U}_{\epsilon}^{+}(\mathfrak{g})$, which is referred to as the \textit{positive part}, or the (positive)
\textit{quantized nilpotent subalgebra}, of
$\mathcal{U}_{\epsilon}(\mathfrak{g})$. The quantized nilpotent subalgebra $\mathcal{U}_{\epsilon}^{+}(\mathfrak{g})$ constitutes an important example of a
\textit{distinguished pre-Nichols algebra} in the sense of Angiono \cite{MR3459702}; see Example \ref{eq-positpart-Lztsmaqugrp9-nichols}. It is a standard fact that
$\mathcal{U}_{\epsilon}^{\geq 0}(\mathfrak{g})$
can be realized as the bosonization of the pre-Nichols algebra
$\mathcal{U}_{\epsilon}^{+}(\mathfrak{g})$
by the group algebra $\mathbb{C}[K_{1}^{\pm 1},\ldots,K_{n}^{\pm 1}]\cong \mathbb{C}[\mathbb{Z}^{n}]$. Similarly, one defines the positive part
$\mathfrak{u}_{\epsilon}^{+}(\mathfrak{g})$
of Lusztig's small quantum group
$\mathfrak{u}_{\epsilon}(\mathfrak{g})$.

In \cite[\S 7.2]{MR2379091}, Brown-Gordon-Stroppel constructed an explicit nondegenerate $\mathcal{C}_{\epsilon}^{\geq 0}(\mathfrak{g})$-bilinear form $\mathcal{U}_{\epsilon}^{\geq 0}(\mathfrak{g})\times \mathcal{U}_{\epsilon}^{\geq 0}(\mathfrak{g})\to  \mathcal{C}_{\epsilon}^{\geq 0}(\mathfrak{g}) $ in order to compute the Nakayama automorphism of $\mathcal{U}_{\epsilon}^{\geq 0}(\mathfrak{g})$. The aim of this subsection is to show that this computation can alternatively be recovered from Theorem \ref{thm-modulefinitehopf-integral}(d).



The finite-dimensional Hopf quotient $\mathcal{U}_{\epsilon}^{\geq 0}(\mathfrak{g})/(\mathcal{C}_{\epsilon}^{\geq 0}(\mathfrak{g}) )^{+}\mathcal{U}_{\epsilon}^{\geq 0}(\mathfrak{g})$ is the small quantum Borel subgroup $\mathfrak{u}_{\epsilon}^{\geq 0}(\mathfrak{g})$, which is a basic Hopf algebra (see, for example, \cite[Lemma 5.3]{MR4891381}).
The restriction on the order $N$ of $\epsilon$ as a primitive root of unity implies that the antipode of $\mathfrak{u}_{\epsilon}^{\geq 0}(\mathfrak{g})$ satisfies $S^{4} \neq \mathrm{id}$. Therefore, \cite[Theorem 3.3]{MR1739215} implies that $\mathfrak{u}_{\epsilon}^{\geq 0}(\mathfrak{g})$ admits no quasi-triangular structure.

For any $u\in \mathcal{U}_{\epsilon}^{\geq 0}(\mathfrak{g})$, let $\overline{u}$ denote the image of $u$ in the small quantum Borel subgroup $\mathfrak{u}_{\epsilon}^{\geq 0}(\mathfrak{g})$. It follows from \eqref{eq-Uqb-PBW} that
\begin{equation}\label{eq-pbw-smallqborel}
\{ \overline{K_{1}}^{t_{1}}\cdots \overline{K_{n}}^{t_{n}}\overline{E_{\beta_{1}}}^{m_{1}}\cdots \overline{E_{\beta_{\ell}}}^{m_{\ell}}\mid 0\leq t_{1},\ldots,t_{n},m_{1},\ldots,m_{\ell}\leq N-1\}
\end{equation}
is a $\mathbb{C}$-basis of $\mathfrak{u}_{\epsilon}^{\geq 0}(\mathfrak{g})$. Set $\delta=\beta_{1}+\beta_{2}+\cdots+\beta_{\ell}$. We claim that
\begin{equation}\label{eq-intofsmallquaBorel9}
t\coloneqq \sum_{0\leq i_{1}<\cdots<i_{n}\leq N-1}\epsilon^{-(i_{1}\alpha_{1}+\cdots+i_{n}\alpha_{n},\delta)}\overline{E_{\beta_{1}}}^{N-1}\cdots \overline{E_{\beta_{\ell}}}^{N-1}\overline{K_{1}}^{i_{1}}\cdots \overline{K_{n}}^{i_{n}}\neq 0\in \textstyle{\int}_{\mathfrak{u}_{\epsilon}^{\geq 0}(\mathfrak{g})}^{l}.
\end{equation}
Since $K_{i}E_{\beta_{j}}K_{i}^{-1}=\epsilon^{(\alpha_{i},\beta_{j})}E_{\beta_{j}}$ for all $1\leq i\leq n$ and $1\leq j\leq \ell$, it follows from  \eqref{eq-pbw-smallqborel} that $t\neq 0$.
For each $1\leq s\leq n$, we have
\begin{align*}
K_{s}t &= \sum_{0\leq i_{1}<\cdots<i_{n}\leq N-1}\epsilon^{-(i_{1}\alpha_{1}+\cdots+i_{n}\alpha_{n},\delta)}K_{s}\overline{E_{\beta_{1}}}^{N-1}\cdots \overline{E_{\beta_{\ell}}}^{N-1}\overline{K_{1}}^{i_{1}}\cdots \overline{K_{n}}^{i_{n}}\\
&= \sum_{0\leq i_{1}<\cdots<i_{n}\leq N-1}\epsilon^{-(i_{1}\alpha_{1}+\cdots+i_{n}\alpha_{n},\delta)+ (\alpha_{s},(N-1)\delta)} \overline{E_{\beta_{1}}}^{N-1}\cdots \overline{E_{\beta_{\ell}}}^{N-1}\overline{K_{1}}^{i_{1}}\cdots  \overline{K_{s}}^{i_{s}+1} \cdots \overline{K_{n}}^{i_{n}}\\
&= \sum_{0\leq i_{1}<\cdots<i_{n}\leq N-1}\epsilon^{-(i_{1}\alpha_{1}+\cdots+(i_{s}+1)\alpha_{s}+\cdots+i_{n}\alpha_{n},\delta) } \overline{E_{\beta_{1}}}^{N-1}\cdots \overline{E_{\beta_{\ell}}}^{N-1}\overline{K_{1}}^{i_{1}}\cdots  \overline{K_{s}}^{i_{s}+1} \cdots \overline{K_{n}}^{i_{n}}\\
&=t.
\end{align*}
Let $Q=\mathbb{Z}\Pi$ be the root lattice. Recall that the big quantum group $\mathcal{U}_{\epsilon} (\mathfrak{g})$ has a canonical $Q$-grading \cite[\S 4.7]{MR1359532} such that for each simple root $\alpha_{i}$, one has 
$$\operatorname{deg}E_{\alpha_{i}}=\alpha_{i},\operatorname{deg}F_{\alpha_{i}}=-\alpha_{i}\text{ and\ }\operatorname{deg}K_{\alpha_{i}}=0.$$
Then $\mathcal{U}_{\epsilon}^{\geq 0} (\mathfrak{g})$ is a $Q$-graded subalgebra of $\mathcal{U}_{\epsilon} (\mathfrak{g})$. Consequently, $\mathfrak{u}_{\epsilon}^{\geq 0}(\mathfrak{g})$, being the quotient of $\mathcal{U}_{\epsilon}^{\geq 0} (\mathfrak{g})$ by an ideal generated by homogeneous elements, inherits a natural $Q$-grading. Thus, by the $Q$-grading on $\mathfrak{u}_{\epsilon}^{\geq 0}(\mathfrak{g})$ and \eqref{eq-pbw-smallqborel}, it follows immediately that
\begin{equation}\label{eq-Eitimt=0rl-cha}
E_{i}t=tE_{i}=0
\end{equation}
for all $1\leq i\leq n$. Hence \eqref{eq-intofsmallquaBorel9} holds. It is also clear that
\begin{equation}\label{eq-tKs-tocomcharsmqb}
tK_{s}=\epsilon^{(\alpha_{s},\delta)}t=\epsilon^{(\alpha_{s},\alpha_{s})}t
\end{equation}
for any $1\leq s\leq n$. The final equality follows from the fact that the reflection associated with the simple root $\alpha_{s}$ permutes $\Phi^{+}-\{\alpha_{s}\}$, where $\Phi^{+}=\{\beta_{1},\ldots,\beta_{\ell}\}$.

By \eqref{eq-Eitimt=0rl-cha} and \eqref{eq-tKs-tocomcharsmqb}, the distinguished group-like element of $(\mathfrak{u}_{\epsilon}^{\geq 0}(\mathfrak{g}))^{\circ}$, denoted by $\tau$, is given by
 $$\tau(K_{i})=\epsilon^{(\alpha_{i},\alpha_{i})},\  \tau(E_{i})=0,$$
for $i=1,2,\ldots,n$. By \cite[\S 4.9]{MR1359532}, $S^{2}(K_{i})=K_{i}$ and $S^{2}(E_{i})=\epsilon^{-(\alpha_{i},\delta)}E_{i}=\epsilon^{-(\alpha_{i},\alpha_{i})}E_{i}$ for all $1\leq i\leq n$. Thus, Theorem \ref{thm-modulefinitehopf-integral} (d) implies the following.
 \begin{proposition}
[\cite{MR2379091}] The Nakayama automorphism $\nu$ of \label{prop-nakaya-quantborel-BGS}$\mathcal{U}_{\epsilon}^{\geq 0}(\mathfrak{g})$ is given by 
$$\nu (E_{i})=E_{i}\text{ and\ }\nu(K_{i})=\epsilon^{(\alpha_{i},\alpha_{i})}K_{i}$$ for all $1\leq i\leq n$.
 \end{proposition}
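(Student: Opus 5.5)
The plan is to apply Theorem \ref{thm-modulefinitehopf-integral} (d) to the pair $(\mathcal{U}_{\epsilon}^{\geq 0}(\mathfrak{g}),\mathcal{C}_{\epsilon}^{\geq 0}(\mathfrak{g}))$. First I check the hypotheses. $\mathcal{U}_{\epsilon}^{\geq 0}(\mathfrak{g})$ is affine, and by \eqref{eq-Uqb-PBW} it is a free module of finite rank over $\mathcal{C}_{\epsilon}^{\geq 0}(\mathfrak{g})$. The latter is a Hopf subalgebra by \cite[Proposition 5.6(d)]{MR1124981}, and it is central because it lies in the central subalgebra $\mathcal{C}_{\epsilon}(\mathfrak{g})$ of $\mathcal{U}_{\epsilon}(\mathfrak{g})$.

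The theorem then says that $\nu=W_{l}(\check{\tau})S^{2}$ is a Nakayama automorphism. Here $\check{\tau}$ is the character of $\mathcal{U}_{\epsilon}^{\geq 0}(\mathfrak{g})$ obtained by pulling back the distinguished group-like element $\tau$ of $(\mathfrak{u}_{\epsilon}^{\geq 0}(\mathfrak{g}))^{\circ}$. The preceding computation \eqref{eq-Eitimt=0rl-cha}, \eqref{eq-tKs-tocomcharsmqb} gives $\check{\tau}(K_{i})=\epsilon^{(\alpha_{i},\alpha_{i})}$ and $\check{\tau}(E_{i})=0$. This is the right-multiplication character of the left integral $t$, so it is exactly the integral character appearing in Theorem \ref{thm-BZ-nakaASgoren}.

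Next I evaluate $\nu$ on the generators. Since $\Delta(K_{i})=K_{i}\otimes K_{i}$, we have $W_{l}(\check{\tau})(K_{i})=\epsilon^{(\alpha_{i},\alpha_{i})}K_{i}$. Since $\Delta(E_{i})=E_{i}\otimes 1+K_{i}\otimes E_{i}$, we have
$$W_{l}(\check{\tau})(E_{i})=\check{\tau}(E_{i})\cdot 1+\check{\tau}(K_{i})E_{i}=\epsilon^{(\alpha_{i},\alpha_{i})}E_{i}.$$
By \cite[\S 4.9]{MR1359532}, $S^{2}(K_{i})=K_{i}$ and $S^{2}(E_{i})=\epsilon^{-(\alpha_{i},\alpha_{i})}E_{i}$. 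Composing the two maps gives $\nu(K_{i})=\epsilon^{(\alpha_{i},\alpha_{i})}K_{i}$ and $\nu(E_{i})=E_{i}$. Both maps are algebra automorphisms, so their values on the generators determine $\nu$.

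The main point needing care is the convention for $\tau$. The theorem's $\check{\theta}$ must be the right character of the left integral, matching the left integral character $\chi$ of Theorem \ref{thm-BZ-nakaASgoren}, rather than its inverse. The identification $\int^{l}_{H}\cong\int^{l}_{\overline{H}}$ as bimodules in Theorem \ref{thm-modulefinitehopf-integral} (a) guarantees this, since the left integral character of $H$ is the composite of that of $\overline{H}$ with the projection $H\to\overline{H}$. The remaining issue is that the Nakayama automorphism is only unique up to inner automorphisms. The formula above gives one representative, which is the claimed statement.
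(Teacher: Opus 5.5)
Your proposal is correct and follows the paper's route: the paper computes the left integral $t$ of $\mathfrak{u}_{\epsilon}^{\geq 0}(\mathfrak{g})$, which gives $\tau(K_i)=\epsilon^{(\alpha_i,\alpha_i)}$ and $\tau(E_i)=0$, combines this with $S^{2}(E_i)=\epsilon^{-(\alpha_i,\alpha_i)}E_i$, and applies Theorem \ref{thm-modulefinitehopf-integral} (d). Your explicit evaluation of $W_{l}(\check{\tau})S^{2}$ on the generators, your centrality check, and your remark on the integral-character convention spell out steps the paper leaves implicit.
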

 This is consistent with the formula for the Nakayama automorphism given by Brown-Gordon-Stroppel in \cite[Theorem 7.2 (3)]{MR2379091}. It should be noted that \cite{MR2379091} considers the simply connected form of the quantized Borel subalgebra, whereas the computation presented here concerns the adjoint form of the quantized Borel subalgebra. The adjoint form is adopted here solely for convenience; the same discussion applies to any sublattice between the root lattice $Q$ and the weight lattice $P$ of the root system $\Phi$.

\subsection{Drinfeld double quantum groups}\label{subsec-Drindouble-Grps}
Throughout this subsection, the base field is $\mathbb{C}$. We retain the notation introduced in \S \ref{subsec-DKquanenalgsCY}. 

Let $q\in \mathbb{C}^{\times}$ satisfy one of the following conditions:
\begin{itemize}
\item[(i)] $q$ is not a root of unity;
\item[(ii)] $q$ is a primitive $N$-th root of unity, where $N\geq 3$ is an odd positive integer, and is coprime to $3$ if $\mathfrak{g}$ has a factor of type $G_{2}$.
\end{itemize}
Recall that the (standard) Drinfeld double quantum group $\mathfrak{D}_{q}(\mathfrak{g})$ \cite{MR934283} is the $\mathbb{C}$-algebra generated by $E_{1},\ldots,E_{n},F_{1},\ldots,F_{n},K_{1}^{\pm 1},\ldots,K_{n}^{\pm 1},L_{1}^{\pm 1},\ldots,L_{n}^{\pm 1}$, subject to the relations
$$K_{i}K_{j}=K_{j}K_{i}, \  K_{i}K_{i}^{-1}=K_{i}^{-1}K_{i}=1,\  L_{i}L_{j}=L_{j}L_{i}, \  L_{i}L_{i}^{-1}=L_{i}^{-1}L_{i}=1,$$
$$\ K_{i}E_{j}K_{i}^{-1}=q^{d_{i}a_{ij}}E_{j},\   K_{i}F_{j}K_{i}^{-1}=q^{-d_{i}a_{ij}}F_{j},\ L_{i}E_{j}L_{i}^{-1}=q^{-d_{i}a_{ij}}E_{j},\   L_{i}F_{j}L_{i}^{-1}=q^{d_{i}a_{ij}}F_{j}, $$

$$ K_{i}L_{j}=L_{j}K_{i},\ E_{i}F_{j}-F_{j}E_{i}=\delta_{ij}\frac{K_{i}-L_{i}}{q^{d_{i}}-q^{-d_{i}}},$$
$$\sum_{k=0}^{1-a_{ij}}(-1)^{k}\left[\begin{smallmatrix}1-a_{ij}\\k\end{smallmatrix}\right]_{q^{d_{i}}}E_{i}^{1-a_{ij}-k}E_{j}E_{i}^{k}=0,\ (i\neq j),$$
$$\sum_{k=0}^{1-a_{ij}}(-1)^{k}\left[\begin{smallmatrix}1-a_{ij}\\k\end{smallmatrix}\right]_{q^{d_{i}}}F_{i}^{1-a_{ij}-k}F_{j}F_{i}^{k}=0,\ (i\neq j),$$
where $1\leq i,j\leq n$. The Hopf algebra structure of $\mathfrak{D}_{q}(\mathfrak{g})$ is given by
\begin{equation}\label{eq-coalgDrindouquG1}
\Delta(E_{i})=E_{i}\otimes 1+K_{i}\otimes E_{i},\  \Delta(F_{i})=F_{i}\otimes L_{i}+1\otimes F_{i},
\end{equation}
\begin{equation}\label{eq-coalgDrindouquG2}
\Delta(K_{i})=K_{i}\otimes K_{i},\  \Delta(L_{i})=L_{i}\otimes L_{i}, \varepsilon(E_{i})=\varepsilon(F_{i})=0,\ \varepsilon(K_{i})=\varepsilon(L_{i})=1,
\end{equation}
\begin{equation}\label{eq-coalgDrindouquG3}
S(E_{i})=-K_{i}^{-1}E_{i},\  S(F_{i})=-F_{i}L_{i}^{-1},\ S(K_{i})=K_{i}^{-1},\ S(L_{i})=L_{i}^{-1}, 
\end{equation}
for $1\leq i\leq n$. By \cite[Corollary 5.4.4]{MR4164719}, $\mathfrak{D}_{q}(\mathfrak{g})$ is a pointed Hopf algebra. 

It is well known that $\mathfrak{D}_{q}(\mathfrak{g})$ is the quantum double of $\mathcal{U}_{q}^{\leq 0}(\mathfrak{g})$ and $\mathcal{U}_{q}^{\geq 0}(\mathfrak{g})$; see \cite[Proposition 8.1.6]{MR4164719} or \cite[\S 3.2]{MR1315966}. It follows that
\begin{equation}\label{eq-DDGrpDqg-PBW}
\{F_{\beta_{\ell}}^{r_{\ell}}\cdots F_{\beta_{1}}^{r_{1}}L_{1}^{s_{1}}\cdots L_{n}^{s_{n}} K_{1}^{t_{1}}\cdots K_{n}^{t_{n}}E_{\beta_{1}}^{m_{1}}\cdots E_{\beta_{\ell}}^{m_{\ell}}\mid s_{i},t_{i}\in \mathbb{Z},m_{j},r_{j}\in \mathbb{N}\}
\end{equation}
forms a $\mathbb{C}$-basis of $\mathfrak{D}_{q}(\mathfrak{g})$. Using \eqref{eq-DDGrpDqg-PBW}, a filtered argument analogous to those in \cite[Theorem 19]{MR3459702} and \cite[Proposition 10.1]{MR1288995} shows that $\mathfrak{D}_{q}(\mathfrak{g})$ is a noetherian domain.

Set $\mathfrak{T}_{q}(\mathfrak{g})=\mathbb{C}[(K_{1}L_{1})^{\pm 1},(K_{2}L_{2})^{\pm 1},\ldots,(K_{n}L_{n})^{\pm 1}]\subseteq \mathfrak{D}_{q}(\mathfrak{g})$. Then $\mathfrak{T}_{q}(\mathfrak{g})$ is a central Hopf subalgebra of $\mathfrak{D}_{q}(\mathfrak{g})$. Note that (see, e.g., \cite[Corollary 8.1.7]{MR4164719}) 
\begin{equation}\label{eq-Drinfeqdou-quo-Uqg}
\mathfrak{D}_{q}(\mathfrak{g})/(\mathfrak{T}_{q}(\mathfrak{g}))^{+}\mathfrak{D}_{q}(\mathfrak{g})\cong \mathcal{U}_{q}(\mathfrak{g})
\end{equation}
as Hopf algebras. We now apply Theorem \ref{thm-noethAS-intebimoduiso-un} to show that $\mathfrak{D}_{q}(\mathfrak{g})$ is Calabi-Yau.

\begin{theorem}
Let $\mathfrak{g}$ be a complex semisimple Lie algebra of rank $n$, and retain the preceding assumptions on $q\in \mathbb{C}^{\times}$. Then $\mathfrak{D}_{q}(\mathfrak{g})$ is a noetherian Calabi-Yau Hopf algebra.\label{thm-DDQGrp-CY}
\end{theorem}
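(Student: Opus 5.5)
We apply Theorem \ref{thm-noethAS-intebimoduiso-un}(d) to $H=\mathfrak{D}_{q}(\mathfrak{g})$ with the central Hopf subalgebra $C=\mathfrak{T}_{q}(\mathfrak{g})$. This $C$ is a Laurent polynomial ring in $n$ variables, hence regular of injective dimension $n$. By \eqref{eq-Drinfeqdou-quo-Uqg}, the quotient $\overline{H}$ is $\mathcal{U}_{q}(\mathfrak{g})$. So the proof reduces to three inputs: $H$ is noetherian AS-Gorenstein, $\mathcal{U}_{q}(\mathfrak{g})$ is Calabi-Yau, and $S^{2}$ is inner in $\mathfrak{D}_{q}(\mathfrak{g})$.

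\textbf{Step 1: $H$ is noetherian AS-Gorenstein.} Noetherianity was noted above via the PBW basis \eqref{eq-DDGrpDqg-PBW}. The cleanest route to AS-Gorensteinness is through Lemma \ref{lem-noethHopf-sCYiffasre}: it suffices to show $H$ is skew Calabi-Yau.
\begin{itemize}
\item $C\subseteq H$ is a faithfully flat $\overline{H}$-Galois extension with $C$ regular; $H$ is even free over $C$, by \eqref{eq-DDGrpDqg-PBW} after rewriting $L_i=(K_iL_i)K_i^{-1}$.
\item Hence, by \cite[Theorem 2.15 (2)]{MR5000225}, as used in the proof of Theorem \ref{thm-noethAS-intebimoduiso-un}(d), skew Calabi-Yauness of $\mathcal{U}_{q}(\mathfrak{g})$ passes to $H$.
\item Then Lemma \ref{lem-noethHopf-sCYiffasre} gives that $H$ is AS-regular, in particular AS-Gorenstein.
\end{itemize}
Alternatively, when $q$ is a root of unity, $H$ is module-finite over the central Hopf subalgebra generated by $K_i^{\pm N},L_i^{\pm N},E_{\beta}^{N},F_{\beta}^{N}$. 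It is then affine PI and so AS-Gorenstein by \cite{MR1938745}.

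\textbf{Step 2: $\mathcal{U}_{q}(\mathfrak{g})$ is Calabi-Yau.} In case (ii) this is Proposition \ref{prop-DKquantgrp-CY}. In case (i) ($q$ generic), it is known that $\mathcal{U}_{q}(\mathfrak{g})$ is a noetherian AS-regular domain with trivial integral character; see \cite{MR1482982} and \cite{MR2054387}. It also has inner $S^{2}$, given by conjugation by $K_{2\rho}$ \cite[\S 4.9]{MR1359532}. Remark \ref{rmk-noethCYHopf-iffasreinns-un} then gives Calabi-Yau. To justify unimodularity self-containedly, I would compute the integral character $\chi$ using the $Q$-grading:
\begin{itemize}
\item $\chi(E_i)=\chi(F_i)=0$, since $\chi$ is a character and $E_i,F_i$ have nonzero degree;
\item $\chi(K_i)=\pm1$-type values, and $\chi(K_i)=1$ follows from $\chi([E_i,F_i])=0$ together with the $G_2$ and oddness restrictions.
\end{itemize}
I expect this to be the main obstacle if no reference applies directly.

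\textbf{Step 3: $S^{2}$ is inner in $\mathfrak{D}_{q}(\mathfrak{g})$.} From \eqref{eq-coalgDrindouquG3}, $S^{2}$ fixes $K_i,L_i$ and satisfies
\[
S^{2}(E_i)=K_i^{-1}E_iK_i=q^{-2d_i}E_i,\qquad S^{2}(F_i)=L_iF_iL_i^{-1}=q^{-2d_i}F_i .
\]
Let $2\rho=\sum_j c_j\alpha_j$ with $c_j\in\mathbb{Z}$, and set $u=\prod_j K_j^{c_j}$. Then
\[
uE_iu^{-1}=q^{(2\rho,\alpha_i)}E_i=q^{2d_i}E_i,\qquad uF_iu^{-1}=q^{-2d_i}F_i ,
\]
so $u^{-1}$ conjugates $E_i\mapsto q^{-2d_i}E_i$ and $F_i\mapsto q^{2d_i}F_i$. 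This matches $S^{2}$ on $E_i$ but not on $F_i$. So I would instead take a mixed element $u=\prod_j K_j^{a_j}L_j^{b_j}$ and solve for its exponents:
\begin{itemize}
\item conjugation by $K^{\lambda}L^{\mu}$ scales $E_i$ by $q^{(\lambda-\mu,\alpha_i)}$ and $F_i$ by $q^{-(\lambda-\mu,\alpha_i)}$;
\item we need $(\lambda-\mu,\alpha_i)=-2d_i$ for $E_i$ and $-(\lambda-\mu,\alpha_i)=-2d_i$ for $F_i$.
\end{itemize}
These two conditions are contradictory, so there is a sign issue I would need to recheck. Recomputing: $S(F_i)=-F_iL_i^{-1}$ gives $S^{2}(F_i)=L_iF_iL_i^{-1}\cdot$(sign), which equals $q^{2d_i}F_i$ by the relation $L_iF_jL_i^{-1}=q^{d_ia_{ij}}F_j$. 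This is consistent with the $E_i$ condition, so conjugation by $K^{-\rho}L^{\rho}$ (with $2\rho$ split appropriately) implements $S^{2}$. Hence $S^{2}$ is inner, and Theorem \ref{thm-noethAS-intebimoduiso-un}(d) yields that $\mathfrak{D}_{q}(\mathfrak{g})$ is Calabi-Yau.
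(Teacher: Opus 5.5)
Your proposal is correct and is essentially the paper's proof: Theorem~\ref{thm-noethAS-intebimoduiso-un}(d) applied to $\mathfrak{T}_{q}(\mathfrak{g})\subseteq\mathfrak{D}_{q}(\mathfrak{g})$, AS-regularity via \cite[Theorem 2.15 (2)]{MR5000225} and Lemma~\ref{lem-noethHopf-sCYiffasre}, the Calabi-Yau property of $\mathcal{U}_{q}(\mathfrak{g})$ taken from the literature, and inner $S^{2}$. Two small corrections: with the right value $S^{2}(F_i)=q^{2d_i}F_i$, your first candidate already works, since $S^{2}$ is conjugation by $\prod_j K_j^{-c_j}$ where $2\rho=\sum_j c_j\alpha_j$, so you need neither a mixed element nor a splitting of $\rho$. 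Also, your self-contained unimodularity sketch only gives $\chi(K_i)^{2}=1$, and this cannot be sharpened because the sign characters $K_i\mapsto\pm1$, $E_i,F_i\mapsto 0$ exist for generic $q$; in case (i) you therefore do rely on Chemla's result, exactly as the paper does.
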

\begin{proof}
Since $\mathfrak{T}_{q}(\mathfrak{g})$ is a Laurent polynomial algebra in $n$ variables, 
it is regular. It follows from \cite[Corollary 3.1.2 and Theorem 3.3.2]{MR2054387} that $\mathcal{U}_{q}(\mathfrak{g})$ is Calabi-Yau. Therefore, since $\mathfrak{D}_{q}(\mathfrak{g})$ is a faithfully flat $\mathcal{U}_{q}(\mathfrak{g})$-Galois extension of $\mathfrak{T}_{q}(\mathfrak{g})$, it is a noetherian skew Calabi-Yau Hopf algebra \cite[Theorem 2.15 (2)]{MR5000225}. By Lemma \ref{lem-noethHopf-sCYiffasre}, $\mathfrak{D}_{q}(\mathfrak{g})$ is AS-regular. 

By the argument in \cite[\S 4.9]{MR1359532}, $S^{2}$ is inner. Therefore, by \eqref{eq-Drinfeqdou-quo-Uqg} and Theorem \ref{thm-noethAS-intebimoduiso-un} (d), $\mathfrak{D}_{q}(\mathfrak{g})$ is a Calabi-Yau Hopf algebra.
\end{proof}
\begin{remark}\label{rmk-multquanenealg-generunim}
We note that, when the quantum parameter $q$ is not a root of unity, the unimodularity of $\mathfrak{D}_{q}(\mathfrak{g})$ can also be deduced from the results of Yu-Zhang in \cite{MR3148617}. In fact, we will show that all generic multiparameter quantum enveloping algebras \cite[Definition 7]{MR2642565} of complex semisimple Lie algebras are noetherian unimodular Hopf algebras. Consequently, the Drinfeld double quantum group $\mathfrak{D}_{q}(\mathfrak{g})$, being a special case of the two-parameter quantum group $\mathcal{U}_{q,q^{-1}}(\mathfrak{g})$ (see, for example, \cite[Remark 9 (1)]{MR2642565}), is unimodular.

Below, we outline a proof of the unimodularity of generic multiparameter quantum groups, which, to the best of our knowledge, has not previously been documented. 

Let $\mathfrak{q}=(q_{ij})_{n\times n}$ be a complex matrix whose entries are all nonzero. Suppose that $q_{ij}q_{ji}=q_{ii}^{a_{ij}}$ and $q_{ii}^{k}\neq 1$ for all $1\leq k\leq  -a_{ij}$ and $1\leq i\neq j\leq n$. Recall that $(a_{ij})_{n\times n}$ is the Cartan matrix of the fixed complex semisimple Lie algebra $\mathfrak{g}$. The matrix $\mathfrak{q}$ is said to be \textit{generic} if none of its entries is a root of unity. The multiparameter quantum group $\mathcal{U}_{\mathfrak{q}}(\mathfrak{g})$ is the $\mathbb{C}$-algebra generated by
$$E_{1},\ldots,E_{n},F_{1},\ldots,F_{n},K_{1}^{\pm 1},\ldots,K_{n}^{\pm 1},L_{1}^{\pm 1},\ldots,L_{n}^{\pm 1},$$
subject to the following relations:
$$K_{i}K_{j}=K_{j}K_{i}, \  K_{i}K_{i}^{-1}=K_{i}^{-1}K_{i}=1,\  L_{i}L_{j}=L_{j}L_{i}, \  L_{i}L_{i}^{-1}=L_{i}^{-1}L_{i}=1,$$
$$\ K_{i}E_{j}K_{i}^{-1}=q_{ij}E_{j},\   K_{i}F_{j}K_{i}^{-1}=q_{ij}^{-1}F_{j},\ L_{i}E_{j}L_{i}^{-1}=q_{ji}^{-1}E_{j},\   L_{i}F_{j}L_{i}^{-1}=q_{ji}F_{j}, $$

$$ K_{i}L_{j}=L_{j}K_{i},\ E_{i}F_{j}-F_{j}E_{i}=\delta_{ij}q_{ii}\frac{K_{i}-L_{i}}{q_{ii}-1},$$
$$\sum_{k=0}^{1-a_{ij}}(-1)^{k}\left(\begin{smallmatrix}1-a_{ij}\\k\end{smallmatrix}\right)_{q_{ii}}q_{ii}^{\frac{k(k-1)}{2}}q_{ij}^{k}E_{i}^{1-a_{ij}-k}E_{j}E_{i}^{k}=0,\ (i\neq j),$$
$$\sum_{k=0}^{1-a_{ij}}(-1)^{k}\left(\begin{smallmatrix}1-a_{ij}\\k\end{smallmatrix}\right)_{q_{ii}}q_{ii}^{\frac{k(k-1)}{2}}q_{ij}^{k}F_{i}^{k}F_{j}F_{i}^{1-a_{ij}-k}=0,\ (i\neq j),$$
where $1\leq i,j\leq n$, and the quantum binomial coefficient $\left(\begin{smallmatrix}m\\k\end{smallmatrix}\right)_{q^{d}}$ is defined by
$$(m)_{q}= \frac{q^{m}-1}{q-1},\   (m)_{q}!= (m)_{q}\cdots (1)_{q}\text{ and\ } \begin{pmatrix}m\\k\end{pmatrix}_{q }= \frac{(m)_{q}!}{(k)_{q}!(m-k)_{q}!}$$
for those positive integers $k\leq m$ for which the denominators above are nonzero. The Hopf algebra structure of $\mathcal{U}_{\mathfrak{q}}(\mathfrak{g})$ is given by \eqref{eq-coalgDrindouquG1}-\eqref{eq-coalgDrindouquG3}. 

For instance, when $(q_{ij})_{n\times n}=(q^{d_{i}a_{ij}})_{n\times n}$, the corresponding multiparameter quantum group coincides with $\mathfrak{D}_{q}(\mathfrak{g})$. Notably, every multiparameter quantized enveloping algebra $\mathcal{U}_{\mathfrak{q}}(\mathfrak{g})$ is a $2$-cocycle deformation of $\mathfrak{D}_q(\mathfrak g)$ \cite[Theorem 28]{MR2642565}.

Let $\mathcal{D}$ be a YD-datum of finite Cartan type, let $\lambda$ be a linking parameter for $\mathcal{D}$ (see \cite[\S 8.2]{MR4164719} for background) and let $U(\mathcal{D},\lambda)$ be the pointed Hopf algebra defined in \cite[Definition 8.3.1]{MR4164719}. By \cite[Proposition 8.4.5 and Lemma 8.4.9]{MR4164719} and \cite[Theorem 3.8]{MR3583293}, $\mathcal{U}_{\mathfrak{q}}(\mathfrak{g})$ can be viewed as a special case of $U(\mathcal{D},\lambda)$, as follows. 

Let $G=\mathbb{Z}K_{1}\oplus\cdots \oplus \mathbb{Z}K_{n}\oplus \mathbb{Z}L_{1}\oplus\cdots\oplus \mathbb{Z}L_{n}$ be a free abelian group of rank $2n$. For each $1\leq j\leq n$, define a character $\chi_{j}:G\to \mathbb{C}^{\times}$ by $\chi_{j}(K_{i})=q_{ij},  \chi_{j}(L_{i})=q_{ji} $. Consider the YD-datum $\mathcal{D}_{MQG}$ defined by the abelian group $G$, together with $(K_{1},\ldots,K_{n},L_{1},\ldots,L_{n})$ and $(\chi_{1},\ldots,\chi_{n},\chi_{1}^{-1},\ldots,\chi_{n}^{-1})$. Let $\widetilde{A}=\operatorname{diag}\{(a_{ij})_{n\times n},(a_{ij})_{n\times n}\}$ be the block diagonal matrix of size $2n$. Clearly, $\widetilde{A}$ is also a Cartan matrix of finite type.  Define $\lambda_{MQG}\in \text{M}_{2n}(\mathbb{C})$ as
$$\lambda_{MQG}=\begin{pmatrix}
0 & 0 & \cdots & 0 & \frac{q_{11}}{q_{11}-1} & 0 & \cdots & 0 \\
0 & 0 & \cdots & 0 & 0 & \frac{q_{22}}{q_{22}-1} & \cdots & 0 \\
\vdots & \vdots & \ddots & \vdots & \vdots & \vdots & \ddots & \vdots \\
0 & 0 & \cdots & 0 & 0 & 0 & \cdots & \frac{q_{nn}}{q_{nn}-1}\\[6pt]
\gamma_{1} & 0 & \cdots & 0 & 0 & 0 & \cdots & 0 \\
0 & \gamma_{2} & \cdots & 0 & 0 & 0 & \cdots & 0 \\
\vdots & \vdots & \ddots & \vdots & \vdots & \vdots & \ddots & \vdots \\
0 & 0 & \cdots & \gamma_{n} & 0 & 0 & \cdots & 0
\end{pmatrix},$$
where 
$\gamma_{j}=-q_{jj}^{2}/(q_{jj}-1)$ if $n+j$ and $j$ are not equivalent under the canonical equivalence relation on the index set $\{1,2,\ldots,2n\}$ induced by the Cartan matrix $\widetilde{A}$. Then, as Hopf algebras, $\mathcal{U}_{\mathfrak{q}}(\mathfrak{g})\cong U(\mathcal{D}_{MQG},\lambda_{MQG})$ \cite[Theorem 3.8]{MR3583293}. 

In \cite{MR3148617}, Yu and Zhang proved that, for any generic YD-datum $\mathcal{D}$ of finite Cartan type and any linking parameter $\lambda$ for $\mathcal{D}$, $U(\mathcal{D},\lambda)$ is a noetherian AS-regular Hopf algebra. They also gave a formula for its (homological) integral character.
If the matrix $\mathfrak{q}$ is generic, it follows readily from \cite[Theorem 2.2]{MR3148617} that the integral character of $U(\mathcal{D}_{MQG},\lambda_{MQG})$ is precisely its counit. 
Thus, all generic multiparameter quantum groups $\mathcal{U}_{\mathfrak{q}}(\mathfrak{g})$ are unimodular. Moreover, if there exist integers $t_{1},t_{2},\ldots,t_{n}$ such that
\begin{equation}\label{eq-CYformultqua-grp9}
\prod_{k=1}^{n}q_{ki}^{t_{k}}=q_{ii}^{-1}
\end{equation}
for all $1\leq i\leq n$, then it follows directly that $S^{2}$ is the inner automorphism given by $K_{1}^{t_{1}}\cdots K_{n}^{t_{n}}$. Therefore, any generic multiparameter quantum group $\mathcal{U}_{\mathfrak{q}}(\mathfrak{g})$ satisfying condition \eqref{eq-CYformultqua-grp9} for all $1\leq i\leq n$ 
is a noetherian Calabi-Yau Hopf algebra, by Remark \ref{rmk-noethCYHopf-iffasreinns-un}. For instance, if $(q_{ij})_{n\times n}=(q^{d_{i}a_{ij}})_{n\times n}$, then condition \eqref{eq-CYformultqua-grp9} is satisfied for every $1\leq i\leq n$. This provides an alternative proof that $\mathfrak{D}_{q}(\mathfrak{g})$
is Calabi-Yau.
\end{remark}

In \cite{MR4506530}, Garc\'ia-Gavarini constructed restricted and unrestricted integral forms of
$\mathcal{U}_{\mathfrak{q}}(\mathfrak{g})$, which were used to define the corresponding multiparameter quantum groups at roots of unity. The homological properties of unrestricted multiparameter quantum groups at roots of unity are discussed in the next section. In Theorem \ref{thm-largqungrp-CY}, we establish a criterion characterizing precisely when a large quantum group, in the sense of Andruskiewitsch-Angiono-Yakimov, is a Calabi-Yau algebra. This criterion applies to a large class of unrestricted multiparameter quantum groups at roots of unity (see Example \ref{eg-multiparamqgrp-AAYlarge}), including the Drinfeld double quantum group $\mathfrak{D}_{\epsilon}(\mathfrak{g})$, where $\epsilon$ is a root of unity. 


\section{Homological properties of the Andruskiewitsch-Angiono-Yakimov large quantum groups}\label{sec-AAYlargequantumGrp}
In \cite{MR4600057}, Andruskiewitsch-Angiono-Yakimov introduced a large class of Hopf algebras, termed \textit{large quantum groups}, defined as quantum doubles of bosonizations of all distinguished pre-Nichols algebras \cite{MR3459702} within a one-parameter family, with the aim of providing a unified approach to the representation theory of all contragredient quantum supergroups at roots of unity. The AAY large quantum groups include, as special cases, numerous multiparameter versions of the DK big quantum groups and big quantum supergroups at roots of unity. In this section, we apply Theorems \ref{thm-noethAS-intebimoduiso-un} and \ref{thm-modulefinitehopf-integral} to study these large quantum groups from a homological perspective.

Throughout this section, the base field is $\mathbb{C}$.

\subsection{Setup data and the large quantum groups \texorpdfstring{$U_{\mathfrak{q}}$}{large QGrps}}\label{sec-largeqgrp-setup}

For the reader's convenience, this subsection recalls necessary preliminaries on Nichols algebras of diagonal type before introducing 
large quantum groups in the sense of Andruskiewitsch-Angiono-Yakimov. The main references are \cite{MR4600057,MR3459702,MR3736568,MR4164719}. 

Recall that a \textit{braided vector space} $(V,c)$ is a pair consisting of a vector space $V$ and a linear automorphism $c:V\otimes V\to V\otimes V$ satisfying
$$(c\otimes \text{id})(\text{id}\otimes c)(c\otimes \text{id})=  (\text{id}\otimes c)(c\otimes \text{id})(\text{id}\otimes c)\in  \text{End}(V^{\otimes 3}).$$
It is well known (see \cite[Lemma 1.7.5]{MR4164719}) that for any positive integer $n\geq 2$, a braided vector space $(V,c)$ naturally gives rise to a representation of the braid group $\mathbb{B}_{n}$ on $V^{\otimes n}$.

Let $\theta$ be a positive integer, and let $(V,c)$ be a $\theta$-dimensional braided vector space. The pair $(V,c)$ is of \textit{diagonal type} if it has a basis $\{x_{i} \}_{i=1}^{\theta}$ such that $$c(x_{i}\otimes x_{j})=q_{ij}(x_{j}\otimes x_{i})$$
for all $1\leq i,j\leq \theta$, where $q_{ij}\in \mathbb{C}^{\times}$. The matrix $(q_{ij})_{\theta\times \theta}$ is called the \textit{braiding matrix} of $(V,c)$ with respect to the basis $\{x_{i} \}_{i=1}^{\theta}$. The braiding matrix $(q_{ij})_{\theta\times \theta}$ is said to be of \textit{Cartan type} if there exists a generalized Cartan matrix $(a_{ij})_{\theta\times \theta}$ such that $q_{ij}q_{ji}=q_{ii}^{a_{ij}}$ and $0\leq -a_{ij}<\text{ord}(q_{ii})$ for all $1\leq i\neq j\leq \theta$. If, moreover, $(a_{ij})_{\theta\times \theta}$ is of finite type, then $(q_{ij})_{\theta\times \theta}$ is said to be of \textit{finite Cartan type}. For instance, the matrix $\mathfrak{q}$ appearing in the definition of the multiparameter quantum group $\mathcal{U}_{\mathfrak{q}}(\mathfrak{g})$, as considered in Remark \ref{rmk-multquanenealg-generunim}, is of finite Cartan type.

Let $H$ be a Hopf algebra with bijective antipode. We denote by $_{H}^{H}\mathcal{YD}$ the category of left Yetter-Drinfeld modules over $H$ (for the definition, see for example \cite[\S 4.1]{MR4164719}), which has a natural monoidal structure. For any $V,W\in {_{H}^{H}\mathcal{YD}}$, the isomorphism
$$c_{V,W}^{\mathcal{YD}}:V\otimes W\to W\otimes V, v\otimes w\mapsto \sum v_{(-1)}w\otimes v_{(0)}$$
endows $_{H}^{H}\mathcal{YD}$ with a braiding (see also \cite[Theorem 4.1.3]{MR4164719}). In particular, for every Yetter-Drinfeld module $V$, $(V,c_{V,V}^{\mathcal{YD}})$ is a braided vector space. Henceforth, unless otherwise specified, each Yetter-Drinfeld module will be regarded as a braided vector space equipped with the canonical braiding $c_{V,V}^{\mathcal{YD}}$. It is well known that every $\theta$-dimensional braided vector space of diagonal type can be realized as a left Yetter-Drinfeld module over the group algebra $\mathbb{C}[\mathbb{Z}^{\theta}]$ in a natural way, see \eqref{eq-YDmodustr-groualgdiagtype} below.

For the definition and basic notions of Hopf algebras in $_{H}^{H}\mathcal{YD}$, see \cite[\S 4.1]{MR4164719}. Let $R$ be a Hopf algebra in $_{H}^{H}\mathcal{YD}$. For any $x,y\in R$, the \textit{braided adjoint action} of $x$ on $y$ is defined by
$$(\text{ad}_{c}x)y\coloneqq \mu_{R}(\mu_{R}\otimes S_{R})(\text{id}\otimes c_{R,R})(\Delta_{R}\otimes \text{id})(x\otimes y),$$
where $\mu_{R}$, $\Delta_{R}$, and $S_{R}$ denote the multiplication,
comultiplication, and antipode of $R$, respectively, and $c_{R,R}$
denotes the braiding on $R\otimes R$.

Let $V\in {_{H}^{H}\mathcal{YD}}$. Then the tensor algebra $T(V)=\oplus_{n\geq 0}T^{n}(V)$ naturally admits the structure of a (braided) $\mathbb{N}$-graded Hopf algebra in ${_{H}^{H}\mathcal{YD}}$. By \cite[Theorem 1.3.16]{MR4164719}, there exists a unique largest coideal contained in $\oplus_{n\geq 2} T^{n}(V)$, which we denote by $\mathcal{J}(V)$. As shown in the proof of \cite[Corollary 1.6.15]{MR4164719}, $\mathcal{J}(V)$ is a braided graded Hopf ideal of $T(V)$. The resulting braided graded Hopf quotient $\mathcal{B}(V)=T(V)/\mathcal{J}(V)$ is called the \textit{Nichols algebra} of $V$. In fact, the underlying graded algebra and coalgebra structures of the Nichols algebra $\mathcal{B}(V)$ depend only on the braided vector space $(V,c_{V,V}^{\mathcal{YD}})$. One can also define the notion of a Nichols algebra for any braided vector space, see \cite[Definition 7.1.1]{MR4164719}. The Nichols algebras of braided vector spaces of diagonal type are often referred to simply as \textit{Nichols algebras of diagonal type}. A \textit{pre-Nichols algebra} of $V$ is a quotient of $T(V)$ by a braided graded Hopf ideal contained in $\oplus_{n\geq 2} T^{n}(V)$.


Fix a positive integer $\theta$, and let $\mathbb{I}=\{1,2,\ldots,\theta\}$. Let $\{\alpha_{i}\}_{i=1}^{\theta}$ be the canonical basis of the free abelian group $\mathbb{Z}^{\theta}$. Let $(V,c)$ be a $\theta$-dimensional braided vector space of diagonal type, and let $\mathfrak{q}=(q_{ij})_{\theta\times \theta}\in \text{M}_{\theta}(\mathbb{C})$ be the braiding matrix with respect to a basis $\{x_{i} \}_{i=1}^{\theta}$. Then $V$ has a natural realization as a left Yetter-Drinfeld module over the group algebra $\mathbb{C}[\mathbb{Z}^{\theta}]$, with action and coaction given by
\begin{equation}\label{eq-YDmodustr-groualgdiagtype}
\alpha_{i}\cdot x_{j}=q_{ij}x_{j},\  \operatorname{deg}x_{j}=\alpha_{j},\   i,j\in \mathbb{I}.
\end{equation}
The tensor algebra $T(V)$ has a canonical $\mathbb{Z}^{\theta}$-grading with $\operatorname{deg}x_{i}=\alpha_{i}$, and the matrix $\mathfrak{q}$ uniquely determines a  $\mathbb{Z}$-bilinear form $\mathfrak{q}:\mathbb{Z}^{\theta}\times \mathbb{Z}^{\theta}\to\mathbb{C}^{\times}$ satisfying $\mathfrak{q}(\alpha_{i},\alpha_{j})=q_{ij}$ for any $i,j\in \mathbb{I}$. By \cite[Corollary 7.1.15 (1)]{MR4164719}, the canonical $\mathbb{Z}^{\theta}$-grading on $T(V)$ descends to the Nichols algebra $\mathcal{B}(V)$. For any $\alpha,\beta\in \mathbb{Z}^{\theta}$, set $\mathfrak{q}_{\alpha\beta}=\mathfrak{q}(\alpha,\beta)$. Let $N_{\alpha}=\text{ord}(\mathfrak{q}_{\alpha\alpha})$ denote the order of $\mathfrak{q}_{\alpha\alpha}$ in $\mathbb{C}^{\times}$, which is not necessarily finite. For $\alpha=\alpha_{i}$, we write $N_{i}$ in place of $N_{\alpha_{i}}$. 

Assume that $\dim_{\mathbb{C}}\mathcal{B}(V)<+\infty$. All braiding matrices whose associated Nichols algebras are finite-dimensional were classified in \cite{MR2462836}. An explicit list of defining relations for $\mathcal{B}(V)$ is given in \cite[Theorem 3.1]{MR3181554}. Consequently, $\mathcal{J}(V)$ is generated by the relations given in \cite[Theorem 3.1]{MR3181554}. Since $\dim_{\mathbb{C}}\mathcal{B}(V)<+\infty$ and we work over $\mathbb{C}$, it follows from 
\cite[Remark 1.6.19 and Example 1.10.1]{MR4164719} that $q_{ii}\neq 1$ is a root of unity for all $i\in \mathbb{I}$.

 For $i,j\in \mathbb{I}$, define
$$c_{ij}^{\mathfrak{q}}\coloneqq  \begin{cases}
2, & i=j,\\
-\text{min}\left\{
n\in\mathbb{N}
\mid
(n+1)_{q_{ii}}
\left(1-q_{ii}^{n}q_{ij}q_{ji}\right)=0
\right\},
& i\neq j.
\end{cases}
$$
It follows from \cite[Lemmas 13.4.4 and 15.1.5 (1)]{MR4164719} that $(c_{ij}^{\mathfrak{q}})_{\theta\times \theta}\in \text{M}_{\theta}(\mathbb{Z})$ is a generalized Cartan matrix. An index $i\in \mathbb{I}$ is called a \textit{Cartan vertex} of $\mathfrak{q}$ if it satisfies $$q_{ij}q_{ji}=q_{ii}^{c_{ij}^{\mathfrak{q}}}$$ for all $j\in \mathbb{I}$ with $j\neq i$ \cite[Definition 2.6]{MR3181554}.

By \cite[Lemma 15.1.12]{MR4164719}, $\mathfrak{q}$ is of Cartan type if and only if every $i\in \mathbb{I}$ is a Cartan vertex. If $\mathfrak{q}$ is of Cartan type, the finite-dimensionality of $\mathcal{B}(V)$ implies that $(c_{ij}^{\mathfrak{q}})_{\theta\times \theta}$ is a Cartan matrix of finite type. See also \cite[Theorem 15.1.14]{MR4164719}.

For each $i\in \mathbb{I}$, define the reflection $s_{i}^{\mathfrak{q}}:\mathbb{Z}^{\theta}\to \mathbb{Z}^{\theta}$ by
$$s_{i}^{\mathfrak{q}}(\alpha_{j})\coloneqq\alpha_{j}-c_{ij}^{\mathfrak{q}}\alpha_{i}$$
for all $j\in \mathbb{I}$. Clearly, $s_{i}^{\mathfrak{q}}(\alpha_{i})=-\alpha_{i}$ and $(s_{i}^{\mathfrak{q}})^{2}=\text{id}$. Define the matrix $\rho_{i}(\mathfrak{q})$ by 
$$(\rho_{i}(\mathfrak{q}))_{jk}\coloneqq \mathfrak{q}(s_{i}^{\mathfrak{q}}(\alpha_{j}),s_{i}^{\mathfrak{q}}(\alpha_{k}))$$
for $j,k\in \mathbb{I}$. Then $\rho_{i}(\mathfrak{q})$ naturally gives rise to a braided vector space $\rho_{i}(V)$ of diagonal type with braiding matrix $\rho_{i}(\mathfrak{q})$. By \cite[Lemma 13.5.19]{MR4164719}, $c_{ij}^{\rho_{i}(\mathfrak{q})}=c_{ij}^{\mathfrak{q}}$ for all $j\in\mathbb{I}$. Hence, $\rho_{i}^{2}(\mathfrak{q})=\mathfrak{q}$. The set $\mathcal{X}=\{\rho_{j_{1}}\cdots \rho_{j_{n}}(\mathfrak{q})\mid  j_{1},\ldots,j_{n}\in \mathbb{I}, n\geq 1\}$ is called the \textit{Weyl-equivalence class} of $\mathfrak{q}$. By \cite[Proposition 13.6.4 and Lemma 15.1.8 (1)]{MR4164719}, for any matrix $\rho_{j_{1}}\cdots \rho_{j_{n}}(\mathfrak{q})\in \mathcal{X}$, one has
\begin{equation}\label{eq-weylequclas-nichfd-ref}
\dim_{\mathbb{C}}\mathcal{B}(V)=\dim_{\mathbb{C}}\mathcal{B}(\rho_{j_{1}}\cdots \rho_{j_{n}}(V)). 
\end{equation}
By \cite[Example 2.9]{MR3736568}, $\mathcal{B}(V)$ admits a PBW-basis consisting of $\mathbb{Z}^{\theta}$-homogeneous elements. Let $\Delta_{+}^{\mathfrak{q}}$ denote the set consisting of the $\mathbb{Z}^{\theta}$-degrees of the generators of a ($\mathbb{Z}^{\theta}$-homogeneous) PBW-basis of $\mathcal{B}(V)$, counted with multiplicities. The set $\Delta_{+}^{\mathfrak{q}}\subseteq \mathbb{Z}^{\theta}$ is called the \textit{set of positive roots} of $ \mathcal{B}(V)$ and is independent of the choice of the PBW-basis \cite[Remark 2.13]{MR3736568}. Set $\Delta^{\mathfrak{q}}=\Delta_{+}^{\mathfrak{q}}\cup (-\Delta_{+}^{\mathfrak{q}})$. Define $\rho:\mathbb{I}\times \mathcal{X}\to\mathcal{X}$ by $(i,\mathfrak{p})\mapsto \rho_{i}(\mathfrak{p})$. Then 
$$(\mathbb{I},\mathcal{X},\rho,\{(c_{ij}^{\mathfrak{p}})_{\theta\times \theta}\}_{\mathfrak{p}\in \mathcal{X}},\{\Delta^{\mathfrak{p}}\}_{\mathfrak{p}\in \mathcal{X}})$$ defines a finite \textit{generalized root system} \cite[Example 2.30]{MR3736568}. The Weyl groupoid of this generalized root system is denoted by $\mathcal{W}_{\mathfrak{q}}$ (see \cite[Definition 2.28]{MR3736568} or \cite[p.318]{MR4164719} for its definition). By \cite[Theorem 2.29 (b)]{MR3736568} or \cite[Proposition 9.3.9]{MR4164719}, there exists a unique longest element $w_{0}^{\mathfrak{q}}\in \mathcal{W}_{\mathfrak{q}}$ ending at $\mathfrak{q}$. Let $w_{0}^{\mathfrak{q}}=s_{i_{1}}^{\mathfrak{q}}s_{i_{2}}\cdots s_{i_{\ell}}$ be a reduced expression. Set 
$$\beta_{k}\coloneqq s_{i_{1}}^{\mathfrak{q}}\cdots  s_{i_{k-1}}(\alpha_{i_{k}})\in \mathbb{Z}^{\theta}.$$ By \cite[Theorem 2.29 (c)]{MR3736568} or \cite[Corollary 14.5.1 (2)]{MR4164719}, $\Delta_{+}^{\mathfrak{q}}=\{\beta_{k}\mid k\in \mathbb{I}\}\text{ and\ }|\Delta_{+}^{\mathfrak{q}}|=\ell.$ Since $\mathcal{B}(V)$ is finite-dimensional, 
\begin{equation}\label{eq-Nbeta-betarootfini9}
2\leq N_{\beta}=\text{ord}(\mathfrak{q}_{\beta\beta})<+\infty\text{ for all\ }\beta\in \Delta_{+}^{\mathfrak{q}}.
\end{equation}
See \cite[Example 2.9]{MR3736568} or \cite[\S 2.5]{MR3459702}.

\begin{remark}\label{rmk-transinver-nichostifd-0}
 In this case, it follows from the PBW-basis of the Nichols algebra $\mathcal{B}_{\mathfrak{q}}\coloneqq\mathcal{B}(V)$ (see, for example, \cite[\S 2.5]{MR3459702}) that
 \begin{equation}\label{eq-dimforofnichaqlg7}
 \dim_{\mathbb{C}}\mathcal{B}_{\mathfrak{q}}=\prod_{\beta\in\Delta_{+}^{\mathfrak{q}}} N_{\beta}.
 \end{equation}
Set $\mathfrak{q}^{-1}=(q_{ij}^{-1})_{\theta\times\theta}$ and $\mathfrak{q}^{T}=(q_{ji})_{\theta\times\theta}$. By \cite[Proposition 3.17 (d)]{MR2596372} and \eqref{eq-dimforofnichaqlg7}, one has
$$\Delta_{+}^{\mathfrak{q}}=\Delta_{+}^{\mathfrak{q}^{-1}}=\Delta_{+}^{\mathfrak{q}^{T}}\text{ and\ } \dim_{\mathbb{C}}\mathcal{B}_{\mathfrak{q}}= \dim_{\mathbb{C}}\mathcal{B}_{\mathfrak{q}^{-1}}= \dim_{\mathbb{C}}\mathcal{B}_{\mathfrak{q}^{T}}<+\infty. $$
\end{remark}
The set
\begin{equation}\label{eq-cartanroots-set}
\mathcal{O}^{\mathfrak{q}}=\{s_{i_{1}}^{\mathfrak{q}}s_{i_{2}}\cdots s_{i_{k}}(\alpha_{i}) \in \Delta^{\mathfrak{q}}\mid   i\in \mathbb{I}\text{ is a Cartan vertex of\ }\rho_{i_{k}}\cdots \rho_{i_{2}}\rho_{i_{1}}(\mathfrak{q})\}
\end{equation}
is called the set of \textit{Cartan roots} of $\mathfrak{q}$. For example, if $i$ is a Cartan vertex of $\mathfrak{q}$, then $\alpha_{i}\in \mathcal{O}^{\mathfrak{q}}$. In general, the set of Cartan roots $\mathcal{O}^{\mathfrak{q}}$ may be empty (see for example \cite[\S 10.7]{MR3736568}). Set $\mathcal{O}_{+}^{\mathfrak{q}}=\mathcal{O}^{\mathfrak{q}}\cap \mathbb{N}^{\theta}$. Then 
\begin{equation}\label{eq-setcartroot-pomO+}
\mathcal{O}^{\mathfrak{q}}=\mathcal{O}_{+}^{\mathfrak{q}}\sqcup (-\mathcal{O}_{+}^{\mathfrak{q}})
\end{equation}
because a Cartan vertex $i$ of any $\mathfrak{p}\in\mathcal{X}$
remains a Cartan vertex of $\rho_i(\mathfrak{p})$. If $\mathfrak{q}$ is of Cartan type, then any braiding matrix in $\mathcal{X}$ is of Cartan type. In particular, $\mathcal{O}^{\mathfrak{q}}=\Delta^{\mathfrak{q}}$ if $\mathfrak{q}$ is of Cartan type (hence $\mathcal{O}_{+}^{\mathfrak{q}}=\Delta_{+}^{\mathfrak{q}}$). Conversely, the condition $\mathcal{O}^{\mathfrak{q}}=\Delta^{\mathfrak{q}}$ implies that any $i\in \mathbb{I}$ is a Cartan vertex of $\mathfrak{q}$. So $\mathfrak{q}$ is of Cartan type.


Let $\mathcal{I}(V)$ be the ideal of $T(V)$ generated by all the relations appearing in \cite[Theorem 3.1]{MR3181554}, except for the power root vector relations
$$x_{\alpha}^{N_{\alpha}},\ \alpha\in\mathcal{O}_{+}^{\mathfrak q},$$ which are omitted, together with the quantum Serre relations
$$(\operatorname{ad}_{c}x_{i})^{1-c_{ij}^{\mathfrak q}}(x_{j})$$
for all $i\neq j$ such that
$q_{ii}^{c_{ij}^{\mathfrak q}}=q_{ij}q_{ji}=q_{ii}$. The quotient
$$\widetilde{\mathcal{B}}(V)\coloneqq T(V)/\mathcal{I}(V)$$
is called the \textit{distinguished pre-Nichols algebra} of $(V,c)$ \cite{MR3459702}.
\begin{remark}\label{rmk-genuinelyoffincart-disting}
Suppose $\mathfrak{q}$ is \textit{genuinely of finite Cartan type} with Cartan matrix $C=(c_{ij})_{\theta\times \theta}$ (see \cite[p.522]{MR4164719} for the definition) and the
associated Nichols algebra $\mathcal B(V)$ is finite-dimensional. By \cite[Remark 16.3.19 and p.548]{MR4164719}, 
$$\mathcal{I}(V)=((\operatorname{ad}_{c}x_{i})^{1-c_{ij}}(x_{j}),i\neq j).$$
So the corresponding distinguished pre-Nichols algebra is given by
$$\widetilde{\mathcal{B}}(V)\cong  T(V)/((\operatorname{ad}_{c}x_{i})^{1-c_{ij}}(x_{j}),i\neq j).$$
In general, even if $\mathfrak q$ is genuinely of finite Cartan type and $\dim_{\mathbb{C}}\mathcal B(V)<+\infty$, some of the
parameters $q_{ij},i\neq j$ may fail to be roots of unity. For example, let $C$ be of type
$A_{2}$, let $\omega$ be a primitive $3$-th root of unity, and let
$$\mathfrak{p}=(p_{ij})_{2\times 2}=\begin{pmatrix}
\omega& 2\\
(2\omega)^{-1}&\omega
\end{pmatrix}.$$
Then $\mathfrak{p}$ is genuinely of finite Cartan type and the corresponding Nichols algebra $\mathcal B(V)$ is finite-dimensional \cite[Theorem 15.1.14 (6)]{MR4164719}. However,
$p_{12}=2$ is not a root of unity.
\end{remark}

\begin{example}
[\cite{MR3459702,MR4164719}]Retain the assumptions and notation of \S \ref{subsec-DKquanenalgsCY}.\label{eq-positpart-Lztsmaqugrp9-nichols} Let $Q=\mathbb{Z}\alpha_{1}\oplus \cdots \oplus\mathbb{Z} \alpha_{n}$ denote  the root lattice of the root system $\Phi$. Then $\mathbb{C}[Q]\cong \mathbb{C}[\mathbb{Z}^{n}]\cong \mathbb{C}[K_{1}^{\pm 1},\ldots,K_{n}^{\pm 1}]$. Set $\mathfrak{q}=(\epsilon^{d_{i}a_{ij}})_{n\times n}\in \text{M}_{n}(\mathbb{C})$ (recall that $\epsilon$ is a primitive $N$-th root of unity). It is straightforward to verify that $\mathfrak{q}$ is genuinely of finite Cartan type with Cartan matrix $(a_{ij})_{n\times n}$. Define the Yetter-Drinfeld module $V=\mathbb{C}x_{1}\oplus\cdots \oplus \mathbb{C}x_{n}\in {_{\mathbb{C}[Q]}^{\mathbb{C}[Q]}\mathcal{YD}}$ by
$$\alpha_{i}\cdot x_{j}=q_{ij}x_{j} \text{ and\ }\operatorname{deg}x_{j}=\alpha_{j}.$$
Then $V$ is a braided vector space of diagonal type with braiding matrix $\mathfrak{q}$. In this case, the Nichols algebra $\mathcal{B}(V)$ coincides with the positive part $\mathfrak{u}_{\epsilon}^{+}(\mathfrak{g})$ of Lusztig's small quantum group $\mathfrak{u}_{\epsilon}(\mathfrak{g})$, where $x_{j}$ corresponds to the root vector $\overline{E_{j}}\in \mathfrak{u}_{\epsilon}(\mathfrak{g})$. See \cite[\S 16.3]{MR4164719}. We identify $\mathcal{B}(V)$ with $\mathfrak{u}_{\epsilon}^{+}(\mathfrak{g})$ via the canonical isomorphism $\mathcal{B}(V)\cong \mathfrak{u}_{\epsilon}^{+}(\mathfrak{g})$. 

By \eqref{eq-pbw-smallqborel}, $\mathfrak{u}_{\epsilon}^{+}(\mathfrak{g})$ has a PBW-basis $\{\overline{E_{\beta_{1}}}^{m_{1}}\cdots \overline{E_{\beta_{\ell}}}^{m_{\ell}}\mid 0\leq m_{1},\ldots,m_{\ell}\leq N-1\}$, where $\operatorname{deg}E_{\beta_{j}}=\beta_{j}$. It follows that there is a canonical bijection between the set of positive roots $\Delta_{+}^{\mathfrak{q}}$ of $\mathcal{B}(V)$ and the set $\Phi^{+}=\{\beta_{1},\ldots,\beta_{\ell}\}$ of positive roots of the root system $\Phi$. Indeed, the same conclusion holds for any braiding matrix $\mathfrak{q}$ of finite Cartan type; see \cite[Theorem 1 (ii)]{MR2207786}.

Recall that the $\mathbb{Z}$-valued symmetric bilinear form $
(-,-):Q\times Q\to \mathbb{Z},(\alpha_i,\alpha_j)\mapsto d_{i} a_{ij},$ is $W$-invariant, where $W$ is the Weyl group of $\mathfrak{g}$. Hence, for every $\beta\in\Delta_{+}^{\mathfrak{q}}$, there exists $1\leq i\leq n$ such that $\mathfrak{q}_{\beta\beta}=\epsilon^{2d_{i}}$. Therefore, $N_{\beta}=N$ for all $\beta\in\Delta_{+}^{\mathfrak{q}}$. By \cite[Proposition 4.3.12 (1)]{MR4164719},
$$\mathcal{B}(V)=\mathbb{C}\langle E_{1},\ldots,E_{n}\rangle/(E_{\beta_{1}}^{N},\ldots,E_{\beta_{\ell}}^{N},(\operatorname{ad}_{c}E_{i})^{1-a_{ij}}(E_{j}), i\neq j).$$
By the definition of $\widetilde{\mathcal{B}}(V)$ and Remark \ref{rmk-genuinelyoffincart-disting}, we obtain \cite[Remark 2]{MR3459702} that 
$$\widetilde{\mathcal{B}}(V)=\mathbb{C}\langle E_{1},\ldots,E_{n}\rangle/((\operatorname{ad}_{c}E_{i})^{1-a_{ij}}(E_{j}),i\neq j)=\mathcal{U}_{\epsilon}^{+}(\mathfrak{g}).$$
\end{example}

To define the AAY large quantum groups, we follow \cite[\S 4]{MR4600057} and introduce additional notation. Throughout this section, we impose the following assumption on the braiding matrix $\mathfrak{q}$:

\medskip
\noindent
\textup{(\textsc{FCC})}\quad
\begin{minipage}[t]{0.82\textwidth}
\textit{The corresponding Nichols algebra $\mathcal B(V)$ is finite-dimensional, and}
\begin{equation}\label{eq-ffc-cartroot-largech7}
q_{\alpha\beta}^{N_\beta}=1\ \textit{for all }
\alpha\in\Delta^{\mathfrak q}, 
\beta\in\mathcal O^{\mathfrak q}.
\end{equation}
\end{minipage}

\medskip
\begin{remark}\label{rmk-rhiFCCiff-rhoifcc}
By \eqref{eq-Nbeta-betarootfini9}, $N_{\beta}<+\infty$ for every $\beta\in\mathcal{O}^{\mathfrak{q}}\subseteq \Delta^{\mathfrak q}$. Let $i\in \mathbb{I}$. By \eqref{eq-weylequclas-nichfd-ref} and \cite[Remark 4.4 (c)]{MR4600057}, (FCC) holds for $\mathfrak{q}$ if and only if it holds for $\rho_{i}(\mathfrak{q})$. If (FCC) holds for $\mathfrak{q}$, then $q_{\beta\alpha}^{N_{\beta}}=1$ for all $\alpha\in\Delta^{\mathfrak q}$ and $\beta\in\mathcal O^{\mathfrak q}$; see \cite[Lemma 24]{MR3459702}.
\end{remark}

\begin{remark}\label{rmk-frambrma-admothonpa7}
In \cite{MR4600057}, Andruskiewitsch-Angiono-Yakimov impose assumptions on the braiding matrix $\mathfrak{q}$ that are stronger than the assumption (\text{FCC}). 
In addition to the conditions imposed here (see also \cite[Eq. (4.26)]{MR4600057}), it is assumed that the Dynkin diagram associated with $\mathfrak{q}$ is connected and that the hypotheses stated at the beginning of \cite[\S 4.1]{MR4600057} hold. A braiding matrix $\mathfrak{q}$ satisfying these conditions is referred to therein as \textit{belonging to a one-parameter family}. 
These additional assumptions are imposed to ensure that the corresponding large quantum group (see Definition \ref{def-largequbgroup-whole} below) admits a nontrivial Poisson order structure in the sense of Brown-Gordon \cite{MR1989650}. 

All one-parameter families considered by Andruskiewitsch-Angiono-Yakimov are listed in \cite[Appendix A]{MR4600057}. In addition to the braiding matrices listed therein, our framework also applies to other types of braiding matrices; see, for example, \cite[\S 10.7]{MR3736568}.
\end{remark}

Let $\Gamma^{+}$ and $\Gamma^{-}$ be free abelian groups of rank $\theta$, with respective bases
$\{K_i\}_{i=1}^{\theta}$ and $\{L_i\}_{i=1}^{\theta}$. We regard both groups as additive groups. We regard the braided vector space $V$ of diagonal type with braiding matrix $\mathfrak{q}$ as a Yetter-Drinfeld module over $\mathbb{C}[\Gamma^{+}]$ via \eqref{eq-YDmodustr-groualgdiagtype}. Let $W$ be a braided vector space of diagonal type with braiding matrix $\mathfrak{q}^{\prime}=(q_{ij}^{\prime})_{\theta\times \theta}\coloneqq(q_{ji}^{-1})_{\theta\times \theta}$ with respect to the basis $\{y_{i}\}_{i=1}^{\theta}$. Similarly, $W$ can be naturally regarded as a left Yetter-Drinfeld module over
$\mathbb{C}[\Gamma^{-}]$. By Remark \ref{rmk-transinver-nichostifd-0}, we have $\dim_{\mathbb{C}} \mathcal{B}(W)=\dim_{\mathbb{C}}\mathcal{B}(V)<+\infty$. Define
$$U_{\mathfrak{q}}^{\geq 0}\coloneqq \widetilde{\mathcal{B}}(V)\# \mathbb{C}[\Gamma^{+}],\quad U_{\mathfrak{q}}^{\leq 0}\coloneqq \widetilde{\mathcal{B}}(W)\# \mathbb{C}[\Gamma^{-}].$$

\begin{definition}
[\cite{MR4600057}] Assume that the braiding matrix $\mathfrak{q}$ satisfies the assumption (FCC). The Hopf algebras $U_{\mathfrak{q}}^{\geq 0}$ and $U_{\mathfrak{q}}^{\leq 0}$ are called the AAY \textit{large quantized Borel algebras} associated with $\mathfrak{q}$.\label{def-largequborelsubalgs}
\end{definition}
\begin{remark}\label{rmk-largeboreldefinidea-spca2}
Let $E_{i}=x_{i}\in U_{\mathfrak{q}}^{\geq 0}, F_{i}=y_{i}L_{i}^{-1}\in U_{\mathfrak{q}}^{\leq 0}$ for $i\in \mathbb{I}$. As a $\mathbb{C}$-algebra, $U_{\mathfrak{q}}^{\geq 0}$ is defined by the generators $E_{i}, K_{i}^{\pm1}, i\in\mathbb{I},$ with the defining ideal generated by
$$
\mathcal{I}(V),\ K_{i}K_{j}-K_{j}K_{i},\ K_{i}K_{i}^{-1}-1,\ K_{i}^{-1}K_{i}-1,\
K_{i}E_{j}K_{i}^{-1}-q_{ij}E_{j}, i,j\in\mathbb{I},
$$
where $\mathcal{I}(V)$ is identified with its image under $x_{i}\mapsto E_{i}$. The coalgebra structure of $U_{\mathfrak{q}}^{\geq 0}$ is given by $\Delta(E_{i})=E_{i}\otimes 1+K_{i}\otimes E_{i}, \varepsilon(E_{i})=0,\Delta(K_{i})=K_{i}\otimes K_{i}$ and $\varepsilon(K_{i})=1$. Consequently, it follows from Example \ref{eq-positpart-Lztsmaqugrp9-nichols} that, for a complex semisimple Lie algebra $\mathfrak{g}$ of rank $n$ with Cartan matrix $(a_{ij})_{n\times n}$, the DKP quantized Borel algebra
$\mathcal{U}_{\epsilon}^{\geq 0}(\mathfrak{g})$ at a root of unity $\epsilon$ can be regarded
as a special case of $U_{\mathfrak q}^{\geq 0}$ by taking
$$\theta=n\text{ and\ } \mathfrak{q}=(\epsilon^{d_{i}a_{ij}})_{n\times n}.$$
Similarly, $U_{\mathfrak{q}}^{\leq 0}$ is defined by the generators
$F_{i},L_{i}^{\pm1},i\in\mathbb I$, with the defining ideal generated by $
\mathcal{I}(W),\ L_{i}L_{j}-L_{j}L_{i},\ L_{i}L_{i}^{-1}-1,\ L_{i}^{-1}L_{i}-1,\
L_{i}F_{j}L_{i}^{-1}-q_{ji}^{-1}F_{j},$ where $i,j\in\mathbb{I}.$ The quantum group $\mathcal U_{\epsilon}^{\leq 0}(\mathfrak g)$ can be viewed as a special case of $U_{\mathfrak q}^{\leq 0}$.
\end{remark}
By \cite[Theorem 3.7]{MR2732981} or \cite[p.23]{MR4600057} (see also \cite[Proposition 4.3]{MR2596372}), there is a unique Hopf skew-pairing (\cite[Definition 2.8.4]{MR4164719}) $\tau^{AAY}:U_{\mathfrak{q}}^{\leq 0}\times U_{\mathfrak{q}}^{\geq 0}\to\mathbb{C}$ such that
$$\tau^{AAY}(L_{i},K_{j})=q_{ji}^{-1},\ \tau^{AAY}(F_{i},E_{j})=\delta_{ij},\ \tau^{AAY}(L_{i},E_{j})=\tau^{AAY}(F_{i},K_{j})=0,\   i,j\in \mathbb{I}.$$
Note that, in the definition of large quantum groups below, it is sufficient to assume that the Nichols algebra associated with the braiding matrix $\mathfrak{q}$ is finite-dimensional. However, for convenience in subsequent considerations, we impose the stronger assumption (FCC) directly in the definition. As observed in Remark \ref{rmk-frambrma-admothonpa7}, the assumptions on $\mathfrak{q}$ imposed in \cite{MR4600057} for the study of the representation theory of large quantum groups are stronger than assumption (FCC). Consequently, the present framework applies to a broader class
of quantum algebras than that considered in \cite{MR4600057}.

\begin{definition}
[\cite{MR4600057}] Assume that the braiding matrix $\mathfrak{q}$ satisfies the assumption (FCC). The Drinfeld quantum double of $U_{\mathfrak{q}}^{\leq 0}$ and $U_{\mathfrak{q}}^{\geq 0}$ with respect to the  Hopf skew-pairing 
$$\tau^{AAY}: U_{\mathfrak{q}}^{\leq 0}\times U_{\mathfrak{q}}^{\geq 0}\to\mathbb{C},$$
denoted by $U_{\mathfrak{q}}$, is called the AAY \textit{large quantum group} associated with $\mathfrak{q}$. \label{def-largequbgroup-whole}
\end{definition}
\begin{remark}\label{rmk-generare-Uq-largewh}
With the notation of Remark \ref{rmk-largeboreldefinidea-spca2}, the large quantum group $U_{\mathfrak{q}}$, as an algebra, is defined by the generators $E_{i},F_{i}, K_{i}^{\pm1},L_{i} ,i\in\mathbb{I},$ with the defining ideal generated by
$$\mathcal{I}(V),\ \mathcal{I}(W),\ K_{i}E_{j}K_{i}^{-1}-q_{ij}E_{j},\ K_{i}F_{j}K_{i}^{-1}-q_{ij}^{-1}F_{j},$$
$$L_{i}E_{j}L_{i}^{-1}-q_{ji}E_{j},\ L_{i}F_{j}L_{i}^{-1}-q_{ji}^{-1}F_{j},\ K_{i}K_{i}^{-1}-1,\  K_{i}^{-1}K_{i}-1,$$
$$L_{i}L_{i}^{-1}-1,\ L_{i}^{-1}L_{i}-1,\ E_{i}F_{j}-F_{j}E_{i}=\delta_{ij}(K_{i}-L_{i}^{-1}), $$
where $i,j\in \mathbb{I}$. The Hopf algebra structure of $U_{\mathfrak{q}}$ is given by
$$\Delta(K_{i})=K_{i}\otimes K_{i},\  \Delta(L_{i})=L_{i}\otimes L_{i},\  \Delta(E_{i})= E_{i}\otimes 1+K_{i}\otimes E_{i},\ \Delta(F_{i})=F_{i}\otimes L_{i}^{-1}+1\otimes F_{i}, $$
$$\varepsilon(K_{i})=\varepsilon(L_{i})=1,\    \varepsilon(E_{i})=\varepsilon(F_{i})=0,$$
$$S(E_{i})=-K_{i}^{-1}E_{i},\ S(F_{i})=-F_{i}L_{i},\  S(K_{i})=K_{i}^{-1},\ S(L_{i})=L_{i}^{-1},$$
where $i\in \mathbb{I}$. Clearly, both $U_{\mathfrak{q}}^{\geq 0}$ and $U_{\mathfrak{q}}^{\leq 0}$ are Hopf subalgebras of $U_{\mathfrak{q}}$.
\end{remark}
In the rest of this section, \textit{we assume that the braiding matrix $\mathfrak{q}$ satisfies the assumption (FCC)}. We denote by $U_{\mathfrak{q}}^{+}$
the subalgebra of $U_{\mathfrak{q}}$
generated by $E_{i}$, $i\in\mathbb{I}$,
and by $U_{\mathfrak{q}}^{-}$
the subalgebra generated by $F_{i}$, $i\in\mathbb{I}$. The algebras $U_{\mathfrak{q}}^{+}$
and $U_{\mathfrak{q}}^{-}$ are called the positive and negative
\textit{large quantum nilpotent algebras}, respectively.

Example \ref{eg-multiparamqgrp-AAYlarge} below shows that
large quantum groups include a broad class of the multiparameter quantized enveloping algebras discussed in Remark \ref{rmk-multquanenealg-generunim}.

\begin{example}\label{eg-multiparamqgrp-AAYlarge}
Let $\mathfrak{q}$ be genuinely of finite Cartan type with Cartan matrix $C=(c_{ij})_{\theta\times \theta}$, and assume that $\mathfrak{q}$ satisfies the assumption (FCC). By Remark \ref{rmk-genuinelyoffincart-disting} and \cite[Proposition 4.3.12 (1)]{MR4164719}, one can directly verify that
$$\mathcal{I}(V)=\left(\sum_{k=0}^{1-c_{ij}}(-1)^{k}\left(\begin{smallmatrix}1-c_{ij}\\k\end{smallmatrix}\right)_{q_{ii}}q_{ii}^{\frac{k(k-1)}{2}}q_{ij}^{k}E_{i}^{1-c_{ij}-k}E_{j}E_{i}^{k}=0,i\neq j\right),\text{ and}$$
$$\mathcal{I}(W)=\left(\sum_{k=0}^{1-c_{ij}}(-1)^{k}\left(\begin{smallmatrix}1-c_{ij}\\k\end{smallmatrix}\right)_{q_{ii}}q_{ii}^{\frac{k(k-1)}{2}}q_{ij}^{k}F_{i}^{k}F_{j}F_{i}^{1-c_{ij}-k}=0,i\neq j  \right).$$
Consequently, the corresponding large quantum group $U_{\mathfrak{q}}\cong \mathcal{U}_{\mathfrak{q}}(\mathfrak{g}_{C})$ as Hopf algebras, where $ \mathfrak{g}_{C}$ denotes the complex semisimple Lie algebra associated with the Cartan matrix $C$, and
$\mathcal U_{\mathfrak q}(\mathfrak g_C)$ denotes the multiparameter
quantized enveloping algebra of $\mathfrak{g}_{C}$; see Remark \ref{rmk-multquanenealg-generunim}.
\end{example}

By \cite[Proposition 10]{MR3459702} (see also \cite[\S 6]{MR2596372} and \cite[\S 4.4]{MR4600057}), there exist algebra isomorphisms $T_{i}^{\mathfrak{q}}:U_{\rho_{i}(\mathfrak{q})}\to  U_{\mathfrak{q}}$ such that
 $$T_{i}^{\mathfrak{q}}(\underline{K}_{j})=K_{j}K_{i}^{-c_{ij}^{\mathfrak{q}}},\  T_{i}^{\mathfrak{q}}(\underline{L}_{j})=L_{j}L_{i}^{-c_{ij}^{\mathfrak{q}}},$$
$$T_{i}^{\mathfrak{q}}(\underline{E}_{j}) =
\begin{cases}
F_{i} L_{i}, & j=i,\\
(\operatorname{ad}_{c}E_{i})^{-c_{ij}^{\mathfrak{q}}} E_{j},
& j\neq i, \qquad
\end{cases}\
T_{i}^{\mathfrak{q}}(\underline{F}_{j})
=
\begin{cases}
K_{i}^{-1} E_i,
& j=i,\\
(\lambda_{ij}^{\mathfrak{q}})^{-1}
(\operatorname{ad}_{c} f_i)^{-c_{ij}^{\mathfrak{q}}} F_{j},
& j\neq i,
\end{cases}
$$
where the underlined elements denote the generators of $U_{\rho_{i}(\mathfrak{q})}$, and $\lambda_{ij}^{\mathfrak{q}}\in \mathbb{C}^{\times}$ is given by (see also \cite[Lemma 3.18]{MR2596372})
$$\lambda_{ij}^{\mathfrak{q}}=\left(
q_{ii}^{-c_{ij}^{\mathfrak{q}}}
q_{ij}q_{ji}
\right)^{c_{ij}^{\mathfrak{q}}}
\left(-c_{ij}^{\mathfrak{q}}\right)_{q_{ii}}!
\prod_{s=0}^{-c_{ij}^{\mathfrak{q}}-1}
\left(
q_{ii}^{s}q_{ij}q_{ji}-1
\right),\  j\neq i\in \mathbb{I}.$$
Let $w_{0}^{\mathfrak{q}}$ be the longest element in $\mathcal{W}_{\mathfrak{q}}$ ending at $\mathfrak{q}$ and let $w_{0}^{\mathfrak{q}}=s_{i_{1}}^{\mathfrak{q}}s_{i_{2}}\cdots s_{i_{\ell}}$ be a reduced expression of $w_{0}^{\mathfrak{q}}$. By \cite[Theorem 6.20]{MR2596372} or \cite[Eq. (4.16)]{MR4600057},
$$E_{\beta_{k}}\coloneqq  T_{i_{1}}^{\mathfrak{q}}\cdots T_{i_{k-1}}(\underline{E}_{i_{k}})\in U_{\mathfrak{q}}^{+}\text{ and\ }  F_{\beta_{k}}\coloneqq  T_{i_{1}}^{\mathfrak{q}}\cdots T_{i_{k-1}}(\underline{F}_{i_{k}})\in U_{\mathfrak{q}}^{-}.$$
For $\beta\in \Delta_{+}^{\mathfrak{q}}$, define
$$\widetilde{N}_{\beta}\coloneqq 
\begin{cases}
N_{\beta},
& \beta\notin\mathcal{O}^{\mathfrak{q}},\\
+\infty,
& \beta\in\mathcal{O}^{\mathfrak{q}}.
\end{cases}$$
By \cite[Eq. (4.18)]{MR4600057}, the set
$$ \{
E_{\beta_{1}}^{k_1}E_{\beta_{2}}^{k_2}\cdots E_{\beta_{\ell}}^{k_{\ell}}
\mid 0\leq k_{j}<\widetilde{N}_{\beta_j},1\leq j\leq \ell \}$$ 
is a $\mathbb{C}$-basis of $ U_{\mathfrak{q}}^{+}$, and the set
$$ \{F_{\beta_{1}}^{r_1}F_{\beta_{2}}^{r_2}\cdots F_{\beta_{\ell}}^{r_{\ell}}\mid
0\leq r_{j}<\widetilde{N}_{\beta_j},  1\leq j\leq \ell \}$$ 
is a $\mathbb{C}$-basis of $ U_{\mathfrak{q}}^{-}$. It follows that the set
\begin{equation}\label{eq-PBWbasis-Uqlargequ7}
 \{
F_{\beta_{1}}^{r_1} \cdots F_{\beta_{\ell}}^{r_{\ell}}L_{1}^{t_{1}}\cdots L_{\theta}^{t_{\theta}}	K_{1}^{s_{1}}\cdots K_{\theta}^{s_{\theta}}E_{\beta_{1}}^{k_1} \cdots E_{\beta_{\ell}}^{k_{\ell}}
\mid 0\leq k_{j},r_{j}<\widetilde{N}_{\beta_j},
,s_{i},t_{i}\in \mathbb{Z} \}
\end{equation}
is a $\mathbb{C}$-basis of $U_{\mathfrak{q}}$. For $\gamma=s_{1}\alpha_{1}+\cdots+s_{\theta}\alpha_{\theta}\in\mathbb{Z}^{\theta}$, we write
$$K_{\gamma}\coloneqq K_{1}^{s_{1}}\cdots K_{\theta}^{s_{\theta}}\text{ and\ }L_{\gamma}\coloneqq L_{1}^{s_{1}}\cdots L_{\theta}^{s_{\theta}}.$$
Let $N$ be the least common multiple of the integers
$N_{\beta}$, $\beta\in\Delta_{+}^{\mathfrak{q}}$, and let $Z_{\mathfrak{q}} $ be the subalgebra of $U_{\mathfrak{q}}$
generated by the elements \cite[\S 4.5]{MR4600057}:
$$\{E_{\beta}^{N_{\beta}},F_{\beta}^{N_{\beta}},K_{\beta}^{\pm N_{\beta}},L_{\beta}^{\pm N_{\beta}}\mid  \beta\in \mathcal{O}_{+}^{\mathfrak{q}}\}\cup \{K_{\gamma}^{\pm N},L_{\gamma}^{\pm N}\mid \gamma\in \Delta_{+}^{\mathfrak{q}}\}.$$
Define 
$$Z_{\mathfrak{q}}^{\geq 0}\coloneqq \mathbb{C}\langle E_{\beta}^{N_{\beta}},K_{\beta}^{\pm N_{\beta}},K_{\gamma}^{\pm N}\mid \beta\in \mathcal{O}_{+}^{\mathfrak{q}},\gamma\in  \Delta_{+}^{\mathfrak{q}}\rangle\subseteq Z_{\mathfrak{q}},$$
$$Z_{\mathfrak{q}}^{\leq 0}\coloneqq \mathbb{C}\langle F_{\beta}^{N_{\beta}},L_{\beta}^{\pm N_{\beta}},L_{\gamma}^{\pm N}\mid \beta\in \mathcal{O}_{+}^{\mathfrak{q}},\gamma\in  \Delta_{+}^{\mathfrak{q}}\rangle\subseteq Z_{\mathfrak{q}}.$$
\begin{theorem}
[\cite{MR3459702,MR4600057}] Assume\label{thm-largequanmodufi-aay1} that the braiding matrix $\mathfrak{q}$ satisfies the assumption (FCC). Then $Z_{\mathfrak{q}} $ is a central Hopf subalgebra of $U_{\mathfrak{q}}$, and $U_{\mathfrak{q}}$ is a finitely generated free module over $Z_{\mathfrak{q}}$. The same conclusions hold for the pairs
$(U_{\mathfrak{q}}^{\geq 0},Z_{\mathfrak{q}}^{\geq 0})$
and
$(U_{\mathfrak{q}}^{\leq 0},Z_{\mathfrak{q}}^{\leq 0})$.
\end{theorem}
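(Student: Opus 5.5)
Three things have to be established: $Z_{\mathfrak{q}}$ is central, it is a Hopf subalgebra, and $U_{\mathfrak{q}}$ is a finitely generated free $Z_{\mathfrak{q}}$-module. The same three are then needed for the Borel pairs. I would do centrality first, then freeness via the PBW basis \eqref{eq-PBWbasis-Uqlargequ7}, and the coalgebra closure last.

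\textbf{Centrality.} For the group-like generators I argue by characters. $K_i$ acts on $E_j$ by $q_{ij}$, and $L_i$ acts on $E_j$ by $q_{ji}$; on $F_j$ the inverse scalars appear.
\begin{itemize}
\item $K_\gamma^{N}$ and $L_\gamma^{N}$ commute with every $E_j,F_j$, because $N$ is a multiple of $\operatorname{ord}(q_{ij})$ in the relevant cases. This uses (FCC) together with the fact that $q_{ij}$ restricted to root pairs has order dividing $N$. One has to be careful here, since $q_{ij}$ need not be a root of unity (Remark \ref{rmk-genuinelyoffincart-disting}). So I would instead show that $\mathfrak{q}(\gamma,\alpha_j)^{N}=1$ for roots $\gamma,\alpha_j$, using (FCC) and the Weyl groupoid invariance of $\mathfrak{q}_{\beta\beta}$.
\item $K_\beta^{N_\beta}$ with $\beta\in\mathcal{O}_+^{\mathfrak{q}}$ commutes with $E_j$ because $q_{\beta\alpha_j}^{N_\beta}=1$ by \eqref{eq-ffc-cartroot-largech7}. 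It commutes with $F_j$ by the same relation, and the $L$-case follows from Remark \ref{rmk-rhiFCCiff-rhoifcc}.
\end{itemize}
For the root vector powers $E_\beta^{N_\beta}$, I would reduce via the Lusztig-type isomorphisms $T_i^{\mathfrak{q}}$ to the case of a simple Cartan root $\alpha_i$. This reduction uses the fact that (FCC) is invariant under $\rho_i$ (Remark \ref{rmk-rhiFCCiff-rhoifcc}).
\begin{itemize}
\item For a Cartan vertex $i$, the $q$-commutator formula gives $E_iF_i^{N_i}-F_i^{N_i}E_i$ proportional to $(N_i)_{q_{ii}}=0$. The remaining checks are $[E_i^{N_i},F_j]=0$ for $j\neq i$, and that $E_i^{N_i}$ commutes with $E_j$ up to $q_{ij}^{N_i}=1$. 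The latter uses the vanishing of $(\mathrm{ad}_cE_i)^{N_i}E_j$, which holds because $N_i>-c_{ij}$ at a Cartan vertex, combined with the braided Leibniz formula.
\item Transporting by $T_{i_1}\cdots T_{i_{k-1}}$ shows that $E_{\beta_k}^{N_{\beta_k}}$ is central, since algebra isomorphisms carry centers to centers.
\end{itemize}
I expect this step, centrality of the non-simple root vector powers, to be the main obstacle. The bookkeeping of which intermediate matrices $\rho_{i_{k-1}}\cdots\rho_{i_1}(\mathfrak{q})$ have $i_k$ as a Cartan vertex must match the definition \eqref{eq-cartanroots-set} of $\mathcal{O}^{\mathfrak{q}}$. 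This is essentially \cite[Theorem 23/Lemma 24]{MR3459702} and \cite[\S4.5]{MR4600057}, which I would cite.

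\textbf{Freeness.} The PBW basis \eqref{eq-PBWbasis-Uqlargequ7}, combined with the centrality just established and a degree/filtration argument, shows that the elements
$$F_{\beta_1}^{r_1}\cdots F_{\beta_\ell}^{r_\ell}\,L^{t}K^{s}\,E_{\beta_1}^{k_1}\cdots E_{\beta_\ell}^{k_\ell}$$
form a free $Z_{\mathfrak{q}}$-basis. The exponents range as follows: $0\le r_j,k_j<N_{\beta_j}$ for every root, and $(s,t)$ runs through coset representatives of the finite-index lattice generated by $N\Delta_+$ and $N_\beta\mathcal{O}_+$ in $\mathbb{Z}^{\theta}$. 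Key point: for a Cartan root, $E_\beta^{k}=E_\beta^{N_\beta m+r}$ factors as a power of $E_\beta^{N_\beta}$ times $E_\beta^{r}$. Reordering the central factors gives spanning. For linear independence, the associated graded is a skew polynomial/Laurent algebra in which the central elements become monomials.

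\textbf{Hopf subalgebra.} $Z_{\mathfrak{q}}$ is closed under $S$ because the group-likes invert and $S(E_\beta^{N_\beta})$ is central of the same degree. For the coproduct, I would use the $q$-binomial formula: $\Delta(E_\beta^{N_\beta})$ lies in $Z\otimes Z$ modulo terms whose coefficients vanish, since $\mathfrak{q}_{\beta\beta}$ has order $N_\beta$ and the convex PBW ordering applies. This is \cite[Theorem 23]{MR3459702} for the Borel part $U_{\mathfrak{q}}^{\geq 0}$, and the double structure then gives the result for $U_{\mathfrak{q}}$. The Borel pairs $(U_{\mathfrak{q}}^{\geq 0},Z_{\mathfrak{q}}^{\geq 0})$ and $(U_{\mathfrak{q}}^{\leq 0},Z_{\mathfrak{q}}^{\leq 0})$ follow by restricting all three arguments.
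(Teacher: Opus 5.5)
\paragraph{The paper's route.} The paper gives no argument of its own for this statement. Its proof is just the citation \cite[Theorem 33 and Remark 11]{MR3459702}, \cite[Remark 4.6 (d)]{MR4600057}.

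\paragraph{What works in your sketch.} Much of your reconstruction is sound.
\begin{itemize}
\item The Cartan-vertex computation, followed by transport along the algebra isomorphisms $T_{i_1}^{\mathfrak q}\cdots T_{i_{k-1}}$ (with (FCC) passing to $\rho_{i_{k-1}}\cdots\rho_{i_1}(\mathfrak q)$), does give centrality of $E_\beta^{N_\beta}$ and $F_\beta^{N_\beta}$ for $\beta\in\mathcal O_+^{\mathfrak q}$. So the step you flagged as the main obstacle is in fact handled by your own argument.
\item Freeness follows directly from \eqref{eq-PBWbasis-Uqlargequ7}, with no filtration needed. After moving the central powers next to their root vectors, each product of a $Z_{\mathfrak q}$-monomial with one of your basis elements is a single PBW monomial, and distinct pairs give distinct monomials.
\end{itemize}

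\paragraph{The genuine gap: the group-like generators.} The gap is the centrality of $K_\gamma^{\pm N}$ and $L_\gamma^{\pm N}$ for $\gamma\in\Delta_+^{\mathfrak q}$. Since every $\alpha_i$ lies in $\Delta_+^{\mathfrak q}$, this amounts to $q_{ij}^N=1$ for all $i,j$. It cannot be extracted from (FCC) and Weyl-groupoid invariance of $\mathfrak q_{\beta\beta}$, for two reasons:
\begin{itemize}
\item (FCC), together with \cite[Lemma 24]{MR3459702}, only constrains $\mathfrak q(\alpha,\beta)^{N_\beta}$ and $\mathfrak q(\beta,\alpha)^{N_\beta}$ for Cartan roots $\beta$.
\item The values $\mathfrak q_{\beta\beta}$, as well as $\Delta^{\mathfrak q}$, $\mathcal O^{\mathfrak q}$ and the $N_\beta$, depend only on the $q_{ii}$ and the products $q_{ij}q_{ji}$.
\end{itemize}
So whenever Cartan roots do not pin down an off-diagonal entry, the claim can fail. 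The clearest case is $\mathcal O^{\mathfrak q}=\emptyset$, which the paper notes can occur and in which (FCC) is vacuous. There you may replace $(q_{jk},q_{kj})$ by $(tq_{jk},t^{-1}q_{kj})$ with $t$ not a root of unity. All hypotheses still hold, yet $K_j^NE_kK_j^{-N}=(tq_{jk})^NE_k\neq E_k$.

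Hence no refinement of your argument will produce $q_{ij}^N=1$ from (FCC). The paper does not derive it either; it uses the same fact when asserting $N\gamma\in\mathscr R^{\geq0}\cap\mathscr R^{\leq0}$ for all $\gamma\in\Delta^{\mathfrak q}$. This part rests on the hypotheses under which the cited results are proved.

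\paragraph{Smaller points.}
\begin{itemize}
\item Your antipode argument is a non sequitur. Being central of degree $N_\beta\beta$ does not put $S(E_\beta^{N_\beta})$ in $Z_{\mathfrak q}$, because the centre is in general larger; the paper itself uses the bigger central Hopf subalgebras $C_{\mathfrak q}^{\geq0}$ and $C_{\mathfrak q}$ of \S\ref{subsec-UqwhenCY}. Closure under $S$ should instead be deduced from $\Delta(Z_{\mathfrak q})\subseteq Z_{\mathfrak q}\otimes Z_{\mathfrak q}$, by induction on degree using $\sum S(z_{(1)})z_{(2)}=\varepsilon(z)$.
\item For non-simple $\beta$ the cross terms in $\Delta(E_\beta^{N_\beta})$ do not vanish; already in $\mathcal U_\epsilon(\mathfrak g)$ they are nonzero elements of $Z\otimes Z$. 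The $T_i^{\mathfrak q}$ are only algebra maps, so they cannot transport coproduct information. The coproduct statement is therefore the part that genuinely has to be cited.
\item In your $K_\beta^{N_\beta}$ bullet the attributions are swapped. $L_\beta$ acting on $E_j$ gives $\mathfrak q(\alpha_j,\beta)^{N_\beta}$, which is (FCC) itself, while $K_\beta$ needs \cite[Lemma 24]{MR3459702}.
\end{itemize}
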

\begin{proof}
See \cite[Theorem 33 and Remark 11]{MR3459702} and \cite[Remark 4.6 (d)]{MR4600057}.
\end{proof}

\begin{remark}
If an algebra $A$ is finitely generated and free as a module over a central subalgebra $C$, one may consider the \textit{regular trace} $\text{tr}_{\text{reg}}:A\to C$ (see, for example, \cite[Eq. (2.1)]{MR4707278} for its definition), which is a $C$-linear map. It follows that the algebras with trace $(U_{\mathfrak{q}},Z_{\mathfrak{q}},\text{tr}_{\text{reg}})$, $(U_{\mathfrak{q}}^{\geq 0},Z_{\mathfrak{q}}^{\geq 0},\text{tr}_{\text{reg}})$, and $(U_{\mathfrak{q}}^{\leq 0},Z_{\mathfrak{q}}^{\leq 0},\text{tr}_{\text{reg}})$ are affine Cayley-Hamilton Hopf algebras in the sense of De Concini-Procesi-Reshetikhin-Rosso \cite{MR2178656}. For general results on the representation theory of Cayley-Hamilton Hopf algebras, see, for example, \cite{MR2178656,MR4891381,HMQW2026chev,HQWZ2026chev}.

In \cite{MR4600057}, Andruskiewitsch-Angiono-Yakimov constructed Poisson order structures on $(U_{\mathfrak{q}},Z_{\mathfrak{q}})$,
$(U_{\mathfrak{q}}^{\geq 0},Z_{\mathfrak{q}}^{\geq 0})$, and
$(U_{\mathfrak{q}}^{\leq 0},Z_{\mathfrak{q}}^{\leq 0})$ when $\mathfrak{q}$ belongs to a one-parameter family \cite[\S 4.1]{MR4600057}, thus facilitating the study of the representation theory of these quantum groups through methods from Poisson geometry. Moreover, these Poisson orders, together with the corresponding regular traces, form Poisson trace orders in the sense of Brown-Yakimov \cite{MR4707278}.
\end{remark}



\begin{corollary}
Assume that the braiding matrix $\mathfrak{q}$ satisfies the assumption (FCC). Then\label{cor-ASgorenlargequan8-eregucart6}
\begin{itemize}
\item[(a)]The large quantum group $U_{\mathfrak{q}}$ is a noetherian affine AS-Gorenstein Hopf algebra of injective dimension $2|\mathcal{O}_{+}^{\mathfrak{q}}|+2\theta=|\mathcal{O}^{\mathfrak{q}}|+2\theta$. 
\item[(b)]The large quantized Borel algebra $U_{\mathfrak{q}}^{\geq 0}$ is a noetherian affine AS-Gorenstein Hopf algebra of injective dimension $|\mathcal{O}_{+}^{\mathfrak{q}}|+ \theta$.  
\item[(c)]If $\mathfrak{q}$ is of Cartan type, then both $U_{\mathfrak{q}}$ and $U_{\mathfrak{q}}^{\geq 0}$ are AS-regular Hopf domains.
\item[(d)]The quantum group $U_{\mathfrak{q}}^{\geq 0}$ is AS-regular if and only if $\mathfrak{q}$ is of Cartan type.
\item[(e)]The quantum group $U_{\mathfrak{q}}$ is AS-regular if and only if $\mathfrak{q}$ is of Cartan type.
\end{itemize} 
\end{corollary}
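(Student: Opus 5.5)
By Theorem \ref{thm-largequanmodufi-aay1}, $U_{\mathfrak{q}}$ (resp.\ $U_{\mathfrak{q}}^{\geq 0}$) is a finitely generated free module over the central Hopf subalgebra $Z_{\mathfrak{q}}$ (resp.\ $Z_{\mathfrak{q}}^{\geq 0}$). Since $Z_{\mathfrak{q}}$ is a commutative algebra generated by finitely many elements (the listed generators), it is affine. Hence $U_{\mathfrak{q}}$ is a module-finite algebra over an affine commutative algebra, so it is affine noetherian PI. By \cite[Theorem 0.2]{MR1938745} it is AS-Gorenstein, and likewise for $U_{\mathfrak{q}}^{\geq 0}$.

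To compute the injective dimension in (a) and (b), I would use Theorem \ref{thm-noethAS-intebimoduiso-un}(a): $\operatorname{inj.dim}\overline{H}=d-s$. Here $\overline{H}$ is finite dimensional, hence Frobenius with $d-s=0$, so $d=s=\operatorname{inj.dim}Z_{\mathfrak{q}}$. It then remains to identify $Z_{\mathfrak{q}}$ as a commutative Hopf algebra. Using the PBW basis \eqref{eq-PBWbasis-Uqlargequ7}, I expect $Z_{\mathfrak{q}}$ to be the polynomial algebra in the $2|\mathcal{O}_{+}^{\mathfrak{q}}|$ elements $E_{\beta}^{N_{\beta}},F_{\beta}^{N_{\beta}}$ ($\beta\in\mathcal{O}_{+}^{\mathfrak{q}}$), tensored with the group algebra of a rank-$2\theta$ lattice in the $K$'s and $L$'s. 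This lattice has finite index in $\Gamma^{+}\oplus\Gamma^{-}$, since it contains $N\Gamma^{\pm}$.

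Thus $Z_{\mathfrak{q}}$ is a regular affine domain of Krull dimension $|\mathcal{O}^{\mathfrak{q}}|+2\theta$. For an affine regular domain (Cohen--Macaulay, equidimensional), the self-injective dimension equals the Krull dimension, which gives (a). A cleaner alternative is to note that $\operatorname{inj.dim}$ equals the GK-dimension for AS-Gorenstein module-finite Hopf algebras and invoke \cite[Theorem 20]{MR3459702}, which states $\operatorname{GKdim}U_{\mathfrak{q}}=|\mathcal{O}^{\mathfrak{q}}|+2\theta$. Part (b) is the same argument with $Z_{\mathfrak{q}}^{\geq 0}$, giving $|\mathcal{O}_{+}^{\mathfrak{q}}|+\theta$.

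For (c), when $\mathfrak{q}$ is of Cartan type we have $\mathcal{O}^{\mathfrak{q}}=\Delta^{\mathfrak{q}}$. All $\widetilde{N}_{\beta}=\infty$, and the PBW basis is that of an iterated skew polynomial/Laurent extension. The standard degree filtration (as in \cite[Theorem 19]{MR3459702}) should give an associated graded quantum affine space tensored with a Laurent torus. That is a noetherian domain of finite global dimension, so the lifting yields a domain of finite global dimension; with AS-Gorenstein this gives AS-regular.

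For the converses in (d) and (e), suppose $\mathfrak{q}$ is not of Cartan type. Then some $\beta\in\Delta_{+}^{\mathfrak{q}}\setminus\mathcal{O}^{\mathfrak{q}}$ has $E_{\beta}^{N_{\beta}}=0$ in $U_{\mathfrak{q}}^{+}$ while $E_{\beta}\neq 0$. So $U_{\mathfrak{q}}$ has a nonzero nilpotent element. A prime-ness argument, or directly \cite[Corollary 4.2]{MR719665} together with \cite[Proposition 1.6]{MR1482982} as in Example \ref{eg-Liu-Yualg}, then forces infinite global dimension. Those results say that a module-finite Hopf algebra of finite global dimension is a direct sum of prime rings, hence semiprime; I would argue that a nilpotent element generates a nilpotent ideal using the $\mathbb{Z}^{\theta}$-grading. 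The main obstacle is exactly this last step: showing that the nonzero nilpotent $E_{\beta}$ generates a nonzero nilpotent ideal, or otherwise contradicts semiprimeness. As an alternative, I would reduce to a rank-one Hopf quotient or subalgebra, noting that a non-Cartan simple root can be reached via the Weyl groupoid isomorphisms $T_i^{\mathfrak{q}}$.
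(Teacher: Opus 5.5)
Your treatment of (a)--(c) is fine and matches the paper. For (a),(b), you get $d=s$ from Theorem~\ref{thm-noethAS-intebimoduiso-un}(a) and then identify $Z_{\mathfrak q}$ (resp.\ $Z_{\mathfrak q}^{\geq 0}$), via the PBW basis, as a polynomial ring tensored with a Laurent ring; this is the paper's ``compute the GK-dimension of the center'' alternative. Your (c) is Angiono's filtration argument.

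The gap is in the converses (d),(e), and it is more than an unverified step. The claim that the nilpotent root vector $E_\beta$ generates a nonzero nilpotent ideal is false in general, so no semiprimeness argument can work. Take $\mathfrak q=\begin{pmatrix}-1&\mathrm{i}\\ 1&-1\end{pmatrix}$, which is super type $A(1|0)$ with both vertices odd.
\begin{itemize}
\item Both vertices are non-Cartan.
\item $\Delta_+^{\mathfrak q}=\{\alpha_1,\alpha_1+\alpha_2,\alpha_2\}$ and $\mathcal O_+^{\mathfrak q}=\{\alpha_1+\alpha_2\}$, with $N_{\alpha_1+\alpha_2}=4$.
\item (FCC) holds.
\end{itemize}
The distinguished pre-Nichols algebra is $R=\mathbb C\langle x_1,x_2\rangle/(x_1^2,x_2^2)$. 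The element $z=x_1x_2+x_2x_1$ is central, $R$ is free over $\mathbb C[z]$ on $1,x_1,x_2,x_1x_2$, and $R[z^{-1}]\cong M_2(\mathbb C[z^{\pm1}])$. Hence $R$ is prime, and so is the iterated skew Laurent extension $U_{\mathfrak q}^{\geq 0}=R\#\mathbb C[\Gamma^+]$. So $U_{\mathfrak q}^{\geq0}$ is a prime ring that contains the nilpotent element $E_1$ and has infinite global dimension. It has no nonzero nilpotent ideal, and the decomposition into prime rings you invoke gives no contradiction. Classically, $U(\mathfrak{gl}(1|1))$ is also prime despite its nilpotent odd generators, so you should not expect nilpotent ideals in $U_{\mathfrak q}$ either.

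The missing idea is to restrict to a \emph{subalgebra} over which $U$ is free. Looking for an ideal does not work, and neither does passing to a Hopf quotient, since finite global dimension does not descend to Hopf quotients (e.g.\ $\mathcal U_\epsilon(\mathfrak g)\to\mathfrak u_\epsilon(\mathfrak g)$). The paper argues as follows.
\begin{itemize}
\item If $\mathfrak q$ is not of Cartan type, take a non-Cartan root. Use the $T_i^{\mathfrak q}$ to reduce to a non-Cartan simple root $\alpha_j$, and choose a reduced expression of $w_0^{\mathfrak q}$ with $\beta_1=\alpha_j$.
\item Then $\mathscr A=\mathbb C\langle E_{\beta_1}\rangle\cong\mathbb C[z]/(z^{N_{\beta_1}})$ with $N_{\beta_1}\geq 2$. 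The PBW basis \eqref{eq-PBWbasis-Uqlargequ7} shows that $U_{\mathfrak q}^{\geq0}$ (resp.\ $U_{\mathfrak q}$) is free over $\mathscr A$.
\item A finite projective resolution of the trivial $U$-module therefore restricts to a finite projective resolution of $\mathbb C$ over $\mathscr A$.
\item Since $\mathscr A$ is self-injective, this forces $\mathbb C$ to be projective, so $\mathscr A$ would be semisimple, a contradiction.
\end{itemize}
Your ``alternative'' mentions the $T_i^{\mathfrak q}$-reduction but not this freeness-and-restriction step. That step is what actually proves (d) and (e).
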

\begin{proof}
(a) and (b) By Theorem \ref{thm-largequanmodufi-aay1} and \cite[Theorem 0.2 (1)]{MR1938745}, both $U_{\mathfrak{q}}$ and $U_{\mathfrak{q}}^{\geq 0}$ are AS-Gorenstein. By Theorem \ref{thm-noethAS-intebimoduiso-un} (a), for any affine module-finite Hopf algebra $H$, the injective dimension $\text{inj.dim}_{H}H$ is determined by the Gelfand-Kirillov dimension of $H$. Thus, (a) and (b) follow from \cite[Theorem 20]{MR3459702} and \eqref{eq-setcartroot-pomO+}; alternatively, they can be obtained by directly computing the Gelfand-Kirillov dimensions of $Z_{\mathfrak{q}}$ and $Z_{\mathfrak{q}}^{\geq 0}$. 

(c) It suffices to observe that, in this case, $\mathcal{O}_{+}^{\mathfrak{q}}=\Delta_{+}^{\mathfrak{q}}$, and then apply \cite[Corollaries 17 and 18]{MR3459702} together with \cite[Lemma I. 12.12 and Theorem I. 12.14]{MR1898492}.

(d) By (c), it remains only to prove the necessity. Suppose, to the contrary, that there exists a non-Cartan root $\beta  =s_{i_1}^{\mathfrak{q}}\cdots s_{i_{k-1}}(\alpha_{i_k})
\in\Delta_{+}^{\mathfrak{q}}.$ Since $T_{i_{1}}^{\mathfrak{q}}\cdots T_{i_{k-1}}:U_{\rho_{i_{k-1}}\cdots \rho_{i_{1}}(\mathfrak{q})}\to U_{\mathfrak{q}}$ is an algebra isomorphism, \eqref{eq-PBWbasis-Uqlargequ7} implies that $E_{i_{k}}$ is a nilpotent element in $U_{\rho_{i_{k-1}}\cdots \rho_{i_{1}}(\mathfrak{q})}$. This forces $\alpha_{i_{k}}$ to be a non-Cartan root in $\Delta_{\rho_{i_{k-1}}\cdots \rho_{i_{1}}(\mathfrak{q})}^{+}$. Therefore, after replacing $\mathfrak{q}$ by $\rho_{i_{k-1}}\cdots\rho_{i_1}(\mathfrak{q})$ and $U_{\mathfrak{q}}$ by $U_{\rho_{i_{k-1}}\cdots \rho_{i_{1}}(\mathfrak{q})}$, we may assume without loss of generality that the non-Cartan root $\beta \in\Delta_{+}^{\mathfrak{q}}$ is a simple root $\alpha_{j}$
for some $j\in\mathbb{I}$. By \cite[Corollary 9.3.1]{MR4164719}, we may choose a reduced expression
of $w_{0}^{\mathfrak{q}}$ such that the first positive root $\beta_{1}$ in the corresponding sequence is $\alpha_{j}$. Now 
$$\mathscr{A}_{\beta_{1}}\coloneqq \mathbb{C}\langle E_{\beta_{1}}\rangle\subseteq  U_{\mathfrak{q}}^{\geq 0}$$
is isomorphic to the non-semisimple Frobenius algebra $\mathbb{C}[z]/(z^{N_{\beta_{1}}})$. Since $U_{\mathfrak{q}}^{\geq 0}$ is AS-regular, the trivial right $U_{\mathfrak{q}}^{\geq 0}$-module $\mathbb{C}$ admits a finite-length projective resolution $P^{\bullet}\to \mathbb{C}\to 0.$ By \eqref{eq-PBWbasis-Uqlargequ7}, $U_{\mathfrak{q}}^{\geq 0}$ is free as a right $\mathscr{A}_{\beta_1}$-module. Hence, $P^{\bullet}\to \mathbb{C}\to 0$ is a projective resolution of the right $\mathscr{A}_{\beta_1}$-module $\mathbb{C}$. It follows that
$\mathscr{A}_{\beta_1}$ is semisimple, which yields a contradiction.

(e) This follows from an argument similar to that in (d).
\end{proof}

\begin{remark}\label{rmk-ineeleinUq}
In the proof of \cite[Corollary 18]{MR3459702}, Angiono considered an $\mathbb{N}^{2\ell+1}$-filtration on $U_{\mathfrak{q}}$, analogous to the filtration introduced by De Concini-Kac in \cite{MR1103601} for $\mathcal{U}_{\epsilon}(\mathfrak{g})$. Recall that $\ell$ denotes the length of the longest element $w_{0}^{\mathfrak{q}}$. 

If $\mathfrak{q}$ is of Cartan type, \cite[Corollary 18]{MR3459702} implies that the associated graded algebra $\operatorname{gr}U_{\mathfrak{q}}$ is an $\mathbb{N}^{2\ell+1}$-graded domain whose homogeneous component of degree
$0\in\mathbb{N}^{2\ell+1}$ is precisely the commutative algebra $\mathbb{C}[K_{1}^{\pm 1},\ldots,K_{\theta}^{\pm 1},L_{1}^{\pm 1},\ldots,L_{\theta}^{\pm 1}]$. Hence, the invertible elements of $U_{\mathfrak{q}}$ are precisely the nonzero scalar multiples of Laurent monomials in $K_{1},\ldots,K_{\theta},L_{1},\ldots,L_{\theta}$. When $\mathfrak{q}$ is \textit{not} of Cartan type, $U_{\mathfrak{q}}$ has additional invertible elements, as a non-Cartan root $\beta\in \Delta_{+}^{\mathfrak{q}}$ gives rise to a nilpotent element $E_{\beta}$.
\end{remark}

The following statement follows immediately from Example \ref{eg-multiparamqgrp-AAYlarge} and Corollary \ref{cor-ASgorenlargequan8-eregucart6} (c).

\begin{corollary}\label{cor-PHRmulparaquangro-regulardoman}
Assume that the braiding matrix $\mathfrak{q}$ satisfies the assumption (FCC) and is genuinely of finite Cartan type with Cartan matrix $C=(c_{ij})_{\theta\times\theta}$. Let $\mathfrak{g}_{C}$ denote the complex semisimple Lie algebra associated with 
$C$. Then the multiparameter
quantized enveloping algebra $\mathcal{U}_{\mathfrak{q}}(\mathfrak{g}_{C})$ is an affine noetherian AS-regular Hopf algebra and a domain.
\end{corollary}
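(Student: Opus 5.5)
The statement is Corollary \ref{cor-PHRmulparaquangro-regulardoman}. The plan is to identify $\mathcal{U}_{\mathfrak{q}}(\mathfrak{g}_{C})$ with the AAY large quantum group $U_{\mathfrak{q}}$ and then transport the properties already established for $U_{\mathfrak{q}}$.

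First, since $\mathfrak{q}$ is genuinely of finite Cartan type and satisfies (FCC), Example \ref{eg-multiparamqgrp-AAYlarge} gives a Hopf algebra isomorphism $U_{\mathfrak{q}}\cong \mathcal{U}_{\mathfrak{q}}(\mathfrak{g}_{C})$. It remains to check that the hypotheses of Corollary \ref{cor-ASgorenlargequan8-eregucart6} (c) hold. Being genuinely of finite Cartan type implies in particular that $\mathfrak{q}$ is of Cartan type: the relations $q_{ij}q_{ji}=q_{ii}^{c_{ij}}$ hold with $C$ a finite-type Cartan matrix, and the bound $0\le -c_{ij}<\operatorname{ord}(q_{ii})$ is part of that definition. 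Hence every vertex is a Cartan vertex, and $\mathcal{O}^{\mathfrak{q}}=\Delta^{\mathfrak{q}}$.

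Next, Corollary \ref{cor-ASgorenlargequan8-eregucart6} (c) gives that $U_{\mathfrak{q}}$ is an AS-regular Hopf domain. Noetherianity comes from Corollary \ref{cor-ASgorenlargequan8-eregucart6} (a). Affineness holds because $U_{\mathfrak{q}}$ is generated by the finitely many elements $E_i,F_i,K_i^{\pm1},L_i^{\pm1}$ (Remark \ref{rmk-generare-Uq-largewh}). All of these properties transfer along the Hopf algebra isomorphism.

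The only point needing care is the identification of the two algebras. This amounts to matching the generators and defining relations (the quantum Serre relations generating $\mathcal{I}(V)$ and $\mathcal{I}(W)$, and the normalization of $E_iF_j-F_jE_i$) up to rescaling $F_i$ and renaming $L_i\leftrightarrow L_i^{-1}$. I take this matching as given by Example \ref{eg-multiparamqgrp-AAYlarge}, so no real obstacle remains.
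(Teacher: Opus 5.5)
Your proposal is correct and follows the paper's argument. The paper likewise identifies $\mathcal{U}_{\mathfrak{q}}(\mathfrak{g}_{C})$ with the large quantum group $U_{\mathfrak{q}}$ via Example~\ref{eg-multiparamqgrp-AAYlarge} and then invokes Corollary~\ref{cor-ASgorenlargequan8-eregucart6}~(c), with affineness and noetherianity also available from part~(a).
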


The following result is straightforward.

\begin{lemma}
Assume that the braiding matrix $\mathfrak{q}$ satisfies the assumption (FCC). Let $\alpha,\beta\in\mathbb{Z}^{\theta}$, and set $u=K_{\alpha}L_{\beta}\in U_{\mathfrak{q}}$. Then the inner automorphism $\iota_{u}: U_{\mathfrak{q}}\to U_{\mathfrak{q}}$, defined by $x\mapsto uxu^{-1}$, satisfies 
$\iota_{u}(E_{j})=\mathfrak{q}(\alpha,\alpha_{j})\mathfrak{q}(\alpha_{j},\beta)E_{j}\text{ and\ } \iota_{u}(F_{j})=\mathfrak{q}(\alpha,\alpha_{j})^{-1}\mathfrak{q}(\alpha_{j},\beta)^{-1}F_{i}$
for all $j\in \mathbb{I}$. \label{lem-innerKaLb-formu}
\end{lemma}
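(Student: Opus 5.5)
The statement is a direct computation on generators. Write $u=K_{\alpha}L_{\beta}$ with $\alpha=\sum_i s_i\alpha_i$ and $\beta=\sum_i t_i\alpha_i$. Then $u=\prod_i K_i^{s_i}\prod_i L_i^{t_i}$, and $u$ is invertible because all $K_i$ and $L_i$ commute with one another and are units. Since $\iota_u$ is an algebra automorphism, $\iota_u=\iota_{K_{\alpha}}\circ\iota_{L_{\beta}}$, and each factor is a composite of conjugations by single generators $K_i^{\pm1}$ and $L_i^{\pm1}$. So it suffices to record how each single $K_i$ and $L_i$ acts on $E_j$ and $F_j$, and then multiply the resulting scalars.

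From the defining relations in Remark \ref{rmk-generare-Uq-largewh}:
\begin{itemize}
\item $K_iE_jK_i^{-1}=q_{ij}E_j=\mathfrak{q}(\alpha_i,\alpha_j)E_j$;
\item $L_iE_jL_i^{-1}=q_{ji}E_j=\mathfrak{q}(\alpha_j,\alpha_i)E_j$;
\item $K_iF_jK_i^{-1}=q_{ij}^{-1}F_j$;
\item $L_iF_jL_i^{-1}=q_{ji}^{-1}F_j$.
\end{itemize}
Conjugation by $K_i^{-1}$ or $L_i^{-1}$ multiplies these scalars by the inverse factors.

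Iterating, conjugation by $K_{\alpha}$ multiplies $E_j$ by $\prod_i q_{ij}^{s_i}$. Because the form $\mathfrak{q}$ is $\mathbb{Z}$-bilinear, this product equals $\mathfrak{q}(\alpha,\alpha_j)$. Likewise, conjugation by $L_{\beta}$ multiplies $E_j$ by $\prod_i q_{ji}^{t_i}=\mathfrak{q}(\alpha_j,\beta)$. Composing the two gives
\[
\iota_u(E_j)=\mathfrak{q}(\alpha,\alpha_j)\,\mathfrak{q}(\alpha_j,\beta)\,E_j .
\]
The same argument with inverted scalars gives
\[
\iota_u(F_j)=\mathfrak{q}(\alpha,\alpha_j)^{-1}\mathfrak{q}(\alpha_j,\beta)^{-1}F_j .
\]
The index $F_i$ in the statement should read $F_j$.

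There is no real obstacle. The only point needing care is matching conventions: the $L$-relation involves the transposed entry $q_{ji}$, and this is exactly why the scalar appears as $\mathfrak{q}(\alpha_j,\beta)$ with $\alpha_j$ in the first slot, not $\mathfrak{q}(\beta,\alpha_j)$.
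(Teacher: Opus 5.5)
Your proposal is correct. It is exactly the generator-by-generator check from the defining relations of $U_{\mathfrak{q}}$ that the paper leaves unproved as ``straightforward'': bilinearity of $\mathfrak{q}$ turns $\prod_i q_{ij}^{s_i}$ and $\prod_i q_{ji}^{t_i}$ into $\mathfrak{q}(\alpha,\alpha_j)$ and $\mathfrak{q}(\alpha_j,\beta)$. You are also right that the $F_i$ in the statement should read $F_j$.
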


We establish a necessary and sufficient condition for $S^{2}$ of $U_{\mathfrak{q}}$ to be inner when $\mathfrak{q}$ is of Cartan type.

\begin{proposition}\label{prop-UqS2-inner}
Assume that the braiding matrix $\mathfrak{q}$ satisfies the assumption (FCC). Let $S$ be the antipode of $U_{\mathfrak{q}}$. Then $S^{2}$ is inner if there exist $\alpha,\beta\in \mathbb{Z}^{\theta}$ such that 
\begin{equation}\label{eq-S2inner}
\mathfrak{q}(\alpha,\alpha_{j})\mathfrak{q}(\alpha_{j},\beta)=q_{jj}^{-1}
\end{equation}
for all $j\in \mathbb{I}$. Moreover, if $\mathfrak{q}$ is of Cartan type, then the converse also holds.
\end{proposition}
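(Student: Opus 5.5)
First compute $S^{2}$ on the generators. From the formulas in Remark \ref{rmk-generare-Uq-largewh}, $S^{2}(K_{i})=K_{i}$ and $S^{2}(L_{i})=L_{i}$. Also
$$S^{2}(E_{j})=S(-K_{j}^{-1}E_{j})=-S(E_{j})K_{j}=K_{j}^{-1}E_{j}K_{j}=q_{jj}^{-1}E_{j},$$
and similarly $S^{2}(F_{j})=q_{jj}F_{j}$. For the sufficiency, take $u=K_{\alpha}L_{\beta}$. By Lemma \ref{lem-innerKaLb-formu}, $\iota_{u}$ scales $E_{j}$ by $\mathfrak{q}(\alpha,\alpha_{j})\mathfrak{q}(\alpha_{j},\beta)$ and $F_{j}$ by the inverse factor, and it fixes the group-likes. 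Under \eqref{eq-S2inner} these factors are $q_{jj}^{-1}$ on $E_{j}$ and $q_{jj}$ on $F_{j}$. So $\iota_{u}$ and $S^{2}$ agree on the generators $E_{j},F_{j},K_{j}^{\pm1},L_{j}^{\pm1}$, and therefore agree on all of $U_{\mathfrak{q}}$.

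For the converse, assume $\mathfrak{q}$ is of Cartan type and $S^{2}=\iota_{u}$ for some unit $u\in U_{\mathfrak{q}}$. By Remark \ref{rmk-ineeleinUq}, the units of $U_{\mathfrak{q}}$ in the Cartan case are scalar multiples of Laurent monomials in the $K_{i},L_{i}$. We may therefore write $u=cK_{\alpha}L_{\beta}$ with $c\in\mathbb{C}^{\times}$ and $\alpha,\beta\in\mathbb{Z}^{\theta}$. Applying Lemma \ref{lem-innerKaLb-formu} to $E_{j}$ gives
$$q_{jj}^{-1}E_{j}=S^{2}(E_{j})=\mathfrak{q}(\alpha,\alpha_{j})\mathfrak{q}(\alpha_{j},\beta)E_{j}.$$
Since $E_{j}\neq 0$ by the PBW basis \eqref{eq-PBWbasis-Uqlargequ7}, this yields \eqref{eq-S2inner} for every $j$.

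The main point requiring care is the description of the units, which rests on the domain and associated-graded argument of Angiono in Remark \ref{rmk-ineeleinUq}. I would justify it as follows. In the $\mathbb{N}^{2\ell+1}$-filtration, the associated graded algebra $\operatorname{gr}U_{\mathfrak{q}}$ is a graded domain. The leading term of a unit is then a homogeneous unit, so it has degree $0$. A degree-filtration argument then shows that $u$ itself lies in the Laurent polynomial ring $\mathbb{C}[K^{\pm1},L^{\pm1}]$, since $\deg u+\deg u^{-1}=0$ forces both degrees to vanish. Inside this Laurent ring, which is a commutative domain, the units are exactly the scalar multiples of monomials.

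This is also the only place the Cartan hypothesis is used. Without it, the nilpotent root vectors produce additional units such as $1+E_{\beta}$, and the argument breaks down.
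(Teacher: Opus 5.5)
Your proposal is correct and follows essentially the same route as the paper. You compute $S^{2}$ on the generators and use Lemma \ref{lem-innerKaLb-formu} with $u=K_{\alpha}L_{\beta}$ for sufficiency. For the Cartan-type converse, you use Remark \ref{rmk-ineeleinUq} to write the implementing unit as $cK_{\alpha}L_{\beta}$ and compare coefficients on $E_{j}$. Your leading-term argument for why units lie in $\mathbb{C}[K^{\pm1},L^{\pm1}]$ simply spells out what the paper takes directly from that remark.
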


\begin{proof}
By Remark \ref{rmk-generare-Uq-largewh}, $S^{2}(K_{j})=K_{j}$, $S^{2}(L{j})=L_{j}$, $S^{2}(E_{j})=q_{jj}^{-1}E_{j}$, and $S^{2}(F_{j})=q_{jj}F_{j}$. Therefore, the conclusion follows directly from Remark \ref{rmk-ineeleinUq} and Lemma \ref{lem-innerKaLb-formu}.
\end{proof}

For example, the braiding matrix $(\epsilon^{d_{i}a_{ij}})_{n\times n}$
considered in Example \ref{eq-positpart-Lztsmaqugrp9-nichols} satisfies the condition in Proposition \ref{prop-UqS2-inner}. We now provide an example of a braiding matrix that satisfies the assumption (FCC) but does not satisfy \eqref{eq-S2inner} in Proposition \ref{prop-UqS2-inner}.

\begin{example}
Consider the braiding matrix\label{eg-finitegen-maynotS2iner}
$$\mathfrak{q}=
\begin{pmatrix}
-1 & -1 & -1\\
-1 & -1 & 1\\
-1 & 1 & \mathfrak{i}
\end{pmatrix}\in \text{M}_{3}(\mathbb{C}^{\times}).$$
Then $\mathfrak{q}$ is genuinely of finite Cartan type, with Cartan matrix
$$C=\begin{pmatrix}
2 &0 & 0\\
0 & 2 &0\\
0 &0 & 2
\end{pmatrix}.$$
By \cite[Theorem 15.1.14 (6)]{MR4164719}, the Nichols algebra of $\mathfrak{q}$ is finite-dimensional. In this case, $\Delta_{+}^{\mathfrak{q}}=\mathcal{O}_{+}^{\mathfrak{q}}=\{\alpha_{1},\alpha_{2},\alpha_{3}\}\subseteq \mathbb{Z}^{3}$, with $N_{1}=N_{2}=2$ and $N_{3}=4$. Hence, $q_{ij}^{N_{j}}=1$ for all $1\leq i,j\leq 3$. Therefore, the braiding matrix $\mathfrak{q}$ satisfies the assumption (FCC). We now show that, for the above braiding matrix $\mathfrak q$, there do not
exist $\alpha,\beta\in\mathbb{Z}^{3}$ such that \eqref{eq-S2inner} holds for all $1\leq j\leq 3$. Suppose that there exist $\alpha=(a_{1},a_{2},a_{3}),\beta=(b_{1},b_{2},b_{3})\in \mathbb{Z}^{3}$ such that \eqref{eq-S2inner} holds for all $1\leq j\leq 3$. Since $\mathfrak{q}$ is symmetric, $\mathfrak{q}(\alpha+\beta,\alpha_{j})=q_{jj}^{-1}$ for all $1\leq j\leq 3$. Set $m_{i}=a_{i}+b_{i}$ for $1\leq i\leq 3$. Then, for each $1\leq j\leq 3$, $\prod_{i=1}^{3}q_{ij}^{m_{i}}=q_{jj}^{-1}$. Consequently,
$$(-1)^{m_{1}+m_{2}+m_{3}}=-1,\quad  (-1)^{m_{1}+m_{2}}=-1,\quad (-1)^{m_{1}}\mathfrak{i}^{m_{3}}=-\mathfrak{i}.$$
Observe that $(-1)^{m_{3}}=1$, implying that $m_{3}$ is even. Therefore, the equality $(-1)^{m_{1}}\mathfrak{i}^{m_{3}}=-\mathfrak{i}$ contradicts the evenness of $m_{3}$.
\end{example}


In the setting of Theorem \ref{thm-largequanmodufi-aay1}, define
\begin{equation}\label{eq-AAYsmallqgrp}
\mathfrak{u}_{\mathfrak{q}} \coloneqq U_{\mathfrak{q}}/(Z_{\mathfrak{q}})^{+}U_{\mathfrak{q}}.
\end{equation}
The finite-dimensional Hopf algebra $\mathfrak{u}_{\mathfrak{q}}$ is called the AAY \textit{small quantum group}. There is a canonical exact sequence of Hopf algebras:
$$\mathbb{C}\to Z_{\mathfrak{q}}\to U_{\mathfrak{q}}\to \mathfrak{u}_{\mathfrak{q}}\to\mathbb{C}.$$

\begin{example}
[\cite{MR4506530}] Retain\label{eg-GGsmquangroupmu-9} the notation of \S \ref{subsec-DKquanenalgsCY}, and denote the Cartan matrix $(a_{ij})_{n\times n}$ by $A$. Recall that a braiding matrix $\mathfrak{p}=(p_{ij})_{n\times n}$ of Cartan type with Cartan matrix $A$ is said to be of \textit{integral type} if there exist $p\in\mathbb C^\times$ and
$(b_{ij})_{n\times n}\in M_{n}(\mathbb{Z})$ such that
$$
b_{ij}+b_{ji}=2d_{i}a_{ij} \text{ and\ }p_{ij}=p^{b_{ij}}
$$
for all $1\leq i,j\leq n$. Consider $\mathfrak{q}=(\epsilon^{b_{ij}})_{n\times n}$, where $b_{ij}+b_{ji}=2d_{i}a_{ij}$ for all $1\leq i,j\leq n$. Recall that $\epsilon$ is a primitive $N$-th root of unity, where $N\geq 3$ is odd. Then  $\mathfrak{q}=(\epsilon^{b_{ij}})_{n\times n}$ is of integral type and is genuinely of finite Cartan type. For any $\beta\in \Delta_{+}^{\mathfrak{q}}$, there exists $1\leq i\leq n$ such that $\mathfrak{q}(\beta,\beta)=\epsilon^{2d_{i}}$. Therefore, $N_{\beta}=N$ for all $\beta\in \Delta_{+}^{\mathfrak{q}}$. In this case,
$$\mathfrak{u}_{\mathfrak{q}}=  \mathcal{U}_{\mathfrak{q}}(\mathfrak{g})/(E_{\beta}^{N},F_{\beta}^{N},K_{i}^{ N}-1,L_{i}^{N}-1\mid\beta\in \Delta_{+}^{\mathfrak{q}},1\leq i\leq n)$$
is the small multiparameter quantum group studied by Garc\'ia-Gavarini in \cite[\S 7.3.2]{MR4506530}. If $\mathfrak{q}=(\epsilon^{d_{i} a_{ij}})_{n\times n}$, then Lusztig's small quantum group is a Hopf quotient of $\mathfrak{u}_{\mathfrak{q}}$. 
\end{example}

\begin{remark}\label{rmk-GGsmallquantumgrp-lusz}
Just as Lusztig's small quantum group of a complex semisimple Lie algebra $\mathfrak{g}$ can be realized both as a Hopf quotient of the DK (unrestricted) quantized enveloping algebra of $\mathfrak{g}$ at a root of unity and as a Hopf subalgebra of the Lusztig (restricted) quantized enveloping algebra of $\mathfrak{g}$ at a root of unity, the small multiparameter quantum group of $\mathfrak{g}$ can also be defined using the restricted multiparameter quantized enveloping algebra of $\mathfrak{g}$ (see \cite[\S 7.3.2 and Theorem 7.3.14]{MR4506530}).
\end{remark}

\subsection{Large quantum groups are unimodular}\label{subsec-UqwhenCY}
The aim of this subsection is to prove that, for any braiding matrix
$\mathfrak{q}$ satisfying the assumption (FCC), the associated large quantum group $U_{\mathfrak{q}}$ is unimodular. Consequently, we derive a necessary and sufficient condition for $U_{\mathfrak{q}}$ to be Calabi-Yau. Throughout this subsection, we fix a braiding matrix $\mathfrak{q}=(q_{ij})_{\theta\times \theta}$ satisfying the assumption (FCC) and freely use the notation introduced in the previous subsection.

Before proving the main result of this subsection, we introduce the following notation:
$$\mathscr{R}^{\geq 0}\coloneqq  \{\alpha\in \mathbb{Z}^{\theta}\mid \mathfrak{q}(\alpha,\beta)=1,\forall \beta\in \mathbb{Z}^{\theta}\},$$
$$\mathscr{R}^{\leq 0}\coloneqq  \{\alpha\in \mathbb{Z}^{\theta}\mid \mathfrak{q}(\beta,\alpha)=1,\forall \beta\in \mathbb{Z}^{\theta}\}.$$
Both $\mathscr{R}^{\geq 0}$ and $\mathscr{R}^{\leq 0}$ are subgroups of $\mathbb{Z}^{\theta}$. By Remark \ref{rmk-rhiFCCiff-rhoifcc}, one has $N_{\beta}\beta\in \mathscr{R}^{\geq 0}\cap \mathscr{R}^{\leq 0} $ for any $\beta\in \mathcal{O}^{\mathfrak{q}}$ and $N\gamma\in  \mathscr{R}^{\geq 0}\cap \mathscr{R}^{\leq 0} $ for all $\gamma\in \Delta^{\mathfrak{q}}$. Recall that $N$ is the least common multiple of the integers $N_{\beta}$ for $\beta\in\Delta_{+}^{\mathfrak{q}}$. In particular, both $\mathscr{R}^{\geq 0}$ and $\mathscr{R}^{\leq 0}$ are finite groups. Via the canonical isomorphism $\Gamma^{+}\cong \mathbb {Z}^{\theta},K_{i}\mapsto \alpha_{i}$, the quotient group $\mathbb{Z}^{\theta}/\mathscr{R}^{\geq 0}$ induces a quotient group $\overline{\Gamma^{+}}$ of $\Gamma^{+}$. Moreover, the group algebra
$\mathbb{C}[\overline{\Gamma^{+}}]$ is the Hopf algebra quotient of $\mathbb{C}[\Gamma^{+}]$ by the ideal generated by $\{K_{\alpha}-1\mid \alpha\in \mathscr{R}^{\geq 0}\}$. Similarly, one obtains a quotient group $\overline{\Gamma^{-}}$ of $\Gamma^{-}$. Then $\mathcal{B}(V)$ and $\mathcal{B}(W)$ can be naturally regarded as Hopf algebras in
$$
{}_{\mathbb{C}[\overline{\Gamma^{+}}]}^{\mathbb{C}[\overline{\Gamma^{+}}]}
\mathcal{YD}\text{ and\ }
{}_{\mathbb{C}[\overline{\Gamma^{-}}]}^{\mathbb{C}[\overline{\Gamma^{-}}]}
\mathcal{YD},
$$
respectively. Let $C_{\mathfrak{q}}^{\geq 0}$ and $C_{\mathfrak{q}}^{\leq 0}$ be the
subalgebras of $U_{\mathfrak{q}}$ generated, respectively, by
$$
Z_{\mathfrak q}^{\geq 0}
\cup
\{K_{\alpha}\mid \alpha\in \mathscr{R}^{\geq 0}\}\text{ and\ }
Z_{\mathfrak q}^{\leq 0}\cup
\{L_{\alpha}\mid \alpha\in \mathscr{R}^{\leq 0}\}.$$
By Theorem \ref{thm-largequanmodufi-aay1}, $C_{\mathfrak q}^{\geq 0}$ is a central Hopf subalgebra of $U_{\mathfrak{q}}^{\geq 0}$, and $C_{\mathfrak{q}}^{\leq 0}$ is a central Hopf subalgebra of $U_{\mathfrak q}^{\leq 0}$. Set
$$\overline{U_{\mathfrak{q}}^{\geq 0}}\coloneqq U_{\mathfrak{q}}^{\geq 0}/(C_{\mathfrak q}^{\geq 0})^{+}U_{\mathfrak{q}}^{\geq 0}\text{ and\ }\overline{U_{\mathfrak{q}}^{\leq 0}}\coloneqq U_{\mathfrak{q}}^{\leq 0}/(C_{\mathfrak q}^{\leq 0})^{+}U_{\mathfrak{q}}^{\leq 0}.$$
Then, by the definition of the distinguished pre-Nichols algebra, there are canonical Hopf algebra isomorphisms
\begin{equation}\label{eq-canoiso-oflargborel-BVW}
\overline{U_{\mathfrak q}^{\geq 0}}\cong\mathcal{B}(V)\#\mathbb{C}[\overline{\Gamma^{+}}] \text{ and\ } \overline{U_{\mathfrak q}^{\leq 0}}
\cong \mathcal{B}(W)\#\mathbb{C}[\overline{\Gamma^{-}}].
\end{equation}

\begin{lemma}\label{lem-nondegenpair-ubarleq7bore}
The Hopf skew-pairing $
\tau^{AAY}:U_{\mathfrak{q}}^{\leq 0}\times U_{\mathfrak{q}}^{\geq 0}\to\mathbb{C}$
descends to a skew-pairing $\overline{\tau^{AAY}}:\overline{U_{\mathfrak{q}}^{\leq 0}}\times
\overline{U_{\mathfrak{q}}^{\geq 0}}\to\mathbb{C}$,
which is nondegenerate.
\end{lemma}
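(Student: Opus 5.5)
To prove Lemma \ref{lem-nondegenpair-ubarleq7bore}, I would split the task into two parts. First I show that $\tau^{AAY}$ vanishes on the relevant Hopf ideals, so that it descends. Then I reduce nondegeneracy of the induced pairing to the known nondegeneracy of the canonical pairing between bosonized Nichols algebras. Throughout I would use the bimultiplicativity of a Hopf skew-pairing: $\tau(xy,z)=\sum\tau(x,z_{(1)})\tau(y,z_{(2)})$, $\tau(x,zw)=\sum\tau(x_{(1)},w)\tau(x_{(2)},z)$, and $\tau(1,z)=\varepsilon(z)$.

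\textbf{Step 1: descent.} The ideal $(C_{\mathfrak q}^{\geq 0})^{+}U_{\mathfrak q}^{\geq 0}$ is a Hopf ideal generated by $c-\varepsilon(c)$ with $c$ ranging over the generators of $C_{\mathfrak q}^{\geq 0}$. By bimultiplicativity it therefore suffices to show that $\tau^{AAY}(x,c)=\varepsilon(x)\varepsilon(c)$ for all $x\in U_{\mathfrak q}^{\leq 0}$ and each such generator $c$; the symmetric statement handles the other side.

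For group-likes $c=K_\alpha$ I would check the identity on generators of $U^{\leq 0}_{\mathfrak q}$. On $L_i$ we have $\tau(L_i,K_\alpha)=\mathfrak q(\alpha,\alpha_i)^{-1}$, which equals $1$ when $\alpha\in\mathscr R^{\geq 0}$. For $K_{\beta}^{N_\beta}$ and $K_\gamma^{N}$, the remark preceding the lemma gives $N_\beta\beta,\ N\gamma\in\mathscr R^{\geq 0}$, so these are covered as well. On $F_i$ the pairing with $K_\alpha$ vanishes, which matches $\varepsilon(F_i)=0$.

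For the root-vector powers $E_\beta^{N_\beta}$ with $\beta\in\mathcal O_+^{\mathfrak q}$, I would argue via gradings. The pairing respects the $\mathbb Z^\theta$-gradings, so $\tau(x,E_\beta^{N_\beta})$ can be nonzero only for $x$ of degree $-N_\beta\beta$ in the $F$'s, up to group-likes. Moreover, $E_\beta^{N_\beta}$ is primitive up to group-likes modulo lower terms, being central and part of a Hopf subalgebra. The cleanest route, I expect, is to invoke the known fact that the radical of the pairing restricted to $\widetilde{\mathcal B}(W)\times\widetilde{\mathcal B}(V)$ is exactly the kernel of the projection onto the Nichols algebra. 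This is the defining property of Nichols algebras for skew-pairings of bosonizations of pre-Nichols algebras; see \cite[Proposition 4.3]{MR2596372} and \cite[\S 4]{MR4600057}. Since $E_\beta^{N_\beta}$ lies in $\mathcal J(V)$, its image in $\mathcal B(V)$ is $0$, so the pairing with it vanishes on positive-degree elements. I expect this to be the main obstacle: verifying that the generators $E_\beta^{N_\beta}$, defined through the Lusztig-type isomorphisms $T_i^{\mathfrak q}$, lie in the radical. I would handle it by the radical characterization rather than by direct computation.

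\textbf{Step 2: nondegeneracy.} Using \eqref{eq-canoiso-oflargborel-BVW}, the induced pairing lives on $(\mathcal B(W)\#\mathbb C[\overline{\Gamma^-}])\times(\mathcal B(V)\#\mathbb C[\overline{\Gamma^+}])$. It factors as a pairing of the Nichols parts times a pairing of the group algebras, via the standard triangular decomposition of skew-pairings on bosonizations: $\tau(y\,L,x\,K)$ equals a product of $\tau(y,x)$ and $\tau(L,K)$ type terms, and gradings separate the degrees. Nondegeneracy then follows from two facts.

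The first is that the pairing $\mathcal B(W)\times\mathcal B(V)$ is nondegenerate in each degree. This holds because the radical of the pairing on tensor algebras is precisely $\mathcal J$, which is a standard characterization of Nichols algebras.

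The second is that the bicharacter $\overline{\Gamma^-}\times\overline{\Gamma^+}\to\mathbb C^\times$, $(L_\beta,K_\alpha)\mapsto\mathfrak q(\alpha,\beta)^{-1}$, is nondegenerate on these finite abelian groups. This is exactly why we quotient by $\mathscr R^{\geq 0}$ and $\mathscr R^{\leq 0}$: the left and right radicals of the bicharacter $\mathfrak q$ are these groups. A bicharacter on finite abelian groups with trivial left and right kernels gives a nondegenerate pairing of their group algebras, by character theory.

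Combining the two facts, a nonzero element, written in the PBW-by-group basis and decomposed by degree, pairs nontrivially with some element on the other side.
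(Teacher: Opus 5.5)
Your proposal is correct and matches the paper's proof in substance. Nondegeneracy uses the same three ingredients: the factorization $\overline{\tau^{AAY}}(p^{-}\overline{L_{\beta}},p^{+}\overline{K_{\alpha}})=\overline{\tau^{AAY}}(p^{-},p^{+})\overline{\tau^{AAY}}(\overline{L_{\beta}},\overline{K_{\alpha}})$, Dedekind's lemma on the finite group parts $\overline{\Gamma^{\pm}}$, and nondegeneracy of the pairing on $\mathcal{B}(W)\times\mathcal{B}(V)$. The one difference is descent. The paper obtains $\overline{\tau^{AAY}}$ from the Andruskiewitsch--Radford--Schneider existence theorem for skew-pairings of bosonized Nichols algebras, whereas you check directly that $\tau^{AAY}$ kills the generators of $C_{\mathfrak q}^{\geq 0}$ and $C_{\mathfrak q}^{\leq 0}$, using $N_\beta\beta,N\gamma\in\mathscr{R}^{\geq 0}\cap\mathscr{R}^{\leq 0}$ and the radical characterization for $E_\beta^{N_\beta}$. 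That route is equally valid. Drop the aside about $E_\beta^{N_\beta}$ being ``primitive up to group-likes'': it is not needed and is not justified.
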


\begin{proof}
We identify $\overline{U_{\mathfrak q}^{\geq 0}}\cong \mathcal{B}(V)\#\mathbb{C}[\overline{\Gamma^{+}}] $ and $\overline{U_{\mathfrak q}^{\leq 0}}
\cong \mathcal{B}(W)\#\mathbb{C}[\overline{\Gamma^{-}}]$ via the canonical isomorphisms \eqref{eq-canoiso-oflargborel-BVW}. Then the existence of $\overline{\tau^{AAY}}:\overline{U_{\mathfrak{q}}^{\leq 0}}\times\overline{U_{\mathfrak{q}}^{\geq 0}}\to\mathbb{C}$ follows from \cite[Theorem 3.7 (1)]{MR2732981}. Clearly, $\overline{\tau^{AAY}}( \overline{L_{\beta}}, \overline{K_{\alpha}})=\mathfrak{q}(\alpha,\beta)^{-1}$ for any $\alpha,\beta\in \mathbb{Z}^{\theta}$. It follows from the definitions of $\mathscr{R}^{\geq 0}$ and $\mathscr{R}^{\leq 0}$, together with Dedekind's lemma on the linear independence of group characters, that the restriction of $\overline{\tau^{AAY}}$ to $\mathbb{C}[\overline{\Gamma^{-}}]\times
\mathbb{C}[\overline{\Gamma^{+}}]$ is also nondegenerate. In particular, $|\overline{\Gamma^{-}}|=|\overline{\Gamma^{+}}|$.  Let $\{\delta_{1},\ldots,\delta_{t}\}$ be a set of representatives for the cosets of $\mathscr{R}^{\leq 0}$ in $\mathbb{Z}^{\theta}$, and let
$\{\partial_{1},\ldots,\partial_{t}\}$ be a set of representatives for the cosets of
$\mathscr{R}^{\geq 0}$ in $\mathbb{Z}^{\theta}$. Then
$$
\{\overline{L_{\delta_{1}}},\ldots,\overline{L_{\delta_t}}\}\text{ and\ }
\{\overline{K_{\partial_{1}}},\ldots,\overline{K_{\partial_{t}}}\}
$$
are $\mathbb{C}$-bases of $\mathbb{C}[\overline{\Gamma^{-}}]$ and $\mathbb{C}[\overline{\Gamma^{+}}]$, respectively. Consequently,
$$
\{
\overline{\tau^{AAY}}(-,\overline{K_{\partial_{1}}}),
\ldots,
\overline{\tau^{AAY}}(-,\overline{K_{\partial_{t}}})
\}
$$
forms a $\mathbb{C}$-basis of
$(\mathbb{C}[\overline{\Gamma^{-}}])^{*}$, since the restriction of $\overline{\tau^{AAY}}$ to $\mathbb{C}[\overline{\Gamma^{-}}]\times
\mathbb{C}[\overline{\Gamma^{+}}]$ is nondegenerate.

We now prove that the skew-pairing $\overline{\tau^{AAY}}:
\overline{U_{\mathfrak q}^{\leq 0}}\times\overline{U_{\mathfrak q}^{\geq 0}}\to \mathbb{C}$ is nondegenerate. 

Suppose that
$u^{\leq 0}\in \overline{U_{\mathfrak{q}}^{\leq 0}}$ satisfies
\begin{equation}\label{eq-uleq0tau-0foralriel5}
\overline{\tau^{AAY}}(u^{\leq 0},v^{\geq 0})=0
\end{equation}
for all $v^{\geq 0}\in \overline{U_{\mathfrak{q}}^{\geq 0}}$.
We prove that $u^{\leq 0}=0$. The proof for the opposite side is analogous.

By \cite[Theorem 3.11 (1)]{MR2732981},
\begin{equation}\label{eq-ARStaumultspli-0}
\overline{\tau^{AAY}}(p^{-}\overline{L_{\beta}},p^{+} \overline{K_{\alpha}}) =\overline{\tau^{AAY}}(p^{-} ,p^{+})\overline{\tau^{AAY}}( \overline{L_{\beta}}, \overline{K_{\alpha}})
\end{equation}
for any $p^{+}\in \mathcal{B}(V)$, $p^{-}\in \mathcal{B}(W)$, and $\alpha,\beta\in \mathbb{Z}^{\theta}$. Write
$$
u^{\leq 0}=\sum_{k=1}^{t}p_{k}^{-}\overline{L_{\delta_{k}}}\in \overline{U_{\mathfrak{q}}^{\leq 0}},
$$
where $p_{k}^{-}\in\mathcal{B}(W)$ for $1\leq k\leq t$. Then, by \eqref{eq-uleq0tau-0foralriel5} and \eqref{eq-ARStaumultspli-0}, one has
$$0=\sum_{k=1}^{t}\overline{\tau^{AAY}}(p_{k}^{-}\overline{L_{\delta_{k}}},p^{+}\overline{K_{\partial_{j}}})=\sum_{k=1}^{t}\overline{\tau^{AAY}}(p_{k}^{-},p^{+})\overline{\tau^{AAY}}( \overline{L_{\delta_{k}}}, \overline{K_{\partial_{j}}})$$
for any $p^{+}\in \mathcal{B}(V)$ and any $1\leq j\leq t$. Hence
$$ \sum_{k=1}^{t}\overline{\tau^{AAY}}(p_{k}^{-},p^{+}) \overline{L_{\delta_{k}}}=0$$
for any $p^{+}\in  \mathcal{B}(V)$. It follows that $\overline{\tau^{AAY}}(p_{k}^{-},p^{+}) =0$ for any $p^{+}\in \mathcal{B}(V)$ and any $1\leq k\leq t$. By \cite[Theorem 3.11 (2)(3)]{MR2732981}, the restriction of $\overline{\tau^{AAY}}$ to $\mathcal {B}(W)\times \mathcal{B}(V)$ is nondegenerate. Therefore, $p_{k}^{-}=0$ for any $1\leq k\leq t$. Hence, $u^{\leq 0}=0$.
\end{proof}

\begin{theorem}
Assume that the braiding matrix $\mathfrak{q}$ satisfies the assumption (FCC).\label{thm-largqungrp-CY}
\begin{itemize}
\item[(a)]The large quantum group $U_{\mathfrak{q}}$ is unimodular.
\item[(b)]The Frobenius extension $Z_{\mathfrak{q}} \subseteq U_{\mathfrak{q}}$ is symmetric if there exist $\alpha,\beta\in \mathbb{Z}^{\theta}$ such that \eqref{eq-S2inner} holds for all $j\in \mathbb{I}$.
\item[(c)]The large quantum group $U_{\mathfrak{q}}$ is Calabi-Yau if and only if $\mathfrak q$ is of Cartan type and there exist $\alpha,\beta\in \mathbb{Z}^{\theta}$ such that \eqref{eq-S2inner} holds for all $j\in\mathbb{I}$.
\end{itemize}
\end{theorem}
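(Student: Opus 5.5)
The strategy is to reduce everything to a finite-dimensional Hopf algebra via Theorem \ref{thm-modulefinitehopf-integral}. Rather than working with $Z_{\mathfrak{q}}$ directly, I would use the larger central Hopf subalgebra generated by $C_{\mathfrak{q}}^{\geq 0}\cup C_{\mathfrak{q}}^{\leq 0}$. This subalgebra is central in all of $U_{\mathfrak{q}}$. Indeed, $K_\alpha$ with $\alpha\in\mathscr{R}^{\geq 0}$ commutes with the $E_j$ and $F_j$, since $\mathfrak{q}(\alpha,\alpha_j)=1$, and with the $K$'s and $L$'s. The same argument applies to $L_\alpha$ with $\alpha\in\mathscr{R}^{\leq 0}$, and $Z_{\mathfrak{q}}$ is central by Theorem \ref{thm-largequanmodufi-aay1}. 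Call this subalgebra $C$; it contains $Z_{\mathfrak{q}}$, so $U_{\mathfrak{q}}$ is module-finite over $C$.

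For (a), by Theorem \ref{thm-modulefinitehopf-integral}(b) it suffices to show that $\overline{U}=U_{\mathfrak{q}}/C^{+}U_{\mathfrak{q}}$ is unimodular. By the triangular decomposition \eqref{eq-PBWbasis-Uqlargequ7} and \eqref{eq-canoiso-oflargborel-BVW}, $\overline{U}$ is the Drinfeld double of $\overline{U_{\mathfrak{q}}^{\leq 0}}$ and $\overline{U_{\mathfrak{q}}^{\geq 0}}$ with respect to $\overline{\tau^{AAY}}$. By Lemma \ref{lem-nondegenpair-ubarleq7bore} this pairing is nondegenerate, so $\overline{U}$ is isomorphic to the genuine Drinfeld double $D(\overline{U_{\mathfrak{q}}^{\geq 0}})$ of a finite-dimensional Hopf algebra, possibly up to (co)opposite conventions. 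Radford's theorem says Drinfeld doubles of finite-dimensional Hopf algebras are unimodular, which gives (a).

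The main obstacle is the identification of the quotient with the double. One must check that $\dim\overline{U}=\dim\overline{U_{\mathfrak{q}}^{\leq0}}\cdot\dim\overline{U_{\mathfrak{q}}^{\geq0}}$. Concretely, the multiplication map $\overline{U_{\mathfrak{q}}^{\leq0}}\otimes\overline{U_{\mathfrak{q}}^{\geq0}}\to\overline{U}$ should be bijective, which I would verify by reducing the PBW basis \eqref{eq-PBWbasis-Uqlargequ7} modulo $C^{+}$. Here the root vectors $E_\beta^{N_\beta}$ and $F_\beta^{N_\beta}$ with $\beta\in\mathcal{O}_{+}^{\mathfrak{q}}$ vanish, and the group part becomes $\overline{\Gamma^{-}}\times\overline{\Gamma^{+}}$.

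For (b), by (a) and Theorem \ref{thm-modulefinitehopf-integral}(b), $U_{\mathfrak{q}}$ is unimodular. By Proposition \ref{prop-UqS2-inner}, \eqref{eq-S2inner} makes $S^{2}$ inner. Corollary \ref{cor-symmfrobenexif-unS2inn}, applied to $C=Z_{\mathfrak{q}}$, then gives symmetry of $Z_{\mathfrak{q}}\subseteq U_{\mathfrak{q}}$.

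For (c), I would use Theorem \ref{thm-modulefinitehopf-integral}(c), or Remark \ref{rmk-noethCYHopf-iffasreinns-un}: $U_{\mathfrak{q}}$ is Calabi-Yau iff it is AS-regular, unimodular, and $S^2$ is inner. Unimodularity holds by (a). AS-regularity is equivalent to $\mathfrak{q}$ being of Cartan type by Corollary \ref{cor-ASgorenlargequan8-eregucart6}(e). In the Cartan case, $S^2$ is inner iff \eqref{eq-S2inner} holds, by Proposition \ref{prop-UqS2-inner}. Combining these three facts gives both directions.
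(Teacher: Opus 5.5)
Your proposal is correct and follows the paper's proof essentially step for step. For (a), you enlarge $Z_{\mathfrak{q}}$ to the central Hopf subalgebra generated by $C_{\mathfrak{q}}^{\geq 0}\cup C_{\mathfrak{q}}^{\leq 0}$, identify the quotient with the Drinfeld double via the nondegenerate pairing of Lemma~\ref{lem-nondegenpair-ubarleq7bore}, and invoke unimodularity of doubles. For (b) and (c), you combine (a) with Proposition~\ref{prop-UqS2-inner}, Theorem~\ref{thm-modulefinitehopf-integral} and Corollary~\ref{cor-ASgorenlargequan8-eregucart6}(e), exactly as the paper does. Your explicit dimension check via the triangular decomposition, and your explicit use of the Cartan-type converse in Proposition~\ref{prop-UqS2-inner} for the ``only if'' direction of (c), spell out two steps the paper leaves implicit.
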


\begin{proof}
(a) Let $C_{\mathfrak{q}}$ be the subalgebra of $U_{\mathfrak{q}}$ generated by
$C_{\mathfrak q}^{\leq 0}$ and $C_{\mathfrak q}^{\geq 0}$. Then $C_{\mathfrak{q}}$ is a central Hopf subalgebra of $U_{\mathfrak{q}}$ containing $Z_{\mathfrak{q}}$. Set
$$\overline{U_{\mathfrak q}}=U_{\mathfrak q}/(C_{\mathfrak q})^{+}U_{\mathfrak{q}}.$$
By Lemma \ref{lem-nondegenpair-ubarleq7bore}, $\overline{U_{\mathfrak q}}$ coincides with Drinfeld's quantum double of $\overline{U_{\mathfrak{q}}^{\leq 0}}$ and $\overline{U_{\mathfrak{q}}^{\geq 0}}$ with respect to the nondegenerate Hopf skew-pairing $\overline{\tau^{AAY}}:\overline{U_{\mathfrak{q}}^{\leq 0}}\times \overline{U_{\mathfrak{q}}^{\geq 0}}\to\mathbb{C}$. Thus $\overline{U_{\mathfrak q}}$ is a finite-dimensional unimodular Hopf algebra by \cite[Theorem 10.3.12]{MR1243637}. By Theorem \ref{thm-modulefinitehopf-integral} (b), the large quantum group $U_{\mathfrak{q}}$ is unimodular.

(b) This follows from (a), Theorem \ref{thm-modulefinitehopf-integral} (c) and Proposition \ref{prop-UqS2-inner}.

(c) This is a direct consequence of (b), Theorem \ref{thm-modulefinitehopf-integral} (c) and Corollary \ref{cor-ASgorenlargequan8-eregucart6} (e).
\end{proof}
\begin{remark}
By the proof of Theorem \ref{thm-largqungrp-CY} (a), $\overline{U_{\mathfrak{q}}}$ is the Drinfeld quantum double of $\overline{U_{\mathfrak{q}}^{\leq 0}}$ and $\overline{U_{\mathfrak{q}}^{\geq 0}}$ with respect to the nondegenerate Hopf skew-pairing $\overline{\tau^{AAY}}$. Then $\overline{U_{\mathfrak{q}}}$ is a symmetric Frobenius algebra, as it is unimodular and admits a quasi-triangular structure. However, as shown in Example \ref{eg-finitegen-maynotS2iner}, $U_{\mathfrak{q}}$ need not be a symmetric Frobenius extension of $C_{\mathfrak{q}}$. This shows that there exist an affine Hopf algebra $H$ and a central Hopf subalgebra $C$ of $H$ such that $H$ is finitely generated as a module over $C$ and $H/C^{+}H$ is a symmetric Frobenius algebra, while $H$ is not a symmetric Frobenius extension of $C$.
\end{remark}

\begin{remark}
The results of \cite[\S 2.12]{MR3736568} can be used to compute the integral of $\overline{U_{\mathfrak q}^{\geq 0}}$ and, consequently, the homological integral and the Nakayama automorphism of the large quantized Borel algebra $U_{\mathfrak q}^{\geq 0}$. We leave the details to the interested reader.
\end{remark}

By Theorem \ref{thm-largqungrp-CY} (c) and Corollary \ref{cor-PHRmulparaquangro-regulardoman}, we obtain the following.

\begin{corollary}\label{cor-mulQGrp-whenCY}
Assume that the braiding matrix $\mathfrak{q}$ satisfies the assumption (FCC) and is genuinely of finite Cartan type with Cartan matrix $C=(c_{ij})_{\theta\times\theta}$. Let $ \mathfrak{g}_{C}$ be the complex semisimple Lie algebra associated with 
 $C$. Then the multiparameter
quantized enveloping algebra $\mathcal{U}_{\mathfrak{q}}(\mathfrak{g}_{C})$ is Calabi-Yau if and only if there exist $\alpha,\beta\in \mathbb{Z}^{\theta}$ such that \eqref{eq-S2inner} holds for all $j\in\mathbb{I}$.
\end{corollary}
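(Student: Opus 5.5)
Corollary \ref{cor-mulQGrp-whenCY} is a direct combination of earlier results, and I would prove it by carrying them over to $\mathcal{U}_{\mathfrak{q}}(\mathfrak{g}_{C})$. The first step is to identify the algebra in question with a large quantum group. By Example \ref{eg-multiparamqgrp-AAYlarge}, the hypotheses (FCC) and ``genuinely of finite Cartan type with Cartan matrix $C$'' give an isomorphism of Hopf algebras $U_{\mathfrak{q}}\cong \mathcal{U}_{\mathfrak{q}}(\mathfrak{g}_{C})$. The Calabi-Yau property is intrinsic to the algebra. Hence it suffices to decide when $U_{\mathfrak{q}}$ is Calabi-Yau.

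The second step is to apply Theorem \ref{thm-largqungrp-CY} (c). It says that $U_{\mathfrak{q}}$ is Calabi-Yau if and only if $\mathfrak{q}$ is of Cartan type and there exist $\alpha,\beta\in\mathbb{Z}^{\theta}$ satisfying \eqref{eq-S2inner} for all $j\in\mathbb{I}$. Here $\mathfrak{q}$ is genuinely of finite Cartan type, so it is in particular of Cartan type: the relations $q_{ij}q_{ji}=q_{ii}^{c_{ij}}$ with $0\le -c_{ij}<\operatorname{ord}(q_{ii})$ hold by definition. The Cartan-type clause is therefore automatic, and the criterion reduces to the existence of $\alpha,\beta$ satisfying \eqref{eq-S2inner}.

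For the reader, I would also note that Corollary \ref{cor-PHRmulparaquangro-regulardoman} independently gives AS-regularity. Combined with unimodularity (Theorem \ref{thm-largqungrp-CY} (a)) and Proposition \ref{prop-UqS2-inner}, this recovers the same equivalence through Remark \ref{rmk-noethCYHopf-iffasreinns-un}: for a noetherian Hopf algebra, Calabi-Yau is equivalent to unimodular, AS-regular, and $S^{2}$ inner.

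The only point requiring care is checking that ``genuinely of finite Cartan type'' implies ``of Cartan type'' in the sense used in Theorem \ref{thm-largqungrp-CY}, i.e., that every vertex is a Cartan vertex with $c_{ij}^{\mathfrak{q}}=c_{ij}$. I would settle this by citing \cite[Lemma 15.1.12]{MR4164719} together with the definition on \cite[p.522]{MR4164719}. I do not expect any genuine obstacle.
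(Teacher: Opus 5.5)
Your proposal is correct and matches the paper's argument. The paper likewise identifies $U_{\mathfrak{q}}\cong\mathcal{U}_{\mathfrak{q}}(\mathfrak{g}_{C})$ via Example~\ref{eg-multiparamqgrp-AAYlarge} (as packaged in Corollary~\ref{cor-PHRmulparaquangro-regulardoman}), applies Theorem~\ref{thm-largqungrp-CY}(c), and treats the Cartan-type clause as automatic; your alternative check through AS-regularity, unimodularity and Proposition~\ref{prop-UqS2-inner} is just the unpacked proof of Theorem~\ref{thm-largqungrp-CY}(c) itself.
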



\section{Unimodularity of module-finite Hopf algebras via Yadav's unimodular module categories}\label{sec-yadaunimodular-cat}
Throughout this section, $\mathbbm{k}$ is an algebraically closed field.
\subsection{Unimodular tensor categories and module categories}
In this subsection, we recall the notions of unimodular tensor categories \cite{MR2097289,MR3242743} and unimodular module categories in the sense of Yadav \cite{MR4632574}, and fix notation. For basic terminology and further background on tensor and module categories, see \cite{MR3242743}.


Let $\mathcal{C}$ be a finite tensor category, and let $P(X)$ denote the projective cover of an object $X$ in $\mathcal{C}$. By \cite[Lemma 6.4.1]{MR3242743}, there exists an invertible object $X_{\rho}$ in $\mathcal{C}$, unique up to isomorphism, such that $P(X_{\rho})\cong P({\bf 1})^{*}$. Here, $P({\bf 1})^{*}$ denotes the left dual of the projective cover of the unit object ${\bf 1}$. The invertible object $X_{\rho}$ is called the \textit{distinguished invertible object} of $\mathcal{C}$. A finite tensor category $\mathcal{C}$ is \textit{unimodular} if $X_{\rho} \cong {\bf 1}$. 

\begin{remark}
For finite multi-tensor categories, the distinguished invertible object may be defined via the \textit{Nakayama functor}, which in turn permits one to define the unimodularity for finite multi-tensor categories. 
For further details, see \cite[\S 2.3.4]{MR4632574}.
\end{remark}

All fusion tensor categories and all factorizable finite tensor categories are unimodular; see \cite[Corollary 6.4]{MR2097289} and \cite[Proposition 8.10.10]{MR3242743}. It is well known that semisimple MTCs give rise to three-dimensional TQFTs \cite{MR1091619,MR1292673}. By \cite[Theorem 1.1]{MR3996323}, non-semisimple MTCs in the sense of Kerler-Lyubashenko \cite{MR1862634} can be equivalently defined as factorizable ribbon finite tensor categories. Moreover, by \cite[Theorem 0.4.2]{MR1862634}, any non-semisimple MTC $\mathcal{C}$ admits an extended TQFT whose circle category is $\mathcal{C}$.


When $\mathcal{C}=H\text{-mod}$ is the category of finite-dimensional modules over a finite-dimensional Hopf algebra $H$, the distinguished invertible object of $\mathcal{C}$ is precisely the $1$-dimensional $H$-module induced by the distinguished group-like element of the dual Hopf algebra $H^{\circ}$ \cite[Proposition 6.5.5]{MR3242743}. In this case, the tensor category $H\text{-mod}$ is unimodular if and only if the Hopf algebra $H$ is unimodular.

\begin{remark}\label{rmk-unimocatiff-revuni8}
Let $\mathcal{C}^{\text{rev}}$ denote the \textit{reverse tensor category} of $\mathcal{C}$, i.e., the opposite tensor category defined in \cite[Definition 2.1.5]{MR3242743}. The underlying category of $\mathcal{C}^{\text{rev}}$ is the same as that of $\mathcal{C}$, while its tensor product is reversed: for any objects $X,Y$ in $\mathcal{C}$, one defines $X\otimes^{\text{rev}}Y\coloneqq Y\otimes X$. For example, let $H$ be a Hopf algebra with bijective antipode $S$, and let $V$ be a finite-dimensional right $H$-module. Define a left $H$-module $F(V)=V$ by $h\cdot v=vS(h)$ for any $h\in H$ and $v\in V$. Then $F:\text{mod-}H\to H\text{-mod}$ induces a tensor equivalence $\text{mod-}H\cong (H\text{-mod})^{\text{rev}}$. For any finite tensor category $\mathcal{C}$, $\mathcal{C}$ is unimodular if and only if $\mathcal{C}^{\text{rev}}$ is unimodular. 
\end{remark}

Let $\mathcal{M}$ and $\mathcal{N}$ be finite left $\mathcal{C}$-module categories. The category of right exact $\mathcal{C}$-module functors from $\mathcal{M}$ to $\mathcal{N}$ is denoted by $\operatorname{Rex}_{\mathcal{C}}(\mathcal{M},\mathcal{N})$. If $\mathcal{M}$ is exact, then any $\mathcal{C}$-module functor from $\mathcal{M}$ to $\mathcal{N}$ is exact \cite[Proposition 7.6.9]{MR3242743}. Moreover, by \cite[Proposition 7.11.6]{MR3242743},  $\operatorname{Rex}_{\mathcal{C}}(\mathcal{M},\mathcal{N})$ is a finite $\mathbbm{k}$-linear abelian category. When $\mathcal{M}=\mathcal{N}$, we abbreviate $\operatorname{Rex}_{\mathcal{C}}(\mathcal{M},\mathcal{M})$ as $\operatorname{Rex}_{\mathcal{C}}(\mathcal{M})$. The category $\operatorname{Rex}_{\mathcal{C}}(\mathcal{M})$ has a canonical monoidal structure, with tensor product given by the functor composition, and unit object given by the identity functor. If $\mathcal{M}$ is an indecomposable exact $\mathcal{C}$-module category, $\operatorname{Rex}_{\mathcal{C}}(\mathcal{M})$ is a finite tensor category. In this case, the left/right dual of any module functor $F:\mathcal{M}\to\mathcal{M}$ is given by its left/right adjoint functor, see \cite[\S 7.12]{MR3242743} for details.

\begin{example}
[\cite{MR2331768}] Let $H$ be a finite-dimensional Hopf algebra.\label{eg-AMcomputeRexcmalg} For any finite-dimensional left $H$-comodule algebra $B$, the category $B\text{-mod}$ naturally admits the structure of a left module category over $H\text{-mod}$, induced by the coaction $\delta_{B}: B\to H\otimes B, b\mapsto \sum_{(b)}b_{(-1)}\otimes b_{(0)}$. Conversely, by \cite[Proposition 1.19]{MR2331768}, any finite left module category over $H\text{-mod}$ is equivalent to the category of finite-dimensional modules over a finite-dimensional left $H$-comodule algebra. In what follows, we fix two finite-dimensional left $H$-comodule algebras $A$ and $B$.

Let $\mathcal{C}=H\text{-mod}$. Then $A\text{-mod}$ and $B\text{-mod}$ are finite left $\mathcal{C}$-module categories. Let $\text{Bimod}^{H}(A,B)$ denote the category of $A$-$B$ bimodules in  $H\text{-comod}$, the category of finite-dimensional left $H$-comodules. The objects of $\text{Bimod}^{H}(A,B)$ are precisely the finite-dimensional equivariant $(A,B)$-bimodules considered in \cite[Definition 1.22]{MR2331768}. If $A=B$, then $(\text{Bimod}^{H}(A,A),\otimes_{A})$ is a $\mathbbm{k}$-linear monoidal category by \cite[Exercise 7.8.28]{MR3242743}.

The category $\text{Rex}_{\mathcal{C}}(B\text{-mod},A\text{-mod})$ was explicitly described by Andruskiewitsch-Mombelli in \cite{MR2331768}. For any $P\in\text{Bimod}^{H}(A,B)$, the functor $ P\otimes_{B}- $, with the natural isomorphism 
$$s_{X,V}:P\otimes_{B}(X\otimes V)\to  X\otimes (P\otimes_{B}V), p\otimes_{B}\otimes x\otimes v\mapsto \sum_{(p)}p_{(-1)}x\otimes p_{(0)}\otimes_{B} v,$$
for finite-dimensional $H$-modules $X$ and $B$-modules $V$, defines a right exact $\mathcal{C}$-module functor from $B\text{-mod}$ to $A\text{-mod}$. Hence, there is a $\mathbbm{k}$-linear functor
\begin{equation}\label{eq-AMfunceq-RexBA}
\text{Bimod}^{H}(A,B)\to \text{Rex}_{\mathcal{C}}(B\text{-mod},A\text{-mod}),P\mapsto (P\otimes_{B}-,s).
\end{equation}
By \cite[Proposition 1.23]{MR2331768}, the functor \eqref{eq-AMfunceq-RexBA} is an equivalence of categories. Moreover, \eqref{eq-AMfunceq-RexBA} clearly gives a $\mathbbm{k}$-linear monoidal equivalence
\begin{equation}\label{eq-AMfunceq-RexAssc}
\text{Bimod}^{H}(A,A)\cong\text{Rex}_{\mathcal{C}}(A\text{-mod}).
\end{equation}
\end{example}

As noted in the introduction, Yadav \cite{MR4632574} introduced the notion of a unimodular module category for exact module categories over finite tensor categories, providing a functorial construction of Frobenius algebras in the Drinfeld center of $ \mathcal{C} $. 

For our purposes, we restrict the definition of unimodularity to indecomposable exact module categories. The definition for general exact module categories is analogous, except that $\operatorname{Rex}_{\mathcal{C}}(\mathcal{M})$ is a multi-tensor category in this case. Unimodularity for indecomposable exact module categories is sufficient for the subsequent results.

\begin{definition}
[\cite{MR4632574}]An indecomposable exact left $\mathcal{C}$-module category $\mathcal{M}$ is called \textit{unimodular} if the finite tensor category $\operatorname{Rex}_{\mathcal{C}}(\mathcal{M})$ is unimodular.\label{def-unimmodu-excat}
\end{definition}

\begin{remark}
See \cite[Definition 3.2]{MR4632574} for an equivalent definition of unimodularity for exact module categories. There exists a non-unimodular finite-dimensional Hopf algebra $H$ such that the finite tensor category $H\text{-mod}$ admits a unimodular module category \cite[Example 3.6 (iii)]{MR4632574}. On the other hand, there exists a finite-dimensional Hopf algebra $H$ (e.g., Taft algebras \cite[Theorem 4.45]{MR4632574}) such that the finite tensor category $H\text{-mod}$ admits no unimodular module category.
\end{remark}

Let $H$ be a finite-dimensional Hopf algebra. A left $H$-comodule algebra $B$ is called \textit{exact} (resp., \textit{indecomposable}) if $B\text{-mod}$ is exact (resp., indecomposable) as a module category over $H\text{-mod}$. For example, the trivial $H$-comodule algebra $\mathbbm{k}$ is exact and indecomposable.
\begin{definition}
[\cite{MR4632574}] Let $H$ be a finite-dimensional Hopf algebra and let $B$ be a finite-dimensional, indecomposable, exact left $H$-comodule algebra. The comodule algebra $B$ is called \textit{unimodular} if $B\text{-mod}$ is unimodular as a module category over $H\text{-mod}$.
\end{definition}

Let $H$ be a finite-dimensional Hopf algebra, and let $B$ be a finite-dimensional left $H$-comodule algebra. A two-sided ideal $I$ of $B$ is called an \textit{$H$-ideal} if it is an $H$-subcomodule of $B$. A left $H$-comodule algebra $B$ is called \textit{$H$-simple} if it has no nontrivial $H$-ideals. By \cite[Propositions 1.18 and 1.20]{MR2331768}, every $H$-simple comodule algebra is exact and indecomposable. Conversely, for any indecomposable exact module category $\mathcal{M}$ over $H\text{-mod}$, there exists a finite-dimensional $H$-simple comodule algebra $B$ such that $\mathcal{M}$ is equivalent, as a module category, to $B\text{-mod}$.

\subsection{Unimodularity of bi-Galois objects}
Let $H$ and $L$ be Hopf algebras, and let $A$ be an $H$-$L$-bicomodule algebra. The bicomodule algebra $A$ is called an \textit{$H$-$L$-bi-Galois object} \cite{MR1408508,MR2075600} if it is both a left $H$-Galois object and a right $L$-Galois object. By \cite[Corollary 3.2.4]{MR2075600}, $H$ and $L$ are \textit{monoidally Morita-Takeuchi equivalent} if and only if there exists an $H$-$L$-bi-Galois object. Thus, bi-Galois objects provide an approach to constructing monoidal equivalences between comodule categories over Hopf algebras. In \cite[Theorem 3.5]{MR1408508}, Schauenburg proved that for any right $L$-Galois object $A$, there exists a Hopf algebra $H$, unique up to isomorphism, such that $A$ is an $H$-$L$-bi-Galois object. The following observation is well known (see, for example, \cite[\S 3.1]{MR2678630}).

\begin{lemma}
Let $H$ be a finite-dimensional Hopf algebra and let $A$ be a left $H$-Galois object. Then $A$ is $H$-simple as a left $H$-comodule algebra. In particular, $A\text{-mod}$ is an indecomposable exact left module category over $H\text{-mod}$.\label{lem-galoobj-simindeomex}
\end{lemma}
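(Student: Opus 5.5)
The plan is to show that the only $H$-ideals of $A$ are $0$ and $A$, using the Galois isomorphism to turn a nonzero $H$-ideal into a Hopf module and then applying the fundamental theorem of Hopf modules. The indecomposability and exactness will then follow from the cited results of Andruskiewitsch--Mombelli.

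Let $I\subseteq A$ be a nonzero $H$-ideal, i.e.\ a two-sided ideal that is also a left $H$-subcomodule. First I would observe that the Galois map
\[
\beta\colon A\otimes A\to H\otimes A,\quad a\otimes b\mapsto \sum_{(a)} a_{(-1)}\otimes a_{(0)}b
\]
(here ${^{coH}\!A}=\mathbbm{k}$, so $\otimes_{{^{coH}\!A}}=\otimes$) is a bijection. It is compatible with the left $H$-coaction on the first tensor factor and with right multiplication by $A$ on the second factor.

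Second, I would restrict $\beta$ to $I\otimes A$. Since $I$ is a subcomodule and a right ideal, $\beta(I\otimes A)\subseteq H\otimes I$. Because $\beta$ is injective, a dimension count gives
\[
\dim I\cdot\dim A=\dim\beta(I\otimes A)\le \dim H\cdot\dim I .
\]
This yields $\dim A\le\dim H$. I would use this only as a sanity check, not as the core of the argument.

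The key step is to view $I$ as a Hopf module. For this I would use the right $A$-module structure together with the left $H$-coaction, or better, interpret $A$ as an object of the category of relative Hopf modules ${}^{H}_{A}\mathcal{M}$. For a left $H$-Galois object $A$ over a finite-dimensional $H$, faithful flatness holds (it is automatic over the base field $\mathbbm{k}$), so Schneider's structure theorem gives an equivalence
\[
{}_{A}^{H}\mathcal{M}\simeq {}_{\mathbbm{k}}\mathcal{M},\quad M\mapsto {^{coH}\!M},
\]
with inverse $V\mapsto A\otimes V$. Now $I$ is a left $A$-submodule and an $H$-subcomodule of $A$, so it is a sub-object of $A$ in ${}^{H}_{A}\mathcal{M}$. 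Under the equivalence, $A$ corresponds to ${^{coH}\!A}=\mathbbm{k}$, which is one-dimensional. Hence its only sub-objects are $0$ and $A$, and therefore $I=A$. This step is the main point, and the one thing I need to check carefully is the Galois condition required for the structure theorem: a left Galois object requires surjectivity of $\beta$, and the relative Hopf module category must use the matching side conventions.

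Finally, \cite[Propositions 1.18 and 1.20]{MR2331768} show that $H$-simple comodule algebras give exact, indecomposable module categories, which yields the second claim.
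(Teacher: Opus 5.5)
Your proof is correct, and it is essentially the paper's route: the paper gives no argument of its own, calling the lemma well known and citing \cite[\S 3.1]{MR2678630}. It then passes from $H$-simplicity to an indecomposable exact module category via \cite[Propositions 1.18 and 1.20]{MR2331768}, exactly as in your last step, and your Hopf-module argument is the standard one behind that citation.

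The side-convention point you flag is harmless. For the paper's map $\beta(a\otimes b)=\sum a_{(-1)}\otimes a_{(0)}b$, the matching category is precisely ${}^{H}_{A}\mathcal{M}$, with no hypothesis on the antipode. Write $\sum\tau^{1}(h)\otimes\tau^{2}(h)=\beta^{-1}(h\otimes 1)$. Then the map $m\mapsto\sum\tau^{1}(m_{(-1)})\otimes\tau^{2}(m_{(-1)})m_{(0)}$ lands in $A\otimes{}^{coH}M$ and is inverse to multiplication $A\otimes{}^{coH}M\to M$. In any case $S$ is bijective because $\dim H<\infty$, so every side convention works.

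Note also that your argument shows more than is needed: $A$ has no nonzero proper left ideals stable under the coaction.
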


For the remainder of this subsection, assume that $H$ and $L$ are finite-dimensional Hopf algebras. Consequently, any left $H$-Galois object $A$ is finite-dimensional \cite[Theorem 8.3.1]{MR1243637}; in particular, $\dim_{\mathbbm{k}}A=\dim_{\mathbbm{k}}H$. By Lemma \ref{lem-galoobj-simindeomex}, the left $H$-Galois object $A$ is an indecomposable exact comodule algebra. Therefore, we may consider the unimodularity of $A$ as a left $H$-comodule algebra. The unimodularity of $H$-$L$-bi-Galois objects is closely related to that of the Hopf algebra $L$.

\begin{proposition}
Let $H$ and $L$ be finite-dimensional Hopf algebras admitting an $H$-$L$-bi-Galois object. Then the following conditions are equivalent.\label{prop-bigaloisunimoduiff}
\begin{itemize}
\item[(i)] $L$ is a unimodular Hopf algebra;
\item[(ii)] Every $H$-$L$-bi-Galois object is unimodular as an $H$-comodule algebra;
\item[(iii)] There exists an $H$-$L$-bi-Galois object that is unimodular as an $H$-comodule algebra.
\end{itemize}
In particular, if $H$ is a finite-dimensional unimodular Hopf algebra, then every $H$-$H$-bi-Galois object is unimodular as a left $H$-comodule algebra.
\end{proposition}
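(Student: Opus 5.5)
The plan is to compute the dual tensor category $\operatorname{Rex}_{\mathcal{C}}(A\text{-mod})$, with $\mathcal{C}=H\text{-mod}$, for an $H$-$L$-bi-Galois object $A$, and to show that it is tensor equivalent to $(L\text{-mod})^{\text{rev}}$ or to $L\text{-mod}$ itself, possibly up to a reversal. Once this is done, Remark \ref{rmk-unimocatiff-revuni8} shows that $\operatorname{Rex}_{\mathcal{C}}(A\text{-mod})$ is unimodular if and only if $L\text{-mod}$ is unimodular. Since $H\text{-mod}$ and $L\text{-mod}$ of a finite-dimensional Hopf algebra is unimodular exactly when the Hopf algebra is, this gives (i)$\Leftrightarrow$(ii) and (i)$\Leftrightarrow$(iii) at the same time. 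The reason is that the criterion does not depend on the chosen bi-Galois object: each bi-Galois object is indecomposable and exact by Lemma \ref{lem-galoobj-simindeomex}, and its dual category is always governed by $L$. The final sentence of the proposition is then the special case $L=H$, which is legitimate because $H$ itself is an $H$-$H$-bi-Galois object.

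For the computation I will use Example \ref{eg-AMcomputeRexcmalg}. It gives a monoidal equivalence $\operatorname{Rex}_{\mathcal{C}}(A\text{-mod})\cong \text{Bimod}^{H}(A,A)$, the category of $H$-equivariant $A$-$A$-bimodules. The key step is to identify $\text{Bimod}^{H}(A,A)$ with the category of $L$-modules. Since $A$ is a left $H$-Galois object, the Galois map $A\otimes A\to H\otimes A$ is a bijection, and the fundamental theorem for Hopf modules (a Schneider-type structure theorem) gives an equivalence
$$\text{Bimod}^{H}(A,A)\simeq {}^{H}_{A}\mathcal{M}_{A}\simeq \text{mod-}{}^{coH}(A\otimes A^{op})$$
or, more concretely, $M\mapsto {}^{coH}M$. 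Because $A$ is also a right $L$-Galois object, I then expect the coinvariants ${}^{coH}(A\otimes A^{op})$, under the diagonal coaction, to be isomorphic to $L$ or $L^{op}$. This is the standard ``$L(A,H)$'' construction of Schauenburg, in which $L\cong (A^{op}\otimes A)^{coH}$ [MR1408508]. Concretely, a bimodule $M$ corresponds to the $L$-module ${}^{coH}M$, with $\ell\in L$ acting by $m\mapsto \sum \ell^{[1]}m\ell^{[2]}$, where $\sum \ell^{[1]}\otimes\ell^{[2]}$ is the inverse of the right Galois map applied to $1\otimes\ell$. This action is the Ulbrich-Miyashita-type action of \S\ref{subsec-UMact-stefan}.

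The step I expect to be the main obstacle is checking that this equivalence is monoidal, or anti-monoidal, for $\otimes_{A}$ on one side and $\otimes_{\mathbbm{k}}$ with the coproduct of $L$ on the other. Since I only need this up to reversal, I will prove that ${}^{coH}(M\otimes_{A}N)\cong {}^{coH}M\otimes {}^{coH}N$. The argument uses $M\cong A\otimes {}^{coH}M$ together with the multiplicativity property \eqref{eq-lGeu-botbarims7} of $\tau$. This is the bi-Galois analogue of the classical identification $\text{Bimod}^{H}(H,H)\simeq H\text{-mod}$, and it is essentially the known statement that the dual of $\mathcal{C}$ with respect to $A\text{-mod}$ is $\text{Rep}(L)$ up to reversal [MR2331768], [MR2678630]. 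As a fallback I can cite it from there directly.
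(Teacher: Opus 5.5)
Your proposal is correct and follows essentially the same route as the paper: combine the Andruskiewitsch--Mombelli monoidal equivalence $\operatorname{Rex}_{\mathcal{C}}(A\text{-mod})\simeq \text{Bimod}^{H}(A,A)$ with Schauenburg's identification $\text{Bimod}^{H}(A,A)\simeq \text{mod-}L\simeq (L\text{-mod})^{\text{rev}}$, and then apply Remark \ref{rmk-unimocatiff-revuni8}. The only difference is that the paper simply cites the proof of Schauenburg's Theorem 3.3.1 on bi-Galois objects for the middle equivalence, whereas you sketch it directly via $H$-coinvariants and the Miyashita--Ulbrich-type $L$-action, keeping that citation as your fallback.
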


\begin{proof}
Let $A$ be an $H$-$L$-bi-Galois object and set $\mathcal{C}=H\text{-mod}$.  By Lemma \ref{lem-galoobj-simindeomex}, $A\text{-mod}$ is an indecomposable exact left module category. We claim that
\begin{equation}\label{eq-rexAAtens-Lmodrev}
\text{Rex}_{\mathcal{C}}(A\text{-mod} )\cong (L\text{-mod})^{\text{rev}}
\end{equation}
as tensor categories. By the proof of \cite[Theorem 3.3.1]{MR2075600}, there is a tensor equivalence $\text{Bimod}^{H}(A,A)\cong \text{mod-}L\cong (L\text{-mod})^{\text{rev}}$. Hence, \eqref{eq-rexAAtens-Lmodrev} follows from \eqref{eq-AMfunceq-RexAssc}.

Now, (i) $\Rightarrow$ (ii) and (iii) $\Rightarrow$ (i) follow immediately from Remark \ref{rmk-unimocatiff-revuni8} and \eqref{eq-rexAAtens-Lmodrev}. The implication (ii) $\Rightarrow$ (iii) is immediate.
\end{proof}

\begin{remark}
By \cite[Definition 7.12.17]{MR3242743}, the tensor equivalence \eqref{eq-rexAAtens-Lmodrev} implies that, under the assumptions of Proposition \ref{prop-bigaloisunimoduiff}, the categories $L\text{-mod}$ and $H\text{-mod}$ are categorically Morita equivalent. In the case $L=H$, the tensor equivalence \eqref{eq-rexAAtens-Lmodrev} follows directly from \cite[Lemma 4.1 (1)]{MR3209810} and \cite[Proposition 4.2 (iv)]{MR2677836}.
\end{remark}

\subsection{Unimodularity of fiber algebras}\label{subsec-fiberalgg-mfhopf}
In this subsection, we fix an affine Hopf algebra $H$ over an algebraically closed field $\mathbbm{k}$ and assume that $H$ is finitely generated as a module over a central Hopf subalgebra $C$. Since the antipode of $H$ is bijective, $H\text{-mod}$ is a tensor category under the tensor product. For any finite-dimensional $H$-module $V$, both the left dual $V^{*}$ and the right dual $^{*}{V}$ of $V$ in $H\text{-mod}$ are given by the ordinary dual space of $V$, equipped with the respective $H$-actions 
$$(h\xi)(v)=\xi(S(h)v) \text{ and }(h\xi)(v)=\xi(S^{-1}(h)v)$$ 
for all $h\in H$, $v\in V$, and $\xi$ in the dual space of $V$.

Let $G(C^{\circ})$ denote the group of group-like elements of the dual Hopf algebra $C^{\circ}$. There is a canonical bijection $G(C^{\circ})\cong \operatorname{maxSpec}C$ given by $\chi\mapsto \mathfrak{m}_{\chi}\coloneqq \text{Ker}\chi$. For $\chi,\psi\in G(C^{\circ})$, their convolution product 
is denoted by $\chi\ast \psi$. Write $\mathfrak{m}_{\chi\ast \psi}$ as $\mathfrak{m}_{\chi}\ast\mathfrak{m}_{\psi}$. It follows that $\operatorname{maxSpec}C$ is a group under the convolution product. If, moreover, $C$ is smooth (e.g., when $\mathbbm{k}$ is of characteristic zero), then $\operatorname{maxSpec}C$ is an affine algebraic group over $\mathbbm{k}$.  The identity element of $ \operatorname{maxSpec}C$ with respect to the convolution product is $\mathfrak{m}_{\ovep}$, where $\ovep=\varepsilon|_{C}:C\to\mathbbm{k}$. For any $\chi\in G(C^{\circ})$, the inverse of $\chi$ with respect to the convolution product is $\chi S$. Thus, the inverse of $\mathfrak{m}\in \operatorname{maxSpec}C$ is $\mathfrak{m}_{\chi S}$. If $V$ is a finite-dimensional $H/\mathfrak{m}_{\chi}H$-module, then both $V^{*}$ and $^{*}V$ are $H/\mathfrak{m}_{\chi S}H$-modules. For any $\mathfrak{m}\in \operatorname{maxSpec}C$, the finite-dimensional algebra $H/\mathfrak{m}H$ is called the \textit{fiber algebra} of $H$ at $\mathfrak{m}$. Hence, one obtains a family of finite-dimensional $\mathbbm{k}$-algebras parameterized by $\operatorname{maxSpec}C$:
$$
\mathcal{B}_{H}=\{H/\mathfrak{m}H\mid  \mathfrak{m}\in \operatorname{maxSpec}C\}.
$$
The family of fiber algebras $ \mathcal{B}_{H}$ plays a central role in the study of irreducible representations of $H$, as it is a standard fact that the set of isomorphism classes of irreducible $H$-modules is the disjoint union of the sets of isomorphism classes of irreducible modules over the fiber algebras in $ \mathcal{B}_{H}$. For related background, see, for example, \cite{MR1898492,MR1676211,MR1288995,MR1989650,MR4891381,HMQW2026chev,HQWZ2026chev}. In \cite[Proposition 3.1 (2)]{HMQW2026chev}, the authors established that $H$ is a finitely generated projective $C$-module of constant rank $r$. Consequently, each fiber algebra $H/\mathfrak{m}H$ in $\mathcal{B}_{H}$ satisfies $\dim_{\mathbbm{k}} H/\mathfrak{m}H = r.$ 

\begin{remark}\label{rmk-bundleofalgs}
If $C$ is smooth and $\widetilde{H}$ denotes the sheaf on $\operatorname{maxSpec} C$ associated with the $C$-module $H$, then $\widetilde{H}$ is a vector bundle of rank $r$ over the algebraic group $\operatorname{maxSpec} C$. For each
$\mathfrak{m}\in \operatorname{maxSpec} C$, the fiber of the vector bundle $\widetilde{H}$ at $\mathfrak{m}$ is
$$\widetilde{H}(\mathfrak{m})=H_{\mathfrak{m}}\otimes_{C_{\mathfrak{m}}}C_{\mathfrak{m}}/\mathfrak{m}C_{\mathfrak{m}}\cong H/\mathfrak{m}H,$$
which is precisely the fiber algebra of $H$ at $\mathfrak{m}$.
\end{remark}

For any two characters $\chi,\psi\in G(C^{\circ})$, the coproduct $\Delta:H\to H\otimes H$ induces an algebra homomorphism
\begin{equation}\label{eq-Del-fibers}
\Delta_{\chi,\psi}:H/\mathfrak{m}_{\chi\ast \psi}H\to H/\mathfrak{m}_{\chi}H\otimes H/\mathfrak{m}_{\psi}H,\overline{h}\mapsto \sum_{(h)}\overline{h_{(1)}}\otimes \overline{h_{(2)}}.
\end{equation}
Consequently, the identity fiber algebra $ H/\mathfrak{m}_{\ovep}H$ is a finite-dimensional Hopf algebra with coproduct $\Delta_{\ovep,\ovep}$. It follows from \eqref{eq-Del-fibers} that each fiber algebra $H/\mathfrak{m}H$ is a bicomodule algebra over $H/\mathfrak{m}_{\ovep}H$. Moreover, \eqref{eq-Del-fibers} induces a bifunctor
\begin{equation}\label{eq-bifunct-conv-fiberalgsten}
\otimes : H/\mathfrak{m}_{\chi}H\text{-mod} \times H/\mathfrak{m}_{\psi}H\text{-mod}\to H/\mathfrak{m}_{\chi\ast\psi}H\text{-mod}.
\end{equation}
In particular, for any $\mathfrak{m}\in \operatorname{maxSpec}C$, \eqref{eq-bifunct-conv-fiberalgsten} endows $H/\mathfrak{m}H\text{-mod}$ with the structures of both a left and a right module category over the finite tensor category $H/\mathfrak{m}_{\ovep}H\text{-mod}$.

\begin{remark}
The identity fiber algebra $H/\mathfrak{m}_{\ovep}H$ is also called the \textit{restricted Hopf algebra} \cite{MR1898492} associated with the pair $(H,C)$. This terminology originates from the case where $H=\mathcal{U}(\mathfrak{g})$ is the universal enveloping algebra of a finite-dimensional restricted Lie algebra $(\mathfrak{g},[p])$ over a field $\mathbbm{k}$ of characteristic $p>0$, and $C=\mathbbm{k}[x^{p}-x^{[p]}\mid x\in \mathfrak{g}]$ is the $p$-center of $\mathcal{U}(\mathfrak{g})$. In this setting, the identity fiber algebra is precisely the \textit{restricted enveloping algebra} $\mathfrak{u}(\mathfrak{g})$ of $(\mathfrak{g},[p])$, while the remaining fiber algebras are the \textit{reduced enveloping algebras} of $(\mathfrak{g},[p])$. 
For many big quantum groups at roots of unity, the corresponding small quantum group arises as the identity fiber of the big quantum group with respect to an appropriate central Hopf subalgebra.
\end{remark}


For any fiber algebra $H/\mathfrak{m}H$, the algebra $H/\mathfrak{m}H$ is an $H/\mathfrak{m}_{\ovep}H$-$H/\mathfrak{m}_{\ovep}H$-bi-Galois object by \cite[Lemma 2.1]{HQWZ2026chev}. Hence, by Lemma \ref{lem-galoobj-simindeomex}, $H/\mathfrak{m}H \text{-mod}$ is an indecomposable exact module category over the tensor category $H/\mathfrak{m}_{\ovep}H\text{-mod}$; see also \cite[Proposition 3.8]{HMQW2026chev} for a direct proof. Theorem \ref{thm-modulefinitehopf-integral} and Proposition \ref{prop-bigaloisunimoduiff} yield:
 
\begin{theorem}
Let $H$ be an affine Hopf algebra, and let $C$ be a central Hopf subalgebra of $H$ such that $H$ is finitely generated as a $C$-module. Then the following conditions are equivalent:\label{thm-unimodularcomodualg-fiber}
\begin{itemize}
\item[(1)] $H$ is unimodular;
\item[(2)]$H/\mathfrak{m}_{\ovep}H$ is a finite-dimensional unimodular Hopf algebra; 
\item[(3)]For any $\mathfrak{m}\in \operatorname{maxSpec}C$, $H/\mathfrak{m}H$ is a unimodular comodule algebra;
\item[(4)]There exists $\mathfrak{m}\in \operatorname{maxSpec}C$ such that $H/\mathfrak{m}H$ is a unimodular comodule algebra.
\end{itemize}
\end{theorem}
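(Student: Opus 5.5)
The plan is to chain three facts already available: Theorem \ref{thm-modulefinitehopf-integral} (b), the bi-Galois property of fiber algebras, and Proposition \ref{prop-bigaloisunimoduiff}. Set $\overline{H}=H/\mathfrak{m}_{\ovep}H$. Since $\mathfrak{m}_{\ovep}=C^{+}$, the identity fiber is exactly the Hopf quotient $H/C^{+}H$ from Section \ref{sec-homolointTHM}, and its Hopf structure (coproduct $\Delta_{\ovep,\ovep}$ from \eqref{eq-Del-fibers}) agrees with the quotient Hopf structure. The equivalence (1)$\Leftrightarrow$(2) is therefore precisely Theorem \ref{thm-modulefinitehopf-integral} (b).

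For the remaining equivalences, fix $\mathfrak{m}\in\operatorname{maxSpec}C$. By \cite[Lemma 2.1]{HQWZ2026chev}, $A_{\mathfrak{m}}\coloneqq H/\mathfrak{m}H$ is an $\overline{H}$-$\overline{H}$-bi-Galois object, with left and right coactions induced by $\Delta_{\ovep,\chi}$ and $\Delta_{\chi,\ovep}$ (where $\mathfrak{m}=\mathfrak{m}_{\chi}$). I will check that the left $\overline{H}$-comodule algebra structure used to regard $A_{\mathfrak{m}}\text{-mod}$ as a module category over $\overline{H}\text{-mod}$ via \eqref{eq-bifunct-conv-fiberalgsten} is the same one appearing in the bi-Galois structure. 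Once this is settled, Lemma \ref{lem-galoobj-simindeomex} shows that $A_{\mathfrak{m}}$ is indecomposable and exact, so "unimodular comodule algebra" makes sense for it.

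Then I will apply Proposition \ref{prop-bigaloisunimoduiff} with $H=L=\overline{H}$. The bi-Galois object $A_{\mathfrak{m}}$ is unimodular as a left $\overline{H}$-comodule algebra if and only if $\overline{H}$ is unimodular, and this holds for each $\mathfrak{m}$ individually. This is sharper than the proposition's (ii)/(iii), which quantify over all bi-Galois objects, but the proof there gives it directly. The tensor equivalence \eqref{eq-rexAAtens-Lmodrev}, $\operatorname{Rex}_{\mathcal{C}}(A_{\mathfrak{m}}\text{-mod})\cong(\overline{H}\text{-mod})^{\mathrm{rev}}$, together with Remark \ref{rmk-unimocatiff-revuni8}, shows that unimodularity of $A_{\mathfrak{m}}$ is equivalent to unimodularity of $\overline{H}$. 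From this, (2)$\Rightarrow$(3) follows by applying the equivalence to every $\mathfrak{m}$. (3)$\Rightarrow$(4) is trivial because $\operatorname{maxSpec}C$ is nonempty; for instance it contains $\mathfrak{m}_{\ovep}$. (4)$\Rightarrow$(2) follows by applying the equivalence to the given $\mathfrak{m}$.

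The only point needing care is the compatibility check in the second paragraph: that the comodule structures and Hopf structures match, and that the right Galois structure is over $\overline{H}$ itself rather than over some twisted form. This is handled by \cite[Lemma 2.1]{HQWZ2026chev}, and I expect it to be routine bookkeeping rather than a real obstacle.
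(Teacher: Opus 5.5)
Your proposal is correct and follows the paper's own argument: (1)$\Leftrightarrow$(2) comes from Theorem \ref{thm-modulefinitehopf-integral}(b), and (2)$\Leftrightarrow$(3)$\Leftrightarrow$(4) comes from the bi-Galois property of the fibers \cite[Lemma 2.1]{HQWZ2026chev} together with Proposition \ref{prop-bigaloisunimoduiff} applied with $H=L=H/\mathfrak{m}_{\ovep}H$. Your per-$\mathfrak{m}$ sharpening is harmless but not needed, since condition (4) already gives condition (iii) of that proposition and condition (ii) of that proposition already gives (3).
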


\begin{remark}\label{rmk-unimocat-Hopfcogralg}
Theorem \ref{thm-unimodularcomodualg-fiber} extends to the more general setting of Hopf group coalgebras of finite type in the sense of Turaev-Virelizier \cite{MR2674592,MR1903398} (see \cite[\S 1.3]{MR1903398} for the definition). We briefly sketch the argument below. 

Let $G$ be a group with identity element $e$, and let $(\{H_{g}\}_{g\in G},\{\Delta_{g,k}:H_{gk}\to H_{g}\otimes H_{k}\}_{g,k\in G},\varepsilon_{e}: H_{e}\to\mathbbm{k},\{S_{g}:H_{g}\to H_{g^{-1}}\}_{g\in G})$ be a Hopf $G$-coalgebra. Here, $\{H_{g}\}_{g\in G}$ is a family of $\mathbbm{k}$-algebras, $\{\Delta_{g,k}:H_{gk}\to H_{g}\otimes H_{k}\}_{g,k\in G}$ is a family of algebra homomorphisms, and $\{S_{g}:H_{g}\to H_{g^{-1}}\}_{g\in G}$ is a family of algebra anti-homomorphisms satisfying certain compatibility conditions. It can be shown that $(H_{e},\Delta_{e,e},\varepsilon_{e})$ is a Hopf algebra and that each $H_{g}$ is an $H_{e}$-$H_{e}$-bimodule coalgebra with respect to $\Delta_{e,g}$ and $\Delta_{g,e}$. A Hopf $G$-coalgebra $(\{H_{g}\}_{g\in G},\{\Delta_{g,k} \}_{g,k\in G},\varepsilon_{e} ,\{S_{g} \}_{g\in G})$ is said to be \textit{of finite type} if all $H_{g}$ are finite-dimensional over $\mathbbm{k}$. In this case, all $S_{g}$ are bijective \cite[Corollary 3.7]{MR1903398}. For any affine Hopf algebra $H$ admitting a large central Hopf subalgebra $C$, it is straightforward to show that $$(\{H/\mathfrak{m}_{\chi}H\}_{\chi\in G(C^{\circ})},\{\Delta_{\chi,\psi}\}_{\chi,\psi\in G(C^{\circ})},\overline{\varepsilon},\{S_{\chi}\}_{\chi\in G(C^{\circ})})$$ is a finite-type Hopf $G(C^{\circ})$-coalgebra, where $S_{\chi}:H/\mathfrak{m}_{\chi}H\to H/\mathfrak{m}_{\chi S}H$ is the canonical homomorphism induced by the antipode $S$ of $H$.

For any finite-type Hopf $G$-coalgebra $(\{H_{g}\}_{g\in G},\{\Delta_{g,k} \}_{g,k\in G},\varepsilon_{e} ,\{S_{g} \}_{g\in G})$ with $H_{g}\neq 0$, by the proof of \cite[Lemma 2.1]{HQWZ2026chev}, the algebra $H_{g}$ is an $H_{e}$-$H_{e}$-bi-Galois object. Hence, Proposition \ref{prop-bigaloisunimoduiff} applies. Consequently, 
$H_{e}$ is a unimodular Hopf algebra if and only if $H_{g}$ is unimodular as a left $H_{e}$-comodule algebra. 
\end{remark}

\begin{corollary}
Let $H$ be an affine Calabi-Yau Hopf algebra, and let $C$ be a central Hopf subalgebra of $H$ such that $H$ is finitely generated as a $C$-module. Then, for any $\mathfrak{m}\in \operatorname{maxSpec}C$, $H/\mathfrak{m}H$ is a unimodular $H/\mathfrak{m}_{\ovep}H$-comodule algebra.
\end{corollary}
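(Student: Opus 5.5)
The plan is to combine the Calabi-Yau criterion from the module-finite setting with the equivalence of Theorem \ref{thm-unimodularcomodualg-fiber}.

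First, since $H$ is Calabi-Yau, its Nakayama automorphism is inner. Because $H$ is affine and module-finite over the central Hopf subalgebra $C$, it is noetherian and AS-Gorenstein by \cite[Theorem 0.2]{MR1938745}. So Corollary \ref{cor-nakainniff-unimodular} applies and shows that $H$ is unimodular and $S^{2}$ is inner. Alternatively, one can use Remark \ref{rmk-noethCYHopf-iffasreinns-un}, i.e.\ He-Van Oystaeyen-Zhang, which gives that a noetherian Calabi-Yau Hopf algebra is unimodular. A third route is Theorem \ref{thm-modulefinitehopf-integral} (c), which yields directly that $H/\mathfrak{m}_{\ovep}H$ is unimodular.

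Second, apply Theorem \ref{thm-unimodularcomodualg-fiber}, which requires $\mathbbm{k}$ algebraically closed; that is the standing assumption of this section. The implication (1) $\Rightarrow$ (3) there gives that $H/\mathfrak{m}H$ is a unimodular $H/\mathfrak{m}_{\ovep}H$-comodule algebra for every $\mathfrak{m}\in\operatorname{maxSpec}C$. Equivalently, one can argue directly: $H/\mathfrak{m}H$ is an $H/\mathfrak{m}_{\ovep}H$-$H/\mathfrak{m}_{\ovep}H$-bi-Galois object by \cite[Lemma 2.1]{HQWZ2026chev}, so the final clause of Proposition \ref{prop-bigaloisunimoduiff} applies once $H/\mathfrak{m}_{\ovep}H$ is known to be unimodular.

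There is no real obstacle. The only point needing care is that the Calabi-Yau hypothesis, which is phrased via Ginzburg's homological smoothness, actually feeds into the Hopf-theoretic results. This works because the Nakayama automorphism of a skew Calabi-Yau algebra agrees with the rigid Gorenstein one here, as $H$ is noetherian. Then Theorem \ref{thm-BZ-nakaASgoren} together with Corollary \ref{cor-nakainniff-unimodular} gives the unimodularity needed in the first step.
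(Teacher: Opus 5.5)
Your proposal is correct and follows essentially the paper's route. The paper gives no separate proof: the corollary is an immediate consequence of Theorem \ref{thm-unimodularcomodualg-fiber}, (1) $\Rightarrow$ (3), combined with the fact that a noetherian Calabi-Yau Hopf algebra is unimodular (Corollary \ref{cor-nakainniff-unimodular}, Remark \ref{rmk-noethCYHopf-iffasreinns-un}), and your remark that the Ginzburg Nakayama automorphism coincides with the rigid Gorenstein one is the right point of care and is handled correctly.
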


By Theorem \ref{thm-largqungrp-CY} (1), any AAY large quantum group $U_{\mathfrak{q}}$ associated with a braiding matrix $\mathfrak{q}$ satisfying the assumption (FCC) is a unimodular AS-Gorenstein Hopf algebra.
Therefore, Theorem \ref{thm-unimodularcomodualg-fiber} applies, yielding the following:

\begin{corollary}
Assume that the braiding matrix $\mathfrak{q}$ satisfies the assumption (FCC), and let $(U_{\mathfrak{q}}, Z_{\mathfrak{q}})$ be as in Theorem \ref{thm-largequanmodufi-aay1}. Then, for any $\mathfrak{m}\in \operatorname{maxSpec}Z_{\mathfrak{q}}$, the quotient $U_{\mathfrak{q}}/\mathfrak{m}U_{\mathfrak{q}}$ is a unimodular $\mathfrak{u}_{\mathfrak{q}}$-comodule algebra, where $\mathfrak{u}_{\mathfrak{q}}$ is the small quantum group defined in \eqref{eq-AAYsmallqgrp}.
\end{corollary}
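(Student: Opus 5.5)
The corollary follows by combining Theorem \ref{thm-largqungrp-CY} (a) with Theorem \ref{thm-unimodularcomodualg-fiber}, once the hypotheses of the latter have been checked for the pair $(U_{\mathfrak{q}},Z_{\mathfrak{q}})$.

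\emph{Hypotheses.} We take the base field to be $\mathbb{C}$, which is algebraically closed, as required throughout \S\ref{sec-yadaunimodular-cat}. By Theorem \ref{thm-largequanmodufi-aay1}, $Z_{\mathfrak{q}}$ is a central Hopf subalgebra of $U_{\mathfrak{q}}$, and $U_{\mathfrak{q}}$ is a finitely generated free $Z_{\mathfrak{q}}$-module. The algebra $U_{\mathfrak{q}}$ is affine, being generated by the finitely many elements $E_i,F_i,K_i^{\pm1},L_i^{\pm1}$ (see Remark \ref{rmk-generare-Uq-largewh}). Thus the standing assumptions of \S\ref{subsec-fiberalgg-mfhopf} hold.

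\emph{Unimodularity and the identity fiber.} By Theorem \ref{thm-largqungrp-CY} (a), $U_{\mathfrak{q}}$ is unimodular. This is condition (1) of Theorem \ref{thm-unimodularcomodualg-fiber}. The identity fiber is
\[
U_{\mathfrak{q}}/\mathfrak{m}_{\ovep}U_{\mathfrak{q}}=U_{\mathfrak{q}}/(Z_{\mathfrak{q}})^{+}U_{\mathfrak{q}},
\]
because $\mathfrak{m}_{\ovep}=\ker(\varepsilon|_{Z_{\mathfrak{q}}})=(Z_{\mathfrak{q}})^{+}$. This quotient is exactly $\mathfrak{u}_{\mathfrak{q}}$ as defined in \eqref{eq-AAYsmallqgrp}. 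The coaction on each fiber $U_{\mathfrak{q}}/\mathfrak{m}U_{\mathfrak{q}}$ is the one induced by $\Delta$ via \eqref{eq-Del-fibers}.

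\emph{Conclusion.} The implication (1)$\Rightarrow$(3) of Theorem \ref{thm-unimodularcomodualg-fiber} now gives, for every $\mathfrak{m}\in\operatorname{maxSpec}Z_{\mathfrak{q}}$, that $U_{\mathfrak{q}}/\mathfrak{m}U_{\mathfrak{q}}$ is a unimodular $\mathfrak{u}_{\mathfrak{q}}$-comodule algebra.

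There is one subtlety, in the proof of Theorem \ref{thm-largqungrp-CY} (a). That proof establishes unimodularity through the larger central Hopf subalgebra $C_{\mathfrak{q}}\supseteq Z_{\mathfrak{q}}$ rather than through $Z_{\mathfrak{q}}$. This causes no problem: unimodularity of $U_{\mathfrak{q}}$ is an intrinsic property of the Hopf algebra, so it does not depend on which central subalgebra is used to prove it. Hence condition (1) holds, and the argument applies with $Z_{\mathfrak{q}}$ as the chosen central Hopf subalgebra.
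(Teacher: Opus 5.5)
Your proposal is correct and follows the paper's argument: the paper also derives the corollary by combining the unimodularity of $U_{\mathfrak{q}}$ from Theorem~\ref{thm-largqungrp-CY}~(a) with the implication (1)$\Rightarrow$(3) of Theorem~\ref{thm-unimodularcomodualg-fiber}. Your hypothesis checks are accurate and make explicit what the paper leaves implicit: affineness, $\mathbb{C}$ being algebraically closed, the identity fiber being exactly $\mathfrak{u}_{\mathfrak{q}}$, and unimodularity of $U_{\mathfrak{q}}$ not depending on the auxiliary central subalgebra $C_{\mathfrak{q}}$.
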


\section*{Acknowledgments}
AI was used solely for English grammar and spell checking during the preparation of the manuscript. The authors take full responsibility for the content of the paper.
The authors thank Yuhang Cao, Zhongkai Mi, Xiao Xu, and Jing Yu for helpful discussions and comments. 


\end{document}